\newtheorem{theorem}{Theorem}[section]
\newtheorem{lemma}[theorem]{Lemma}
\newtheorem{proposition}[theorem]{Proposition}
\newtheorem{corollary}[theorem]{Corollary}
\newtheorem{remark}[theorem]{Remark}
\newtheorem{definition}[theorem]{Definition}
\renewcommand\theequation{\thesection.\@arabic\c@equation}
\providecommand{\keywords}[1]
{
  \small	
  \textbf{\textit{Keywords---}} #1
}
\providecommand{\MSC}[1]
{
  \small	
  \textbf{\textit{Mathematics Subject Classification---}} #1
}
\title{On the extreme value of Nehari manifold for nonlocal singular Schr{\"o}dinger-Kirchhoff equations in $\mathbb{R}^N$}
\date{}
\author{Deepak Kumar Mahanta$^{1}$, Tuhina Mukherjee$^{1}$ and Abhishek Sarkar$^{1,}$\thanks{Corresponding author}  \\
        \small $^{1}$ Department of Mathematics, Indian Institute of Technology Jodhpur, Rajasthan 342030, India \\
        }
\newcommand{\Addresses}{{% additional braces for segregating \footnotesize
  \bigskip
  \footnotesize

  D.K.~Mahanta, \textit{E-mail address:} \texttt{mahanta.1@iitj.ac.in}

  %\medskip

  T.~Mukherjee, \textit{E-mail address:} \texttt{tuhina@iitj.ac.in}

 % \medskip

  A.~Sarkar, \textit{E-mail address:} \texttt{abhisheks@iitj.ac.in}

}}
\begin{document}
\maketitle \vspace{-1.8\baselineskip}
%\lipsum
% \begin{abstract}
%    % \section{Abstact}
% In this article, we investigate the existence, non-existence, and multiplicity of weak solutions for the following nonlocal Schr{\"o}dinger-Kirchhoff type problem involving singular non-linearity 
% \begin{equation}%\label{main problem}
%   \begin{cases}
% M\big( \|u\|^p\big)\bigg[\big(-\Delta_p\big)^s u+V(x)u^{p-1}\bigg]=\lambda\alpha(x)u^{-\delta}+\beta(x)u^{\gamma-1}~~\text{in}~~\mathbb{R}^N,\\
% ~~~~~~u>0~~\text{in}~~\mathbb{R}^N,~ \int_{\mathbb{R}^N} V(x)u^p~\mathrm{d}x<\infty,~ u\in W^{s,p}(\mathbb{R}^N),
%   \end{cases}\tag{$\mathcal{E}_\lambda$}
% \end{equation}
%  with 
%  $$ \|u\|=\Bigg(\int_{\mathbb{R}^{N}}\int_{\mathbb{R}^{N}}\frac{|u(x)-u(y)|^p}{|x-y|^{N+sp}}~\mathrm{d}x\mathrm{d}y+\int_{\mathbb{R}^{N}}V(x)|u|^p~\mathrm{d}x\Bigg)^{\frac{1}{p}},$$
% where $N\geq 2$, $\big(-\Delta_p\big)^s$ is the fractional p-Laplacian operator, $0<s,\delta<1<p<\gamma<\infty$, $sp<N$, $\lambda>0$ is a real parameter, $\beta(\cdot)$ may change the sign in $\mathbb{R}^N$, $M$ is a Kirchhoff function, and $V:\mathbb{R}^N\to \mathbb{R}^+ $ is a potential function. By performing fine analysis based on Nehari submanifolds and fiber maps, our goal is to show the problem \eqref{main problem} has at least two positive solutions even if $\lambda$ lies beyond the extremal parameter $\lambda_\ast$, obtained from standard Nehari manifold technique.
% \end{abstract}
%\centerline{$^{1}$Department of Mathematics, Indian Institute of Technology Jodhpur,}
%\centerline{Rajasthan 342030, India}
\begin{abstract}
This article investigates the existence, non-existence, and multiplicity of weak solutions for a parameter-dependent nonlocal Schr{\"o}dinger-Kirchhoff type problem on $\mathbb R^N$ involving singular non-linearity. By performing nuanced analysis based on Nehari submanifolds and fibre maps, our goal is to show the problem has at least two positive solutions even if $\lambda$ lies beyond the extremal parameter $\lambda_\ast$.%, which we obtain from standard Nehari manifold technique.
\end{abstract}
\keywords{Nehari manifold, Schr{\"o}dinger-Kirchhoff equations, fractional $p$-Laplacian, singular problems, extreme values of parameter, variational methods}\\
\noindent\MSC{35J50, 35J75, 35J60, 35R11}
\section{Introduction}
Given $0<s<1$, $p\in(1,\infty)$ and $sp<N$, we consider the following fractional $p$-Laplacian singular problem of Schr{\"o}dinger-Kirchhoff type:
\begin{equation}\label{main problem}
  \begin{cases}
M\big( \|u\|^p\big)\bigg[\big(-\Delta_p\big)^s u+V(x)u^{p-1}\bigg]=\lambda\alpha(x)u^{-\delta}+\beta(x)u^{\gamma-1}~~\text{in}~~\mathbb{R}^N,\\
~~~~~~u>0~~\text{in}~~\mathbb{R}^N,~\displaystyle \int_{\mathbb{R}^N} V(x)u^p~\mathrm{d}x<\infty,~ u\in W^{s,p}(\mathbb{R}^N),
  \end{cases}\tag{$\mathcal{E}_\lambda$}
\end{equation}
 with 
 $$ \|u\|=\Bigg(\int_{\mathbb{R}^{N}}\int_{\mathbb{R}^{N}}\frac{|u(x)-u(y)|^p}{|x-y|^{N+sp}}~\mathrm{d}x\mathrm{d}y+\int_{\mathbb{R}^{N}}V(x)|u|^p~\mathrm{d}x\Bigg)^{\frac{1}{p}},$$
 where $N\geq 2$, $0<\delta<1$, $\lambda>0$ is a real parameter, $M:\mathbb{R}^+\to\mathbb{R}^+$ (where $\mathbb{R}^+:=(0,\infty)$) is a Kirchhoff function, $V:\mathbb{R}^N\to \mathbb{R}^+ $ is a potential function and $\big(-\Delta_p\big)^s$ is the fractional $p$-Laplacian operator which, up to a normalization constant, is defined as
 $$ \big(-\Delta_p\big)^s \phi(x)= 2\lim_{\epsilon\to 0^+}\int_{\mathbb{R}^N\setminus B_{\epsilon}(x)} \frac{|\phi(x)-\phi(y)|^{p-2}(\phi(x)-\phi(y))}{|x-y|^{N+sp}}~\mathrm{d}y,~~x\in \mathbb{R}^N, $$
 along any $\phi\in\mathcal{C}_0 ^{\infty}(\mathbb{R}^N)$, where $B_{\epsilon}(x)=\{y\in \mathbb{R}^N: |x-y|<\epsilon\}$.
More details on the fractional $p$-Laplacian operator can be found in \cite{goyal2015existence,franzina2014fractional} and the references therein. Throughout the paper, we assume the following hypotheses: 
\begin{itemize}
\item[(H1)] $0<\delta<1<p<\gamma<p^*_{s}-1$, where $p^*_{s}=\frac{Np}{N-sp}$ is the fractional Sobolev critical exponent.
\item[(H2)] $\alpha>0$ a.e. in $\mathbb{R}^N$ and there exists $\xi>1$ such that $ \alpha\in L^\xi(\mathbb{R}^N)\cap L^{\infty}(\mathbb{R}^N)$. Furthermore, there exists another constant $\tau>1$ such that $p<\tau<p^*_{s}$ satisfies $\frac{1}{\xi}+\frac{1-\delta}{\tau}=1$.
\item[(H3)] $\beta$ can be sign changing a.e. in $\mathbb{R}^N$ with $\beta^+=\max ~\{\beta,0\}\neq 0$ and $\beta\in L^{\infty}(\mathbb{R}^N)$.
\item[(H4)] If $\beta(x)>0$ in $\mathbb{R}^N$, then $\bigg(\frac{\alpha(x)}{\beta(x)}\bigg)^\frac{1}{\gamma+\delta-1}\notin  W^{s,p}_V(\mathbb{R}^N)$.
\item[(H5)] The Kirchhoff function  $M:\mathbb{R}^+\to\mathbb{R}^+$ is defined by $M(s)=as^m ~(a,m>0)$, where $p(m+1)<\gamma$, is a continuous and monotonically increasing function. Moreover, we define $$\widehat{M}(t)=\int_{0}^{t}M(s)~\mathrm{d}s,~\forall~t>0.$$
\end{itemize}
Furthermore, to avoid the lack of compactness of the embeddings of the solution space into Lebesgue space, we introduce the following conditions on $V:\mathbb{R}^N\to \mathbb{R}^+$:
\begin{itemize}
\item[(V1)] $V\in\mathcal{C}(\mathbb{R}^N)$ and there exists $V_0>0$ such that $\displaystyle\inf_{\mathbb{R}^N} V\geq V_0$.
\item[(V2)] There exists $h>0$ such that $\displaystyle\lim_{|y|\to\infty} ~\mathrm{meas}\{x\in B_h(y):V(x)\leq c\}=0 ,$ for all $c>0$, where $B_R(x)$ denotes any open ball in $\mathbb{R}^N$ centered at $x$ and of radius $R>0$. By $\mathrm{meas}(E)$, we denote the Lebesgue measure of $E \subset \mathbb{R}^N$.
\end{itemize}
\begin{remark}
 The condition $\mathrm{(V2)}$ is weaker than the coercivity assumption, that is, $V(x)\to\infty~\text{as}~|x|\to\infty$.   
\end{remark}
% On the other hand, when $p=2$, \eqref{main problem} reduces to the singular Schr{\"o}dinger- Kirchhoff fractional equation, whose prototype is given by
% \begin{equation}\label{eq1.1}
%   \begin{cases}
% M\big( \|u\|^2\big)\bigg[\big(-\Delta\big)^s u+V(x)u\bigg]=\lambda\alpha(x)u^{-\delta}+\beta(x)u^{\gamma-1}~~\text{in}~~\mathbb{R}^N,\\
% ~~~~~~u>0~~\text{in}~~\mathbb{R}^N,~ \int_{\mathbb{R}^N} V(x)u^2~\mathrm{d}x<\infty,~ u\in H^{s}(\mathbb{R}^N).
%   \end{cases}
% \end{equation}

 The study of Kirchhoff-type problems, which Kirchhoff originally studied in \cite{kirchhoff1897vorlesungen}, has received a great deal of attention in past years due to its applicability in a wide range of models of physical and biological systems. The Kirchhoff function $M$ is often represented by the expression $M(t)=a+bt^\theta$ for $t\geq 0$ and $\theta>0$, where $a,b\geq 0$ and $a+b>0$. Consequently, $M$ is considered a degenerate Kirchhoff function if and only if $a=0$, while it is nondegenerate if $a>0$. However, we note that the degenerate situation in Kirchhoff's theory is much more fascinating and delicate than the non-degenerate case. In this direction, we mention \cite{ambrosio2022kirchhoff,chen2013multiple,pucci2019existence,pucci2015multiple} and references therein for interested readers.

A very well-established fact says that the Nehari submanifolds (defined in Section \ref{sec3}) $\mathcal{M}^\pm_\lambda$ are separated from $\mathcal{M}^0_\lambda$, in the sense that the boundaries of $\mathcal{M}^\pm_\lambda$ do not overlap with $\mathcal{M}^0_\lambda$, then minimizing the energy functional over
$\mathcal{M}^\pm_\lambda$, one can easily show the existence of at least two nonnegative solutions to the corresponding problem. The main difficulties arise when the boundaries of these submanifolds overlap with each other. To avoid these difficulties, we introduce the extremal threshold value $\lambda^\ast$ (see \eqref{eq3.19}), in the sense that when $\lambda<\lambda^\ast$, then $\mathcal{M}_\lambda$ is a $\mathcal{C}^1$-manifold while if $\lambda\geq\lambda^\ast$, then $\mathcal{M}^0_\lambda\neq \emptyset$ and $\mathcal{M}_\lambda$ is not a manifold. Consequently, whenever $\lambda\geq\lambda^\ast$, the standard minimizing techniques are of no help to find the local minimizers for the energy functional over $\mathcal{M}^\pm_\lambda$ and the situation becomes more complicated. We need some topological estimates for the Nehari sets to overcome these technical issues and obtain local minimizers for the energy functional. In that context, we refer the reader to  \cite{alves2022multiplicity,ilyasov2017extreme,ilyasov2018branches,quoirin2023local,quoirin2021nehari,silva2018extremal,de2020extreme} and references therein.

In the nonlocal framework, many authors have extensively studied elliptic problems with singular nonlinearity involving fractional $p$-Laplacian over bounded domains using the Nehari manifold technique and the fibre map analysis. For instance, we refer \cite{mukherjee2016dirichlet,wang2019existence,goyal2017multiplicity,goyal2016fractional} and references therein for further readings in this direction. Whereas we refer to \cite{wang2021uniqueness,xiang2020least,xiang2016multiplicity,fareh2023multiplicity,do2019nehari} for the results concerning Kirchhoff-type problems. It is worth mentioning that very few contributions are devoted to the study of nonlocal Kirchhoff problems with singular nonlinearity in bounded domains and the whole space $\mathbb{R}^N$. In this direction, we refer the reader to \cite{hsini2019multiplicity,fiscella2019nehari,wang2020combined}.

Motivated by the works mentioned above, our objective in this paper is to generalize the results of \cite{alves2022multiplicity}. To the best of the authors' knowledge, the nonlocal problems involving fractional $p$-Laplacian operator with singular nonlinearity and sign-changing weight function in $\mathbb{R}^N$ in the context of extremal parameters of Nehari manifold have not yet been studied. Our paper is a standard contribution to the existing literature. However, we combine the known techniques due to the appearance of degenerate Kirchhoff function, the singular term,  and whole space $\mathbb R^N$ in \eqref{main problem}. Each feature has its characteristics to enhance the novelty of the situation, and we try to enlist them now. The singular term in \eqref{main problem} finishes the possibility to differentiate the associated energy functional; the degenerate Kirchhoff function puts many barriers to obtaining strong convergence from weak convergence of minimizing sequences. To get over the trouble caused by the degenerate Kirchhoff function, we build an operator (see Lemma \ref{lemma2.4}) to the fixed weakly convergent sequence for establishing compactness. To handle the noncompactness caused due to $\mathbb R^N$, the assumptions $(V1)$ and $(V2)$ come to our rescue. Last but not least, we study the existence and nonexistence of solutions to \eqref{main problem} beyond the extremal value $\lambda_*$. Our efforts lie in combining these techniques efficiently to obtain the main results listed below.

Before we state our main results, we define the weak solution and the energy functional associated with \eqref{main problem}.
\begin{definition}[Weak Solution]
We say that $u\in W^{s,p}_{V}(\mathbb{R}^N)$ is a weak solution of \eqref{main problem} if $\alpha(x)u^{-\delta}v\in L^1(\mathbb{R}^N)$, $u>0$ for a.e. $x\in \mathbb{R}^N$ and it satisfies
$$M\big(\|u\|^p\big)\bigg[\int_{\mathbb{R}^{N}}\int_{\mathbb{R}^{N}}\frac{|u(x)-u(y)|^{p-2}(u(x)-u(y))(v(x)-v(y))}{|x-y|^{N+sp}}~\mathrm{d}x\mathrm{d}y+\int_{\mathbb{R}^{N}}V(x)u^{p-1}v~\mathrm{d}x\bigg]$$ $$
-\lambda \int_{\mathbb{R}^{N}} \alpha(x)u^{-\delta}v~\mathrm{d}x-\int_{\mathbb{R}^{N}}\beta(x)u^{\gamma-1}v~\mathrm{d}x=0,~\text{for all}~v\in W^{s,p}_{V}(\mathbb{R}^N). $$
\end{definition}
The energy functional $\Psi_\lambda:W^{s,p}_{V}(\mathbb{R}^N)\to\mathbb{R}$ associated with \eqref{main problem} is given by
\begin{equation}\label{eq2.11}
 \Psi_\lambda(u)=\frac{1}{p}\widehat{M}\big(\|u\|^p\big)-\frac{\lambda}{1-\delta}\int_{\mathbb{R}^{N}}\alpha(x)|u|^{1-\delta}~\mathrm{d}x-\frac{1}{\gamma}\int_{\mathbb{R}^{N}}\beta (x)|u|^\gamma~\mathrm{d}x.   
\end{equation}

It is easy to see that $\Psi_\lambda$ is well defined and continuous in $W^{s,p}_{V}(\mathbb{R}^N)$. One can notice that due to the presence of the singular term the energy functional  $\Psi_\lambda$  is not $\mathcal{C}^1$ in $W^{s,p}_{V}(\mathbb{R}^N)$ and also it is not bounded from below on $W^{s,p}_{V}(\mathbb{R}^N)$. Hence, we cannot apply the critical point theory to verify the existence of weak solutions for \eqref{main problem}. To overcome this, we work with a set of Nehari manifolds.

The following states our main results.
\begin{theorem}\label{thm2.6}
Assume that $(H1)-(H5)$ and $(V1)-(V2)$ are satisfied, then there exists $\lambda_*>0$ such that for all $\lambda\in$ $(0,\lambda_*+\epsilon)$, where $\epsilon>0$ is sufficiently small, the problem \eqref{main problem} has at least two positive solutions $u_\lambda\in \mathcal{M}^+_\lambda$ and $w_\lambda\in \mathcal{M}^-_\lambda$, respectively.
\end{theorem}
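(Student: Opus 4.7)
The plan is to follow the Nehari manifold and fibering map program. For each $u\in W^{s,p}_V(\mathbb{R}^N)\setminus\{0\}$ let $\psi_u(t)=\Psi_\lambda(tu)$ and decompose $\mathcal{M}_\lambda=\{u\ne 0:\psi_u'(1)=0\}$ into $\mathcal{M}_\lambda^+\cup\mathcal{M}_\lambda^0\cup\mathcal{M}_\lambda^-$ according to the sign of $\psi_u''(1)$. Under (H5) the fibering map takes the form $\psi_u(t)=\frac{a}{p(m+1)}t^{p(m+1)}\|u\|^{p(m+1)}-\frac{\lambda}{1-\delta}t^{1-\delta}A(u)-\frac{1}{\gamma}t^{\gamma}B(u)$ with $A(u)=\int_{\mathbb{R}^N}\alpha|u|^{1-\delta}$, $B(u)=\int_{\mathbb{R}^N}\beta|u|^\gamma$ and exponents $1-\delta<p(m+1)<\gamma<p_s^*$. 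A sharp three--term comparison of these exponents, in the spirit of \cite{silva2018extremal,alves2022multiplicity}, produces the extremal value $\lambda_*$ of \eqref{eq3.19}: $\mathcal{M}_\lambda^0=\emptyset$ for $\lambda<\lambda_*$, and $\mathcal{M}_\lambda^0\ne\emptyset$ for $\lambda\ge\lambda_*$.

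For $\lambda\in(0,\lambda_*)$ I would minimize $\Psi_\lambda$ separately on $\mathcal{M}_\lambda^\pm$. Coercivity and lower boundedness on each branch follow from (H1), (H5) and the embedding $W^{s,p}_V(\mathbb{R}^N)\hookrightarrow L^\gamma(\mathbb{R}^N)$. Take a minimizing sequence $\{u_n\}\subset\mathcal{M}_\lambda^+$ with weak limit $u_\lambda$. Assumptions (V1)--(V2) upgrade the embedding $W^{s,p}_V(\mathbb{R}^N)\hookrightarrow L^q(\mathbb{R}^N)$ to a compact one for $p\le q<p_s^*$, which controls the integrals involving $\alpha$ (via (H2)) and $\beta$ (via (H3)) and yields $A(u_n)\to A(u_\lambda)$, $B(u_n)\to B(u_\lambda)$. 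The first technical difficulty is to pass from weak to strong convergence in $W^{s,p}_V(\mathbb{R}^N)$: the degenerate Kirchhoff factor $M(\|u_n\|^p)$ forbids a direct subtraction, and I would invoke the auxiliary operator of Lemma \ref{lemma2.4} together with the identity $\psi_{u_n}'(1)=0$ to deduce $\|u_n\|\to\|u_\lambda\|$. The separation $\overline{\mathcal{M}_\lambda^+}\cap\mathcal{M}_\lambda^0=\emptyset$, which holds precisely because $\lambda<\lambda_*$, then places $u_\lambda\in\mathcal{M}_\lambda^+$. I would then convert $u_\lambda$ into a genuine critical point of $\Psi_\lambda$ by the standard singular testing trick---plugging perturbations $u_\lambda+t\varphi$ with $\varphi\ge 0$ and letting $t\downarrow 0$, using Fatou to handle the singular integral---and obtain $u_\lambda>0$ a.e.\ from a strong minimum principle for the fractional $p$--Laplacian. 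The same scheme on $\mathcal{M}_\lambda^-$ delivers the second positive solution $w_\lambda$.

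The main obstacle is extending existence beyond $\lambda_*$, to the interval $[\lambda_*,\lambda_*+\epsilon)$ where $\mathcal{M}_\lambda^0\ne\emptyset$ and $\mathcal{M}_\lambda$ is no longer a manifold; direct minimization breaks down because $\overline{\mathcal{M}_\lambda^+}$ may meet $\mathcal{M}_\lambda^0$. Following the strategy of \cite{ilyasov2017extreme,de2020extreme,alves2022multiplicity}, I would run a continuity/perturbation argument in the parameter: for each $\lambda>\lambda_*$ close to $\lambda_*$, the explicit polynomial form of $M$ supplies uniform quantitative control of the critical set of $\psi_u$ as $\lambda$ varies, and combining this with hypothesis (H4) yields topological estimates ensuring a strict positive gap between $\inf_{\mathcal{M}_\lambda^\pm}\Psi_\lambda$ and the energy attained on $\mathcal{M}_\lambda^0$; condition (H4) is essential because it prevents $(\alpha/\beta)^{1/(\gamma+\delta-1)}$, the natural candidate for the degenerate minimum, from lying in $W^{s,p}_V(\mathbb{R}^N)$ and hence collapsing the two branches. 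One then proves continuity of $\lambda\mapsto\inf_{\mathcal{M}_\lambda^\pm}\Psi_\lambda$ on $(0,\lambda_*+\epsilon)$ and passes to the limit in the solutions produced above along a sequence $\lambda_n\to\lambda$ from inside the subcritical range, extracting the pair $(u_\lambda,w_\lambda)\in\mathcal{M}_\lambda^+\times\mathcal{M}_\lambda^-$ for every $\lambda\in(0,\lambda_*+\epsilon)$. Preserving strong convergence under the degenerate Kirchhoff factor throughout this limiting procedure is the most delicate point, and Lemma \ref{lemma2.4} is invoked once more to close the argument.
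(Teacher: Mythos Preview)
Your plan for $\lambda\in(0,\lambda_*)$ is essentially the paper's (Lemmas~\ref{lem4.1}--\ref{lem4.5}, Theorem~\ref{thm4.6}), with two minor deviations worth noting. First, the paper does \emph{not} use the $(S_+)$ property of Lemma~\ref{lemma2.4} to upgrade weak to strong convergence of minimizing sequences on $\mathcal{M}^\pm_\lambda$; instead it argues by fibering contradiction (if $u_n\rightharpoonup u_\lambda$ only weakly, then $t^+_\lambda(u_\lambda)>1$ and $\Psi_\lambda(t^+_\lambda(u_\lambda)u_\lambda)<\Upsilon^+_\lambda$). Lemma~\ref{lemma2.4} is invoked only at $\lambda=\lambda_*$ (Theorem~\ref{thm5.6}), where the minimizing sequence consists of actual solutions and the weak formulation is available to test against $u_n-w_{\lambda_*}$. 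Second, positivity $u_\lambda>0$ a.e.\ is not obtained from a strong minimum principle but from the energy comparison in Lemma~\ref{lem4.5}: if $u_\lambda$ vanished on a set of positive measure the singular term would drive $\frac{\Psi_\lambda(u_\lambda+\epsilon v)-\Psi_\lambda(u_\lambda)}{\epsilon}\to-\infty$.

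There is, however, a genuine gap in your treatment of $\lambda>\lambda_*$. You propose to ``pass to the limit in the solutions produced above along a sequence $\lambda_n\to\lambda$ from inside the subcritical range'', but if $\lambda>\lambda_*$ no sequence in $(0,\lambda_*)$ converges to $\lambda$; this limiting device only reaches $\lambda=\lambda_*$ (and that is exactly Theorem~\ref{thm5.6}). The paper's mechanism for $\lambda\in(\lambda_*,\lambda_*+\epsilon)$ is different and more delicate: one freezes the reference manifold at $\lambda_*$ and minimizes the $\lambda$-functionals $\mathcal{I}^\pm_\lambda$ over the constrained sets
\[
\mathcal{M}^\pm_{\lambda_*,d}=\bigl\{u\in\mathcal{M}^\pm_{\lambda_*}:\ \mathrm{dist}(u,\mathcal{M}^0_{\lambda_*})>d,\ \|u\|\ \text{suitably bounded}\bigr\}.
\]
The key input is Lemma~\ref{lem5.1} (this is where (H4) enters): elements of $\mathcal{M}^0_{\lambda_*}$ satisfy an overdetermined identity which is incompatible with being a weak solution, hence the solution sets $\mathcal{S}^\pm_{\lambda_*}$ lie at \emph{positive distance} from $\mathcal{M}^0_{\lambda_*}$ (Lemma~\ref{lem6.3}). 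This is a distance gap in the norm of $W^{s,p}_V(\mathbb{R}^N)$, not the energy gap you describe; no separation between $\Upsilon^\pm_\lambda$ and $\Psi_\lambda|_{\mathcal{M}^0_\lambda}$ is claimed or needed. With $d$ chosen below this distance, Lemmas~\ref{lem6.6}--\ref{lem6.7} show (via continuity of $\lambda\mapsto\Upsilon^\pm_{\lambda,d}$ at $\lambda_*$, Proposition~\ref{prop6.5}) that for $\lambda$ slightly above $\lambda_*$ the constrained infimum is attained at an \emph{interior} point $\overline{u}_\lambda\in\mathcal{M}^\pm_{\lambda_*,d}$; Corollary~\ref{cor6.2} then guarantees that its projection $t^\pm_\lambda(\overline{u}_\lambda)\overline{u}_\lambda$ lands in $\mathcal{M}^\pm_\lambda$, and the singular testing argument of Theorem~\ref{thm4.6} finishes the proof. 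Your outline should incorporate this constrained-minimization-at-$\lambda_*$ step; without it the extension past $\lambda_*$ does not go through.
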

The remainder of the article is structured as follows. In Section \ref{sec2}, we have provided the mathematical framework for our analysis. In Section \ref{sec3}, we introduce the setup of the Nehari manifold and the study of the fibre map for \eqref{main problem} and also describe the extremal parameter $\lambda_\ast$. Finally, in Sections \ref{sec4}--\ref{sec6}, we have established the existence and multiplicity of solutions to \eqref{main problem} for $0<\lambda<\lambda_\ast$, $\lambda=\lambda_\ast$ and $\lambda>\lambda_\ast$, respectively, and we prove Theorem \ref{thm2.6}.
% \textbf{Notations.} In this article, we consistently use the following notations:
% \begin{itemize}
% \item [$\bullet$] For any Banach space $W$, $W^\ast$ denotes the continuous dual of $W$.
%     \item [$\bullet$] $\langle{\cdot,\cdot}\rangle$ denotes the duality pair between $W^\ast$ and $W$.
%     \item [$\bullet$] $o(1)$ means $o(1)\to 0$ as $n\to\infty$.
%     \item [$\bullet$] $\rightharpoonup $ denotes weak convergence and $\rightarrow$ denotes strong convergence.
% \end{itemize}    

\section{Preliminaries}\label{sec2}
In this section, we introduce some primary results and properties of the fractional Sobolev spaces and then provide some necessary lemmas that will be needed in the proof of our main results.
Let $0<s<1<p<\infty$ and $sp<N$. The fractional Sobolev space $ W^{s,p}(\mathbb{R}^N)$ is defined by
$$  W^{s,p}(\mathbb{R}^N)=\bigg\{u\in L^p(\mathbb{R}^N): \int_{\mathbb{R}^{N}}\int_{\mathbb{R}^{N}}\frac{|u(x)-u(y)|^p}{|x-y|^{N+sp}}~\mathrm{d}x\mathrm{d}y<\infty\bigg\},$$
equipped with the norm
$$\|u\|_{ W^{s,p}(\mathbb{R}^N)}=\bigg(\|u\|^p_ {L^{p}(\mathbb{R}^N)}+[u]^p_{s,p}\bigg)^\frac{1}{p},$$
where $[u]_{s,p}$ denotes the Gagliardo seminorm, defined by $$[u]_{s,p}=\bigg(\int_{\mathbb{R}^{N}}\int_{\mathbb{R}^{N}}\frac{|u(x)-u(y)|^p}{|x-y|^{N+sp}}~\mathrm{d}x\mathrm{d}y\bigg)^\frac{1}{p}.$$ It is well-known that $\bigg(W^{s,p}(\mathbb{R}^N),\|\cdot\|_{ W^{s,p}(\mathbb{R}^N)} \bigg)$ is a uniformly convex Banach space (see\cite{pucci2015multiple}). The fractional critical exponent is defined by
$$ p^*_{s}=\begin{cases}
    \frac{Np}{N-sp},~\text{if}~sp<N;\\
    \infty~~~~,~\text{if}~sp\geq N.
\end{cases}$$
The space $ W^{s,p}(\mathbb{R}^N)$ is continuously embedded in $L^q(\mathbb{R}^N)$ for any $q\in[p,p^*_{s}]$ but is compactly embedded in $L^q_{loc}(\mathbb{R}^N)$ for any $q\in[p,p^*_{s})$ (see \cite{kim2018multiplicity}). For more details about the space $ W^{s,p}(\mathbb{R}^N)$, we refer to \cite{di2012hitchhiker} and the references therein.
Moreover, for $1\leq p<\infty$, the space $L^p_{V}(\mathbb{R}^N)$ is consisting of all real-valued measurable functions, with $V(x)|u|^p\in L^1(\mathbb{R}^N)$, and endowed with the norm 
$$\|u\|_{L^p_{V}(\mathbb{R}^N)}=\bigg( \int_{\mathbb{R}^N}V(x)|u|^p~\mathrm{d}x\bigg)^\frac{1}{p},~\forall~u\in L^p_{V}(\mathbb{R}^N).$$
The space $\big(L^p_{V}(\mathbb{R}^N), \|\cdot\|_{L^p_{V}(\mathbb{R}^N)}\big)$ is also a uniformly convex Banach  space thanks to $(V1)$ (see \cite{pucci2015multiple}).
Now we define weighted fractional Sobolev space $ W^{s,p}_{V}(\mathbb{R}^N)$, which is a subspace of $ W^{s,p}(\mathbb{R}^N)$ and is defined by
$$ W^{s,p}_{V}(\mathbb{R}^N)=\bigg\{u\in W^{s,p}(\mathbb{R}^N):\int_{\mathbb{R}^N}V(x)|u|^p~\mathrm{d}x<\infty\bigg\},$$
endowed with the norm
$$ \|u\|=\bigg(\|u\|^p_{L^p_{V}(\mathbb{R}^N)}+[u]^p_{s,p}\bigg)^\frac{1}{p}.$$
It is easy to see that the space $\big(W^{s,p}_{V}(\mathbb{R}^N),\|\cdot\|\big)$ is a unifomly convex Banach space (see \cite{pucci2015multiple}).

\begin{lemma}[see\cite{pucci2015multiple}]\label{lemma2.1}%(see\cite{pucci2015multiple})
  Let $(V1)$ holds. Then the embeddings $$W^{s,p}_{V}(\mathbb{R}^N)\hookrightarrow W^{s,p}(\mathbb{R}^N)\hookrightarrow L^\vartheta (\mathbb{R}^N)$$ are continuous for any $\vartheta\in [p,p^*_{s}]$. Consequently, we have  $\textit{min}~\{1,V_0\}\|u\|^p_{ W^{s,p}(\mathbb{R}^N)}\leq \|u\|^p$ for all $u\in W^{s,p}_{V}(\mathbb{R}^N)$.
  Also, when $\vartheta\in [1,p^*_{s})$, then the embedding $W^{s,p}_{V}(\mathbb{R}^N)\hookrightarrow L^\vartheta (B_R(0))$ is compact for any $R>0$.
\end{lemma}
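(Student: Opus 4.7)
The plan is to verify the three assertions of the lemma in order; each reduces either to a direct pointwise comparison using (V1) or to invoking a standard fractional Sobolev embedding theorem from \cite{di2012hitchhiker}.

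First I would establish the norm comparison. Since (V1) gives $V(x) \geq V_0 > 0$ a.e., the weighted $L^p$-part of $\|u\|^p$ is bounded below by $V_0 \|u\|_{L^p(\mathbb{R}^N)}^p$; combining this with the Gagliardo seminorm yields
\[
\|u\|^p \;\geq\; [u]_{s,p}^p + V_0\|u\|_{L^p(\mathbb{R}^N)}^p \;\geq\; \min\{1,V_0\}\,\|u\|_{W^{s,p}(\mathbb{R}^N)}^p,
\]
which simultaneously delivers the continuous embedding $W^{s,p}_V(\mathbb{R}^N) \hookrightarrow W^{s,p}(\mathbb{R}^N)$ and the displayed estimate in the statement.

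For the continuous embedding $W^{s,p}(\mathbb{R}^N) \hookrightarrow L^\vartheta(\mathbb{R}^N)$ with $\vartheta \in [p, p^*_s]$, I would cite the classical fractional Sobolev inequality: the case $\vartheta = p$ is immediate from the definition of the $W^{s,p}$ norm, the case $\vartheta = p^*_s$ is the sharp critical embedding of Di Nezza--Palatucci--Valdinoci, and intermediate values follow by H\"older interpolation between these two endpoints. Composing with the embedding from the previous step then gives the first part of the lemma.

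Finally, for the local compactness, I would take a bounded sequence $\{u_n\} \subset W^{s,p}_V(\mathbb{R}^N)$; by the continuous embeddings already proved, the restrictions $u_n|_{B_R(0)}$ form a bounded sequence in $W^{s,p}(B_R(0))$, and the classical compact embedding of $W^{s,p}(B_R(0))$ into $L^\vartheta(B_R(0))$, valid on bounded Lipschitz domains for every subcritical $\vartheta \in [1, p^*_s)$ (see \cite{di2012hitchhiker}), extracts a convergent subsequence in $L^\vartheta(B_R(0))$. The only delicate point worth flagging is the subcriticality restriction $\vartheta < p^*_s$, which cannot be relaxed---compactness fails at the critical exponent by a standard concentration/rescaling argument, so the hypothesis is sharp. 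Note that condition (V2) is \emph{not} needed here, since compactness is asserted only on the bounded ball $B_R(0)$; (V2) would enter only if one wished to upgrade the statement to global compactness on $\mathbb{R}^N$, as is done later in the paper.
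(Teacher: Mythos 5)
Your proposal is correct. The paper does not prove this lemma at all --- it simply cites \cite{pucci2015multiple} --- and your argument (the pointwise bound $V\geq V_0$ for the norm comparison, the endpoint fractional Sobolev inequality plus H\"older interpolation for the continuous embeddings, and restriction to the ball followed by the classical compact embedding on bounded extension domains for the local compactness) is the standard proof one would find in that reference; your remark that $(V2)$ plays no role here and only enters for the global compactness of Lemma \ref{lemma2.3} is also accurate.
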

\begin{lemma}[see\cite{pucci2015multiple}]\label{lemma2.2} %see\cite{pucci2015multiple})
Suppose $(V1)$ and $(V2)$ hold. Let $\theta\in [p,p^*_{s})$ and $\{v_k\}_{k\in \mathbb{N}}$ be a bounded sequence in $W^{s,p}_{V}(\mathbb{R}^N)$. Then there exists $v\in W^{s,p}_{V}(\mathbb{R}^N)\cap L^\theta (\mathbb{R}^N)$ such that up to a subsequence $v_k\to v$ in $L^\theta (\mathbb{R}^N)$ as $k\to\infty$.
\end{lemma}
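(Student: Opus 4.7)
The plan is to prove the compactness Lemma \ref{lemma2.2} by combining reflexivity, local compact embeddings (from Lemma \ref{lemma2.1}), and a tail estimate driven by the weak coercivity hypothesis $(V2)$. This is essentially a Bartsch--Wang type argument transplanted to the fractional Sobolev setting.

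First I would extract, by reflexivity of $W^{s,p}_V(\mathbb{R}^N)$ (which is uniformly convex), a subsequence — still denoted $\{v_k\}$ — such that $v_k \rightharpoonup v$ weakly in $W^{s,p}_V(\mathbb{R}^N)$ for some $v$. By Lemma \ref{lemma2.1}, $v \in L^{\vartheta}(\mathbb{R}^N)$ for every $\vartheta \in [p,p^*_s]$, and $\{v_k\}$ is bounded in $L^{\vartheta}(\mathbb{R}^N)$ uniformly. Replacing $v_k$ by $v_k - v$, it suffices to show $w_k := v_k - v \to 0$ in $L^\theta(\mathbb{R}^N)$. Note $\{w_k\}$ is still bounded in $W^{s,p}_V(\mathbb{R}^N)$ by some constant $C_0$.

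The core step is the tail estimate. Fix $\varepsilon>0$ and $c>0$ (to be chosen). For $R>0$ and $y\in \mathbb{R}^N$ with $|y|>R$, split
\begin{equation*}
\int_{B_h(y)} |w_k|^p\,\mathrm{d}x \;=\; \int_{B_h(y)\cap\{V\le c\}} |w_k|^p\,\mathrm{d}x + \int_{B_h(y)\cap\{V>c\}} |w_k|^p\,\mathrm{d}x.
\end{equation*}
On $\{V>c\}$ one has $|w_k|^p \le c^{-1}V(x)|w_k|^p$, so this contribution is bounded by $c^{-1}\|w_k\|_{L^p_V(B_h(y))}^p \le c^{-1}C_0^p$. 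On $\{V\le c\}$, apply H\"older with exponents $p^*_s/p$ and its conjugate; by Lemma \ref{lemma2.1} the $L^{p^*_s}$ norm of $w_k$ over $B_h(y)$ is controlled by $C_0$, while the H\"older complementary factor is $(\mathrm{meas}\{x\in B_h(y):V(x)\le c\})^{sp/N}$, which by $(V2)$ tends to $0$ as $|y|\to\infty$. Choosing first $c$ large, then $R$ large, and covering $\mathbb{R}^N\setminus B_R(0)$ by a family $\{B_h(y_i)\}$ of finite overlap (a standard Besicovitch/equally-spaced cover in $\mathbb{R}^N$), I can make $\int_{\mathbb{R}^N\setminus B_R(0)} |w_k|^p\,\mathrm{d}x < \varepsilon$ uniformly in $k$. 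For general $\theta\in[p,p^*_s)$, I would interpolate: $\|w_k\|_{L^\theta(\mathbb{R}^N\setminus B_R)} \le \|w_k\|_{L^p(\mathbb{R}^N\setminus B_R)}^{\mu}\,\|w_k\|_{L^{p^*_s}(\mathbb{R}^N\setminus B_R)}^{1-\mu}$ with $\mu\in(0,1]$ chosen so that $1/\theta=\mu/p+(1-\mu)/p^*_s$, and use boundedness of the second factor.

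Finally, on the bounded region $B_R(0)$ the compact embedding $W^{s,p}_V(\mathbb{R}^N)\hookrightarrow L^\theta(B_R(0))$ supplied by Lemma \ref{lemma2.1} gives $w_k\to 0$ in $L^\theta(B_R(0))$ along a further subsequence. Combining the local strong convergence with the uniform tail bound yields $w_k\to 0$ in $L^\theta(\mathbb{R}^N)$, i.e.\ $v_k\to v$ in $L^\theta(\mathbb{R}^N)$, and $v\in W^{s,p}_V(\mathbb{R}^N)\cap L^\theta(\mathbb{R}^N)$. The main obstacle I expect is the tail estimate: the $(V2)$ hypothesis only says the measure of $\{V\le c\}$ inside small balls $B_h(y)$ is small when $|y|$ is large, so organizing the cover of the exterior $\mathbb{R}^N\setminus B_R(0)$ with controlled overlap and summing the local estimates without losing uniformity in $k$ is the delicate part; the choice of the splitting parameter $c$ interacts with both the uniform $L^p_V$ bound and the measure decay, and these have to be balanced carefully.
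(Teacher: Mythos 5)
The paper offers no proof of this lemma --- it is quoted from \cite{pucci2015multiple} --- and your strategy (weak limit by reflexivity, compact embedding on $B_R(0)$, a tail estimate driven by $(V2)$, interpolation for general $\theta$) is precisely the Bartsch--Wang type argument used in that reference. The architecture is therefore the right one.

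However, the one step you yourself flag as delicate does not close as written. On $B_h(y_i)\cap\{V\le c\}$ you estimate $\int_{A_i}|w_k|^p\,\mathrm{d}x\le(\mathrm{meas}\,A_i)^{sp/N}\|w_k\|^p_{L^{p^*_s}(B_h(y_i))}$ and then control $\|w_k\|_{L^{p^*_s}(B_h(y_i))}$ by the \emph{global} constant $C_0$. Summing over the necessarily infinite family of balls covering $\mathbb{R}^N\setminus B_R(0)$ then yields $\sum_i(\mathrm{meas}\,A_i)^{sp/N}C_0^p$, which need not be finite: $(V2)$ makes each summand small but says nothing about the sum. Nor can you instead bound $\sum_i\|w_k\|^p_{L^{p^*_s}(B_h(y_i))}$ by a power of the global $L^{p^*_s}$ norm, since $t\mapsto t^{p/p^*_s}$ is concave and $\sum_i a_i^{p/p^*_s}\ge\big(\sum_i a_i\big)^{p/p^*_s}$ goes the wrong way. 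The standard repair is to insert the local Sobolev embedding $\|w_k\|_{L^{p^*_s}(B_h(y_i))}\le S\,\|w_k\|_{W^{s,p}(B_h(y_i))}$ with $S$ independent of $y_i$ by translation invariance, so that
\[
\sum_{|y_i|>R}\int_{A_i}|w_k|^p\,\mathrm{d}x\;\le\;S^p\Big(\sup_{|y_i|>R}(\mathrm{meas}\,A_i)^{sp/N}\Big)\sum_i\|w_k\|^p_{W^{s,p}(B_h(y_i))}\;\le\;S^p\,\nu\,\|w_k\|^p_{W^{s,p}(\mathbb{R}^N)}\Big(\sup_{|y_i|>R}(\mathrm{meas}\,A_i)^{sp/N}\Big),
\]
where $\nu$ is the overlap multiplicity of the cover: the summands are now $p$-th powers given by integrals localized to $B_h(y_i)$ (including the Gagliardo double integral restricted to $B_h(y_i)\times B_h(y_i)$), so finite overlap bounds the sum by $\nu$ times the global norm, and only the supremum --- not the sum --- of the small measures survives. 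With this correction the remaining steps (choose $c$ large, then $R$ large, interpolate for $\theta\in(p,p^*_s)$, and use the compact embedding on $B_R(0)$) go through and give the lemma.
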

\begin{lemma}[see \cite{liang2017multiplicity}]\label{lemma2.3}%(see \cite{liang2017multiplicity})
Let $(V1)$ and $(V2)$ are satisfied.
Then the embedding $W^{s,p}_{V}(\mathbb{R}^N)\hookrightarrow L^\tau (\mathbb{R}^N)$ is compact for any $\tau\in (p,p^*_{s})$.
\end{lemma}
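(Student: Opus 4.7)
The plan is to deduce Lemma \ref{lemma2.3} directly from Lemma \ref{lemma2.2}. The key observation is that the interval $(p, p_s^*)$ is strictly contained in $[p, p_s^*)$, so every exponent $\tau$ in the range of Lemma \ref{lemma2.3} is already an admissible exponent for Lemma \ref{lemma2.2}.

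To establish that the inclusion $W^{s,p}_V(\mathbb{R}^N) \hookrightarrow L^\tau(\mathbb{R}^N)$ is compact, I would first record that it is a bounded linear operator: by Lemma \ref{lemma2.1}, $W^{s,p}_V(\mathbb{R}^N) \hookrightarrow W^{s,p}(\mathbb{R}^N) \hookrightarrow L^\tau(\mathbb{R}^N)$ continuously for every $\tau \in [p, p_s^*]$, with operator norm controlled by $\min\{1, V_0\}^{-1/p}$ times the standard Sobolev constant. Next, to verify sequential compactness, take an arbitrary bounded sequence $\{v_k\} \subset W^{s,p}_V(\mathbb{R}^N)$. Apply Lemma \ref{lemma2.2} with $\theta := \tau$, which is legitimate because $\tau \in (p, p_s^*) \subset [p, p_s^*)$: this yields some $v \in W^{s,p}_V(\mathbb{R}^N) \cap L^\tau(\mathbb{R}^N)$ and, up to a subsequence, $v_k \to v$ strongly in $L^\tau(\mathbb{R}^N)$. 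Since any bounded sequence admits a convergent subsequence in $L^\tau(\mathbb{R}^N)$, the embedding is compact, as claimed.

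Because the statement is a direct specialization of Lemma \ref{lemma2.2}, there is essentially no new technical obstacle. The substantive work, which has already been absorbed into Lemma \ref{lemma2.2}, is controlling the behaviour at infinity via $(V2)$: one splits the tail $\mathbb{R}^N \setminus B_R(0)$ into the region $\{V > c\}$, where the weighted bound $\int V|v_k|^p \le \|v_k\|^p$ makes the tail small after interpolation with the fixed $L^{p_s^*}$ bound, and the region $\{V \le c\}$, which by $(V2)$ has asymptotically negligible measure within each ball $B_h(y)$ as $|y| \to \infty$, allowing a covering/Hölder argument combined with the local compactness on bounded sets supplied by Lemma \ref{lemma2.1}. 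If one preferred a self-contained proof of Lemma \ref{lemma2.3}, it would amount to reproducing this argument in the slightly narrower range $(p, p_s^*)$. The sole purpose of isolating Lemma \ref{lemma2.3} as a separate statement is to record the compactness of the embedding in the form that will be invoked later in the paper—chiefly when handling the subcritical term $\beta(x)|u|^\gamma$ and passing to the limit in minimizing sequences on the Nehari manifold.
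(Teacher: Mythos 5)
Your proposal is correct. The paper gives no proof of this lemma at all---it is simply quoted from \cite{liang2017multiplicity}---and your observation that it is an immediate consequence of Lemma \ref{lemma2.2} (which supplies, for any bounded sequence and any $\theta\in[p,p^*_s)$, a subsequence converging strongly in $L^\theta(\mathbb{R}^N)$) combined with the continuity of the embedding from Lemma \ref{lemma2.1} is a sound and essentially complete derivation; your sketch of how $(V2)$ enters the proof of Lemma \ref{lemma2.2} itself is also the standard argument.
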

Let $\big(W^{s,p}_{V}(\mathbb{R}^N)\big)^*$ is the continuous dual of $W^{s,p}_{V}(\mathbb{R}^N)$ and 
 $\langle{\cdot,\cdot}\rangle$ denotes the duality pair between $\big(W^{s,p}_{V}(\mathbb{R}^N)\big)^*$ and $W^{s,p}_{V}(\mathbb{R}^N)$. Now we define the operator $L: W^{s,p}_{V}(\mathbb{R}^N)\to \big(W^{s,p}_{V}(\mathbb{R}^N)\big)^*$ by $$\langle{L(u),v}\rangle =M(\|u\|^p)\langle{B(u),v}\rangle,~\text{for any}~v\in W^{s,p}_{V}(\mathbb{R}^{N}),$$
 where $$ \langle{B(u),v}\rangle=\int_{\mathbb{R}^{N}}\int_{\mathbb{R}^{N}}\frac{|u(x)-u(y)|^{p-2}(u(x)-u(y))(v(x)-v(y))}{|x-y|^{N+sp}}~\mathrm{d}x\mathrm{d}y+\int_{\mathbb{R}^{N}}V(x)|u|^{p-2}uv~\mathrm{d}x. $$
\begin{lemma}[see \cite{akkoyunlu2019infinitely}]\label{lemma2.4}%(see \cite{akkoyunlu2019infinitely})
 If $(H5)$ is satisfied, then
 \begin{itemize}
     \item[(i)] $L: W^{s,p}_{V}(\mathbb{R}^N)\to \big(W^{s,p}_{V}(\mathbb{R}^N)\big)^*$ is continuous, bounded and strictly monotone.
     \item[(ii)] $L$ is a map of type $(S_+)$, i.e., if $v_k\rightharpoonup v$ weakly in $W^{s,p}_{V}(\mathbb{R}^N)$ as $k\to\infty$ and $\displaystyle\limsup_{k\to\infty}\langle{L(v_k)-L(v),v_k-v}\rangle\leq 0,$ then $v_k\to v$ in $W^{s,p}_{V}(\mathbb{R}^N)$ as $k\to\infty$.
     \item[(iii)] $L: W^{s,p}_{V}(\mathbb{R}^N)\to \big(W^{s,p}_{V}(\mathbb{R}^N)\big)^*$ is a homeomorphism.
 \end{itemize}
\end{lemma}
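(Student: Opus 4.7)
My plan is to verify the three items in turn, exploiting the specific structure $L(u)=M(\|u\|^p)B(u)$ with $M(t)=at^{m}$ from (H5), and the fact that $B$ is essentially the Gâteaux derivative of $\frac{1}{p}\|u\|^{p}$ on $W^{s,p}_{V}(\mathbb{R}^{N})$.

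For item (i) I will first analyze $B$. A Hölder inequality with exponents $p/(p-1)$ and $p$ applied separately to the Gagliardo-type double integral and to the weighted local term yields $|\langle B(u),v\rangle|\le \|u\|^{p-1}\|v\|$, which gives boundedness of $B$. Continuity of $B$ is obtained from pointwise continuity of the integrands together with dominated convergence, using that $a\mapsto |a|^{p-2}a$ is continuous. Strict monotonicity of $B$ follows from the classical vector inequality $\langle|a|^{p-2}a-|b|^{p-2}b,a-b\rangle>0$ for $a\neq b$, applied inside both integrals. To lift these properties to $L$, I will remark that $L=\nabla\Phi$ where $\Phi(u)=\tfrac{1}{p}\widehat{M}(\|u\|^{p})=\tfrac{a}{p(m+1)}\|u\|^{p(m+1)}$; since $p(m+1)>1$ and $\|\cdot\|$ is a norm on the uniformly convex space $W^{s,p}_{V}(\mathbb{R}^{N})$, $\Phi$ is strictly convex and hence $L$ is strictly monotone. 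Continuity and boundedness of $L$ are then immediate from those of $B$ and the continuity of $t\mapsto M(t)$.

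For item (ii), assume $v_k\rightharpoonup v$ with $\limsup_{k}\langle L(v_k)-L(v),v_k-v\rangle\le 0$. I will use the decomposition
\[
\langle L(v_k)-L(v),v_k-v\rangle=M(\|v_k\|^{p})\,\langle B(v_k)-B(v),v_k-v\rangle+\bigl(M(\|v_k\|^{p})-M(\|v\|^{p})\bigr)\langle B(v),v_k-v\rangle.
\]
The second summand tends to $0$ because $\{v_k\}$ is bounded (so $M(\|v_k\|^p)$ is bounded), $M$ is continuous, and $\langle B(v),v_k-v\rangle\to 0$ by weak convergence with $B(v)\in(W^{s,p}_{V}(\mathbb{R}^{N}))^{*}$ fixed. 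Strict monotonicity of $B$ gives $\langle B(v_k)-B(v),v_k-v\rangle\ge 0$, hence $\limsup_{k}M(\|v_k\|^{p})\langle B(v_k)-B(v),v_k-v\rangle\le 0$. I then split into two cases on $\liminf_k\|v_k\|$: if it equals $0$, then $v_k\to 0=v$ strongly; otherwise, along a subsequence $M(\|v_k\|^{p})\ge c_0>0$, forcing $\langle B(v_k)-B(v),v_k-v\rangle\to 0$, and the $(S_+)$ property of $B$ yields $v_k\to v$ in $W^{s,p}_{V}(\mathbb{R}^{N})$.

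For item (iii), injectivity of $L$ follows from strict monotonicity in (i). Surjectivity will be obtained from the Browder-Minty theorem once I check coercivity: $\langle L(u),u\rangle=M(\|u\|^{p})\|u\|^{p}=a\|u\|^{p(m+1)}$ with $p(m+1)>1$, so $\langle L(u),u\rangle/\|u\|\to\infty$. For continuity of $L^{-1}$, given $f_k\to f$ in $(W^{s,p}_{V}(\mathbb{R}^{N}))^{*}$ and $u_k=L^{-1}(f_k)$, $u=L^{-1}(f)$, coercivity gives boundedness of $\{u_k\}$, so some subsequence satisfies $u_k\rightharpoonup\tilde u$; then $\langle L(u_k)-L(\tilde u),u_k-\tilde u\rangle=\langle f_k-L(\tilde u),u_k-\tilde u\rangle\to 0$, and $(S_+)$ yields $u_k\to\tilde u$ strongly, whence $L(\tilde u)=f$ and $\tilde u=u$ by injectivity; a standard subsequence argument upgrades to convergence of the full sequence.

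The main obstacle I anticipate is establishing the $(S_+)$ property of the base operator $B$ on the noncompact ambient $\mathbb{R}^{N}$: the usual bounded-domain argument that converts $\langle B(v_k)-B(v),v_k-v\rangle\to 0$ into strong convergence relies on compact embeddings which are unavailable globally. My plan there is to invoke Lemma \ref{lemma2.3} to obtain strong convergence $v_k\to v$ in $L^{\tau}(\mathbb{R}^{N})$ for some $\tau\in(p,p^{*}_s)$, combine with a Brezis-Lieb type splitting of the Gagliardo integral, and then exploit the uniform convexity of $W^{s,p}_{V}(\mathbb{R}^{N})$; the remaining reductions are algebraic and use only continuity and positivity of $M$.
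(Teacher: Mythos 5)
The paper does not prove this lemma at all: it is imported verbatim from the cited reference \cite{akkoyunlu2019infinitely}, so there is no in-paper argument to compare against. Your proposal is a correct, self-contained route and is essentially the standard one for operators of the form $L(u)=M(\|u\|^p)B(u)$ with nondegenerate-at-infinity $M(t)=at^m$: identifying $L$ as the Gâteaux derivative of $\frac{a}{p(m+1)}\|u\|^{p(m+1)}$ and using strict convexity of $\|\cdot\|^{q}$ ($q>1$) on a uniformly convex space gives (i); your two-term decomposition of $\langle L(v_k)-L(v),v_k-v\rangle$ correctly isolates the Kirchhoff coefficient and reduces (ii) to the $(S_+)$ property of $B$; and (iii) follows from Browder--Minty plus coercivity $\langle L(u),u\rangle=a\|u\|^{p(m+1)}$ exactly as you say.

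Two points deserve tightening. First, in Case 1 of (ii) ($\liminf_k\|v_k\|=0$) you assert $v_k\to 0$ for the whole sequence; as written this only identifies the weak limit as $v=0$ along a subsequence. The fix is immediate but should be stated: once $v=0$, the quantity you have shown tends to zero is $M(\|v_k\|^p)\langle B(v_k),v_k\rangle=a\|v_k\|^{p(m+1)}$, which forces $\|v_k\|\to 0$ for the full sequence. Second, the "main obstacle" you flag --- the $(S_+)$ property of $B$ on $\mathbb{R}^N$ --- does not actually require compact embeddings, Lemma \ref{lemma2.3}, or a Brezis--Lieb splitting. The elementary estimate
\[
\langle B(v_k)-B(v),v_k-v\rangle\;\ge\;\bigl(\|v_k\|^{p-1}-\|v\|^{p-1}\bigr)\bigl(\|v_k\|-\|v\|\bigr)\;\ge\;0,
\]
obtained from $\langle B(u),w\rangle\le\|u\|^{p-1}\|w\|$, shows that $\langle B(v_k)-B(v),v_k-v\rangle\to 0$ forces $\|v_k\|\to\|v\|$, and then weak convergence plus the Radon--Riesz property of the uniformly convex space $W^{s,p}_V(\mathbb{R}^N)$ gives strong convergence directly. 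This closes the only step you left as a plan, and does so more simply than the route you sketched.
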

\section{Review of the Nehari manifold set}\label{sec3}
This section provides technical results on the fibre maps and Nehari manifold set. Since we aim to study positive solutions to the problem \eqref{main problem}, therefore we restrict the energy functional $\Psi_\lambda$ to a cone of non-negative functions (say $\mathcal{K}$ ) of $W^{s,p}_{V}(\mathbb{R}^N)$ defined by

$$ \mathcal{K}=\big\{u\in W^{s,p}_{V}(\mathbb{R}^N)\setminus\{0\}:u\geq 0 \big\}. $$
For $u\in \mathcal{K} $, define the $\mathcal{C}^\infty$-fiber map $\mathcal{F}_{\lambda,u}: \mathbb{R}^+\to \mathbb{R}$ by $\mathcal{F}_{\lambda,u}(t)=\Psi_{\lambda}(tu)$ for all $t,\lambda>0$. We can easily derive that
$$\hspace{-4cm}\mathcal{F}_{\lambda,u}(t)=\frac{1}{p}\widehat{M}\big(t^p\|u\|^p\big)-\frac{\lambda}{1-\delta}t^{1-\delta}\int_{\mathbb{R}^{N}}\alpha(x)|u|^{1-\delta}~\mathrm{d}x-\frac{t^\gamma}{\gamma}\int_{\mathbb{R}^{N}}\beta (x)|u|^\gamma~\mathrm{d}x $$
$$\hspace{-1.7cm}=\frac{at^{p(m+1)}}{p(m+1)}\|u\|^{p(m+1)}-\frac{\lambda}{1-\delta}t^{1-\delta}\int_{\mathbb{R}^{N}}\alpha(x)|u|^{1-\delta}~\mathrm{d}x-\frac{t^\gamma}{\gamma}\int_{\mathbb{R}^{N}}\beta (x)|u|^\gamma~\mathrm{d}x, $$
$$\hspace{-3.2cm}\mathcal{F}'_{\lambda,u}(t)=t^{p-1} M\big(t^p\|u\|^p\big)\|u\|^p-\lambda t^{-\delta}\int_{\mathbb{R}^{N}}\alpha(x)|u|^{1-\delta}~\mathrm{d}x-t^{\gamma-1}\int_{\mathbb{R}^{N}}\beta (x)|u|^\gamma~\mathrm{d}x$$
$$\hspace{-1.8cm}=at^{p(m+1)-1}\|u\|^{p(m+1)}-\lambda t^{-\delta}\int_{\mathbb{R}^{N}}\alpha(x)|u|^{1-\delta}~\mathrm{d}x-t^{\gamma-1}\int_{\mathbb{R}^{N}}\beta (x)|u|^\gamma~\mathrm{d}x $$
and $$\hspace{-5.8cm}\mathcal{F}''_{\lambda,u}(t)=(p-1)t^{p-2} M\big(t^p\|u\|^p\big)\|u\|^p +pt^{2p-2}M'\big(t^p\|u\|^p\big)\|u\|^{2p}$$ $$\hspace{2.5cm}+\lambda\delta t^{-\delta-1}\int_{\mathbb{R}^{N}}\alpha(x)|u|^{1-\delta}~\mathrm{d}x-(\gamma-1)t^{\gamma-2}\int_{\mathbb{R}^{N}}\beta (x)|u|^\gamma~\mathrm{d}x$$
$$\hspace{1.5cm}=a(p(m+1)-1)t^{p(m+1)-2}\|u\|^{p(m+1)}+\lambda\delta t^{-\delta-1}\int_{\mathbb{R}^{N}}\alpha(x)|u|^{1-\delta}~\mathrm{d}x-(\gamma-1)t^{\gamma-2}\int_{\mathbb{R}^{N}}\beta (x)|u|^\gamma~\mathrm{d}x. $$
Define the Nehari manifold set as
$$ \mathcal{M}_\lambda=\big\{u\in\mathcal{K}:\mathcal{F}'_{\lambda,u}(1)=0\big\}.$$
It is easy to verify that $\mathcal{F}'_{\lambda,u}(t)=0$ if and only if $tu\in \mathcal{M}_\lambda$. In particular, $\mathcal{F}'_{\lambda,u}(1)=0$ if and only if $u\in \mathcal{M}_\lambda$. It follows that every weak solution of \eqref{main problem} always belongs to $\mathcal{M}_\lambda$. Therefore, it is natural to split $\mathcal{M}_\lambda$ into three disjoint sets corresponding to local minima, local maxima, and points of inflexion, that is,
$$\mathcal{M}^\pm_\lambda =\bigg\{u\in\mathcal{M}_\lambda:\mathcal{F}''_{\lambda,u}(1)\overset{>}{<} 0\bigg\}~\text{and}~\mathcal{M}^0_\lambda =\big\{u\in\mathcal{M}_\lambda:\mathcal{F}''_{\lambda,u}(1)=0\big\}. $$
\begin{theorem}\label{thm3.1} The following results hold:
\begin{itemize}
    \item [(i)] $\Psi_{\lambda}$ is weakly lower semicontinuous.
    \item[(ii)] $\Psi_{\lambda}$ is coercive and bounded from below on $\mathcal{M}_\lambda $. In particular, $\Psi_{\lambda}$ is coercive and bounded from below on $\mathcal{M}^{+}_\lambda$ and $\mathcal{M}^{-}_\lambda$ respectively.
\end{itemize}   
\end{theorem}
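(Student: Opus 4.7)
The plan is to handle the two assertions separately, using the Nehari identity in part (ii) to rewrite $\Psi_\lambda$ in a form where the coercivity is transparent.

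For part (i), I would split $\Psi_\lambda$ into its three pieces and analyse each under a weak convergence $u_k\rightharpoonup u$ in $W^{s,p}_{V}(\mathbb{R}^N)$. Under (H5) we have $\frac{1}{p}\widehat{M}(\|u\|^p)=\frac{a}{p(m+1)}\|u\|^{p(m+1)}$, which is a continuous, convex function of $u$ on the reflexive Banach space and therefore weakly lower semicontinuous. For the $\beta$-term, (H1) gives $\gamma\in(p,p^{*}_{s})$, so Lemma \ref{lemma2.3} yields $u_k\to u$ strongly in $L^{\gamma}(\mathbb{R}^N)$; together with $\beta\in L^{\infty}(\mathbb{R}^N)$ from (H3), an elementary estimate gives $\int \beta|u_k|^{\gamma}\to\int\beta|u|^{\gamma}$. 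For the singular term, I would use Hölder with the conjugate pair $(\xi,\tau/(1-\delta))$, available thanks to the balance $1/\xi+(1-\delta)/\tau=1$ in (H2), combined with the strong $L^{\tau}$ convergence from Lemma \ref{lemma2.3} and the elementary inequality $\big||a|^{1-\delta}-|b|^{1-\delta}\big|\leq |a-b|^{1-\delta}$ valid for $0<1-\delta<1$, to conclude $\int\alpha|u_k|^{1-\delta}\to\int\alpha|u|^{1-\delta}$. Summing the three pieces proves the weak lower semicontinuity of $\Psi_\lambda$.

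For part (ii), the key observation is that on $\mathcal{M}_\lambda$ the identity $\mathcal{F}'_{\lambda,u}(1)=0$ reads
\begin{equation*}
 a\|u\|^{p(m+1)}-\lambda\int_{\mathbb{R}^{N}}\alpha(x)|u|^{1-\delta}~\mathrm{d}x-\int_{\mathbb{R}^{N}}\beta(x)|u|^{\gamma}~\mathrm{d}x=0.
\end{equation*}
Solving for the $\beta$-integral and substituting into \eqref{eq2.11} yields, after collecting coefficients,
\begin{equation*}
 \Psi_\lambda(u)=a\left(\frac{1}{p(m+1)}-\frac{1}{\gamma}\right)\|u\|^{p(m+1)}-\lambda\left(\frac{1}{1-\delta}-\frac{1}{\gamma}\right)\int_{\mathbb{R}^N}\alpha(x)|u|^{1-\delta}~\mathrm{d}x.
\end{equation*}
Both bracketed constants are strictly positive because (H1) and (H5) ensure $1-\delta<1<p(m+1)<\gamma$. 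Applying Hölder with the same pair of exponents as before and the continuous embedding $W^{s,p}_{V}(\mathbb{R}^N)\hookrightarrow L^{\tau}(\mathbb{R}^N)$ from Lemma \ref{lemma2.1} gives $\int\alpha|u|^{1-\delta}\leq C\|\alpha\|_{\xi}\|u\|^{1-\delta}$. Hence on $\mathcal{M}_\lambda$,
\begin{equation*}
 \Psi_\lambda(u)\geq C_1\|u\|^{p(m+1)}-\lambda\, C_2\|u\|^{1-\delta},
\end{equation*}
with $C_1,C_2>0$. Since $p(m+1)>1-\delta$, the right-hand side goes to $+\infty$ as $\|u\|\to\infty$ and attains a finite minimum on $[0,\infty)$, which gives both coercivity and boundedness below on $\mathcal{M}_\lambda$, and a fortiori on $\mathcal{M}^{\pm}_{\lambda}\subset\mathcal{M}_\lambda$.

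The main subtlety I expect is the weak continuity of the singular term $\int\alpha|u|^{1-\delta}$, because $(\cdot)^{1-\delta}$ is nonsmooth and the domain is unbounded, so one cannot simply invoke Rellich-type compactness. The precise calibration of exponents in (H2), together with the noncompactness-breaking assumptions (V1)–(V2) that make Lemma \ref{lemma2.3} available on the whole space, is exactly the ingredient needed; everything else reduces to the convexity of the norm and a standard one-variable coercivity estimate.
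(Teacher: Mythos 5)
Your proposal is correct and follows essentially the same route as the paper: term-by-term weak lower semicontinuity/continuity using Lemma \ref{lemma2.3} and the exponent balance in (H2) for part (i), and the Nehari identity plus the one-variable estimate $C_1 t^{p(m+1)}-\lambda C_2 t^{1-\delta}$ for part (ii). The only (harmless) cosmetic difference is that for the singular term you use the elementary inequality $\big||a|^{1-\delta}-|b|^{1-\delta}\big|\leq|a-b|^{1-\delta}$ where the paper invokes dominated convergence, and you justify weak lower semicontinuity of the Kirchhoff term via convexity of $\|u\|^{p(m+1)}$ rather than asserting it directly.
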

\begin{proof} $(i)$ Let $\{u_n\}_{n\in \mathbb{N}}\subset W^{s,p}_{V}(\mathbb{R}^N)$ be such that  $u_n\rightharpoonup u$ weakly in $W^{s,p}_{V}(\mathbb{R}^N)$ as $n\to\infty$. From Lemma \ref{lemma2.3}, we get $u_n\to u$  in $L^\tau(\mathbb{R}^N)$ and $L^\gamma(\mathbb{R}^N)$ respectively as $n\to\infty$, $u_n\to u$ a.e. in $\mathbb{R}^N$ and there exist two functions $g_\tau$ and $h_\gamma$  in $L^\tau(\mathbb{R}^N)$ and $L^\gamma(\mathbb{R}^N)$  such that $|u_n(x)|\leq g_\tau(x)$ a.e. in $\mathbb{R}^N$ and $|u_n(x)|\leq h_\gamma(x)$ a.e. in $\mathbb{R}^N$. This implies that $\big||u_n|^{1-\delta}-|u|^{1-\delta}\big|^{\frac{\tau}{1-\delta}}\to 0~\text{a.e. in} ~\mathbb{R}^N$
        and $\big||u_n|^{1-\delta}-|u|^{1-\delta}\big|^{\frac{\tau}{1-\delta}}\leq 2^{\frac{\tau}{1-\delta}} (g_\tau(x))^\tau\in L^1(\mathbb{R}^N)$.
        Applying the Lebesgue dominated convergence theorem, we obtain \begin{equation}\label{eq3.1}
            \int_{\mathbb{R}^N}\big||u_n|^{1-\delta}-|u|^{1-\delta}\big|^{\frac{\tau}{1-\delta}}~\mathrm{d}x\to 0~\text{as}~n\to\infty.
        \end{equation}
        Since $\big||u_n|^{1-\delta}-|u|^{1-\delta}\big|\in L^{\frac{\tau}{1-\delta}}(\mathbb{R}^N)$, therefore using $(H2)$, \eqref{eq3.1} and Holder's inequality, we deduce that
        $$\bigg|\int_{\mathbb{R}^N}\alpha(x)\big(|u_n|^{1-\delta}-|u|^{1-\delta}\big)~\mathrm{d}x\bigg|\leq \|\alpha\|_{L^\xi(\mathbb{R}^N)}\||u_n|^{1-\delta}-|u|^{1-\delta}\|_{L^{\frac{\tau}{1-\delta}}(\mathbb{R}^N)}\to 0~\text{as}~n\to\infty.$$
        It follows that 
        \begin{equation}\label{eq3.2}
            \lim_{n\to\infty}\int_{\mathbb{R}^N}\alpha(x)|u_n|^{1-\delta}~\mathrm{d}x=\int_{\mathbb{R}^N}\alpha(x)|u|^{1-\delta}~\mathrm{d}x.
        \end{equation}
       Similarly, we can prove that
        \begin{equation}\label{eq3.4}
\lim_{n\to\infty}\int_{\mathbb{R}^N}\beta(x)|u_n|^{\gamma}~\mathrm{d}x=\int_{\mathbb{R}^N}\beta(x)|u|^{\gamma}~\mathrm{d}x.   
        \end{equation}
     Since $\widehat{M}\big(\|u\|^p\big)$ is weakly lower semicontinuous, i.e., when $u_n\rightharpoonup u$ weakly in $W^{s,p}_{V}(\mathbb{R}^N)$ as $n\to\infty$, then
     \begin{equation}\label{eq3.5}
        \widehat{M}\big(\|u\|^p\big)\leq\displaystyle\liminf_{n\to\infty} \widehat{M}\big(\|u_n\|^p\big).
     \end{equation}
    It follows from \eqref{eq3.2}, \eqref{eq3.4} and \eqref{eq3.5} that $\Psi_\lambda(u)\leq\displaystyle\liminf_{n\to\infty}\Psi_\lambda(u_n)$. Hence, $\Psi_\lambda$ is a weakly lower semicontinuous.
    
\noindent $(ii)$ Let $u\in\mathcal{M}_\lambda$ such that $\|u\|\geq 1$. This yields $\mathcal{F}'_{\lambda,u}(1)=0$ and hence we obtain 
     \begin{equation}\label{eq3.6}
       \int_{\mathbb{R}^N}\beta(x)|u|^\gamma~\mathrm{d}x=a\|u\|^{p(m+1)}-\lambda \int_{\mathbb{R}^N}\alpha(x)|u|^{1-\delta}~\mathrm{d}x. 
     \end{equation}
     Further, using Holder's inequality and Lemma \ref{lemma2.3}, we deduce that there exists a constant $c>0$ such that
     \begin{equation}\label{eq3.7}
      \int_{\mathbb{R}^N}\alpha(x)|u|^{1-\delta}~\mathrm{d}x\leq c\|\alpha\|_{L^\xi(\mathbb{R}^N)}\|u\|^{1-\delta}.   
     \end{equation}
     Using \eqref{eq3.6} and \eqref{eq3.7} in \eqref{eq2.11}, we deduce that
     \begin{equation}\label{eq9.9}
       \Psi_\lambda(u)\geq a\bigg(\frac{1}{p(m+1)}-\frac{1}{\gamma}\bigg)\|u\|^{p(m+1)}-c\lambda\|\alpha\|_{L^\xi(\mathbb{R}^N)}\bigg(\frac{1}{1-\delta}-\frac{1}{\gamma}\bigg)\|u\|^{1-\delta}  
     \end{equation}
     $$\to\infty~\text{as}~\|u\|\to\infty,~\text{since} ~0<1-\delta<1<p(m+1)<\gamma.$$
      This shows that $\Psi_\lambda$ is coercive on $\mathcal{M}_\lambda$ . Let us define a function $f:\mathbb{R}^+\to \mathbb{R} $ by $$ f(t)=K_1 t^{p(m+1)}-K_2 t^{1-\delta},~\forall~ t>0,$$
      where $$ K_1=a\bigg(\frac{1}{p(m+1)}-\frac{1}{\gamma}\bigg)~\text{and}~K_2=c\lambda\|\alpha\|_{L^\xi(\mathbb{R}^N)}\bigg(\frac{1}{1-\delta}-\frac{1}{\gamma}\bigg).$$
      Thanks to elementary calculus, at the point $t(K_1,K_2,p,m,\delta)=\bigg(\frac{(1-\delta)K_2}{p(m+1)K_1}\bigg)^{\frac{1}{p(m+1)+\delta-1}},$ the function $f$ has a unique global minimum. This yields that $\Psi_\lambda(u)\geq f(t(K_1,K_2,p,m,\delta))$ and hence $\Psi_\lambda$ is bounded from below on $\mathcal{M}_\lambda$.This completes the proof.  
\end{proof}
\begin{lemma}\label{lem3.2}
Let $\lambda>0$, then the following results hold:
\begin{itemize}
    \item [(i)] $\sup\big\{\|u\|:u\in \mathcal{M}^+_\lambda\big\}<\infty$. Furthermore, $0>\inf\big\{\Psi_\lambda(u):u\in \mathcal{M}^+_\lambda\big\}>-\infty$.
    \item[(ii)] $\inf\big\{\|w\|:w\in \mathcal{M}^-_\lambda\big\}>0$ and $\sup\big\{\|w\|:w\in \mathcal{M}^-_\lambda,~\Psi_\lambda(w)\leq \jmath\big\}<\infty$ for any $\jmath>0$.
    Moreover, $ \inf\big\{\Psi_\lambda(w):w\in \mathcal{M}^-_\lambda\big\}>-\infty$.
\end{itemize}
    
\end{lemma}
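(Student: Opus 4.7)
\textbf{Proof plan for Lemma \ref{lem3.2}.}
The central idea is to combine the two identities that characterise $\mathcal{M}^\pm_\lambda$, namely $\mathcal{F}'_{\lambda,u}(1)=0$ together with the sign of $\mathcal{F}''_{\lambda,u}(1)$, and eliminate one of the three integral quantities ($\|u\|^{p(m+1)}$, $\int\alpha|u|^{1-\delta}$, $\int\beta|u|^{\gamma}$) to obtain an algebraic inequality in the remaining two, which I then close using the Sobolev/H\"older estimates from Lemma \ref{lemma2.1} and Lemma \ref{lemma2.3}. Throughout, assumption $(H5)$ gives $1-\delta<1<p(m+1)<\gamma$, which provides the required separation of homogeneities.

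\emph{Proof of (i).} Fix $u\in\mathcal{M}^+_\lambda$. Using $\mathcal{F}'_{\lambda,u}(1)=0$ to substitute $\int\beta(x)|u|^\gamma\,\mathrm{d}x=a\|u\|^{p(m+1)}-\lambda\int\alpha(x)|u|^{1-\delta}\,\mathrm{d}x$ into $\mathcal{F}''_{\lambda,u}(1)>0$, a short calculation yields
\[
a\bigl(\gamma-p(m+1)\bigr)\|u\|^{p(m+1)} < \lambda(\gamma+\delta-1)\int_{\mathbb{R}^N}\alpha(x)|u|^{1-\delta}\,\mathrm{d}x.
\]
Since $\gamma>p(m+1)$, the left side is a positive multiple of $\|u\|^{p(m+1)}$. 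Combining with the H\"older estimate \eqref{eq3.7}, one gets $\|u\|^{p(m+1)-(1-\delta)}\le C(\lambda,\alpha,\delta,m,p,\gamma)$, which gives the desired upper bound on $\|u\|$ since $p(m+1)>1-\delta$. For the energy, substituting the Nehari relation into \eqref{eq2.11} gives
\[
\Psi_\lambda(u)=a\Bigl(\tfrac{1}{p(m+1)}-\tfrac{1}{\gamma}\Bigr)\|u\|^{p(m+1)}-\lambda\Bigl(\tfrac{1}{1-\delta}-\tfrac{1}{\gamma}\Bigr)\int_{\mathbb{R}^N}\alpha(x)|u|^{1-\delta}\,\mathrm{d}x,
\]
and plugging the inequality above (used to lower-bound the $\alpha$-integral) produces, after the telescoping identity $\gamma-(1-\delta)=\gamma+\delta-1$, the clean expression
\[
\Psi_\lambda(u)<a\,\frac{\gamma-p(m+1)}{\gamma}\Bigl(\tfrac{1}{p(m+1)}-\tfrac{1}{1-\delta}\Bigr)\|u\|^{p(m+1)}<0,
\]
because $p(m+1)>1-\delta$ makes the bracket negative. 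Finally, since $\mathcal{M}^+_\lambda\subset\mathcal{M}_\lambda$, Theorem \ref{thm3.1}(ii) supplies the lower bound $\inf_{\mathcal{M}^+_\lambda}\Psi_\lambda>-\infty$.

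\emph{Proof of (ii).} For $w\in\mathcal{M}^-_\lambda$, now eliminate $\int\alpha(x)|w|^{1-\delta}\,\mathrm{d}x$ instead: using $\mathcal{F}'_{\lambda,w}(1)=0$ to rewrite $\lambda\int\alpha(x)|w|^{1-\delta}=a\|w\|^{p(m+1)}-\int\beta(x)|w|^\gamma$ and inserting in $\mathcal{F}''_{\lambda,w}(1)<0$ yields
\[
a\bigl(p(m+1)+\delta-1\bigr)\|w\|^{p(m+1)} < (\gamma+\delta-1)\int_{\mathbb{R}^N}\beta(x)|w|^{\gamma}\,\mathrm{d}x.
\]
By $(H3)$ and the continuous embedding $W^{s,p}_V(\mathbb{R}^N)\hookrightarrow L^\gamma(\mathbb{R}^N)$ from Lemma \ref{lemma2.1}, the right side is $\le C\|w\|^\gamma$, and since $\gamma>p(m+1)$ with $p(m+1)+\delta-1>0$, this forces $\|w\|\ge c_0>0$ uniformly. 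The second assertion is immediate from coercivity of $\Psi_\lambda$ on $\mathcal{M}_\lambda$ (Theorem \ref{thm3.1}(ii)): the bound $\Psi_\lambda(w)\le \jmath$ on the coercive set $\mathcal{M}^-_\lambda$ gives $\|w\|$ bounded in terms of $\jmath$. The last claim $\inf_{\mathcal{M}^-_\lambda}\Psi_\lambda>-\infty$ is again a direct consequence of Theorem \ref{thm3.1}(ii).

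\emph{Expected main obstacle.} The proof is essentially algebraic manipulation; the only delicate point is carefully tracking the signs in the two eliminations so that $\gamma-p(m+1)>0$ and $p(m+1)+\delta-1>0$ are both exploited in the correct direction. The use of $(H5)$ (condition $p(m+1)<\gamma$) is precisely what makes both inequalities informative, so verifying the strict inequality $\Psi_\lambda(u)<0$ on $\mathcal{M}^+_\lambda$ requires bookkeeping that I expect to be the only part needing genuine care.
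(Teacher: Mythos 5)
Your proposal is correct and follows essentially the same route as the paper: the same elimination of $\int\beta|u|^\gamma$ (resp. $\int\alpha|w|^{1-\delta}$) between $\mathcal{F}'_{\lambda,u}(1)=0$ and the sign of $\mathcal{F}''_{\lambda,u}(1)$, yielding exactly the paper's inequalities \eqref{eq3.10} and \eqref{eq3.14}, closed by the same H\"older/embedding bounds and the same computation giving $\Psi_\lambda<0$ on $\mathcal{M}^+_\lambda$. The only cosmetic difference is that you invoke Theorem \ref{thm3.1}(ii) directly for the lower bounds on the infima, where the paper re-runs a minimizing-sequence/weak lower semicontinuity argument; both are valid.
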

\begin{proof} $(i)$ Let us assume that $u\in \mathcal{M}^+_\lambda$. From the definition of $\mathcal{M}^+_\lambda$, we have 
        \begin{equation}\label{eq3.10}
         \|u\|^{p(m+1)}<\frac{\lambda(\delta+\gamma-1)}{a(\gamma-p(m+1))}\int_{\mathbb{R}^{N}}\alpha(x)|u|^{1-\delta}~\mathrm{d}x.   
        \end{equation}
        
        Using \eqref{eq3.7} in \eqref{eq3.10}, we get
          $$\|u\|<\bigg[ \frac{c\lambda\|\alpha\|_{L^\xi(\mathbb{R}^N)}(\delta+\gamma-1)}{a(\gamma-p(m+1))}\bigg]^\frac{1}{p(m+1)+\delta-1}=\hat{C}~\text{(say)}.$$
          
        From the above inequality, we infer that $\sup\big\{\|u\|:u\in \mathcal{M}^+_\lambda\big\}<\infty$. Next, our aim is to show $0>\inf\big\{\Psi_\lambda(u):u\in \mathcal{M}^+_\lambda\big\}>-\infty$. To prove this, let us take a minimizing sequence $\{u_n\}_{n\in \mathbb{N}}$ in $\mathcal{M}^+_\lambda$ for $\Psi$. Since $\Psi_\lambda$ is coercive on $\mathcal{M}^+_\lambda$, therefore $\{u_n\}_{n\in N}$ must be bounded and hence up to a subsequence $u_n\rightharpoonup u$ in $W^{s,p}_{V}(\mathbb{R}^N)$ as $n\to\infty$. It follows from Theorem \ref{thm3.1}-$(i)$ that $-\infty<\Psi_\lambda(u)\leq\displaystyle\liminf_{n\to\infty}\Psi_\lambda(u_n)=\inf\big\{\Psi_\lambda(u):u\in \mathcal{M}^+_\lambda\big\}$. This shows that $\inf\big\{\Psi_\lambda(u):u\in \mathcal{M}^+_\lambda\big\}>-\infty$. From \eqref{eq2.11} and \eqref{eq3.10}, we obtain
        $$\Psi_\lambda(u)=a\bigg(\frac{1}{p(m+1)}-\frac{1}{\gamma}\bigg)\|u\|^{p(m+1)}-\lambda\bigg(\frac{1}{1-\delta}-\frac{1}{\gamma}\bigg) \int_{\mathbb{R}^N}\alpha(x)|u|^{1-\delta}~\mathrm{d}x~(\text{as}~ u\in\mathcal{M}^+_\lambda)$$
        $$<a\bigg(\frac{1}{p(m+1)}-\frac{1}{\gamma}\bigg)\|u\|^{p(m+1)}-\lambda\bigg(\frac{1}{1-\delta}-\frac{1}{\gamma}\bigg) \frac{a(\gamma-p(m+1))}{\lambda(\delta+\gamma-1)}\|u\|^{p(m+1)}$$
        $$\hspace{-3cm}=a\bigg(\frac{\gamma-p(m+1)}{\gamma}\bigg)\bigg(\frac{1}{p(m+1)}-\frac{1}{1-\delta}\bigg)\|u\|^{p(m+1)}<0. $$
        This infers that $\inf\big\{\Psi_\lambda(u):u\in \mathcal{M}^+_\lambda\big\}<0.$
        
\noindent $(ii)$ Let $w\in \mathcal{M}_\lambda^-$ and hence we have the following inequality
        \begin{equation}\label{eq3.14}
         \|w\|^{p(m+1)}<\frac{(\delta+\gamma-1)}{a(p(m+1)+\delta-1)}\int_{\mathbb{R}^{N}}\beta(x)|w|^{\gamma}~\mathrm{d}x.  
        \end{equation}
        By Lemma \ref{lemma2.3}, there exists $C>0$ such that
        \begin{equation}\label{eq3.15}
    \int_{\mathbb{R}^{N}}\beta(x)|w|^{\gamma}~\mathrm{d}x\leq C\|\beta\|_{L^\infty(\mathbb{R}^{N})}\|w\|^\gamma.   
        \end{equation}
        On solving \eqref{eq3.14} and \eqref{eq3.15}, we obtain
        \begin{equation}\label{eq3.16} 
          \|w\|>\bigg(\frac{a(p(m+1)+\delta-1)}{C\|\beta\|_{L^\infty(\mathbb{R}^{N})}(\delta+\gamma-1)}\bigg)^{\frac{1}{\gamma-p(m+1)}}=\hat{c}~\text{(say)}. 
        \end{equation} 
        The above inequality infer that $\inf\big\{\|w\|:w\in \mathcal{M}^-_\lambda\big\}>0$. Now from \eqref{eq9.9}, we get
        $$\jmath\geq\Psi_\lambda(w)\geq a\bigg(\frac{1}{p(m+1)}-\frac{1}{\gamma}\bigg)\|w\|^{p(m+1)}-c\lambda\|\alpha\|_{L^\xi(\mathbb{R}^N)}\bigg(\frac{1}{1-\delta}-\frac{1}{\gamma}\bigg)\|w\|^{1-\delta}.$$
       Since $1-\delta<1<p(m+1)$, we have $\sup\big\{\|w\|:w\in \mathcal{M}^-_\lambda,~\Psi_\lambda(w)\leq \jmath\big\}<\infty$. The proof of $\inf\big\{\Psi_\lambda(w):w\in \mathcal{M}^-_\lambda\big\}>-\infty$ is similar to the proof of $\inf\big\{\Psi_\lambda(u):u\in \mathcal{M}^+_\lambda\big\}>-\infty $ and hence we omit. This completes the proof. 
\end{proof}
\begin{corollary}\label{cor3.3}
      If $u\in\mathcal{M}^0_\lambda$, then  $\displaystyle\int_{\mathbb{R}^{N}}\beta (x)|u|^\gamma~\mathrm{d}x>0$ and $\displaystyle\int_{\mathbb{R}^{N}}\alpha (x)|u|^{1-\delta}~\mathrm{d}x>0 $. Moreover, the set $\mathcal{M}^0_\lambda$ is bounded with respect to the norm of $W^{s,p}_V(\mathbb{R}^N)$, for each $\lambda>0$.     
\end{corollary}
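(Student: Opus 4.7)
The plan is to exploit the two defining identities of $\mathcal{M}^0_\lambda$, namely $\mathcal{F}'_{\lambda,u}(1)=0$ and $\mathcal{F}''_{\lambda,u}(1)=0$, treating them as a linear system in the two unknowns $A(u):=\int_{\mathbb{R}^N}\alpha(x)|u|^{1-\delta}\,\mathrm{d}x$ and $B(u):=\int_{\mathbb{R}^N}\beta(x)|u|^{\gamma}\,\mathrm{d}x$, with the quantity $a\|u\|^{p(m+1)}$ playing the role of the data. Concretely, writing the two conditions (using the formulas for $\mathcal{F}'_{\lambda,u}(1)$ and $\mathcal{F}''_{\lambda,u}(1)$ from the beginning of Section~\ref{sec3}) gives
\begin{equation*}
a\|u\|^{p(m+1)}=\lambda A(u)+B(u),\qquad a(p(m+1)-1)\|u\|^{p(m+1)}=-\lambda\delta A(u)+(\gamma-1)B(u).
\end{equation*}
Eliminating $B(u)$ from the first equation yields $\lambda(\delta+\gamma-1)A(u)=a(\gamma-p(m+1))\|u\|^{p(m+1)}$, while eliminating $\lambda A(u)$ yields $(\delta+\gamma-1)B(u)=a(p(m+1)-1+\delta)\|u\|^{p(m+1)}$.

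The first two claims then follow immediately. Since $u\in\mathcal{K}$ is nonzero, $\|u\|^{p(m+1)}>0$; the assumption $(H1)$ together with $(H5)$ gives $1<p(m+1)<\gamma$, hence both $\gamma-p(m+1)>0$ and $p(m+1)-1+\delta>0$, and since $\delta+\gamma-1>0$ and $\lambda>0$, we conclude $A(u)>0$ and $B(u)>0$, which are the two positivity statements.

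For the boundedness of $\mathcal{M}^0_\lambda$, I would use the identity for $A(u)$ combined with the Hölder--Sobolev estimate~\eqref{eq3.7}: substituting $A(u)\leq c\|\alpha\|_{L^\xi(\mathbb{R}^N)}\|u\|^{1-\delta}$ into $\lambda(\delta+\gamma-1)A(u)=a(\gamma-p(m+1))\|u\|^{p(m+1)}$ gives
\begin{equation*}
\|u\|^{p(m+1)-(1-\delta)}\leq \frac{c\lambda\|\alpha\|_{L^\xi(\mathbb{R}^N)}(\delta+\gamma-1)}{a(\gamma-p(m+1))},
\end{equation*}
and since $p(m+1)-(1-\delta)>0$, this yields an $\lambda$-dependent upper bound on $\|u\|$ uniform over $\mathcal{M}^0_\lambda$. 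There is no real obstacle here; the only mild subtlety is to remember that the Hölder estimate~\eqref{eq3.7} is a genuine inequality on all of $W^{s,p}_V(\mathbb{R}^N)$ and not only on the portion $\{\|u\|\geq 1\}$ where it was invoked earlier, so it can legitimately be combined with the algebraic identity to produce a two-sided control on $\|u\|^{p(m+1)}$.
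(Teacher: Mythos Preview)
Your proof is correct and follows exactly the approach the paper indicates: the paper's own proof is merely the hint ``use the definition of $\mathcal{M}^0_\lambda$, Lemma~\ref{lemma2.3} and H\"older's inequality,'' and you have carried this out in full, solving the linear system coming from $\mathcal{F}'_{\lambda,u}(1)=\mathcal{F}''_{\lambda,u}(1)=0$ (these are the equality versions of \eqref{eq3.10} and \eqref{eq3.14} from Lemma~\ref{lem3.2}) and then combining with the H\"older--embedding estimate \eqref{eq3.7}. Your remark that \eqref{eq3.7} holds for all $u$, not only when $\|u\|\geq 1$, is correct and worth noting.
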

\begin{proof}
    The proof is obtained from the definition of $\mathcal{M}^0_\lambda$, Lemma \ref{lemma2.3} and Holder's inequality.
\end{proof}
\subsection{Descriptive analysis of the extremal parameter}\label{subsec3}
 Define $\mathcal{G}^+=\bigg\{u\in\mathcal{K}:~\displaystyle\int_{\mathbb{R}^{N}}\beta (x)|u|^\gamma~\mathrm{d}x>0\bigg\}$. Now, we characterize the set $\mathcal{M}^0_\lambda$ on the basis of the set $\mathcal{G}^+$. For this, let $u\in\mathcal{G}^+$ such that $tu\in\mathcal{M}^0_\lambda$. Hence, we have the system of equations $\mathcal{F}'_{\lambda,u}(t)=0$ and $\mathcal{F}''_{\lambda,u}(t)=0$. By solving these two equations, we can get a unique pair $(t(u),\lambda(u))$ as
\begin{equation}\label{eq3.18}
\begin{cases}
  t(u)=\bigg[ a\bigg(\frac{p(m+1)+\delta-1}{\gamma+p-1}\bigg)\bigg]^{\frac{1}{\gamma-p(m+1)}}\Bigg[\cfrac{\|u\|^{p(m+1)}}{\displaystyle\int_{\mathbb{R}^{N}}\beta (x)|u|^\gamma~\mathrm{d}x}\Bigg]^{\frac{1}{\gamma-p(m+1)}};\\ \\
  \lambda(u)=C(a,\gamma,\delta,m,p)\cfrac{\Big[\|u\|^{p(m+1)}\Big]^{\frac{\gamma+\delta-1}{\gamma-p(m+1)}}}{\bigg[\displaystyle\int_{\mathbb{R}^{N}}\alpha (x)|u|^{1-\delta}~\mathrm{d}x\bigg]\bigg[\displaystyle\int_{\mathbb{R}^{N}}\beta (x)|u|^\gamma~\mathrm{d}x\bigg]^{\frac{p(m+1)+\delta-1}{\gamma-p(m+1)}}},
\end{cases}    
\end{equation}
where $$C(a,\gamma,\delta,m,p)=a\bigg[\frac{\gamma-p(m+1)}{\gamma+\delta-1}\bigg] \bigg[ a\bigg(\frac{p(m+1)+\delta-1}{\gamma+p-1}\bigg)\bigg]^{\frac{p(m+1)+\delta-1}{\gamma-p(m+1)}}>0.$$
Due to Y.II'yasov (see\cite{ilyasov2017extreme,ilyasov2018branches}), we define the extremal parameter $\lambda_*$ by 
\begin{equation}\label{eq3.19}
  \lambda_* =\inf_{u\in\mathcal{G}^+}\lambda(u).  
\end{equation}
\begin{proposition}\label{prop3.1}
For any $u\in\mathcal{K}$, we have the following results:
\begin{itemize}
    \item [(I)] If $\lambda>0$ and $\displaystyle\int_{\mathbb{R}^{N}}\beta (x)|u|^\gamma~\mathrm{d}x\leq 0$, then $\mathcal{F}_{\lambda,u}$ has a unique critical point $t^+_\lambda(u)\in (0,\infty)$ such that $t^+_\lambda(u)u\in \mathcal{M}^+_\lambda$.
    \item[(II)] If $\displaystyle\int_{\mathbb{R}^{N}}\beta (x)|u|^\gamma~\mathrm{d}x> 0$, then there are three possibilities:
    \begin{itemize}
        \item [(a)] if $0<\lambda<\lambda(u)$, then $\mathcal{F}_{\lambda,u}$ has two unique critical points $0<t^+_\lambda(u)<t^-_\lambda(u)$ such that $t^+_\lambda(u)u\in \mathcal{M}^+_\lambda$ and $t^-_\lambda(u)u\in \mathcal{M}^-_\lambda$. Moreover, $\mathcal{F}_{\lambda,u}$ is decreasing over the intervals $(0,t^+_\lambda(u))$, $(t^-_\lambda(u),\infty)$ and increasing over the interval $(t^+_\lambda(u),t^-_\lambda(u))$.
        \item[(b)] if $\lambda=\lambda(u)$, then $\mathcal{F}_{\lambda,u}$ has a unique critical point $t^0_\lambda(u)>0$, which is a saddle point and $t^0_\lambda(u)u\in \mathcal{M}^0_\lambda$. Furthermore, $t^0_\lambda(u)=t(u)$ and $0<t^+_\lambda(u)<t(u)<t^-_\lambda(u)$ for all $\lambda\in(0,\lambda(u))$ and $\mathcal{F}_{\lambda,u}(t)$ decreases for $t>0$.
        \item[(c)] if $\lambda>\lambda(u)$, then $\mathcal{F}_{\lambda,u}$ is decreasing for $t>0$ and has no critical points.
    \end{itemize}
\end{itemize}
\end{proposition}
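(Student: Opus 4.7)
The plan is to reduce the study of the smooth fiber map $\mathcal{F}_{\lambda,u}$ on $(0,\infty)$ to that of a single scalar auxiliary function. Define
\[
\phi_u(t) \;:=\; a\,t^{p(m+1)+\delta-1}\|u\|^{p(m+1)} \;-\; t^{\gamma+\delta-1}\int_{\mathbb{R}^{N}}\beta(x)|u|^\gamma\,\mathrm{d}x, \qquad t>0.
\]
Multiplying the closed form of $\mathcal{F}'_{\lambda,u}(t)$ by $t^\delta$ gives the identity $t^\delta \mathcal{F}'_{\lambda,u}(t) = \phi_u(t) - \lambda\int_{\mathbb{R}^N}\alpha(x)|u|^{1-\delta}\,\mathrm{d}x$, so the critical points of $\mathcal{F}_{\lambda,u}$ are exactly the roots of the scalar equation $\phi_u(t) = \lambda\int_{\mathbb{R}^N}\alpha(x)|u|^{1-\delta}\,\mathrm{d}x$. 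Differentiating once more yields $\delta t^{\delta-1}\mathcal{F}'_{\lambda,u}(t) + t^\delta \mathcal{F}''_{\lambda,u}(t) = \phi'_u(t)$, hence at any critical point $t_c$ one has $\mathcal{F}''_{\lambda,u}(t_c) = t_c^{-\delta}\phi'_u(t_c)$. Thus classifying $t_c u$ as belonging to $\mathcal{M}^{\pm}_\lambda$ or $\mathcal{M}^{0}_\lambda$ is reduced to tracking the sign of $\phi'_u$ at $t_c$.

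For case (I), when $\int_{\mathbb{R}^N}\beta|u|^\gamma\,\mathrm{d}x\le 0$, both terms of $\phi_u$ are non-decreasing in $t$, and since the exponents $p(m+1)+\delta-1$ and $\gamma+\delta-1$ are strictly positive by (H1)--(H5), $\phi_u$ is strictly increasing from $0$ to $+\infty$. Because $\lambda\int\alpha|u|^{1-\delta}\,\mathrm{d}x>0$ by (H2) and $u\neq 0$, there is a unique $t^+_\lambda(u)>0$ solving the scalar equation, and $\phi'_u(t^+_\lambda(u))>0$ forces $t^+_\lambda(u)u\in\mathcal{M}^+_\lambda$.

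For case (II), when $\int\beta|u|^\gamma\,\mathrm{d}x>0$, a direct calculation gives $\phi'_u(t) = t^{p(m+1)+\delta-2}\bigl[A - B\,t^{\gamma-p(m+1)}\bigr]$ with $A := a(p(m+1)+\delta-1)\|u\|^{p(m+1)}>0$ and $B := (\gamma+\delta-1)\int\beta|u|^\gamma\,\mathrm{d}x>0$. Since $\gamma>p(m+1)$ by (H5), $\phi_u$ strictly increases on $(0,t(u))$, attains a unique global maximum at $t(u):=(A/B)^{1/(\gamma-p(m+1))}$, and strictly decreases on $(t(u),\infty)$ with $\phi_u(0^+)=0$ and $\phi_u(+\infty)=-\infty$. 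A substitution then verifies that this $t(u)$ and the corresponding threshold value of $\lambda$ defined by $\phi_u(t(u))=\lambda(u)\int\alpha|u|^{1-\delta}\,\mathrm{d}x$ coincide with the closed-form expressions in \eqref{eq3.18}. Comparing the horizontal line $\lambda\int\alpha|u|^{1-\delta}\,\mathrm{d}x$ with the graph of $\phi_u$ produces the trichotomy: two crossings $0<t^+_\lambda(u)<t(u)<t^-_\lambda(u)$ if $\lambda<\lambda(u)$, a single tangential contact at $t(u)$ if $\lambda=\lambda(u)$, and no crossing if $\lambda>\lambda(u)$. The sign of $\phi'_u$ (positive on $(0,t(u))$, zero at $t(u)$, negative beyond), combined with $\mathcal{F}''_{\lambda,u}(t_c)=t_c^{-\delta}\phi'_u(t_c)$, immediately assigns $t^+_\lambda(u)u\in\mathcal{M}^+_\lambda$, $t^-_\lambda(u)u\in\mathcal{M}^-_\lambda$, and $t(u)u\in\mathcal{M}^0_\lambda$. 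The stated monotonicity of $\mathcal{F}_{\lambda,u}$ in each subcase is then read off from $\mathcal{F}'_{\lambda,u}(t) = t^{-\delta}[\phi_u(t) - \lambda\int\alpha|u|^{1-\delta}\,\mathrm{d}x]$ against the graph of $\phi_u$ — including the degenerate inflection at $t(u)$ when $\lambda=\lambda(u)$, where $\mathcal{F}_{\lambda,u}$ decreases on $(0,\infty)$.

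The argument is elementary calculus once the scalar reduction via $\phi_u$ is set up; the only mildly delicate bookkeeping is confirming that our algebraic expressions for the unique stationary point of $\phi_u$ and for the threshold value of $\lambda$ match the formulas $t(u)$ and $\lambda(u)$ recorded in \eqref{eq3.18}, after which all three subcases of (II) and the monotonicity intervals follow uniformly from the shape of the graph of $\phi_u$.
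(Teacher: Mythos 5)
Your proof is correct and follows essentially the same route as the paper: the paper's proof introduces the identical auxiliary function $g_u(t)=at^{p(m+1)+\delta-1}\|u\|^{p(m+1)}-t^{\gamma+\delta-1}\int_{\mathbb{R}^N}\beta(x)|u|^\gamma\,\mathrm{d}x$ (your $\phi_u$), uses the same relation between $g_u'$ and $\mathcal{F}''_{\lambda,tu}(1)$ at points of $\mathcal{M}_\lambda$, and performs the same comparison of the graph of $g_u$ with the level $\lambda\int_{\mathbb{R}^N}\alpha(x)|u|^{1-\delta}\,\mathrm{d}x$ to obtain the trichotomy. One small remark: the maximizer you compute, with $\gamma+\delta-1$ in the denominator, is the correct solution of the system $\mathcal{F}'_{\lambda,u}(t)=\mathcal{F}''_{\lambda,u}(t)=0$, so your expression does not literally match the constant $\gamma+p-1$ printed in \eqref{eq3.18}, which appears to be a typo there.
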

\begin{proof} 
Let us define the function $g_u:\mathbb{R}^+\to\mathbb{R} $ given by
\begin{equation}\label{eq3.97}
g_u(t)= at^{p(m+1)+\delta-1}\|u\|^{p(m+1)}-t^{\gamma+\delta-1}\int_{\mathbb{R}^{N}}\beta (x)|u|^\gamma~\mathrm{d}x~\text{for all}~t>0 .    
\end{equation} 
It is easy to see that for all $t>0$, $tu\in\mathcal{M}_\lambda$ if and only if $g_u(t)=\lambda\int_{\mathbb{R}^{N}}\alpha(x)|u|^{1-\delta}~\mathrm{d}x$. Hence we deduce that $t^{2-\delta}g'_u(t)=\mathcal{F}''_{\lambda,tu}(1)$ holds true whenever $tu\in\mathcal{M}_\lambda$. 

\noindent $(I)$ Let $\lambda>0$ and $\int_{\mathbb{R}^{N}}\beta (x)|u|^\gamma~\mathrm{d}x\leq 0$, then 
        $$t^{2-\delta}g'_u(t)=\mathcal{F}''_{\lambda,tu}(1)=a(p(m+1)-1)t^{p(m+1)}\|u\|^{p(m+1)}-(\gamma-1)t^\gamma\int_{\mathbb{R}^{N}}\beta (x)|u|^\gamma~\mathrm{d}x$$ $$+\lambda\delta t^{1-\delta}\int_{\mathbb{R}^{N}}\alpha(x)|u|^{1-\delta}~\mathrm{d}x>0. $$
        This shows that $g'_u(t)>0$, i.e., $g_u(t)$ is strictly increasing for $t>0$. Thus, there exists a unique positive real number $t^+_\lambda(u)\in (0,\infty)$ such that $t^+_\lambda(u)u\in \mathcal{M}^+_\lambda$.      
        
\noindent $(II)(a)$ By \eqref{eq3.97}, one can easily verify that $t=t(u)$ (see \eqref{eq3.18}) is a unique critical point for $g_u(t)$. In addition, we obtain $g'_u(t)\big|_{t=t(u)}=0$ and $g''_u(t)\big|_{t=t(u)}<0$. This implies that $g_u(t)$ has a global maximum at $t=t(u)$ and hence $g_u(t)$ is strictly increasing in the interval $(0,t(u))$ and strictly decreasing in the interval $(t(u),\infty)$. Since $t^{2-\delta}g'_u(t)=\mathcal{F}''_{\lambda,tu}(1)$, therefore $\mathcal{F}''_{\lambda,tu}(1)>0$ for all $t\in(0,t(u))$ and $\mathcal{F}''_{\lambda,tu}(1)<0$ for all $t\in(t(u),\infty)$. Choosing $0<t^+_\lambda(u)<t(u)<t^-_\lambda(u)$, it is clear from the strictly monotonicity of the function $g_u(t)$ in the intervals $(0,t(u))$ and $(t(u),\infty)$ that $t^+_\lambda(u)$ and $t^+_\lambda(u)$ are unique positive real numbers  such that $t^+_\lambda(u)u\in \mathcal{M}^+_\lambda$ and $t^-_\lambda(u)u\in \mathcal{M}^-_\lambda$. Thus, $\mathcal{F}''_{\lambda,t^+_\lambda(u)u}(1)>0$ and $\mathcal{F}''_{\lambda,t^-_\lambda(u)u}(1)<0$. This shows that $\mathcal{F}_{\lambda,u}$ decreases over the intervals $(0,t^+_\lambda(u))$, $(t^-_\lambda(u),\infty)$ and increases over the interval $(t^+_\lambda(u),t^-_\lambda(u))$.

           \noindent \noindent $(II)(b)$ Assume $\lambda=\lambda(u)$, then we get a unique point $t=t(u)$ such that $g'_u(t(u))=0$. This yields $\mathcal{F}''_{\lambda(u),u}(t(u))=0$ and hence $t(u)u\in\mathcal{M}^0_{\lambda(u)}$ with $t^+_\lambda(u)<t(u)<t^-_\lambda(u)$. It is easy to see $\mathcal{F}'_{\lambda(u),u}(t)\leq 0$ for all $t>0$ \big( as $\mathcal{F}'_{\lambda(u),u}(t(u))=0$\big). Hence $\mathcal{F}_{\lambda(u),u}$ always decreases for $t>0$ with $t=t(u)$ as a saddle point.
           
\noindent \noindent $(II)(c)$ Let $\lambda(u)<\lambda$, then using $(b)$ we get $\mathcal{F}'_{\lambda,u}(t)<\mathcal{F}'_{\lambda(u),u}(t)\leq 0$ for all $t>0$. This concludes that $\mathcal{F}_{\lambda,u}$ is decreasing for $t>0$ and has no critical points.
\end{proof}
\begin{lemma}\label{lem3.5}
  The function $\lambda(u)$ defined as in \eqref{eq3.18} is continuous and $0$-homogeneous. Furthermore, the extremal parameter $\lambda_*>0$ and there exists $u\in\mathcal{G}^+$ such that $\lambda_*=\lambda(u)$.
\end{lemma}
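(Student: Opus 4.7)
My plan is to dispatch the four assertions in sequence, using only the closed-form formula in \eqref{eq3.18} together with the compact embeddings in Lemma \ref{lemma2.3}.

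For the $0$-homogeneity, I would plug $tu$ ($t>0$) into the formula for $\lambda$ and track the power of $t$: the numerator contributes $t^{p(m+1)\cdot\frac{\gamma+\delta-1}{\gamma-p(m+1)}}$ while the denominator contributes $t^{(1-\delta)+\gamma\cdot\frac{p(m+1)+\delta-1}{\gamma-p(m+1)}}$. Clearing $\gamma-p(m+1)$ from both exponents, each reduces to $p(m+1)(\gamma+\delta-1)$, so the net factor of $t$ is $1$. For continuity on $\mathcal{G}^+$, take $u_n\to u$ in $W^{s,p}_V(\mathbb R^N)$ with $u\in\mathcal{G}^+$; then $\|u_n\|\to\|u\|$, while the Hölder-dominated-convergence argument already used in \eqref{eq3.1}--\eqref{eq3.2}, combined with Lemma \ref{lemma2.3}, yields $\int\alpha|u_n|^{1-\delta}\to\int\alpha|u|^{1-\delta}$ and $\int\beta|u_n|^\gamma\to\int\beta|u|^\gamma$. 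Since $u\in\mathcal{G}^+$ keeps the denominator of \eqref{eq3.18} bounded away from $0$ near $u$, continuity of the algebraic expression gives $\lambda(u_n)\to\lambda(u)$.

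To show $\lambda_*>0$, I would use $0$-homogeneity to rewrite $\lambda_*=\inf\{\lambda(u):u\in\mathcal{G}^+,\ \|u\|=1\}$. On this sphere, Hölder's inequality with $(H2)$ (cf. \eqref{eq3.7}) gives $\int\alpha|u|^{1-\delta}\leq c\|\alpha\|_{L^\xi(\mathbb R^N)}$, and Lemma \ref{lemma2.1} gives $\int\beta|u|^\gamma\leq C\|\beta\|_{L^\infty(\mathbb R^N)}$. Since $\|u\|^{p(m+1)}=1$ in the numerator, the denominator of \eqref{eq3.18} is bounded above by a constant independent of $u$, whence $\lambda(u)$ is bounded below by a strictly positive constant depending only on $a,p,m,\delta,\gamma,\|\alpha\|_{L^\xi(\mathbb R^N)},\|\beta\|_{L^\infty(\mathbb R^N)}$. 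Note also that $\lambda_*<\infty$: by $(H3)$ the set $\{\beta>0\}$ has positive measure, so any $u\in C_c^\infty$ supported there belongs to $\mathcal{G}^+$.

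For the attainment, I apply the direct method. Choose $\{u_n\}\subset\mathcal{G}^+$ with $\|u_n\|=1$ and $\lambda(u_n)\to\lambda_*$. Finiteness of $\lambda_*$, together with the uniform upper bounds just derived, forces both $\int\alpha|u_n|^{1-\delta}$ and $\int\beta|u_n|^\gamma$ to be bounded away from $0$, since otherwise $\lambda(u_n)\to\infty$. Passing to a subsequence, $u_n\rightharpoonup u$ weakly in $W^{s,p}_V(\mathbb R^N)$; by Lemma \ref{lemma2.3} the convergence is strong in $L^\tau(\mathbb R^N)$ and $L^\gamma(\mathbb R^N)$, so (as in the continuity step) the two integrals converge to $\int\alpha|u|^{1-\delta}>0$ and $\int\beta|u|^\gamma>0$, giving $u\in\mathcal{G}^+$. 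Weak lower semicontinuity of $\|\cdot\|$ yields $\|u\|\leq 1$, and since the numerator exponent of $\|u\|$ in \eqref{eq3.18} is positive, one gets $\lambda(u)\leq\lim_n\lambda(u_n)=\lambda_*$. But $u\in\mathcal{G}^+$ also forces $\lambda(u)\geq\lambda_*$, so $\lambda(u)=\lambda_*$, and $u$ is the desired minimizer.

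The main obstacle is the potential vanishing of a minimizing sequence on the unbounded domain $\mathbb R^N$: a weakly convergent $\{u_n\}$ could in principle send either $\int\alpha|u_n|^{1-\delta}$ or $\int\beta|u_n|^\gamma$ to $0$, ejecting the limit from $\mathcal{G}^+$. This is exactly the compactness difficulty the paper is designed to bypass via $(V1)$ and $(V2)$; Lemma \ref{lemma2.3} upgrades the weak limit to strong convergence in $L^\tau$ (and hence in $L^\gamma$ by interpolation/$(H1)$), which, combined with the elementary fact that $\lambda_*<\infty$ prevents the denominator from collapsing, closes the argument cleanly.
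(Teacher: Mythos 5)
Your proposal is correct and follows essentially the same route as the paper: verify $0$-homogeneity by the exponent computation, reduce to the unit sphere, obtain the positive lower bound from \eqref{eq3.7} and \eqref{eq3.15}, and run the direct method using the compact embeddings of Lemma \ref{lemma2.3} plus weak lower semicontinuity of the norm. The only (cosmetic) difference is at the end: the paper argues by contradiction that the minimizing sequence converges strongly and then invokes continuity of $\lambda$, whereas you conclude $\lambda(u)\le\lambda_*$ directly from $\|u\|\le\liminf\|u_n\|$ and the monotonicity of the formula in $\|u\|$ — the same underlying observation.
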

\begin{proof}
   It is obvious that $\lambda(u)$ is continuous. Let us choose $t>0$, then $\lambda(tu)=\lambda(u)$, that is, the function $\lambda(u)$ is 0-homogeneous. Since $\lambda(u)$ is $0$-homogeneous, we can restrict $\lambda(u)$ to the set $\mathcal{G}^+\cap S$, where $S=\big\{u\in W^{s,p}_{V}(\mathbb{R}^N):\|u\|=1\big\}$. Now from \eqref{eq3.7} and \eqref{eq3.15}, we obtain
    $$\lambda_*=\inf_{\mathcal{G}^+\cap S} \lambda(u)\geq \frac{C(a,\gamma,\delta,m,p)}{c\|\alpha\|_{L^\xi(\mathbb{R}^N)}\big(C\|\beta\|_{L^\infty(\mathbb{R}^{N})}\big)^{\frac{p(m+1)+\delta-1}{\gamma-p(m+1)}}}>0.$$
    Next, choose a sequence $\{u_n\}_{n\in \mathbb{N}}$ in $\mathcal{G}^+\cap S$ such that $\lambda(u_n)\to \lambda_*$ as $n\to\infty$. Since $\|u_n\|=1,~\forall~n\in \mathbb{N}$, therefore without loss of generality we can assume $u_n\rightharpoonup u$ weakly in $W^{s,p}_{V}(\mathbb{R}^N)$ as $n\to\infty$ and hence $u_n\to u$ in $L^\tau(\mathbb{R}^N)$ and $L^\gamma(\mathbb{R}^N)$ respectively as $n\to\infty$. Notice that $u\neq 0$, otherwise $\lambda(u_n)\to \infty$ as $n\to\infty$. Let us choose $\frac{u}{\|u\|}\in \mathcal{G}^+\cap S$. Now we claim that $u_n\to u$ in $W^{s,p}_{V}(\mathbb{R}^N)$ as $n\to\infty$. If not, then by using the 0-homogeneity property of $\lambda $ and weakly lower semicontinuity of the norm, we obtain $$\lambda\bigg(\frac{u}{\|u\|}\bigg)=\lambda(u)<\liminf_{n\to\infty}\lambda(u_n)=\lambda_*, $$
    which is a contradiction. Thus, we must have $u_n\to u$ in $W^{s,p}_{V}(\mathbb{R}^N)$ as $n\to\infty$ and therefore $u\in \mathcal{G}^+\cap S$. The continuity of $\lambda$ implies $\lambda(u)=\lambda_*$. This completes the proof.    
\end{proof}
\begin{corollary}\label{cor3.6}
The following results hold:
\begin{itemize}
    \item [(a)] The sets $\mathcal{M}^+_\lambda$ and $\mathcal{M}^-_\lambda$ are non empty for $\lambda>0$.
    \item[(b)] $\mathcal{M}^0_\lambda=\emptyset$ for $\lambda\in(0,\lambda_*)$ and $\mathcal{M}^0_\lambda\neq\emptyset$ for $\lambda\in[\lambda_*,\infty)$.
    \item[(c)] $\mathcal{M}^0_{\lambda_\ast}=\bigg\{u\in \mathcal{M}_{\lambda_\ast}:~\displaystyle\int_{\mathbb{R}^{N}}\beta (x)|u|^\gamma~\mathrm{d}x>0~\text{and}~\lambda(u)=\lambda_*\bigg\}$.
\end{itemize}
\end{corollary}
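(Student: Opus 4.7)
The plan is to deduce all three assertions from Proposition~\ref{prop3.1} combined with Lemma~\ref{lem3.5}. In essence, the decomposition of $\mathcal{M}_\lambda$ into $\mathcal{M}^\pm_\lambda$ and $\mathcal{M}^0_\lambda$ is fully controlled by Proposition~\ref{prop3.1}: a scaled vector $tu$ lands in $\mathcal{M}^+_\lambda$ or $\mathcal{M}^-_\lambda$ depending on the sign of $\int\beta|u|^\gamma$ and on the position of $\lambda$ relative to $\lambda(u)$; equality $\lambda=\lambda(u)$ together with $\int\beta|u|^\gamma>0$ is the only way to land in $\mathcal{M}^0_\lambda$.

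For~(a), fix $\lambda>0$. To populate $\mathcal{M}^+_\lambda$, either there is $u\in\mathcal{K}$ with $\int\beta|u|^\gamma\le 0$ (supported where $\beta\le 0$, whenever $\beta^-\not\equiv 0$) so that Proposition~\ref{prop3.1}(I) yields $t^+_\lambda(u)u\in\mathcal{M}^+_\lambda$, or one applies Proposition~\ref{prop3.1}(II)(a) to some $u\in\mathcal{G}^+$ with $\lambda(u)>\lambda$. The harder statement is $\mathcal{M}^-_\lambda\neq\emptyset$, which forces us to find, for every $\lambda>0$, some $u\in\mathcal{G}^+$ with $\lambda(u)>\lambda$. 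I would show this by exhibiting a nonnegative sequence $\{u_n\}\subset\mathcal{G}^+\cap S$ with $u_n\rightharpoonup 0$ weakly in $W^{s,p}_V(\mathbb{R}^N)$: the compact embedding of Lemma~\ref{lemma2.3} then gives $\int\alpha|u_n|^{1-\delta}\to 0$ and $\int\beta|u_n|^\gamma\to 0^+$, so from the formula in~\eqref{eq3.18}, $\lambda(u_n)\to\infty$.

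For~(b), the case $\lambda<\lambda_*$ is immediate once (c) is known: any $u\in\mathcal{M}^0_\lambda$ would satisfy $\lambda(u)=\lambda<\lambda_*$, contradicting the definition of $\lambda_*$. For $\lambda=\lambda_*$, Lemma~\ref{lem3.5} furnishes $u_*\in\mathcal{G}^+$ with $\lambda(u_*)=\lambda_*$, and Proposition~\ref{prop3.1}(II)(b) places $t(u_*)u_*\in\mathcal{M}^0_{\lambda_*}$. For $\lambda>\lambda_*$, the combination of the attained minimum $\lambda_*$, the continuity of $\lambda(\cdot)$ on $\mathcal{G}^+\cap S$, and the unboundedness of $\lambda(\cdot)$ established in~(a), together with the intermediate value theorem along a suitable path, produces $u\in\mathcal{G}^+$ with $\lambda(u)=\lambda$, whence $t(u)u\in\mathcal{M}^0_\lambda$ by (II)(b).

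For~(c), given $u\in\mathcal{M}^0_{\lambda_*}$, the system $\mathcal{F}'_{\lambda_*,u}(1)=0$ and $\mathcal{F}''_{\lambda_*,u}(1)=0$ is linear in $\int\alpha|u|^{1-\delta}$ and $\int\beta|u|^\gamma$ with $\|u\|^{p(m+1)}$ as parameter; elimination yields
\begin{equation*}
\int_{\mathbb{R}^N}\beta(x)|u|^\gamma\,\mathrm{d}x=\frac{a(p(m+1)+\delta-1)}{\gamma+\delta-1}\,\|u\|^{p(m+1)}>0,\qquad \lambda_*\int_{\mathbb{R}^N}\alpha(x)|u|^{1-\delta}\,\mathrm{d}x=\frac{a(\gamma-p(m+1))}{\gamma+\delta-1}\,\|u\|^{p(m+1)},
\end{equation*}
and substituting these into~\eqref{eq3.18} gives $\lambda(u)=\lambda_*$ after a routine algebraic cancellation. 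Conversely, if $u\in\mathcal{M}_{\lambda_*}$ satisfies $\int\beta|u|^\gamma>0$ and $\lambda(u)=\lambda_*$, then Proposition~\ref{prop3.1}(II)(b) applied at $\lambda=\lambda_*=\lambda(u)$ says that $t(u)$ is the \emph{unique} critical point of $\mathcal{F}_{\lambda_*,u}$ on $(0,\infty)$; since $1$ is already a critical point of $\mathcal{F}_{\lambda_*,u}$ by virtue of $u\in\mathcal{M}_{\lambda_*}$, one concludes $t(u)=1$ and $u\in\mathcal{M}^0_{\lambda_*}$. The main obstacle is producing nonnegative sequences that drive $\lambda(\cdot)\to\infty$ while remaining in $\mathcal{G}^+$; this is where hypotheses (V2), (H3) and (H4) become essential—(V2) provides the required compactness on $\mathbb{R}^N$ via Lemma~\ref{lemma2.3}, while (H3)--(H4) prevent $\int\beta|u|^\gamma$ from becoming nonpositive before the ratio in~\eqref{eq3.18} blows up.
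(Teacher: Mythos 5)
Your route is the same as the paper's: the paper disposes of this corollary in one line as ``a direct consequence of Proposition~\ref{prop3.1} and Lemma~\ref{lem3.5}'', and everything you write is a (welcome) expansion of exactly that reduction. Part (c) is worked out correctly: eliminating between $\mathcal{F}'_{\lambda_\ast,u}(1)=0$ and $\mathcal{F}''_{\lambda_\ast,u}(1)=0$ does give $\int\beta|u|^\gamma=\tfrac{a(p(m+1)+\delta-1)}{\gamma+\delta-1}\|u\|^{p(m+1)}>0$ and $\lambda_\ast\int\alpha|u|^{1-\delta}=\tfrac{a(\gamma-p(m+1))}{\gamma+\delta-1}\|u\|^{p(m+1)}$, and the converse via uniqueness of the critical point in Proposition~\ref{prop3.1}(II)(b) is right. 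One remark: with these identities the ``routine cancellation'' returns $\lambda(u)=\lambda_\ast$ only if the bracket in \eqref{eq3.18} reads $\tfrac{p(m+1)+\delta-1}{\gamma+\delta-1}$ rather than the printed $\tfrac{p(m+1)+\delta-1}{\gamma+p-1}$; your computation in fact exposes a typo in the paper's formula (the corrected constant is the one the paper itself uses later, e.g.\ in Lemma~\ref{lem6.1}).

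The two places where your proposal is a plan rather than a proof are precisely the points the paper also glosses over. First, both $\mathcal{M}^-_\lambda\neq\emptyset$ for every $\lambda>0$ and $\mathcal{M}^0_\lambda\neq\emptyset$ for every $\lambda>\lambda_\ast$ hinge on $\sup_{\mathcal{G}^+}\lambda(u)=\infty$. You propose a weakly null sequence in $\mathcal{G}^+\cap S$ but do not construct it, and the delicate point — which you yourself label ``the main obstacle'' and then leave unresolved — is keeping $\int\beta|u_n|^\gamma$ strictly positive while it tends to $0$: since $\beta$ is merely $L^\infty$ and sign-changing, you need, say, bumps concentrating at a Lebesgue density point of $\{\beta>\epsilon_0\}$, together with the scaling computation showing $\|u_n\|\asymp 1$ forces $\int|u_n|^\tau,\int|u_n|^\gamma\to0$ because $\tau,\gamma<p^*_s$. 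Second, your intermediate value argument for $\lambda>\lambda_\ast$ needs a \emph{path in $\mathcal{G}^+$} from the minimizer $u_\ast$ to a point with large $\lambda(\cdot)$; the linear segment $(1-\theta)u_\ast+\theta u$ can leave $\mathcal{G}^+$, so you should say what the ``suitable path'' is (e.g.\ $w_\theta=\bigl((1-\theta)u_\ast^\gamma+\theta u^\gamma\bigr)^{1/\gamma}$, for which $\int\beta w_\theta^\gamma$ is affine in $\theta$ and hence positive, and which stays in $W^{s,p}_V(\mathbb{R}^N)$ by the Minkowski-type inequality $|w_\theta(x)-w_\theta(y)|\le|u_\ast(x)-u_\ast(y)|+|u(x)-u(y)|$). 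With those two constructions supplied, the argument is complete; without them, (a) and the $\lambda>\lambda_\ast$ half of (b) are not yet proved.
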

\begin{proof}
    The proof is a direct consequence of Proposition \ref{prop3.1} and Lemma \ref{lem3.5}.
\end{proof}

Let $\lambda>0$, the we define the sets
$$ \hat{\mathcal{M}}_\lambda=\big\{u\in\mathcal{K}:~u\in\mathcal{G}^+~\text{and}~\lambda<\lambda(u)\big\} ~\text{and}~ \hat{\mathcal{M}}^+_\lambda=\bigg\{u\in\mathcal{K}:~\int_{\mathbb{R}^{N}}\beta (x)|u|^\gamma~\mathrm{d}x\leq 0\bigg\}.$$
Further, for $\lambda>0$, we define $\mathcal{I}^+_{\lambda}:\hat{\mathcal{M}}_\lambda\cup\hat{\mathcal{M}}^+_\lambda\to\mathbb{R}$ and $\mathcal{I}^-_{\lambda}:\hat{\mathcal{M}}_\lambda\to\mathbb{R}$ by 
$$\mathcal{I}^+_{\lambda}(u)=\Psi_\lambda(t^+_\lambda(u)u)~\text{and}~\mathcal{I}^-_{\lambda}(u)=\Psi_\lambda(t^-_\lambda(u)u).$$
Consider the following constrained minimization problems 
$$\Upsilon^+_\lambda=\inf\big\{\mathcal{I}^+_{\lambda}(u):u\in\mathcal{M}^+_\lambda\big\}~\text{and}~\Upsilon^-_\lambda=\inf\big\{\mathcal{I}^-_{\lambda}(u):u\in \mathcal{M}^+_\lambda\big\} .$$
\begin{lemma}\label{lem3.7}
   Let $u\in\mathcal{K}$ and $I$ be an open interval in $\mathbb{R}$ such that $t^{\pm}_{\lambda}(u)$ are well defined for all $\lambda\in I$. Then the following results hold:
   \begin{itemize}
       \item [(i)] the functions $I\ni\lambda\mapsto  t^{\pm}_{\lambda}(u)$ are $\mathcal{C}^\infty$. Furthermore, $I\ni\lambda\mapsto  t^{+}_{\lambda}(u)$ is strictly increasing, whereas $I\ni\lambda\mapsto  t^{-}_{\lambda}(u)$ is strictly decreasing.
       \item[(ii)] the functions $I\ni\lambda\mapsto  \mathcal{I}^{\pm}_{\lambda}(u)$ are $\mathcal{C}^\infty$ and strictly decreasing.
   \end{itemize}
   Consequently, the above results hold for $I=(0,\lambda_\ast)$ as well as $I=(\lambda_\ast,\lambda_\ast+\epsilon)$, where $\epsilon>0$ is small enough.
\end{lemma}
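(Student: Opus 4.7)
The plan is to exploit the characterization that the Nehari projections $t^{\pm}_\lambda(u)$ solve the smooth scalar equation
\[
g_u(t) \;=\; \lambda A(u), \qquad A(u) := \int_{\mathbb{R}^N}\alpha(x)|u|^{1-\delta}\,\mathrm{d}x,
\]
with $g_u$ as in \eqref{eq3.97}. Smoothness and monotonicity in (i) will then follow from the implicit function theorem together with the signs of $g'_u$ at the two projections that were already extracted in the proof of Proposition \ref{prop3.1}; the differentiability and strict decrease of $\mathcal{I}^{\pm}_\lambda$ in (ii) will fall out of chain-rule differentiation, because the first-order term vanishes by the very definition of $\mathcal{M}_\lambda$.

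For (i), fix $u\in\mathcal{K}$ and set $F(\lambda,t) := g_u(t)-\lambda A(u)$ on $I\times(0,\infty)$. The function $F$ is $C^\infty$, and $F(\lambda,t^{\pm}_\lambda(u))=0$ for every $\lambda\in I$. From the identity $t^{2-\delta} g'_u(t) = \mathcal{F}''_{\lambda,tu}(1)$ valid whenever $tu\in\mathcal{M}_\lambda$ (see the proof of Proposition \ref{prop3.1}), together with $t^{+}_\lambda(u)u\in\mathcal{M}^+_\lambda$ and $t^{-}_\lambda(u)u\in\mathcal{M}^-_\lambda$, we obtain the non-degeneracy
\[
g'_u\bigl(t^+_\lambda(u)\bigr) > 0 \quad\text{and}\quad g'_u\bigl(t^-_\lambda(u)\bigr) < 0,
\]
so $\partial_t F\neq 0$ at both projections. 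The implicit function theorem gives $\lambda\mapsto t^{\pm}_\lambda(u)\in C^\infty(I)$, and implicit differentiation yields
\[
\frac{d}{d\lambda}\,t^{\pm}_\lambda(u) \;=\; \frac{A(u)}{g'_u\bigl(t^{\pm}_\lambda(u)\bigr)}.
\]
Since $A(u)>0$ by $(H2)$, the right-hand side is strictly positive on the $+$ branch and strictly negative on the $-$ branch, proving the claimed monotonicities.

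For (ii), write $\mathcal{I}^{\pm}_\lambda(u)=\mathcal{F}_{\lambda,u}(t^{\pm}_\lambda(u))$ and differentiate via the chain rule:
\[
\frac{d}{d\lambda}\,\mathcal{I}^{\pm}_\lambda(u) \;=\; \mathcal{F}'_{\lambda,u}\bigl(t^{\pm}_\lambda(u)\bigr)\,\frac{d t^{\pm}_\lambda(u)}{d\lambda} \;+\; \frac{\partial \mathcal{F}_{\lambda,u}}{\partial \lambda}\bigl(t^{\pm}_\lambda(u)\bigr) \;=\; -\,\frac{\bigl(t^{\pm}_\lambda(u)\bigr)^{1-\delta}}{1-\delta}\,A(u),
\]
since the first summand vanishes because $t^{\pm}_\lambda(u)u\in\mathcal{M}_\lambda$, and the partial derivative in $\lambda$ is read off directly from $\mathcal{F}_{\lambda,u}$. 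The right-hand side is strictly negative, which gives the strict decrease; $C^\infty$-regularity is inherited from (i). For the final consequence, Proposition \ref{prop3.1} and Corollary \ref{cor3.6} ensure that on $I=(0,\lambda_\ast)$ the projections $t^{\pm}_\lambda(u)$ are well defined for any $u\in\mathcal{G}^+$ with $\lambda(u)\geq\lambda_\ast$ (and for $t^+$ alone under the sign condition defining $\hat{\mathcal{M}}^+_\lambda$), whereas on $I=(\lambda_\ast,\lambda_\ast+\epsilon)$ one fixes $u$ with $\lambda(u)>\lambda_\ast+\epsilon$, which is possible for $\epsilon>0$ sufficiently small by the continuity of $\lambda(\cdot)$ from Lemma \ref{lem3.5}. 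The only genuine obstacle is the non-degeneracy $g'_u(t^{\pm}_\lambda(u))\neq 0$ required to invoke the implicit function theorem, and this is precisely encoded in the strict inequalities defining $\mathcal{M}^{\pm}_\lambda$.
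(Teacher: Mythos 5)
Your argument is correct and is exactly the standard fibre-map computation that the paper omits here by deferring to \cite{alves2022multiplicity}: the identity $g_u(t^{\pm}_\lambda(u))=\lambda A(u)$ together with the sign of $g'_u$ at the two projections (equivalently, the sign of $\mathcal{F}''_{\lambda,t^{\pm}_\lambda(u)u}(1)$) gives the implicit function theorem, and the envelope-type cancellation $\mathcal{F}'_{\lambda,u}(t^{\pm}_\lambda(u))=0$ yields $\frac{d}{d\lambda}\mathcal{I}^{\pm}_\lambda(u)=-\frac{(t^{\pm}_\lambda(u))^{1-\delta}}{1-\delta}A(u)<0$. The only point left implicit is that the local $\mathcal{C}^\infty$ branch produced by the implicit function theorem coincides with the globally defined $t^{\pm}_\lambda(u)$, which follows from the uniqueness of the critical points in Proposition \ref{prop3.1}; this is routine and does not affect the validity of the proof.
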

\begin{proof}
 The proof is similar to [\cite{alves2022multiplicity}, Lemma 2.7], and we omit it.
\end{proof}
\section{Existence of solutions for \texorpdfstring{$\lambda \in (0,\lambda_\ast)$}{Lg}}\label{sec4}
In this section, we show the existence of at least two positive solutions to \eqref{main problem} for $0<\lambda<\lambda_\ast$. 
\begin{lemma}\label{lem4.1}
Let $0<\lambda<\lambda_\ast$. Then there exists $u_\lambda\in\mathcal{M}^+_\lambda$ such that $\Psi_\lambda(u_\lambda)=\mathcal{I}^+_\lambda(u_\lambda)=\Upsilon^+_\lambda$.    
\end{lemma}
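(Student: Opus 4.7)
The plan is to implement the direct method of the calculus of variations. By Lemma~\ref{lem3.2}(i) we have $\Upsilon^+_\lambda<0$, so pick a minimizing sequence $\{u_n\}_n\subset\mathcal{M}^+_\lambda$ with $\Psi_\lambda(u_n)\to\Upsilon^+_\lambda$. Lemma~\ref{lem3.2}(i) yields its boundedness in $W^{s,p}_V(\mathbb{R}^N)$, so up to a subsequence $u_n\rightharpoonup u_\lambda$ weakly, $u_n\to u_\lambda$ strongly in $L^\tau(\mathbb{R}^N)$ and $L^\gamma(\mathbb{R}^N)$ by Lemma~\ref{lemma2.3}, and $u_n\to u_\lambda$ a.e., so $u_\lambda\geq 0$. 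The Hölder/dominated-convergence chain used in the proof of Theorem~\ref{thm3.1}(i) gives $\int_{\mathbb{R}^N}\alpha|u_n|^{1-\delta}\,dx\to\int_{\mathbb{R}^N}\alpha|u_\lambda|^{1-\delta}\,dx$ and $\int_{\mathbb{R}^N}\beta|u_n|^{\gamma}\,dx\to\int_{\mathbb{R}^N}\beta|u_\lambda|^{\gamma}\,dx$. Nontriviality $u_\lambda\not\equiv 0$ follows because otherwise both integrals vanish in the limit, giving $\Psi_\lambda(u_n)=\frac{a}{p(m+1)}\|u_n\|^{p(m+1)}+o(1)\geq o(1)$, incompatible with $\Upsilon^+_\lambda<0$.

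Since $\lambda<\lambda_\ast=\inf_{v\in\mathcal{G}^+}\lambda(v)$, every $v\in\mathcal{K}$ belongs to $\hat{\mathcal{M}}^+_\lambda$ (if $\int_{\mathbb{R}^N}\beta|v|^\gamma\,dx\leq 0$) or to $\hat{\mathcal{M}}_\lambda$ (if $v\in\mathcal{G}^+$, since then $\lambda<\lambda_\ast\leq\lambda(v)$). Proposition~\ref{prop3.1} then furnishes a unique $t^+_\lambda(v)>0$ with $t^+_\lambda(v)v\in\mathcal{M}^+_\lambda$. Applied to $u_\lambda$, this produces $w_\lambda:=t^+_\lambda(u_\lambda)u_\lambda\in\mathcal{M}^+_\lambda$, whence $\Psi_\lambda(w_\lambda)\geq\Upsilon^+_\lambda$.

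The decisive step is to upgrade weak convergence to strong convergence $u_n\to u_\lambda$ in $W^{s,p}_V(\mathbb{R}^N)$. I would apply Ekeland's variational principle on the constrained infimum $\Upsilon^+_\lambda$ to refine $\{u_n\}$ into an almost-minimizing sequence satisfying a first-order quasi-variational inequality
$$\langle L(u_n),v\rangle-\lambda\int_{\mathbb{R}^N}\alpha u_n^{-\delta}v\,dx-\int_{\mathbb{R}^N}\beta u_n^{\gamma-1}v\,dx=o(1)\|v\|\quad\text{for all admissible }v\in W^{s,p}_V(\mathbb{R}^N),$$
meaning of the singular term secured by first testing against non-negative $v$ of compact support (this both yields the supersolution property of $u_\lambda$ and the integrability $\alpha u_\lambda^{-\delta}\phi\in L^1$ for smooth compactly supported $\phi$). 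Then test with $v=u_n-u_\lambda$ and exploit Fatou on the singular integrand together with the compact $L^\tau,L^\gamma$-convergence on the other two integrals to conclude $\limsup_n\langle L(u_n)-L(u_\lambda),u_n-u_\lambda\rangle\leq 0$, so the $(S_+)$-property of $L$ (Lemma~\ref{lemma2.4}(ii)) delivers $u_n\to u_\lambda$ strongly.

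Strong convergence propagates the identity $\mathcal{F}'_{\lambda,u_n}(1)=0$ and the inequality $\mathcal{F}''_{\lambda,u_n}(1)>0$ to the limit, the first giving $u_\lambda\in\mathcal{M}_\lambda$ and the second giving $\mathcal{F}''_{\lambda,u_\lambda}(1)\geq 0$; since $\mathcal{M}^0_\lambda=\emptyset$ for $\lambda<\lambda_\ast$ by Corollary~\ref{cor3.6}(b), the inequality is in fact strict and thus $u_\lambda\in\mathcal{M}^+_\lambda$. Continuity then yields $\Psi_\lambda(u_\lambda)=\lim_n\Psi_\lambda(u_n)=\Upsilon^+_\lambda$, and since $u_\lambda\in\mathcal{M}^+_\lambda$ gives $t^+_\lambda(u_\lambda)=1$, automatically $\mathcal{I}^+_\lambda(u_\lambda)=\Psi_\lambda(u_\lambda)=\Upsilon^+_\lambda$. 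The main obstacle is exactly the Ekeland step: because $\Psi_\lambda$ is not $C^1$ due to the $u^{-\delta}$ singularity one cannot differentiate the constraint directly, so admissible variations must be chosen carefully (typically of the form of truncations $(u_n+\varepsilon\phi)^+$ keeping one away from $\{u_n=0\}$); coupled with the degenerate Kirchhoff factor $M(\|u\|^p)$ on the nonlocal operator, this is where the main technical weight resides, and Lemma~\ref{lemma2.4} together with the compactness provided by (V1)–(V2) is what ultimately makes the argument close.
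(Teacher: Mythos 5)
Your opening (minimizing sequence, coercivity, weak convergence, convergence of the $\alpha$- and $\beta$-integrals, nontriviality of $u_\lambda$ via $\Upsilon^+_\lambda<0$, existence of the projection $t^+_\lambda(u_\lambda)$) matches the paper's proof of Lemma \ref{lem4.1} step for step. The divergence is in the decisive step, the upgrade from weak to strong convergence, and there your argument has a genuine gap. You propose Ekeland's variational principle to produce an almost-critical refinement of $\{u_n\}$ satisfying $\langle L(u_n),v\rangle-\lambda\int\alpha u_n^{-\delta}v-\int\beta u_n^{\gamma-1}v=o(1)\|v\|$ for all admissible $v$, and then to test with $v=u_n-u_\lambda$ and invoke the $(S_+)$-property. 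But deriving that first-order condition is precisely what the singular term obstructs: $\Psi_\lambda$ is not Gateaux differentiable, the constraint defining $\mathcal{M}_\lambda$ involves $\int\alpha|u|^{1-\delta}$ whose formal derivative $(1-\delta)\int\alpha u^{-\delta}v$ need not be finite at this stage, so the Lagrange-multiplier elimination underlying the Ekeland-on-a-manifold scheme is not available. You flag this ("this is where the main technical weight resides") but do not resolve it; moreover, even granting the inequality for nonnegative $v$ of compact support, the $(S_+)$ step requires testing with the sign-changing function $u_n-u_\lambda$, and controlling $\int\alpha u_n^{-\delta}(u_n-u_\lambda)$ from the correct side needs quantitative positivity of $u_n$ that has not been established. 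As written, the proof does not close.

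The paper avoids all first-order information at this stage and closes the strong-convergence step by a fiber-map comparison: if $u_n\not\to u_\lambda$ strongly, then $\|u_\lambda\|^p<\liminf_n\|u_n\|^p$ \emph{strictly}, hence $M(\|u_\lambda\|^p)<\liminf_n M(\|u_n\|^p)$, and combined with the convergence of the two integrals this gives
$$\liminf_{n\to\infty}\mathcal{F}'_{\lambda,u_n}\big(t^+_\lambda(u_\lambda)\big)>\mathcal{F}'_{\lambda,u_\lambda}\big(t^+_\lambda(u_\lambda)\big)=0 .$$
Since each $u_n\in\mathcal{M}^+_\lambda$ has $\mathcal{F}'_{\lambda,u_n}<0$ on $(0,1)$ and $>0$ on $(1,\infty)$ near the first critical point, this forces $t^+_\lambda(u_\lambda)>1$; as $\mathcal{F}_{\lambda,u_\lambda}$ is strictly decreasing on $(0,t^+_\lambda(u_\lambda))$ one gets $\Psi_\lambda(t^+_\lambda(u_\lambda)u_\lambda)<\Psi_\lambda(u_\lambda)\leq\Upsilon^+_\lambda$ with $t^+_\lambda(u_\lambda)u_\lambda\in\mathcal{M}^+_\lambda$, a contradiction. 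I recommend you replace the Ekeland step by this comparison argument (or, if you insist on the Ekeland route, carry out in full the one-sided difference-quotient analysis needed to justify the quasi-variational inequality in the presence of the singular term and the degenerate Kirchhoff factor). Note also that the weak-solution property of $u_\lambda$ is not needed here; in the paper it is established only later, in Lemma \ref{lem4.5} and Theorem \ref{thm4.6}.
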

\begin{proof}
    Since the functional $\Psi_\lambda$ is bounded from below on $\mathcal{M}_\lambda$ and so on $\mathcal{M}^+_\lambda$. So, there exists a minimizing sequence $\{u_n\}_{n\in \mathbb{N}}\subset \mathcal{M}^+_\lambda$ for $\Psi_\lambda$, i.e.,    
$$\lim_{n\to\infty}\Psi_\lambda(u_n)=\lim_{n\to\infty}\mathcal{I}^+_\lambda(u_n)=\Upsilon^+_\lambda=\inf\big\{\Psi_\lambda(u):u\in \mathcal{M}^+_\lambda\big\}.$$    
    The coercivity of $\Psi_\lambda$ (see Theorem \ref{thm3.1}) implies that $\{u_n\}_{n\in \mathbb{N}}$ is bounded in $W^{s,p}_{V}(\mathbb{R}^N)$. Hence, up to a subsequence $u_n\rightharpoonup u_\lambda$ weakly in $W^{s,p}_{V}(\mathbb{R}^N)$ as $n\to\infty$. By Lemma \ref{lemma2.3}, we have $u_n\to u_\lambda$ in $L^\tau(\mathbb{R}^N)$ and $L^\gamma(\mathbb{R}^N)$ respectively as $n\to\infty$, $u_n\to u_\lambda$ a.e. in $\mathbb{R}^N$. Now similar to the proof of \eqref{eq3.2},  \eqref{eq3.4} and \eqref{eq3.5}, we obtain 
\begin{equation}\label{eq4.2}
\begin{cases}    \displaystyle\lim_{n\to\infty}\int_{\mathbb{R}^N}\alpha(x)|u_n|^{1-\delta}~\mathrm{d}x=\int_{\mathbb{R}^N}\alpha(x)|u_\lambda|^{1-\delta}~\mathrm{d}x,\\ 
\displaystyle\lim_{n\to\infty}\int_{\mathbb{R}^N}\beta(x)|u_n|^{\gamma}~\mathrm{d}x=\int_{\mathbb{R}^N}\beta(x)|u_\lambda|^{\gamma}~\mathrm{d}x,\\
\widehat{M}\big(\|u_\lambda\|^p\big)\leq\displaystyle\liminf_{n\to\infty} \widehat{M}\big(\|u_n\|^p\big)~\text{and}~u_\lambda\geq 0.    
\end{cases}
\end{equation}
    By using \eqref{eq4.2}, we get 
    \begin{equation}\label{eq4.3}
    \lim_{n\to\infty}\Psi_\lambda(u_n)\geq \Psi_\lambda(u_\lambda).
    \end{equation}
    Now we have to show $u_\lambda>0$, that is, $u_\lambda\neq 0$. Indeed, if not, then from \eqref{eq4.3} we obtain $\hat{\mathcal{I}}^+_\lambda\geq 0$, which is a contradiction because $\hat{\mathcal{I}}^+_\lambda<0$ (see Lemma \ref{lem3.2}). This yields $u_\lambda\neq0$ and $u_\lambda\in\mathcal{K}$. If $\displaystyle\int_{\mathbb{R}^N}\beta(x)|u_\lambda|^{\gamma}~\mathrm{d}x>0$ or $\displaystyle\int_{\mathbb{R}^N}\beta(x)|u_\lambda|^{\gamma}~\mathrm{d}x\leq 0$, then by Proposition \ref{prop3.1} there exists a unique positive real number $t^+_\lambda(u_\lambda)$ such that $t^+_\lambda(u_\lambda)u\in\mathcal{M}^+_\lambda$, i.e., $\mathcal{F}'_{\lambda,u_\lambda}(t^+_\lambda(u_\lambda))=0$.  The next step is to prove $u_\lambda\in\mathcal{M}^+_\lambda$. For this, we claim that $u_n\to u_\lambda$ in $W^{s,p}_{V}(\mathbb{R}^N)$ as $n\to\infty$. Indeed, if not, then $u_n\not\to u_\lambda$ in $W^{s,p}_{V}(\mathbb{R}^N)$ as $n\to\infty$. Therefore, we have $\|u_\lambda\|^p<\displaystyle\liminf_{n\to\infty}\|u_n\|^p$ and $M\big(\|u_\lambda\|^p\big)<\displaystyle\liminf_{n\to\infty} M\big(\|u_n\|^p\big)$. From \eqref{eq4.2}, we obtain 
       $$\liminf_{n\to\infty}\mathcal{F}'_{\lambda,u_n}(t^+_\lambda(u_\lambda)) >\mathcal{F}'_{\lambda,u_\lambda}(t^+_\lambda(u_\lambda))=0.$$    
    It follows that $\mathcal{F}'_{\lambda,u_n}(t^+_\lambda(u_\lambda))>0$ for $n$ large enough. Since $u_n\in \mathcal{M}^+_\lambda$ for all $n\in N$, therefore 1 is the point of minimum for $\mathcal{F}_{\lambda,u_n}$, $\mathcal{F}'_{\lambda,u_n}(t)<0$ for all $t\in(0,1)$ and $\mathcal{F}'_{\lambda,u_n}(t)>0$ for all $t>1$. This shows that $t^+_\lambda(u_\lambda)>1$. Using the fact that $\mathcal{F}_{\lambda,u_\lambda}$ is strictly decreasing in $(0,t^+_\lambda(u_\lambda))$ (as $t^+_\lambda(u_\lambda)u_\lambda\in\mathcal{M}^+_\lambda$) and \eqref{eq4.3}, we get
    $$ \Psi_\lambda(t^+_\lambda(u_\lambda)u_\lambda)=\mathcal{F}_{\lambda,u_\lambda}(t^+_\lambda(u_\lambda))<\mathcal{F}_{\lambda,u_\lambda}(1)=\Psi_\lambda(u_\lambda)\leq \lim_{n\to\infty} \Psi_\lambda(u_n)=\Upsilon^+_\lambda,$$
    which is absurd because $t^+_\lambda(u_\lambda)u_\lambda\in\mathcal{M}^+_\lambda$. Thus, $u_n\to u_\lambda$ in $W^{s,p}_{V}(\mathbb{R}^N)$ as $n\to\infty$. This implies that $\mathcal{F}'_{\lambda,u_\lambda}(1)=0$ and $\mathcal{F}''_{\lambda,u_\lambda}(1)\geq 0$. By Corollary \ref{cor3.6}, $\mathcal{M}^0_\lambda =\emptyset$ for $0<\lambda<\lambda_\ast$, therefore $u_\lambda\in\mathcal{M}^+_\lambda$ and $\Psi_\lambda(u_\lambda)=\mathcal{I}^+_\lambda(u_\lambda)=\Upsilon^+_\lambda$. This completes the proof.
\end{proof}
\begin{lemma}\label{lem4.2}
Let $0<\lambda<\lambda_\ast$. Then there exists $w_\lambda\in\mathcal{M}^-_\lambda$ such that $\Psi_\lambda(w_\lambda)=\mathcal{I}^-_\lambda(w_\lambda)=\Upsilon^-_\lambda$.   
\end{lemma}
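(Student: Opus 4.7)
The plan is to follow the same template as Lemma \ref{lem4.1}: produce a minimizing sequence for $\Psi_\lambda$ on $\mathcal M^-_\lambda$, extract a weak limit $w_\lambda$, upgrade weak to strong convergence via a fiber-map comparison, and finally exploit $\mathcal M^0_\lambda=\emptyset$ for $\lambda<\lambda_\ast$ to place $w_\lambda$ in $\mathcal M^-_\lambda$. The twist here is that $t=1$ is a local \emph{maximum} of $\mathcal F_{\lambda,w_n}$, not a minimum, so the contradiction used in the strong-convergence step must be arranged differently from the $\mathcal M^+_\lambda$ case.

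I would start by choosing $\{w_n\}\subset\mathcal M^-_\lambda$ with $\Psi_\lambda(w_n)\to\Upsilon^-_\lambda$. Lemma \ref{lem3.2}$(ii)$ gives both the positive lower bound $\|w_n\|\geq\hat c>0$ and, via $\Psi_\lambda(w_n)$ being eventually bounded above, a uniform upper bound on $\|w_n\|$. After extracting a subsequence, $w_n\rightharpoonup w_\lambda$ in $W^{s,p}_V(\mathbb R^N)$, and by Lemma \ref{lemma2.3} together with the dominated-convergence arguments proving \eqref{eq3.2}--\eqref{eq3.5}, $w_n\to w_\lambda$ in $L^\tau(\mathbb R^N)$ and $L^\gamma(\mathbb R^N)$ with $w_\lambda\geq 0$ a.e. Nontriviality of $w_\lambda$ will come from \eqref{eq3.14} combined with $\|w_n\|\geq\hat c$, which produces a uniform positive lower bound for $\int_{\mathbb R^N}\beta(x)|w_n|^\gamma~\mathrm{d}x$; passing to the limit gives $\int_{\mathbb R^N}\beta(x)|w_\lambda|^\gamma~\mathrm{d}x>0$, so $w_\lambda\in\mathcal G^+$. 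Since $\lambda<\lambda_\ast\leq\lambda(w_\lambda)$, Proposition \ref{prop3.1}$(II)(a)$ produces a unique $t^-_\lambda(w_\lambda)>0$ with $t^-_\lambda(w_\lambda)w_\lambda\in\mathcal M^-_\lambda$, hence $\Psi_\lambda(t^-_\lambda(w_\lambda)w_\lambda)\geq\Upsilon^-_\lambda$.

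The main obstacle is strong convergence. Arguing by contradiction, suppose $w_n\not\to w_\lambda$; then $\|w_\lambda\|^p<\liminf\|w_n\|^p$. Evaluating $\mathcal F'_{\lambda,w_n}$ at the fixed value $t_0:=t^-_\lambda(w_\lambda)$, the convergences of the $\alpha$- and $\beta$-integrals together with the strict norm inequality yield
\[
\liminf_{n\to\infty}\mathcal F'_{\lambda,w_n}(t_0)>\mathcal F'_{\lambda,w_\lambda}(t_0)=0,
\]
so $\mathcal F'_{\lambda,w_n}(t_0)>0$ for $n$ large. But for $w_n\in\mathcal M^-_\lambda$, Proposition \ref{prop3.1}$(II)(a)$ shows $\mathcal F'_{\lambda,w_n}$ is strictly positive precisely on $(t^+_\lambda(w_n),1)$, forcing $t_0\in(t^+_\lambda(w_n),1)$ and $\mathcal F_{\lambda,w_n}$ strictly increasing there, so
\[
\Psi_\lambda(t_0 w_n)=\mathcal F_{\lambda,w_n}(t_0)<\mathcal F_{\lambda,w_n}(1)=\Psi_\lambda(w_n).
\]
Taking $\liminf$ and noting that the strict norm inequality makes the weak lower semicontinuity of $w\mapsto\Psi_\lambda(t_0 w)$ strict, I obtain
\[
\Psi_\lambda(t_0 w_\lambda)<\liminf_{n\to\infty}\Psi_\lambda(t_0 w_n)\leq\Upsilon^-_\lambda,
\]
contradicting $t_0 w_\lambda\in\mathcal M^-_\lambda$. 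Hence $w_n\to w_\lambda$ in $W^{s,p}_V(\mathbb R^N)$.

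Once strong convergence is secured, passing $\mathcal F'_{\lambda,w_n}(1)=0$ and $\mathcal F''_{\lambda,w_n}(1)<0$ to the limit yields $\mathcal F'_{\lambda,w_\lambda}(1)=0$ and $\mathcal F''_{\lambda,w_\lambda}(1)\leq 0$, i.e.\ $w_\lambda\in\mathcal M_\lambda$ with $w_\lambda\in\mathcal M^-_\lambda\cup\mathcal M^0_\lambda$. Corollary \ref{cor3.6}$(b)$ rules out $\mathcal M^0_\lambda$ for $\lambda<\lambda_\ast$, so $w_\lambda\in\mathcal M^-_\lambda$ and $\Psi_\lambda(w_\lambda)=\lim\Psi_\lambda(w_n)=\Upsilon^-_\lambda$, with $\mathcal I^-_\lambda(w_\lambda)=\Psi_\lambda(w_\lambda)$ holding automatically since $t^-_\lambda(w_\lambda)=1$ for $w_\lambda\in\mathcal M^-_\lambda$.
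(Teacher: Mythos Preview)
Your proposal is correct and follows essentially the same approach as the paper: minimizing sequence, weak limit, nontriviality and $\int\beta|w_\lambda|^\gamma>0$ from \eqref{eq3.14}, contradiction argument for strong convergence via the fiber map evaluated at $t^-_\lambda(w_\lambda)$, and then Corollary \ref{cor3.6}$(b)$ to land in $\mathcal M^-_\lambda$. Your handling of the strong-convergence step is in fact cleaner than the paper's, since you explicitly locate $t_0$ in the interval $(t^+_\lambda(w_n),1)$ where $\mathcal F_{\lambda,w_n}$ is increasing, rather than invoking a ``global maximum at $1$'' claim that is not literally true when $\Psi_\lambda(w_n)<0$.
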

\begin{proof}
   The functional $\Psi_\lambda$ is bounded from below on $\mathcal{M}_\lambda$ and therefore on $\mathcal{M}^+_\lambda$. As a result, there is a minimizing sequence $\{u_n\}_{n\in \mathbb{N}}\subset \mathcal{M}^+_\lambda$ for $\Psi_\lambda$, i.e.,     
$$\lim_{n\to\infty}\Psi_\lambda(u_n)=\lim_{n\to\infty}\mathcal{I}^-_\lambda(u_n)=\Upsilon^-_\lambda=\inf\big\{\Psi_\lambda(w):w\in \mathcal{M}^-_\lambda\big\}.$$   
    Due to the coercivity of $\Psi_\lambda$ (see Theorem \ref{thm3.1}), the sequence $\{u_n\}_{n\in \mathbb{N}}$ must be bounded in $W^{s,p}_{V}(\mathbb{R}^N)$ and hence we can assume $u_n\rightharpoonup w_\lambda$ weakly in $W^{s,p}_{V}(\mathbb{R}^N)$ as $n\to\infty$. Now replacing $u_\lambda$ with $w_\lambda$ in \eqref{eq4.2}, one can observe that \eqref{eq4.2} still holds. 
    We claim that $w_\lambda\neq 0$. Indeed, if $w_\lambda=0$, then by Lemma \ref{lem3.2} (see \eqref{eq3.14}) and \eqref{eq4.2}, we get 
   $$ 0=\|w_\lambda\|^{p(m+1)}<\liminf_{n\to\infty} \|u_n\|^{p(m+1)}\leq \frac{(\delta+\gamma-1)}{a(p(m+1)+\delta-1)}\liminf_{n\to\infty}\int_{\mathbb{R}^{N}}\beta(x)|u_n|^{\gamma}~\mathrm{d}x$$ $$=\frac{(\delta+\gamma-1)}{a(p(m+1)+\delta-1)}\int_{\mathbb{R}^{N}}\beta(x)|w_\lambda|^{\gamma}~\mathrm{d}x=0, $$
   a contradiction and hence the claim. Repeating the above steps, one can easily deduce that $\displaystyle\int_{\mathbb{R}^{N}}\beta(x)|w_\lambda|^{\gamma}~\mathrm{d}x>0$ and $w_\lambda\in\mathcal{K}$. Now by Proposition \ref{prop3.1}, there exists $t^-_{\lambda}(w_\lambda)>0$ such that $t^-_{\lambda}(w_\lambda)w_\lambda\in \mathcal{M}^+_\lambda$. Next, we claim that $u_n\to w_\lambda$ in $W^{s,p}_{V}(\mathbb{R}^N)$ as $n\to\infty$. Indeed, if not, then $u_n\not\to w_\lambda$ in $W^{s,p}_{V}(\mathbb{R}^N)$ as $n\to\infty$, i.e., $u_n\rightharpoonup w_\lambda$ weakly in $W^{s,p}_{V}(\mathbb{R}^N)$ as $n\to\infty$ and also $t^-_{\lambda}(w_\lambda)u_n\rightharpoonup t^-_{\lambda}(w_\lambda) w_\lambda$ weakly in $W^{s,p}_{V}(\mathbb{R}^N)$ as $n\to\infty$. Therefore, we obtain
\begin{equation}\label{eq4.7}
\begin{cases}
 \displaystyle\liminf_{n\to\infty}\Psi_\lambda(t^-_{\lambda}(w_\lambda)u_n)>\Psi_\lambda(t^-_{\lambda}(w_\lambda)w_\lambda)\\ \hspace{4cm}\text{and}\\ \displaystyle\liminf_{n\to\infty}\mathcal{F}'_{\lambda,u_n}(t^+_\lambda(w_\lambda)) > \mathcal{F}'_{\lambda,w_\lambda}(t^+_\lambda(w_\lambda))=0.    
\end{cases}
\end{equation}
This shows that $\mathcal{F}'_{\lambda,u_n}(t^+_\lambda(w_\lambda))>0$ for $n$ large enough. As $u_n\in\mathcal{M}^-_\lambda$ for all $n\in N$, thus 1 is the point of maximum of $\mathcal{F}_{\lambda,u_n}$. It follows that
$\mathcal{F}'_{\lambda,u_n}(t)>0$ for all $t\in(0,1)$ and $\mathcal{F}'_{\lambda,u_n}(t)<0$ for all $t>1$. Hence we obtain $t^+_\lambda(w_\lambda)<1$. Now, using \eqref{eq4.7} and the fact that 1 is the global maximum point for $\mathcal{F}_{\lambda,u_n}$ lead us to conclude that
$$\Psi_\lambda(t^-_{\lambda}(w_\lambda)w_\lambda)< \liminf_{n\to\infty}\Psi_\lambda(t^-_{\lambda}(w_\lambda)u_n)\leq \liminf_{n\to\infty}\Psi_\lambda(u_n)=\Upsilon^-_\lambda,$$
which is a contradiction because $t^-_{\lambda}(w_\lambda)w_\lambda\in \mathcal{M}^+_\lambda$. This yields $u_n\to w_\lambda$ in $W^{s,p}_{V}(\mathbb{R}^N)$ as $n\to\infty$ and hence $\mathcal{F}'_{\lambda,w_\lambda}(1)=0$ and $\mathcal{F}''_{\lambda,w_\lambda}(1)\leq 0$. From Corollary \ref{cor3.6}, we have $\mathcal{M}^0_\lambda =\emptyset$ for $0<\lambda<\lambda_\ast$. Thus $w_\lambda\in\mathcal{M}^-_\lambda$ and $\Psi_\lambda(w_\lambda)=\mathcal{I}^-_\lambda(w_\lambda)=\Upsilon^-_\lambda$. This completes the proof.   
\end{proof}    
\begin{corollary}\label{cor4.3}
  Let $u\in\mathcal{M}^\pm_\lambda$, then there exists $\rho>0$ and a continuous function $t:B_\rho(0)\to (0,\infty)$ such that $t(0)=1$ and $t(y)(u+y)\in\mathcal{M}^\pm_\lambda $ for all $ y\in B_\rho(0)$, where $$B_\rho(0)=\big\{u\in W^{s,p}_{V}(\mathbb{R}^N):\|u\|=\rho\big\}. $$
\end{corollary}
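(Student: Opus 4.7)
The plan is to apply an implicit function theorem argument to the map
$$G: W^{s,p}_V(\mathbb{R}^N) \times (0,\infty) \to \mathbb{R}, \qquad G(y, t) := \mathcal{F}'_{\lambda, u+y}(t),$$
at the base point $(0, 1)$. By hypothesis $G(0, 1) = \mathcal{F}'_{\lambda, u}(1) = 0$ and $\partial_t G(0, 1) = \mathcal{F}''_{\lambda, u}(1) \neq 0$, the sign being positive on $\mathcal{M}^+_\lambda$ and negative on $\mathcal{M}^-_\lambda$. Because the singular term $|u+y|^{1-\delta}$ is not Fr\'echet differentiable in $y$, I would bypass the classical $\mathcal{C}^1$-IFT in Banach spaces and use its topological variant, based on joint continuity of $G$ together with strict monotonicity in $t$ near $t = 1$.

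Step one is to verify joint continuity of $G$ and $\partial_t G$ on a neighbourhood of $(0, 1)$. The norm map $y \mapsto \|u+y\|^p$ is continuous by definition, and continuity of $y \mapsto \int \alpha |u+y|^{1-\delta}~\mathrm{d}x$ and $y \mapsto \int \beta |u+y|^\gamma~\mathrm{d}x$ follows exactly as in the derivations of \eqref{eq3.2} and \eqref{eq3.4}: the compact embeddings from Lemma \ref{lemma2.3} convert convergence in $W^{s,p}_V(\mathbb{R}^N)$ into strong convergence in $L^\tau$ and $L^\gamma$, and H\"older's inequality with the conjugate pair $\frac{1}{\xi} + \frac{1-\delta}{\tau} = 1$ from $(H2)$, combined with Lebesgue's dominated convergence theorem, closes the argument. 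Continuity of $M$ and the explicit powers of $t$ in the formulas for $\mathcal{F}'_{\lambda, \cdot}$ and $\mathcal{F}''_{\lambda, \cdot}$ then yield joint continuity of both $G$ and $\partial_t G$.

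Step two invokes the topological IFT. Since $\partial_t G$ is jointly continuous and nonzero at $(0,1)$, there exist $\epsilon_0, \rho_0 > 0$ such that $\partial_t G$ has constant nonzero sign on $B_{\rho_0}(0) \times [1 - \epsilon_0, 1 + \epsilon_0]$; hence $G(y, \cdot)$ is strictly monotone on $[1 - \epsilon_0, 1 + \epsilon_0]$ for each $y$, and $G(0, 1 \pm \epsilon_0)$ have opposite signs. Continuity in $y$ lets me shrink $\rho_0$ to some $\rho > 0$ so that the same sign pattern persists for all $y \in B_\rho(0)$, whereupon the intermediate value theorem together with strict monotonicity yields a unique $t(y) \in (1 - \epsilon_0, 1 + \epsilon_0)$ with $G(y, t(y)) = 0$; a standard $\epsilon$-sandwich argument then gives continuity of $y \mapsto t(y)$ with $t(0) = 1$.

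Finally, by construction $t(y)(u+y) \in \mathcal{M}_\lambda$, and the identity $t^{2-\delta}g'_{u+y}(t) = \mathcal{F}''_{\lambda, t(u+y)}(1)$ used throughout Proposition \ref{prop3.1}, evaluated at $t = t(y)$, together with joint continuity and $(y, t(y)) \to (0, 1)$, preserves the sign of $\mathcal{F}''_{\lambda, u}(1)$ after possibly shrinking $\rho$ once more; this places $t(y)(u+y)$ in the appropriate $\mathcal{M}^\pm_\lambda$. The principal obstacle is precisely the lack of smoothness in $y$ introduced by the singular term, which is why the argument must proceed through continuity and monotonicity rather than through a $\mathcal{C}^1$-IFT in Banach spaces; a minor bookkeeping issue, namely ensuring $t(y)(u+y) \in \mathcal{K}$, is handled by observing that $\mathcal{F}_{\lambda, \cdot}$ depends on its argument only through absolute values, so sign questions about $u+y$ can be absorbed into $|u+y|$.
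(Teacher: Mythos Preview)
Your argument is correct and supplies a proof that the paper itself omits (it simply refers to \cite{garain2023anisotropic}, Lemma~2.14). The monotonicity/IVT substitute for the $\mathcal{C}^1$-IFT that you outline is sound, and your handling of the continuity of the three integral terms via Lemma~\ref{lemma2.3} and dominated convergence is exactly what is needed.

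For comparison, the route the paper evidently has in mind---visible in its own proof of the closely related Lemma~\ref{lem4.4}(ii)---is to factor $G(y,t)=\mathcal{F}'_{\lambda,u+y}(t)$ through a finite-dimensional map: write
\[
\mathcal{F}'_{\lambda,u+y}(t)=\mathcal{H}\bigl(t,\,a\|u+y\|^{p(m+1)},\,\textstyle\int\alpha|u+y|^{1-\delta},\,\int\beta|u+y|^{\gamma}\bigr),
\]
where $\mathcal{H}(t,\textit{a},\textit{b},\textit{c})=t^{p(m+1)-1}\textit{a}-\lambda t^{-\delta}\textit{b}-t^{\gamma-1}\textit{c}$ is $\mathcal{C}^\infty$ on $(0,\infty)\times\mathbb{R}^3$. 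One applies the classical IFT to $\mathcal{H}$ at the point $(1,\textit{a}_0,\textit{b}_0,\textit{c}_0)$ corresponding to $y=0$, obtains a smooth local solution map $\mathcal{G}:\mathbb{R}^3\to(0,\infty)$, and then sets $t(y)=\mathcal{G}$ composed with the three (merely continuous) coefficient maps in $y$. This sidesteps the non-differentiability of the singular term entirely by pushing all the $y$-dependence into a continuous precomposition, so no ``topological IFT'' needs to be invoked and continuity of $t(\cdot)$ is immediate. Your direct approach has the minor advantage of not relying on the specific product structure of $\mathcal{F}'_{\lambda,\cdot}$, at the cost of spelling out the IVT/sandwich argument; the paper's factorisation is a bit slicker here. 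Either way the conclusion is the same, and your treatment of the sign issue for $u+y$ via $|u+y|$ is adequate given how loosely the paper itself states the corollary.
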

\begin{proof}
  The proof is quite similar to [\cite{garain2023anisotropic}, Lemma 2.14] and hence omitted here.
\end{proof}
\begin{lemma}\label{lem4.4}
 Let $0<\lambda<\lambda_\ast$ and suppose $u_\lambda$ and $w_\lambda$ are minimizers of $\Psi_\lambda$ over $\mathcal{M}^+_\lambda$ and $\mathcal{M}^-_\lambda$ respectively. Then for every $v\in\mathcal{K}$, we have
 \begin{itemize}
     \item [(i)] there exists a $\epsilon_0>0$ such that $\Psi_\lambda(u_\lambda+\epsilon v)\geq\Psi_\lambda(u_\lambda)$ for every $0\leq\epsilon\leq\epsilon_0$.
     \item[(ii)] $t^-_\lambda(w_\lambda+\epsilon v)\to 1$ as $\epsilon\to 0^+$, where $t^-_\lambda(w_\lambda+\epsilon v)$ is a unique positive real number such that $t^-_\lambda(w_\lambda+\epsilon v)(w_\lambda+\epsilon v)\in \mathcal{M}^-_\lambda$.
 \end{itemize}
\end{lemma}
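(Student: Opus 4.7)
The plan is to leverage Corollary~\ref{cor4.3} together with the local convex structure of the fibre map near $\mathcal{M}^+_\lambda$ and the uniqueness statements of Proposition~\ref{prop3.1}.

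For part $(ii)$, I would apply Corollary~\ref{cor4.3} at $w_\lambda\in\mathcal{M}^-_\lambda$ to obtain $\rho>0$ and a continuous map $t(\cdot)$ with $t(0)=1$ and $t(y)(w_\lambda+y)\in\mathcal{M}^-_\lambda$ for all $y$ in a small ball around $0$. For $\epsilon$ sufficiently small one has $\epsilon v$ inside this ball, so $t(\epsilon v)(w_\lambda+\epsilon v)\in\mathcal{M}^-_\lambda$. To identify $t(\epsilon v)$ with $t^-_\lambda(w_\lambda+\epsilon v)$, I would first check that $w_\lambda+\epsilon v$ lies in case $(II)(a)$ of Proposition~\ref{prop3.1}: since $w_\lambda\in\mathcal{M}^-_\lambda\subset\mathcal{G}^+$ (a direct consequence of the sign discussion in case $(I)$ combined with $\mathcal{F}''_{\lambda,w_\lambda}(1)<0$), and since $\lambda<\lambda_\ast\leq\lambda(w_\lambda)$ by the definition of $\lambda_\ast$, both $\int_{\mathbb{R}^N}\beta(x)|w_\lambda+\epsilon v|^\gamma\,\mathrm{d}x>0$ and $\lambda<\lambda(w_\lambda+\epsilon v)$ persist for small $\epsilon>0$ by continuity of the integral (Lemma~\ref{lemma2.3}) and of $\lambda(\cdot)$ (Lemma~\ref{lem3.5}). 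Hence Proposition~\ref{prop3.1}$(II)(a)$ uniquely characterises $t^-_\lambda(w_\lambda+\epsilon v)$ as the only $t$ with $\mathcal{F}'_{\lambda,w_\lambda+\epsilon v}(t)=0$ and $\mathcal{F}''_{\lambda,w_\lambda+\epsilon v}(t)<0$, forcing $t(\epsilon v)=t^-_\lambda(w_\lambda+\epsilon v)$; continuity of $t(\cdot)$ then yields $t^-_\lambda(w_\lambda+\epsilon v)\to 1$.

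For part $(i)$, I would apply Corollary~\ref{cor4.3} at $u_\lambda\in\mathcal{M}^+_\lambda$ to get a continuous $t_\epsilon:=t(\epsilon v)$ with $t_0=1$ and $t_\epsilon(u_\lambda+\epsilon v)\in\mathcal{M}^+_\lambda$ for small $\epsilon\geq 0$. The minimality of $u_\lambda$ on $\mathcal{M}^+_\lambda$ immediately gives
$$\Psi_\lambda\bigl(t_\epsilon(u_\lambda+\epsilon v)\bigr)\geq \Upsilon^+_\lambda=\Psi_\lambda(u_\lambda),$$
so it remains to compare the fibre map $\phi_\epsilon(t):=\mathcal{F}_{\lambda,u_\lambda+\epsilon v}(t)$ at $t=1$ against $t=t_\epsilon$. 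For this I would use the explicit expression for $\mathcal{F}''$ together with the continuity of $\epsilon\mapsto\|u_\lambda+\epsilon v\|$, $\epsilon\mapsto\int_{\mathbb{R}^N}\alpha(x)|u_\lambda+\epsilon v|^{1-\delta}\,\mathrm{d}x$, and $\epsilon\mapsto\int_{\mathbb{R}^N}\beta(x)|u_\lambda+\epsilon v|^{\gamma}\,\mathrm{d}x$ (each via Lemma~\ref{lemma2.3} and the argument used in the proof of Theorem~\ref{thm3.1}) to verify that $(\epsilon,t)\mapsto\phi_\epsilon''(t)$ is jointly continuous at $(0,1)$. Because $\phi_0''(1)=\mathcal{F}''_{\lambda,u_\lambda}(1)>0$, there exist $\delta_0,\epsilon_0>0$ with $\phi_\epsilon''(t)>0$ on $[0,\epsilon_0]\times[1-\delta_0,1+\delta_0]$ and $t_\epsilon\in(1-\delta_0,1+\delta_0)$ for every $\epsilon\in[0,\epsilon_0]$. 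On $[1-\delta_0,1+\delta_0]$ the map $\phi_\epsilon$ is therefore strictly convex with a critical point at $t_\epsilon$, so $t_\epsilon$ is its unique minimiser there and $\phi_\epsilon(1)\geq\phi_\epsilon(t_\epsilon)$. Chaining with the displayed inequality yields $\Psi_\lambda(u_\lambda+\epsilon v)\geq \Psi_\lambda(u_\lambda)$.

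The main obstacle lies in this last comparison: a priori $t_\epsilon$ may be either smaller or larger than $1$, and in case $(II)(a)$ of Proposition~\ref{prop3.1} the fibre map decreases again beyond $t^-_\lambda$, so one cannot simply invoke global monotonicity of $\phi_\epsilon$ between $1$ and $t_\epsilon$ nor split cleanly according to the sign of $\int\beta|u_\lambda+\epsilon v|^\gamma$. The strict-convexity window of fixed size around $1$ obtained from the joint continuity of $\phi_\epsilon''$ bypasses this awkward case-by-case splitting uniformly for all small $\epsilon$.
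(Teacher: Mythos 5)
Your proposal is correct. For part $(i)$ the paper gives no details beyond citing Corollary~\ref{cor4.3}; your filling-in (minimality of $u_\lambda$ on $\mathcal{M}^+_\lambda$ applied to $t_\epsilon(u_\lambda+\epsilon v)\in\mathcal{M}^+_\lambda$, followed by the comparison $\mathcal{F}_{\lambda,u_\lambda+\epsilon v}(1)\geq\mathcal{F}_{\lambda,u_\lambda+\epsilon v}(t_\epsilon)$ via a uniform strict-convexity window around $t=1$ coming from $\mathcal{F}''_{\lambda,u_\lambda}(1)>0$ and joint continuity of the second derivative of the fibre map) is a sound and arguably more careful version of what the authors intend, and it does avoid the case split on the sign of $\int\beta|u_\lambda+\epsilon v|^\gamma$ that a monotonicity argument via Proposition~\ref{prop3.1} would require. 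For part $(ii)$ your route differs from the paper's: the authors apply the implicit function theorem directly to $\mathcal{H}(t,a,b,c)=t^{p(m+1)-1}a-\lambda t^{-\delta}b-t^{\gamma-1}c$ at the nondegenerate zero $(1,\ldots)$ (nondegenerate because $\mathcal{F}''_{\lambda,w_\lambda}(1)<0$) and then identify the local branch with $t^-_\lambda(w_\lambda+\epsilon v)$, whereas you invoke Corollary~\ref{cor4.3} to produce the continuous branch and then identify it with $t^-_\lambda$ through the uniqueness statement of Proposition~\ref{prop3.1}$(II)(a)$, after checking that $w_\lambda+\epsilon v$ stays in case $(II)(a)$ (using $\lambda<\lambda_\ast\leq\lambda(w_\lambda)$ and continuity of $\lambda(\cdot)$ and of the $\beta$-integral). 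Since Corollary~\ref{cor4.3} is itself an implicit-function-theorem statement, the two arguments are essentially equivalent in substance; yours makes the identification step explicit (which the paper leaves implicit), at the cost of needing the persistence of case $(II)(a)$, which you correctly verify.
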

\begin{proof}
%\begin{itemize}
$(i)$ The proof is a consequence of Corollary \ref{cor4.3}. 

\noindent $(ii)$ Define a $\mathcal{C}^\infty$-function $\mathcal{H}:(0,\infty)\times\mathbb{R}^3 \to\mathbb{R}$ by $\mathcal{H}(t,\textit{a},\textit{b},\textit{c})=t^{p(m+1)-1}\textit{a}-\lambda t^{-\delta}\textit{b}-t^{\gamma-1}\textit{c}$, where $$\textit{a}=M\big(\|u\|^p\big)\|u\|^p,\textit{b}=\int_{\mathbb{R}^{N}}\alpha(x)|u|^{1-\delta}~\mathrm{d}x~\text{and}~\textit{c}=\int_{\mathbb{R}^{N}}\beta (x)|u|^\gamma~\mathrm{d}x. $$
    It follows from $w_\lambda\in\mathcal{M}^-_\lambda$ that $$\mathcal{H}(1,\textit{a},\textit{b},\textit{c})=\mathcal{F}'_{\lambda,w_\lambda}(1)=0~\text{and}~\frac{\mathrm{d}\mathcal{H}(1,\textit{a},\textit{b},\textit{c})}{\mathrm{d}t}=\mathcal{F}''_{\lambda,w_\lambda}(1)<0.$$ 
    By the hypothesis, we have $t^-_\lambda(w_\lambda+\epsilon v)(w_\lambda+\epsilon v)\in \mathcal{M}^-_\lambda$, i.e.,  $\mathcal{F}'_{\lambda,w_\lambda+\epsilon v}(t^-_\lambda(w_\lambda+\epsilon v))=0$. This shows that
    \begin{equation}\label{eq4.8}
      \mathcal{H}\bigg((t^-_\lambda(w_\lambda+\epsilon v),M\big(\|w_\lambda+\epsilon v\|^p\big)\|w_\lambda+\epsilon v\|^p,\int_{\mathbb{R}^{N}}\alpha(x)|w_\lambda+\epsilon v|^{1-\delta}~\mathrm{d}x,\int_{\mathbb{R}^{N}}\beta (x)|w_\lambda+\epsilon v|^\gamma~\mathrm{d}x\bigg)=0.     
    \end{equation}
    
    By implicit function theorem, there exist open neighbourhoods $A\subset(0,\infty)$ and $B\subset \mathbb{R}^3 $ containing 1 and $\bigg(M\big(\|w_\lambda\|^p\big)\|w_\lambda\|^p,\displaystyle\int_{\mathbb{R}^{N}}\alpha(x)|w_\lambda|^{1-\delta}~\mathrm{d}x,\displaystyle\int_{\mathbb{R}^{N}}\beta (x)|w_\lambda|^\gamma~\mathrm{d}x\bigg)$ raspectively such that for all $y\in B$, the equation $\mathcal{H}(t,y)=0$ has a unique solution  $t=\mathcal{G}(y)$ with $\mathcal{G}:B\to A$ being a smooth function. Choosing $\epsilon>0$ very small enough and
$$\bigg(M\big(\|w_\lambda+\epsilon v\|^p\big)\|w_\lambda+\epsilon v\|^p,\int_{\mathbb{R}^{N}}\alpha(x)|w_\lambda+\epsilon v|^{1-\delta}~\mathrm{d}x,\int_{\mathbb{R}^{N}}\beta (x)|w_\lambda+\epsilon v|^\gamma~\mathrm{d}x\bigg)\in B ,$$ then we conclude that \eqref{eq4.8} has a unique solution
$$\mathcal{G}\bigg(M\big(\|w_\lambda+\epsilon v\|^p\big)\|w_\lambda+\epsilon v\|^p,\int_{\mathbb{R}^{N}}\alpha(x)|w_\lambda+\epsilon v|^{1-\delta}~\mathrm{d}x,\int_{\mathbb{R}^{N}}\beta (x)|w_\lambda+\epsilon v|^\gamma~\mathrm{d}x\bigg)=t^-_\lambda(w_\lambda+\epsilon v) .$$
The continuity of $\mathcal{G}$ infers that $t^-_\lambda(w_\lambda+\epsilon v)\to 1$ as $\epsilon\to 0^+$.This completes the proof. 
%\end{itemize}  
\end{proof}
\begin{lemma}\label{lem4.5}
    Let $0<\lambda<\lambda_\ast$, then the following results hold:
    \begin{itemize}
        \item [(i)] if $u_\lambda$ is a minimizer of $\Psi_\lambda$ over $\mathcal{M}^+_\lambda$, then $u_\lambda>0$ a.e. in $\mathbb{R}^N$, $\alpha(x)u_\lambda^{-\delta}v\in L^1(\mathbb{R}^N)$ and it satisfies
\begin{equation}\label{eq4.997}
         \langle{L(u_\lambda),v}\rangle-\lambda \int_{\mathbb{R}^{N}} \alpha(x)u_\lambda^{-\delta}v~\mathrm{d}x-\int_{\mathbb{R}^{N}}\beta(x)u_\lambda^{\gamma-1}v~\mathrm{d}x\geq 0~\text{for all}~v\in\mathcal{K}.   
\end{equation}
\item[(ii)] if $w_\lambda$ is a minimizer of $\Psi_\lambda$ over $\mathcal{M}^-_\lambda$, then $w_\lambda>0$ a.e. in $\mathbb{R}^N$, $\alpha(x)w_\lambda^{-\delta}v\in L^1(\mathbb{R}^N)$ and it satisfies
\begin{equation}\label{eq4.1000}
\langle{L(w_\lambda),v}\rangle-\lambda \int_{\mathbb{R}^{N}} \alpha(x)w_\lambda^{-\delta}v~\mathrm{d}x-\int_{\mathbb{R}^{N}}\beta(x)w_\lambda^{\gamma-1}v~\mathrm{d}x\geq 0~\text{for all}~v\in\mathcal{K}.    
\end{equation}
    \end{itemize}    
\end{lemma}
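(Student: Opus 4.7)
The plan is, for both parts, to perturb the minimizer by $\varepsilon v$ for $v \in \mathcal{K}$, use its variational characterization to produce a one-sided inequality on $\Psi_\lambda$, divide by $\varepsilon$, and pass to the limit. Fatou's lemma on the singular term will simultaneously yield the integrability $\alpha u_\lambda^{-\delta} v \in L^1(\mathbb{R}^N)$ and the desired one-sided variational inequality. Strict positivity a.e.\ will then be recovered a posteriori from this integrability by choosing a well-chosen test function.

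\smallskip
\noindent\textbf{Part (i).} Fix $v \in \mathcal{K}$. Lemma \ref{lem4.4}-(i) provides $\varepsilon_0 > 0$ such that $\Psi_\lambda(u_\lambda + \varepsilon v) \geq \Psi_\lambda(u_\lambda)$ for all $0 \leq \varepsilon \leq \varepsilon_0$. I would write the difference quotient as
\[
0 \;\leq\; \frac{\Psi_\lambda(u_\lambda + \varepsilon v) - \Psi_\lambda(u_\lambda)}{\varepsilon} \;=\; K_\varepsilon \;-\; \lambda A_\varepsilon \;-\; B_\varepsilon,
\]
where $K_\varepsilon \to \langle L(u_\lambda), v\rangle$ using smoothness of $\widehat{M}$ and differentiability of $\|\cdot\|^p$ along $v$, and $B_\varepsilon \to \int_{\mathbb R^N}\beta(x) u_\lambda^{\gamma-1} v\,\mathrm dx$ by the dominated convergence theorem together with $\gamma < p^*_s$ and Lemma \ref{lemma2.3}. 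The singular piece
\[
A_\varepsilon = \frac{1}{(1-\delta)\varepsilon}\int_{\mathbb{R}^N}\alpha(x)\bigl[(u_\lambda + \varepsilon v)^{1-\delta} - u_\lambda^{1-\delta}\bigr]\mathrm{d}x
\]
is nonnegative (since $v \geq 0$ and $0 < 1-\delta < 1$), and its integrand tends pointwise a.e.\ to $\alpha(x) u_\lambda^{-\delta} v$, with the convention $+\infty$ on $\{u_\lambda = 0 < v\}$. Fatou gives $\liminf_{\varepsilon\to 0^+} A_\varepsilon \geq \int \alpha u_\lambda^{-\delta} v\,\mathrm dx$. Since $\lambda A_\varepsilon \leq K_\varepsilon - B_\varepsilon$ and the right-hand side converges, taking $\liminf$ forces $\int \alpha u_\lambda^{-\delta} v\,\mathrm dx < \infty$, hence $\alpha(x) u_\lambda^{-\delta} v \in L^1(\mathbb R^N)$, and simultaneously yields \eqref{eq4.997}.

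\smallskip
\noindent\textbf{Part (ii).} Here Lemma \ref{lem4.4}-(i) is not available, since $w_\lambda + \varepsilon v$ generally falls outside $\mathcal{M}^-_\lambda$. Instead, fix $v \in \mathcal{K}$ and set $t_\varepsilon := t^-_\lambda(w_\lambda + \varepsilon v)$; by Lemma \ref{lem4.4}-(ii), $t_\varepsilon \to 1$ as $\varepsilon \to 0^+$. Because $t_\varepsilon(w_\lambda + \varepsilon v) \in \mathcal{M}^-_\lambda$ and $w_\lambda$ minimizes $\Psi_\lambda$ there, while the fiber $\mathcal{F}_{\lambda, w_\lambda}$ attains its global maximum at $t = 1$ (as $w_\lambda \in \mathcal{M}^-_\lambda$), I obtain the sandwich
\[
\Psi_\lambda\bigl(t_\varepsilon(w_\lambda + \varepsilon v)\bigr) \;\geq\; \Psi_\lambda(w_\lambda) \;\geq\; \Psi_\lambda(t_\varepsilon w_\lambda),
\]
so that $\Psi_\lambda(t_\varepsilon(w_\lambda + \varepsilon v)) - \Psi_\lambda(t_\varepsilon w_\lambda) \geq 0$. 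Dividing by $\varepsilon > 0$, the three terms acquire factors $t_\varepsilon^{p(m+1)}$, $t_\varepsilon^{1-\delta}$, $t_\varepsilon^\gamma$, all tending to $1$, so the computation collapses to the form of Part (i). Writing the singular piece as $A_\varepsilon^{(w)} = \tfrac{t_\varepsilon^{1-\delta}}{(1-\delta)\varepsilon}\int \alpha[(w_\lambda+\varepsilon v)^{1-\delta} - w_\lambda^{1-\delta}]\,\mathrm dx \geq 0$ and applying Fatou once more, the same argument produces $\alpha w_\lambda^{-\delta} v \in L^1(\mathbb{R}^N)$ and \eqref{eq4.1000}.

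\smallskip
\noindent\textbf{Positivity a.e.\ and main obstacle.} If $|\{u_\lambda = 0\}| > 0$, the Lebesgue density theorem furnishes a ball $B$ with $|B \cap \{u_\lambda = 0\}| > 0$, and I may take $\phi \in C^\infty_c(\mathbb R^N) \cap \mathcal{K}$ strictly positive on $B$. Since $\alpha > 0$ a.e.\ by (H2), one gets $\int \alpha u_\lambda^{-\delta} \phi\,\mathrm dx = +\infty$, contradicting the $L^1$-bound of Part (i); the argument for $w_\lambda$ is identical. The delicate step throughout is the Fatou passage on $A_\varepsilon$, which is the mechanism that converts the abstract minimality of $u_\lambda$ and $w_\lambda$ into a pointwise integrability statement about $\alpha u^{-\delta}$; in Part (ii) the extra difficulty is that the perturbation must be projected back onto $\mathcal{M}^-_\lambda$ via $t_\varepsilon$, and it is precisely the continuity $t_\varepsilon \to 1$ supplied by Lemma \ref{lem4.4}-(ii) that keeps the limit computation tractable.
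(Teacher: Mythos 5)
Your argument is correct and follows essentially the same route as the paper: perturb the minimizer by $\epsilon v$, invoke Lemma \ref{lem4.4} to obtain the one-sided inequality on the difference quotient (with the projection $t^-_\lambda(w_\lambda+\epsilon v)\to 1$ handling part (ii)), and pass to the limit using Fatou's lemma on the singular term to get both the integrability and the variational inequality. The only organizational difference is that the paper establishes $u_\lambda>0$ a.e.\ first, by noting that on a putative zero set the quotient contributes a term of order $-\epsilon^{-\delta}\int\alpha v^{1-\delta}\,\mathrm{d}x\to-\infty$, and only then applies the mean value theorem and Fatou, whereas you run Fatou allowing the value $+\infty$ on $\{u_\lambda=0,\,v>0\}$ and recover positivity a posteriori from the finiteness of $\int\alpha u_\lambda^{-\delta}\phi\,\mathrm{d}x$ for a well-chosen $\phi$; both mechanisms exploit the same $\epsilon^{-\delta}$ singularity and are equally valid.
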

\begin{proof}
   % \begin{itemize}
        $(i)$ Since $u_\lambda$ is a minimizer of $\Psi_\lambda$ on $\mathcal{M}^+_\lambda$, therefore $u_\lambda\geq 0$ a.e. in $\mathbb{R}^{N}$. We claim that $u_\lambda>0$ for a.a. $x\in\mathbb{R}^{N}$. Indeed, if not, then let there exists a set $\mathbb{C}$ of positive measure such that $u_\lambda=0$ in $\mathbb{C}$. Choose $v\in W^{s,p}_{V}(\mathbb{R}^N)$, $v>0$ and $\epsilon\in(0,\epsilon_0)$ as in Lemma \ref{lem4.4} such that $(u_\lambda+\epsilon v)^{1-\delta}>u^{1-\delta}_\lambda$ a.e. in $\mathbb{R}^N\setminus\mathbb{C}$. From Lemma \ref{lem4.4}-$(i)$, we have 
        
        $$\hspace{-11cm}0\leq \frac{\Psi_\lambda(u_\lambda+\epsilon v)-\Psi_\lambda(u_\lambda)}{\epsilon} $$ 
        $$< \frac{1}{p\epsilon}\bigg[\widehat{M}\big(\|u_\lambda+\epsilon v\|^p\big)-\widehat{M}\big(\|u_\lambda\|^p\big)\bigg]-\frac{\lambda}{(1-\delta)\epsilon^\delta}\int_\mathbb{C}\alpha(x)v^{1-\delta}~\mathrm{d}x-\frac{1}{\gamma\epsilon}\int_{\mathbb{R}^N}\beta(x) \big((u_\lambda+\epsilon v)^{\gamma}-u^{\gamma}_\lambda\big)~\mathrm{d}x.$$
        This yields $$0\leq \frac{\Psi_\lambda(u_\lambda+\epsilon v)-\Psi_\lambda(u_\lambda)}{\epsilon} \to-\infty~\text{as}~\epsilon\to 0^+, $$
        which is a contradiction. Thus we have  $u_\lambda>0$ a.e. in $\mathbb{R}^N$. Further, take $v\in\mathcal{K}$ and choose a decreasing sequence $\{\epsilon_n\}_{n\in \mathbb{N}}\subset (0,1]$ such that $\epsilon_n\to 0$ as $n\to\infty$. For each $n\in \mathbb{N}$, define a sequence of non-negative measurable functions $\{g_n(x)\}_{n\in \mathbb{N}}$ by
        $$g_n(x)=\alpha(x)\bigg(\frac{(u_\lambda+\epsilon_n v)^{1-\delta}-u^{1-\delta}_\lambda}{\epsilon_n}\bigg). $$ By Mean Value theorem, we have $$\lim_{n\to\infty}g_n(x)=(1-\delta)\alpha(x)u^{-\delta}_\lambda v~\text{for a.e.}~x\in \mathbb{R}^N.$$
        Consequently, by Fatou's Lemma, we obtain 
        \begin{equation}\label{eq4.9}
          \int_{\mathbb{R}^N} \alpha(x)u^{-\delta}_\lambda v~\mathrm{d}x\leq \frac{1}{1-\delta}\liminf_{n\to\infty} \int_{\mathbb{R}^N} \alpha(x)\bigg(\frac{(u_\lambda+\epsilon_n v)^{1-\delta}-u^{1-\delta}_\lambda}{\epsilon_n}\bigg)~\mathrm{d}x.
        \end{equation}
        Using Lemma \ref{lem4.4}-$(i)$ for n large enough, we have
        \begin{equation}\label{eq4.666}
          0\leq \frac{\Psi_\lambda(u_\lambda+\epsilon_n v)-\Psi_\lambda(u_\lambda)}{\epsilon_n}.   
        \end{equation} 
        On simplifying \eqref{eq4.666}, we have
        \begin{equation}\label{eq4.10}
        \begin{split}
          \frac{\lambda}{1-\delta}\int_{\mathbb{R}^N} \alpha(x)\bigg(\frac{(u_\lambda+\epsilon_n v)^{1-\delta}-u^{1-\delta}_\lambda}{\epsilon_n}\bigg)~\mathrm{d}x\leq \frac{1}{p}\Bigg[\frac{\widehat{M}\big(\|u_\lambda+\epsilon_n v\|^p\big)-\widehat{M}\big(\|u_\lambda\|^p}{\epsilon_n}\Bigg]\\&\hspace{-8cm}-\frac{1}{\gamma}\int_{\mathbb{R}^N} \beta(x)\bigg(\frac{(u_\lambda+\epsilon_n v)^{\gamma}-u^{\gamma}_\lambda}{\epsilon_n}\bigg)~\mathrm{d}x.  
        \end{split}  
        \end{equation}
        By the mean value theorem and Lebesgue-dominated convergence theorem, we have
        \begin{equation}\label{eq4.1001}
         \frac{1}{p}\Bigg[\frac{\widehat{M}\big(\|u_\lambda+\epsilon_n v\|^p\big)-\widehat{M}\big(\|u_\lambda\|^p}{\epsilon_n}\Bigg]\to \langle{L(u_\lambda),v}\rangle ~\text{as} ~n\to\infty  
        \end{equation}
        and
        \begin{equation}\label{eq4.1002}
          \frac{1}{\gamma}\int_{\mathbb{R}^N} \beta(x)\bigg(\frac{(u_\lambda+\epsilon v)^{\gamma}-u^{\gamma}_\lambda}{\epsilon_n}\bigg)~\mathrm{d}x\to \int_{\mathbb{R}^{N}}\beta(x)u_\lambda^{\gamma-1}v~\mathrm{d}x~\text{as} ~n\to\infty.  
        \end{equation}
        Taking limit $n\to\infty$ on both the sides of \eqref{eq4.10} and using \eqref{eq4.1001} and \eqref{eq4.1002}, we deduce from \eqref{eq4.9} that $ \alpha(x)u_\lambda^{-\delta}v\in L^1(\mathbb{R}^N)$
        and \eqref{eq4.997} holds.
        
        \noindent $(ii)$ According to Lemma \ref{lem4.4} $(ii)$, we have $t^-_\lambda(w_\lambda+\epsilon v)\to 1$ as $\epsilon\to 0^+$, where $t^-_\lambda(w_\lambda+\epsilon v)(w_\lambda+\epsilon v)\in \mathcal{M}^-_\lambda$ for $v\in\mathcal{K}$ and $0\leq\epsilon\leq\epsilon_0$. Since $w_\lambda$ is a minimizer of $\Psi_\lambda$ on $\mathcal{M}^-_\lambda$, therefore $w_\lambda\geq 0$ a.e. in $\mathbb{R}^{N}$ and 
        \begin{equation}\label{eq4.11}
          \Psi_\lambda(w_\lambda)\leq \Psi_\lambda(t^-_\lambda(w_\lambda+\epsilon v)(w_\lambda+\epsilon v))~\text{for all}~0\leq\epsilon\leq\epsilon_0. 
        \end{equation}
        Let us prove that $w_\lambda>0$ for a.a. $x\in\mathbb{R}^N$. Indeed, if not, then let $w_\lambda=0$ in $\mathbb{C}$, where the measure of the set $\mathbb{C}$ is positive. Furthermore, choose $v\in W^{s,p}_{V}(\mathbb{R}^N)$ with $v>0$ and $\epsilon\in(0,\epsilon_0)$ small enough such that $\big(t^-_\lambda(w_\lambda+\epsilon v)(w_\lambda+\epsilon v)\big)^{1-\delta}>\big(t^-_\lambda(w_\lambda+\epsilon v)(w_\lambda)\big)^{1-\delta}$ a.e. in $\mathbb{R}^N\setminus\mathbb{C}$. From \eqref{eq4.11} and using the fact that 1 is the global maximum point for $\mathcal{F}_{\lambda,w_\lambda}$, we have the following.
        \begin{equation}\label{eq4.12}
          \Psi_\lambda(w_\lambda)\geq \Psi_\lambda(t^-_\lambda(w_\lambda+\epsilon v)w_\lambda).  
        \end{equation}
        Combining \eqref{eq4.11} and \eqref{eq4.12}, we have
        \begin{equation}\label{eq4.999}
         \Psi_\lambda(t^-_\lambda(w_\lambda+\epsilon v)(w_\lambda+\epsilon v))\geq  \Psi_\lambda(t^-_\lambda(w_\lambda+\epsilon v)w_\lambda) ~\text{for} ~\epsilon\in(0,\epsilon_0).
        \end{equation}       
        Now using \eqref{eq4.999} and applying a similar strategy as in case-$(i)$, we can deduce that $w_\lambda>0$ a.e. in $\mathbb{R}^N$, $\alpha(x)w_\lambda^{-\delta}v\in L^1(\mathbb{R}^N)$ and \eqref{eq4.1000} holds.   
   % \end{itemize} 
\end{proof}
\begin{theorem}\label{thm4.6}
 Let $0<\lambda<\lambda_*$, then the minimizers $u_\lambda$ and $w_\lambda$ for the functional $\Psi_\lambda$ on $\mathcal{M}^+_\lambda$ and $\mathcal{M}^-_\lambda$, respectively are weak solutions of \eqref{main problem}.
\end{theorem}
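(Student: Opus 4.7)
The plan is to upgrade the one–sided variational inequality established in Lemma \ref{lem4.5} into a full equality, i.e. to pass from
$$\langle L(u_\lambda),v\rangle-\lambda\int_{\mathbb R^N}\alpha(x)u_\lambda^{-\delta}v~\mathrm dx-\int_{\mathbb R^N}\beta(x)u_\lambda^{\gamma-1}v~\mathrm dx\ge 0,\quad \forall v\in\mathcal K,$$
to the same identity with ``$=$'' for all test functions $\phi\in W^{s,p}_V(\mathbb R^N)$ (and analogously for $w_\lambda$). The standard device in the singular setting is to feed a truncated perturbation into the inequality; concretely, for arbitrary $\phi\in W^{s,p}_V(\mathbb R^N)$ and $\epsilon>0$, set
$$v_\epsilon=(u_\lambda+\epsilon\phi)^+\in\mathcal K,$$
and write $v_\epsilon=u_\lambda+\epsilon\phi-(u_\lambda+\epsilon\phi)\mathbb 1_{\Omega_\epsilon}$ with $\Omega_\epsilon=\{x:u_\lambda(x)+\epsilon\phi(x)<0\}$. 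Since $u_\lambda>0$ a.e., $\lvert\Omega_\epsilon\rvert\to 0$ as $\epsilon\to 0^+$, and on $\Omega_\epsilon$ one has $0\le u_\lambda<-\epsilon\phi$, so $|u_\lambda+\epsilon\phi|\le\epsilon|\phi|$ there.

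First, plug $v_\epsilon$ into \eqref{eq4.997} and distribute: the three contributions over $\mathbb R^N$ combine with the Nehari identity $\mathcal F'_{\lambda,u_\lambda}(1)=0$ (that is, $\langle L(u_\lambda),u_\lambda\rangle=\lambda\int\alpha u_\lambda^{1-\delta}+\int\beta u_\lambda^{\gamma}$) to produce
$$\epsilon\Big[\langle L(u_\lambda),\phi\rangle-\lambda\int_{\mathbb R^N}\alpha u_\lambda^{-\delta}\phi-\int_{\mathbb R^N}\beta u_\lambda^{\gamma-1}\phi\Big]\ge A_\epsilon,$$
where $A_\epsilon$ collects the three remainder integrals on $\Omega_\epsilon$. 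The goal is to show $A_\epsilon=o(\epsilon)$. The two local terms are controlled by $\epsilon\int_{\Omega_\epsilon}\alpha u_\lambda^{-\delta}|\phi|$ and $\epsilon\int_{\Omega_\epsilon}|\beta|u_\lambda^{\gamma-1}|\phi|$; since $\alpha u_\lambda^{-\delta}\phi\in L^1$ (by Lemma \ref{lem4.5} applied to $|\phi|$) and $\beta u_\lambda^{\gamma-1}\phi\in L^1$ (by H\"older, $(H2),(H3)$ and Lemma \ref{lemma2.3}), absolute continuity of the Lebesgue integral forces both to be $o(\epsilon)$. For the nonlocal part, split
$(u_\lambda+\epsilon\phi)\mathbb 1_{\Omega_\epsilon}=u_\lambda\mathbb 1_{\Omega_\epsilon}+\epsilon\phi\mathbb 1_{\Omega_\epsilon}$; linearity of $\langle L(u_\lambda),\cdot\rangle$ reduces matters to showing that both $\phi\,\mathbb 1_{\Omega_\epsilon}$ and $u_\lambda\mathbb 1_{\Omega_\epsilon}$ tend to $0$ strongly in $W^{s,p}_V(\mathbb R^N)$ (the latter with rate $o(1)$, the former contributing an extra factor $\epsilon$). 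This uses the Dominated Convergence Theorem applied to the Gagliardo double integral: the integrands are dominated by $|\phi(x)-\phi(y)|^p/|x-y|^{N+sp}$, respectively $|u_\lambda(x)-u_\lambda(y)|^p/|x-y|^{N+sp}$, both in $L^1(\mathbb R^N\times\mathbb R^N)$, and they converge pointwise a.e. to $0$ because $\mathbb 1_{\Omega_\epsilon}\to 0$ a.e.

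Dividing by $\epsilon>0$ and letting $\epsilon\to 0^+$ yields
$$\langle L(u_\lambda),\phi\rangle-\lambda\int_{\mathbb R^N}\alpha u_\lambda^{-\delta}\phi~\mathrm dx-\int_{\mathbb R^N}\beta u_\lambda^{\gamma-1}\phi~\mathrm dx\ge 0\quad\text{for every }\phi\in W^{s,p}_V(\mathbb R^N).$$
Replacing $\phi$ by $-\phi$ (permitted since $\alpha u_\lambda^{-\delta}\phi=\alpha u_\lambda^{-\delta}\phi^+-\alpha u_\lambda^{-\delta}\phi^-\in L^1$ by Lemma \ref{lem4.5} applied to $\phi^\pm\in\mathcal K$) gives the reverse inequality, hence equality. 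Therefore $u_\lambda$ is a weak solution of \eqref{main problem}.

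For $w_\lambda$ the same scheme applies verbatim: Lemma \ref{lem4.5}\,(ii) provides the analogous one–sided inequality, $w_\lambda>0$ a.e., and $w_\lambda\in\mathcal M_\lambda$ gives the Nehari identity that cancels the leading terms after testing with $(w_\lambda+\epsilon\phi)^+$. I expect the main technical obstacle to be the precise $o(\epsilon)$ estimate of the nonlocal remainder $\langle L(u_\lambda),(u_\lambda+\epsilon\phi)\mathbb 1_{\Omega_\epsilon}\rangle$, since the double integral does not split as a product of measures on $\Omega_\epsilon$; handling it requires the pointwise-a.e. domination argument on $\mathbb R^N\times\mathbb R^N$ described above, together with the continuity of $L$ from Lemma \ref{lemma2.4}.
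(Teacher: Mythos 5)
Your overall strategy is the same as the paper's: test the one--sided inequality of Lemma \ref{lem4.5} with $(u_\lambda+\epsilon\phi)^+$, cancel the leading terms via the Nehari identity, and show the remainder supported on $\Omega_\epsilon$ is $o(\epsilon)$. The local remainders (the $\alpha$ and $\beta$ terms, and the $V$-part of $\langle L(u_\lambda),\cdot\rangle$) are handled correctly. The gap is in the nonlocal remainder, and it is twofold. First, the claimed domination is false: for the Gagliardo integrand of $\phi\,\mathbb{1}_{\Omega_\epsilon}$ one has, when $x\in\Omega_\epsilon$ and $y\notin\Omega_\epsilon$, $\bigl|(\phi\mathbb{1}_{\Omega_\epsilon})(x)-(\phi\mathbb{1}_{\Omega_\epsilon})(y)\bigr|=|\phi(x)|$, which is not controlled by $|\phi(x)-\phi(y)|$ (take $\phi(x)=\phi(y)\neq 0$), so $|\phi(x)-\phi(y)|^p/|x-y|^{N+sp}$ is not a dominating function and the dominated convergence argument for $[\phi\mathbb{1}_{\Omega_\epsilon}]_{s,p}\to 0$ (and likewise for $u_\lambda\mathbb{1}_{\Omega_\epsilon}$) does not go through. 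Second, and more fundamentally, even if you had $\|u_\lambda\mathbb{1}_{\Omega_\epsilon}\|\to 0$, that only yields $\langle L(u_\lambda),u_\lambda\mathbb{1}_{\Omega_\epsilon}\rangle=o(1)$, whereas after dividing by $\epsilon$ you need this term to be $o(\epsilon)$; your own bookkeeping ("the latter with rate $o(1)$") concedes exactly the rate that is insufficient.

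The paper closes this gap not by norm convergence but by exploiting the monotone sign structure of the operator. Writing $\phi^-_\epsilon=-(u_\lambda+\epsilon v)\mathbb{1}_{\mathcal{D}_\epsilon}$ and splitting the double integral over $\mathcal{D}_\epsilon\times\mathcal{D}_\epsilon$ and $\mathcal{D}_\epsilon\times\mathcal{D}^c_\epsilon$, the contribution of the $-u_\lambda$ part on the diagonal block is $-|u_\lambda(x)-u_\lambda(y)|^p\le 0$ and can simply be discarded with the correct inequality sign; what survives is only the $\epsilon v$ part, bounded via H\"older by $2C\epsilon\bigl[\int_{\mathcal{D}_\epsilon}\int_{\mathbb{R}^N}|v(x)-v(y)|^p/|x-y|^{N+sp}\,\mathrm{d}x\,\mathrm{d}y\bigr]^{1/p}$, and the bracket tends to $0$ by a tail-splitting/absolute-continuity argument using $|\mathcal{D}_\epsilon|\to 0$ (which is the correct substitute for your dominated convergence step). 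You should rework the nonlocal estimate along these lines; without the sign trick the required $o(\epsilon)$ rate for the $u_\lambda$-contribution is not attainable by the arguments you propose.
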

\begin{proof}
 To prove $u_\lambda$ is a weak solution of \eqref{main problem}, choose $v\in W^{s,p}_{V}(\mathbb{R}^N)$ and define $\phi_\epsilon=u_\lambda+\epsilon v$, then for each $\epsilon>0$ be given $\phi^+_\epsilon\in\mathcal{K}$. Now by Lemma \ref{lem4.5}-$(i)$, we have 
$$\langle{L(u_\lambda),\phi^+_\epsilon}\rangle-\lambda \int_{\mathbb{R}^{N}} \alpha(x)u_\lambda^{-\delta}\phi^+_\epsilon~\mathrm{d}x-\int_{\mathbb{R}^{N}}\beta(x)u_\lambda^{\gamma-1}\phi^+_\epsilon~\mathrm{d}x\geq 0 .$$
Replacing $\phi^+_\epsilon=\phi_\epsilon+\phi^-_\epsilon$ in the above inequality, we obtain
$$\langle{L(u_\lambda),\phi_\epsilon+\phi^-_\epsilon}\rangle-\lambda \int_{\mathbb{R}^{N}} \alpha(x)u_\lambda^{-\delta}(\phi_\epsilon+\phi^-_\epsilon)~\mathrm{d}x-\int_{\mathbb{R}^{N}}\beta(x)u_\lambda^{\gamma-1}(\phi_\epsilon+\phi^-_\epsilon)~\mathrm{d}x\geq 0 .$$
Define $\mathcal{D}_{\epsilon}=\{x\in\mathbb{R}^{N}:~\phi_\epsilon(x)\leq 0\}$ and $\mathcal{D}^c_{\epsilon}=\{x\in\mathbb{R}^{N}:~\phi_\epsilon(x)>0\}$. After some straightforward calculations, we have 
\begin{align*} \quad 0&\leq \Bigg[M\big(\|u_\lambda\|^p\big)\|u_\lambda\|^p-\lambda \int_{\mathbb{R}^{N}} \alpha(x)u_\lambda^{1-\delta}~\mathrm{d}x-\int_{\mathbb{R}^{N}}\beta(x)u_\lambda^{\gamma}~\mathrm{d}x \Bigg]\\
&\qquad+\epsilon\Bigg[\langle{L(u_\lambda),v}\rangle-\lambda \int_{\mathbb{R}^{N}} \alpha(x)u_\lambda^{-\delta}v~\mathrm{d}x-\int_{\mathbb{R}^{N}}\beta(x)u_\lambda^{\gamma-1}v~\mathrm{d}x \Bigg]\\
&\qquad+M\big(\|u_\lambda\|^p\big)\bigg[\int_{\mathbb{R}^{N}}\int_{\mathbb{R}^{N}}\frac{|u_\lambda(x)-u_\lambda(y)|^{p-2}(u_\lambda(x)-u_\lambda(y))(\phi^-_\epsilon(x)-\phi^-_\epsilon(y))}{|x-y|^{N+sp}}~\mathrm{d}x\mathrm{d}y-\int_{\mathcal{D}_{\epsilon}}V(x)u_\lambda^{p-1}(u_\lambda+\epsilon v)~\mathrm{d}x\bigg]\\
&\qquad+\lambda \int_{\mathcal{D}_{\epsilon}} \alpha(x)u_\lambda^{-\delta}(u_\lambda+\epsilon v)~\mathrm{d}x+\int_{\mathcal{D}_{\epsilon}}\beta(x)u_\lambda^{\gamma-1}(u_\lambda+\epsilon v)~\mathrm{d}x.\end{align*}
This yields
\begin{equation}\label{eq4.13}
    \begin{split}
      0\leq\epsilon\Bigg[\langle{L(u_\lambda),v}\rangle-\lambda \int_{\mathbb{R}^{N}} \alpha(x)u_\lambda^{-\delta}v~\mathrm{d}x-\int_{\mathbb{R}^{N}}\beta(x)u_\lambda^{\gamma-1}v~\mathrm{d}x \Bigg]\\
      &\hspace{-8cm}+M\big(\|u_\lambda\|^p\big)\bigg[\int_{\mathbb{R}^{N}}\int_{\mathbb{R}^{N}}\frac{|u_\lambda(x)-u_\lambda(y)|^{p-2}(u_\lambda(x)-u_\lambda(y))(\phi^-_\epsilon(x)-\phi^-_\epsilon(y))}{|x-y|^{N+sp}}~\mathrm{d}x\mathrm{d}y-\epsilon\int_{\mathcal{D}_{\epsilon}}V(x)u_\lambda^{p-1}v~\mathrm{d}x\bigg]\\
      &\hspace{-7.9cm}+\int_{\mathcal{D}_{\epsilon}}\beta(x)u_\lambda^{\gamma-1}(u_\lambda+\epsilon v)~\mathrm{d}x , 
    \end{split}
\end{equation}
 where the above inequality is obtained by using the fact that $u_\lambda\in\mathcal{M}^+_\lambda$, i.e., 
$$ M\big(\|u_\lambda\|^p\big)\|u_\lambda\|^p-\lambda \int_{\mathbb{R}^{N}} \alpha(x)u_\lambda^{1-\delta}~\mathrm{d}x-\int_{\mathbb{R}^{N}}\beta(x)u_\lambda^{\gamma}~\mathrm{d}x =0. $$  
Further, we also have used 
  $$\int_{\mathcal{D}_{\epsilon}} \alpha(x)u_\lambda^{-\delta}(u_\lambda+\epsilon v)~\mathrm{d}x\leq 0~\text{and}~\int_{\mathcal{D}_{\epsilon}}V(x)u_\lambda^{p}~\mathrm{d}x\geq 0.$$  
Define $$\mathcal{I_{\lambda,\epsilon}}=\int_{\mathbb{R}^{N}}\int_{\mathbb{R}^{N}}\frac{|u_\lambda(x)-u_\lambda(y)|^{p-2}(u_\lambda(x)-u_\lambda(y))(\phi^-_\epsilon(x)-\phi^-_\epsilon(y))}{|x-y|^{N+sp}}~\mathrm{d}x\mathrm{d}y,$$
then by the symmetry of the fractional kernel and after some simple computations, we obtain
\begin{align*}\mathcal{I_{\lambda,\epsilon}}&=\int_{\mathcal{D}_{\epsilon}}\int_{\mathcal{D}_{\epsilon}}\frac{|u_\lambda(x)-u_\lambda(y)|^{p-2}(u_\lambda(x)-u_\lambda(y))(\phi^-_\epsilon(x)-\phi^-_\epsilon(y))}{|x-y|^{N+sp}}~\mathrm{d}x\mathrm{d}y\\
&\qquad+2 \int_{\mathcal{D}_{\epsilon}}\int_{\mathcal{D}^c_{\epsilon}}\frac{|u_\lambda(x)-u_\lambda(y)|^{p-2}(u_\lambda(x)-u_\lambda(y))(\phi^-_\epsilon(x)-\phi^-_\epsilon(y))}{|x-y|^{N+sp}}~\mathrm{d}x\mathrm{d}y \\
&\leq -\epsilon\bigg[\int_{\mathcal{D}_{\epsilon}}\int_{\mathcal{D}_{\epsilon}}\frac{|u_\lambda(x)-u_\lambda(y)|^{p-2}(u_\lambda(x)-u_\lambda(y))(v(x)-v(y))}{|x-y|^{N+sp}}~\mathrm{d}x\mathrm{d}y\\
&\qquad+2 \int_{\mathcal{D}_{\epsilon}}\int_{\mathcal{D}^c_{\epsilon}}\frac{|u_\lambda(x)-u_\lambda(y)|^{p-2}(u_\lambda(x)-u_\lambda(y))(v(x)-v(y))}{|x-y|^{N+sp}}~\mathrm{d}x\mathrm{d}y \bigg] \\
&\leq 2\epsilon \int_{\mathcal{D}_{\epsilon}}\int_{\mathbb{R}^{N}}\frac{|u_\lambda(x)-u_\lambda(y)|^{p-1}|v(x)-v(y)|}{|x-y|^{N+sp}}~\mathrm{d}x\mathrm{d}y.\end{align*}
Now using the fact that $u_\lambda$ is bounded in $ W^{s,p}_{V}(\mathbb{R}^N)$ and Holder's inequality, we infer that there exists a positive constant $C>0$ such that 
\begin{equation}\label{eq4.16}
 \int_{\mathcal{D}_{\epsilon}}\int_{\mathbb{R}^{N}}\frac{|u_\lambda(x)-u_\lambda(y)|^{p-1}|v(x)-v(y)|}{|x-y|^{N+sp}}~\mathrm{d}x\mathrm{d}y\leq C\Bigg[\int_{\mathcal{D}_{\epsilon}}\int_{\mathbb{R}^{N}}\frac{|v(x)-v(y)|^p}{|x-y|^{N+sp}}~\mathrm{d}x\mathrm{d}y \Bigg] ^{\frac{1}{p}}.
\end{equation}
From the above inequality and \eqref{eq4.16}, we obtain
\begin{equation}\label{eq4.17}
\mathcal{I_{\lambda,\epsilon}}\leq 2C\epsilon \Bigg[\int_{\mathcal{D}_{\epsilon}}\int_{\mathbb{R}^{N}}\frac{|v(x)-v(y)|^p}{|x-y|^{N+sp}}~\mathrm{d}x\mathrm{d}y \Bigg] ^{\frac{1}{p}}.  
\end{equation}
Since $\frac{|v(x)-v(y)|^p}{|x-y|^{N+sp}}\in L^1(\mathbb{R}^{2N})$, thus for any $\zeta>0$ there exists $R_\zeta$ large enough such that
$$\int_{\textit{supp}(v)}\int_{\mathbb{R}^{N}\setminus B_{R_\zeta}(0)}\frac{|v(x)-v(y)|^p}{|x-y|^{N+sp}}~\mathrm{d}x\mathrm{d}y<\frac{\zeta}{2} .$$
It follows from $\mathcal{D}_{\epsilon}\subset \textit{supp}(v) $ and $\big|\mathcal{D}_{\epsilon}\times B_{R_\zeta}(0)\big|\to 0$ as $\epsilon\to 0^+$ that there exists a $\rho_{\zeta}>0$ and $\epsilon_\zeta>0$ such that
$$ \big|\mathcal{D}_{\epsilon}\times B_{R_\zeta}(0)\big|<\rho_{\zeta}~\text{and}~\int_{\mathcal{D}_{\epsilon}}\int_{B_{R_\zeta}(0)}\frac{|v(x)-v(y)|^p}{|x-y|^{N+sp}}~\mathrm{d}x\mathrm{d}y<\frac{\zeta}{2} ~\text{for}~\epsilon\in(0,\epsilon_\zeta).$$
This shows that for any $\epsilon\in(0,\epsilon_\zeta)$, we have 
$$ \int_{\mathcal{D}_{\epsilon}}\int_{\mathbb{R}^{N}}\frac{|v(x)-v(y)|^p}{|x-y|^{N+sp}}~\mathrm{d}x\mathrm{d}y <\zeta .$$
Hence 
\begin{equation}\label{eq4.18}
    \lim_{\epsilon\to 0^+} \int_{\mathcal{D}_{\epsilon}}\int_{\mathbb{R}^{N}}\frac{|v(x)-v(y)|^p}{|x-y|^{N+sp}}~\mathrm{d}x\mathrm{d}y=0.
\end{equation}
 Observe that $u_\lambda\leq -\epsilon v$ on $\mathcal{D}_{\epsilon}$ and hence by Lemma \ref{lemma2.3} there exists a positive constant $c>0$ such that
 $$\bigg|\int_{\mathcal{D}_{\epsilon}}\beta(x)u_\lambda^{\gamma-1}(u_\lambda+\epsilon v)~\mathrm{d}x\bigg|\leq \int_{\mathcal{D}_{\epsilon}}|\beta(x)u_\lambda^{\gamma}+\beta(x)\epsilon u_\lambda^{\gamma-1}v)|~\mathrm{d}x\leq 2c\epsilon^\gamma\|\beta\|_{L^\infty(\mathbb{R}^{N})}\|v\|^\gamma . $$
 This yields
 \begin{equation}\label{eq4.19}
    \lim_{\epsilon\to 0^+}\frac{1}{\epsilon} \int_{\mathcal{D}_{\epsilon}}\beta(x)u_\lambda^{\gamma-1}(u_\lambda+\epsilon v)~\mathrm{d}x=0.
 \end{equation}
  Dividing $\epsilon$ on both the sides of \eqref{eq4.13} and using that $|\mathcal{D}_{\epsilon}|\to 0$ as $\epsilon\to 0^+$, \eqref{eq4.17},~\eqref{eq4.18} and \eqref{eq4.19} hold true, we notice that 
$$\langle{L(u_\lambda),v}\rangle-\lambda \int_{\mathbb{R}^{N}} \alpha(x)u_\lambda^{-\delta}v~\mathrm{d}x-\int_{\mathbb{R}^{N}}\beta(x)u_\lambda^{\gamma-1}v~\mathrm{d}x\geq 0. $$
Due to the arbitrariness of $v$, we deduce that $u_\lambda$ is a weak solution to \eqref{main problem}. Similarly, we can prove that $w_\lambda$ is a weak solution to \eqref{main problem}. This completes the proof.
\end{proof}
\section{Existence of solutions for \texorpdfstring{$\lambda=\lambda_\ast$}{Lg}}\label{sec5}
In this section, we study the existence of solutions to \eqref{main problem} for $\lambda=\lambda_\ast$ and characterize the Nehari submanifold $\mathcal{M}^0_{\lambda_\ast}$, which is non-empty.
\begin{lemma}\label{lem5.1}
  Let $u\in \mathcal{M}^0_{\lambda_\ast}$, then for every $v\in W^{s,p}_{V}(\mathbb{R}^N)$ the following result holds 
  \begin{equation}\label{eq5.777}
 p(m+1)\langle{L(u),v}\rangle
-\lambda_\ast \int_{\mathbb{R}^{N}} \alpha(x)u^{-\delta}v~\mathrm{d}x-\int_{\mathbb{R}^{N}}\beta(x)u^{\gamma-1}v~\mathrm{d}x= 0.
\end{equation}
In particular, $(\mathcal{E}_{\lambda_\ast})$ has no solution in $\mathcal{M}^0_{\lambda_\ast}$ .
\end{lemma}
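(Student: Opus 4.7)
The key observation I would use is the characterization $\mathcal{M}^0_{\lambda_\ast}=\{u\in\mathcal{M}_{\lambda_\ast}:\lambda(u)=\lambda_\ast\}$ from Corollary~\ref{cor3.6}(c), together with Lemma~\ref{lem3.5} which says that $\lambda_\ast=\inf_{\mathcal{G}^+}\lambda$ is actually attained. Thus any $u\in\mathcal{M}^0_{\lambda_\ast}$ is an interior minimizer of the $0$-homogeneous functional $\lambda:\mathcal{G}^+\to(0,\infty)$ on the open cone $\mathcal{G}^+\subset W^{s,p}_V(\mathbb{R}^N)$, so the first-order optimality condition $\lambda'(u)[v]=0$ must hold for every admissible $v\in W^{s,p}_V(\mathbb{R}^N)$. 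The plan is to make this condition explicit and then combine it with the two scalar identities $\mathcal{F}'_{\lambda_\ast,u}(1)=0$ and $\mathcal{F}''_{\lambda_\ast,u}(1)=0$ that come from $u\in\mathcal{M}^0_{\lambda_\ast}$.

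Writing $\lambda(u)=CA(u)^{\mu}B(u)^{-1}D(u)^{-\nu}$ with $A(u)=\|u\|^{p(m+1)}$, $B(u)=\int_{\mathbb{R}^N}\alpha u^{1-\delta}$, $D(u)=\int_{\mathbb{R}^N}\beta u^\gamma$ and $\mu,\nu$ the exponents appearing in \eqref{eq3.18}, logarithmic differentiation yields
\begin{equation*}
\mu\,\frac{A'(u)[v]}{A(u)}-\frac{B'(u)[v]}{B(u)}-\nu\,\frac{D'(u)[v]}{D(u)}=0,
\end{equation*}
where the three Gateaux derivatives evaluate to $A'(u)[v]=\tfrac{p(m+1)}{a}\langle L(u),v\rangle$, $B'(u)[v]=(1-\delta)\int\alpha u^{-\delta}v$, and $D'(u)[v]=\gamma\int\beta u^{\gamma-1}v$. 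Solving the pair $\mathcal{F}'_{\lambda_\ast,u}(1)=\mathcal{F}''_{\lambda_\ast,u}(1)=0$ for $B(u)$ and $D(u)$ in terms of $A(u)$ gives $\lambda_\ast B(u)=\tfrac{a(\gamma-p(m+1))}{\gamma+\delta-1}A(u)$ and $D(u)=\tfrac{a(p(m+1)+\delta-1)}{\gamma+\delta-1}A(u)$; substituting these into the optimality identity and simplifying produces \eqref{eq5.777}. For the ``in particular'' claim I would argue by contradiction: if such a $u$ were a weak solution of $(\mathcal{E}_{\lambda_\ast})$, the weak formulation would give $\langle L(u),v\rangle=\lambda_\ast\int\alpha u^{-\delta}v+\int\beta u^{\gamma-1}v$ for every $v$; using this to eliminate $\langle L(u),v\rangle$ from \eqref{eq5.777} leaves a linear identity between $\int\alpha u^{-\delta}v$ and $\int\beta u^{\gamma-1}v$ valid on all of $W^{s,p}_V(\mathbb{R}^N)$, which forces the pointwise relation $\beta\,u^{\gamma+\delta-1}=c\,\alpha$ a.e.\ on $\{\beta>0\}$ for an explicit positive constant $c$, so $u$ coincides a.e.\ with a positive multiple of $(\alpha/\beta)^{1/(\gamma+\delta-1)}$ on $\{\beta>0\}$, which is ruled out by hypothesis (H4).

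The main obstacle I anticipate is the justification of the Gateaux derivative of $B$ in every direction $v$, because the singular weight $\alpha u^{-\delta}$ requires both that $u>0$ a.e.\ and that $\alpha u^{-\delta}v\in L^1(\mathbb{R}^N)$. To handle this I would first show $u>0$ a.e.\ by a perturbation argument modelled on Lemma~\ref{lem4.5}(i), and then establish the $L^1$-integrability and the derivative formula through a Fatou/mean-value comparison between $B(u+\epsilon v)$ and $B(u)$ along nonnegative directions, exactly as in the derivation of \eqref{eq4.9}--\eqref{eq4.10}. Since only one-sided information is immediate from the minimizing property for general $v$, I would first derive \eqref{eq5.777} for $v\in\mathcal{K}$ and then extend to arbitrary $v$ by splitting $v=v^+-v^-$, using the linearity of all three derivative functionals in $v$.
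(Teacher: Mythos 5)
Your overall strategy is the same as the paper's: view $u\in\mathcal{M}^0_{\lambda_\ast}$ as a minimizer of the $0$-homogeneous functional $\lambda(\cdot)$ on $\mathcal{G}^+$, write down the first-order condition, and simplify it using the two scalar identities $\mathcal{F}'_{\lambda_\ast,u}(1)=\mathcal{F}''_{\lambda_\ast,u}(1)=0$; your expressions for $\lambda_\ast B(u)$ and $D(u)$ in terms of $A(u)$ agree with what the paper obtains. The non-existence argument via (H4) is also the paper's (note only that the pointwise identity immediately forces $\beta>0$ a.e., since its left-hand side is positive, so the sign-changing case you gloss over is harmless).

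The genuine gap is in your last step, the passage from the cone $\mathcal{K}$ to arbitrary $v$. As you yourself observe, the minimizing property only gives one-sided information: for $v\in\mathcal{K}$ and $t>0$ one has $\lambda(u+tv)\geq\lambda(u)$, and the Fatou/mean-value argument then yields the \emph{inequality} $p(m+1)\langle L(u),v\rangle-\lambda_\ast\int\alpha u^{-\delta}v-\int\beta u^{\gamma-1}v\geq 0$ on $\mathcal{K}$ (this is the paper's \eqref{eq5.6}), not the equality \eqref{eq5.777}. You cannot upgrade this to an equality on $\mathcal{K}$ by perturbing in the direction $-v$: the function $u-tv$ may change sign, and the difference quotient of $\int\alpha|u-tv|^{1-\delta}$ is not dominated by $\alpha u^{-\delta}v$ (the singularity works against you in that direction), so two-sided Gateaux differentiability of $B$ is exactly what is \emph{not} available. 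Consequently your plan to ``derive \eqref{eq5.777} for $v\in\mathcal{K}$ and then extend by splitting $v=v^+-v^-$ and linearity'' does not close: with only the inequality on $\mathcal{K}$, applying it to $v^+$ and $v^-$ separately and subtracting is invalid. The paper's way out is the truncation device of Theorem \ref{thm4.6}: test the one-sided inequality with $\phi^+_\epsilon=(u+\epsilon v)^+=\phi_\epsilon+\phi^-_\epsilon$, estimate the error terms supported on $\mathcal{D}_\epsilon=\{u+\epsilon v\leq 0\}$ (whose measure tends to $0$), divide by $\epsilon$ and let $\epsilon\to 0^+$ to get the inequality for \emph{every} $v\in W^{s,p}_V(\mathbb{R}^N)$, and only then replace $v$ by $-v$ to obtain equality. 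You need this step (or an equivalent substitute); your proposal omits it.

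A smaller point worth checking: carrying out your own computation honestly (with $B'(u)[v]=(1-\delta)\int\alpha u^{-\delta}v$ and $D'(u)[v]=\gamma\int\beta u^{\gamma-1}v$) yields
\begin{equation*}
p(m+1)\langle L(u),v\rangle-(1-\delta)\lambda_\ast\int_{\mathbb{R}^{N}}\alpha(x)u^{-\delta}v~\mathrm{d}x-\gamma\int_{\mathbb{R}^{N}}\beta(x)u^{\gamma-1}v~\mathrm{d}x=0,
\end{equation*}
not \eqref{eq5.777} as displayed; the displayed version fails at $v=u$ (it would force $p(m+1)=1$), whereas the version with the coefficients $1-\delta$ and $\gamma$ is consistent with $0$-homogeneity and is the one actually used in the paper's deduction of the pointwise identity $\lambda_\ast(p(m+1)+\delta-1)\alpha u^{-\delta}=(\gamma-p(m+1))\beta u^{\gamma-1}$. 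So your assertion that the substitution ``produces \eqref{eq5.777}'' should be corrected; the discrepancy is a typo in the statement rather than an error in your method.
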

\begin{proof}
    Let us rewrite $\lambda(u)$ by $\lambda(u)=C(a,\gamma,\delta,m,p)g(u)h(u)$, where we define $g$ and $h$ by
    \begin{equation}\label{eq5.99}
      g(u)=\frac{1}{\displaystyle\int_{\mathbb{R}^{N}}\alpha (x)|u|^{1-\delta}~\mathrm{d}x} ~\text{and}~h(u)=\frac{\Big[\|u\|^{p(m+1)}\Big]^{\frac{\gamma+\delta-1}{\gamma-p(m+1)}}}{\bigg[\displaystyle\int_{\mathbb{R}^{N}}\beta (x)|u|^\gamma~\mathrm{d}x\bigg]^{\frac{p(m+1)+\delta-1}{\gamma-p(m+1)}}}.  
    \end{equation}
    Let $u\in\mathcal{M}^0_{\lambda_\ast} $ and $v\in\mathcal{K}$. By Theorem \ref{thm3.1} and Corollary \ref{cor3.3}, the map $u\mapsto\displaystyle\int_{\mathbb{R}^{N}}\beta (x)|u|^\gamma~\mathrm{d}x$ is continuous and $\displaystyle\int_{\mathbb{R}^{N}}\beta (x)|u|^\gamma~\mathrm{d}x>0$. It follows that $\displaystyle\int_{\mathbb{R}^{N}}\beta (x)|u+tv|^\gamma~\mathrm{d}x>0$ for $t>0$ small enough and hence $h(u+tv)$ is well-defined for $t>0$ small enough. Thus $\langle{h'(u),v}\rangle$ exists finitely. Now by Corollary \ref{cor3.6}, we have $\lambda(u)=\lambda_\ast$. This shows that $\lambda(u+tv) -\lambda(u)=\lambda(u+tv)-\lambda_\ast\geq 0~\text{for all}~t>0 ~\text{small enough}.$
    Hence 
    \begin{equation}\label{eq5.1}
      (h(u+tv)-h(u))g(u+tv)\geq -h(u)(g(u+tv)-g(u)).  
    \end{equation}
    Dividing $t$ on both the sides of \eqref{eq5.1} and applying limit inferior as $t\to 0^+$, we get
    \begin{equation}\label{eq5.2}
     h(u)\bigg[\int_{\mathbb{R}^{N}}\alpha (x)|u|^{1-\delta}~\mathrm{d}x\bigg]^{-2}\liminf_{t\to 0^+}\int_{\mathbb{R}^{N}}\alpha(x)\bigg(\frac{|u+tv|^{1-\delta}-|u|^{1-\delta}}{t}\bigg)~\mathrm{d}x \leq \langle{h'(u),v}\rangle  g(u)<\infty.   
    \end{equation}
    By Mean Value theorem, there exists $\theta\in(0,1)$ such that 
    $$\alpha(x)\bigg(\frac{|u+tv|^{1-\delta}-|u|^{1-\delta}}{t}\bigg)=(1-\delta)\alpha(x)(u+\theta tv)^{-\delta}v \geq 0$$
    and 
    \begin{equation}\label{eq5.999}
     \lim_{t\to 0^+}\alpha(x)\bigg(\frac{|u+tv|^{1-\delta}-|u|^{1-\delta}}{t}\bigg)=H(x)~\text{(say)}=\begin{cases}
        0~\text{if}~v=0,u>0;\\
        (1-\delta)\alpha(x)u^{-\delta}v~\text{if}~v>0,u>0;\\
        \infty~\text{if}~v>0,u=0.
        \end{cases}
    \end{equation}
    Now, by applying Fatou's lemma, we have
    \begin{equation}\label{eq5.3}
    \int_{\mathbb{R}^{N}}H(x)~\mathrm{d}x\leq \liminf_{t\to 0^+}\int_{\mathbb{R}^{N}}\alpha(x)\bigg(\frac{|u+tv|^{1-\delta}-|u|^{1-\delta}}{t}\bigg)~\mathrm{d}x.   
    \end{equation}
    It is clear from \eqref{eq5.2} and \eqref{eq5.3} that $0\leq \displaystyle\int_{\mathbb{R}^{N}}H(x)~\mathrm{d}x<\infty $. Hence we must have $H(x)=(1-\delta)\alpha(x)u^{-\delta}v$ and $u>0$ a.e. in $\mathbb{R}^{N}$. Choosing $v>0$, we obtain $0<\displaystyle\int_{\mathbb{R}^{N}}\alpha(x)u^{-\delta}v~\mathrm{d}x<\infty$. It follows that $\langle{g'(u),v}\rangle$ exists and is given by
    \begin{equation}\label{eq5.5}
      \langle{g'(u),v}\rangle=-(1-\delta)\bigg[\int_{\mathbb{R}^{N}}\alpha (x)|u|^{1-\delta}~\mathrm{d}x\bigg]^{-2}\int_{\mathbb{R}^{N}}\alpha(x)u^{-\delta}v~\mathrm{d}x.  
    \end{equation}
    Taking into account \eqref{eq5.99}, \eqref{eq5.2}, \eqref{eq5.3}, \eqref{eq5.5} and the fact that $u\in\mathcal{M}^0_{\lambda_\ast}$, we can easily obtain 
\begin{equation}\label{eq5.6}
 p(m+1)\langle{L(u),v}\rangle
-\lambda_\ast \int_{\mathbb{R}^{N}} \alpha(x)u^{-\delta}v~\mathrm{d}x-\int_{\mathbb{R}^{N}}\beta(x)u^{\gamma-1}v~\mathrm{d}x\geq 0,~\forall~v\in \mathcal{K}. 
\end{equation}
Choose $v\in W^{s,p}_{V}(\mathbb{R}^N)$ and define $\phi_\epsilon=u+\epsilon v$, then for each $\epsilon>0$ be given $\phi^+_\epsilon\in\mathcal{K}$. Now, the result follows by replacing $\phi_\epsilon$ in place of $v$ in \eqref{eq5.6} and applying a similar strategy as in Theorem \ref{thm4.6}.\\
Next, we have to show that $(\mathcal{E}_{\lambda_\ast})$ has no solution in $\mathcal{M}^0_{\lambda_\ast}$. On the contrary, let it have a solution $u\in\mathcal{M}^0_{\lambda_\ast}$. Then from the weak formulation definition and \eqref{eq5.777}, we deduce that 
$$ \int_{\mathbb{R}^{N}}\big(\lambda_\ast(p(m+1)+\delta-1)\alpha(x)u^{-\delta}+(p(m+1)-\gamma)\beta(x)u^{\gamma-1}\big)v~\mathrm{d}x=0~\text{for all}~v\in W^{s,p}_{V}(\mathbb{R}^N). $$
This yields
$$\lambda_\ast(p(m+1)+\delta-1)\alpha(x)u^{-\delta}= (\gamma-p(m+1))\beta(x)u^{\gamma-1}~\text{a.e. in} ~\mathbb{R}^{N}. $$
Due to the sign-changing behaviour of $\beta$, we have two situations, i.e., either $\beta(x)\leq 0$ or $\beta(x)> 0$. If $\beta(x)\leq 0$ in some positive measure set $\mathcal{D}\subset \mathbb{R}^N$, then it is easy to see that $\lambda_\ast(p(m+1)+\delta-1)\alpha(x)u^{-\delta}\leq 0$ in $\mathcal{D}$, which is a contradiction. Further, if $\beta(x)>0$ a.e. in $\mathbb{R}^N$, then from $(H4)$ we obtain
$$u=\Bigg[\frac{\lambda_\ast(p(m+1)+\delta-1)\alpha(x)}{(\gamma-p(m+1))\beta(x)}\Bigg]^{\frac{1}{\gamma+\delta-1}}\notin W^{s,p}_{V}(\mathbb{R}^N), $$ 
which is again a contradiction. This completes the proof.  
\end{proof}
\begin{corollary}\label{cor5.2}
    The Nehari submanifold set $\mathcal{M}^0_{\lambda_\ast}$ is compact.
\end{corollary}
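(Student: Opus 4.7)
The plan is to take an arbitrary sequence $\{u_n\}_{n\in\mathbb{N}}\subset\mathcal{M}^0_{\lambda_\ast}$ and extract a subsequence converging strongly in $W^{s,p}_V(\mathbb{R}^N)$ to some $u\in\mathcal{M}^0_{\lambda_\ast}$. By Corollary \ref{cor3.3} the set $\mathcal{M}^0_{\lambda_\ast}$ is bounded, so up to a subsequence $u_n\rightharpoonup u$ weakly in $W^{s,p}_V(\mathbb{R}^N)$. Applying Lemma \ref{lemma2.3} together with the Lebesgue dominated convergence argument used in the proof of Theorem \ref{thm3.1}(i), I obtain
\[
\int_{\mathbb{R}^N}\alpha(x)|u_n|^{1-\delta}\,\mathrm{d}x\to \int_{\mathbb{R}^N}\alpha(x)|u|^{1-\delta}\,\mathrm{d}x,\qquad \int_{\mathbb{R}^N}\beta(x)|u_n|^{\gamma}\,\mathrm{d}x\to \int_{\mathbb{R}^N}\beta(x)|u|^{\gamma}\,\mathrm{d}x.
\]

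Next I establish that $u\neq 0$. Since $u_n\in\mathcal{M}^0_{\lambda_\ast}\subset\mathcal{M}_{\lambda_\ast}$, the relation $\mathcal{F}''_{\lambda_\ast,u_n}(1)=0$ combined with $\mathcal{F}'_{\lambda_\ast,u_n}(1)=0$ yields the identity $a\|u_n\|^{p(m+1)}=\frac{\delta+\gamma-1}{p(m+1)+\delta-1}\int_{\mathbb{R}^N}\beta(x)|u_n|^{\gamma}\,\mathrm{d}x$, which, together with \eqref{eq3.15}, gives a uniform lower bound $\|u_n\|\ge \hat c>0$ exactly as in \eqref{eq3.16}. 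Combining this with the identity just displayed forces $\int_{\mathbb{R}^N}\beta(x)|u_n|^{\gamma}\,\mathrm{d}x\ge C_0>0$ for a constant $C_0$ independent of $n$. Passing to the limit, $\int_{\mathbb{R}^N}\beta(x)|u|^{\gamma}\,\mathrm{d}x\ge C_0>0$, so $u\in\mathcal{G}^+$ and in particular $u\neq 0$.

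The key step, and the main obstacle, is to upgrade weak convergence to strong convergence. I would use the extremal characterization of $\lambda_\ast$ as in Lemma \ref{lem3.5}. By the weak lower semicontinuity of the norm and the strong convergence of the two integrals above, the formula \eqref{eq3.18} for $\lambda(\cdot)$ gives
\[
\lambda(u)\le\liminf_{n\to\infty}\lambda(u_n)=\lambda_\ast.
\]
On the other hand, $u\in\mathcal{G}^+$ combined with the definition \eqref{eq3.19} yields $\lambda(u)\ge\lambda_\ast$, so $\lambda(u)=\lambda_\ast$. Because the denominator of $\lambda(u)$ in \eqref{eq3.18} is the limit of the denominator of $\lambda(u_n)$ and the numerator depends monotonically on $\|u\|^{p(m+1)}$, the equality $\lambda(u)=\lim_n\lambda(u_n)$ forces $\|u\|^{p(m+1)}=\lim_n\|u_n\|^{p(m+1)}$, i.e. $\|u_n\|\to\|u\|$. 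Since $W^{s,p}_V(\mathbb{R}^N)$ is uniformly convex, weak convergence together with convergence of norms implies $u_n\to u$ strongly.

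It remains to check $u\in\mathcal{M}^0_{\lambda_\ast}$. Strong convergence allows me to pass to the limit in $\mathcal{F}'_{\lambda_\ast,u_n}(1)=0$ and $\mathcal{F}''_{\lambda_\ast,u_n}(1)=0$ to obtain $\mathcal{F}'_{\lambda_\ast,u}(1)=0$ and $\mathcal{F}''_{\lambda_\ast,u}(1)=0$; together with $u\ge 0$, $u\neq 0$, this gives $u\in\mathcal{M}^0_{\lambda_\ast}$. Equivalently, one can invoke the characterization in Corollary \ref{cor3.6}(c): we have already shown $\int_{\mathbb{R}^N}\beta(x)|u|^{\gamma}\,\mathrm{d}x>0$ and $\lambda(u)=\lambda_\ast$. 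This closes the proof. The delicate point throughout is the strict use of the infimum definition of $\lambda_\ast$ to convert a one-sided lower semicontinuity inequality into norm convergence; without this step, the weak limit might in principle lie outside $\mathcal{M}^0_{\lambda_\ast}$.
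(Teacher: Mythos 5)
Your proposal is correct and follows essentially the same route as the paper: boundedness via Corollary \ref{cor3.3}, a weakly convergent subsequence, verification that the weak limit lies in $\mathcal{G}^+$, and then the extremal characterization $\lambda_\ast=\inf_{\mathcal{G}^+}\lambda(\cdot)$ together with the explicit form \eqref{eq3.18} of $\lambda(\cdot)$ to upgrade weak to strong convergence and pass to the limit in $\mathcal{F}'_{\lambda_\ast,\cdot}(1)=\mathcal{F}''_{\lambda_\ast,\cdot}(1)=0$. Your derivation of the uniform lower bound $\int_{\mathbb{R}^N}\beta(x)|u_n|^{\gamma}\,\mathrm{d}x\geq C_0>0$ from the $\mathcal{M}^0$-identities and \eqref{eq3.16}, and your direct extraction of $\|u_n\|\to\|u\|$ from $\lambda(u)=\lambda_\ast$, are just slightly more explicit versions of the steps the paper handles by reference to Lemma \ref{lem4.2} and by contradiction.
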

\begin{proof}
    Choose a sequence $\{u_n\}_{n\in \mathbb{N}}$ in $\mathcal{M}^0_{\lambda_\ast}$, then by Corollary \ref{cor3.3}, $\{u_n\}_{n\in \mathbb{N}}$ is bounded. Hence up to a subsequence $u_n\rightharpoonup u$ weakly in $W^{s,p}_{V}(\mathbb{R}^N)$ as $n\to\infty$. Similar to Lemma \ref{lem4.2}, we can obtain $u>0$ and $\displaystyle\int_{\mathbb{R}^{N}}\beta(x)|u|^{\gamma}~\mathrm{d}x>0$. Now we claim that $u_n\to u$ in $W^{s,p}_{V}(\mathbb{R}^N)$ as $n\to\infty$. Indeed, if not, then we have
    $\lambda_\ast=\displaystyle\liminf_{n\to\infty}\lambda(u_n)>\lambda(u), $
    which is a contradiction. Therefore, $u_n\to u$ in $W^{s,p}_{V}(\mathbb{R}^N)$ as $n\to\infty$ and $\mathcal{F}'_{\lambda_*,u}(1)=\mathcal{F}''_{\lambda_*,u}(1)=0$. Thus, we get $u\in \mathcal{M}^0_{\lambda_\ast}$ and hence the result follows.
\end{proof}
Define $t_{\lambda_\ast}:\overline{\hat{\mathcal{M}}_{\lambda_\ast}}\setminus\{0\}\to \mathbb{R}$ and $ s_{\lambda_\ast}:\overline{\hat{\mathcal{M}}_{\lambda_\ast}\cup\hat{\mathcal{M}}^+_{\lambda_\ast}}\to\mathbb{R}$ by 

  $$t_{\lambda_\ast}(u)=\begin{cases}
     t^-_{\lambda_\ast}(u)~\text{if}~u\in\hat{\mathcal{M}}_{\lambda_\ast},\\
     t^0_{\lambda_\ast}(u)~\text{otherwise}.
  \end{cases} 
  ~\text{and}~s_{\lambda_\ast}(w)=\begin{cases}
     t^+_{\lambda_\ast}(w)~\text{if}~w\in\hat{\mathcal{M}}_{\lambda_\ast}\cup\hat{\mathcal{M}}^+_{\lambda_\ast},\\
     t^0_{\lambda_\ast}(w)~\text{otherwise}. 
  \end{cases} .$$
\begin{corollary}\label{cor5.3}
    Let $u\notin \hat{\mathcal{M}}^+_{\lambda_\ast}$, then the following results hold:
    \begin{itemize}
        \item [(i)] $$\lim_{\lambda\uparrow{\lambda_\ast}}t^-_\lambda(u)=t_{\lambda_\ast}(u)~ \text{and} ~ \lim_{\lambda\uparrow {\lambda_\ast}}t^+_\lambda(u)=s_{\lambda_\ast}(u).$$
        \item[(ii)] $$\lim_{\lambda\uparrow {\lambda_\ast}}\mathcal{I}^-_{\lambda}(u)=\Psi_{\lambda_\ast}(t_{\lambda_\ast}(u)u)~ \text{and} ~ \lim_{\lambda\uparrow {\lambda_\ast}}\mathcal{I}^+_{\lambda}(u)=\Psi_{\lambda_\ast}(s_{\lambda_\ast}(u)u).$$
    \end{itemize}
\end{corollary}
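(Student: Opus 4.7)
The plan is to reduce everything to the one-variable analysis of the function $g_u(t)$ introduced in the proof of Proposition \ref{prop3.1}, and then split into two cases according to whether $u \in \hat{\mathcal{M}}_{\lambda_\ast}$ or $u$ lies on the boundary $\{\lambda(u)=\lambda_\ast\}$ of $\hat{\mathcal{M}}_{\lambda_\ast}$. Recall that for $u\in\mathcal{K}$ with $\int_{\mathbb{R}^N}\beta(x)|u|^\gamma\,\mathrm{d}x>0$ (which is exactly the hypothesis $u\notin\hat{\mathcal{M}}^+_{\lambda_\ast}$), the function $g_u$ has a unique global maximum at $t=t(u)$, is strictly increasing on $(0,t(u))$, strictly decreasing on $(t(u),\infty)$, and $g_u(t(u)) = \lambda(u)\int_{\mathbb{R}^N}\alpha(x)|u|^{1-\delta}\,\mathrm{d}x$. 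The positive critical points of $\mathcal{F}_{\lambda,u}$ are precisely the roots of $g_u(t)=\lambda\int_{\mathbb{R}^N}\alpha(x)|u|^{1-\delta}\,\mathrm{d}x$.

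\textbf{Case A: $u\in\hat{\mathcal{M}}_{\lambda_\ast}$, i.e., $\lambda_\ast<\lambda(u)$.} Here I would simply invoke Lemma \ref{lem3.7} on the open interval $I=(0,\lambda(u))$: the maps $\lambda\mapsto t^\pm_{\lambda}(u)$ are of class $\mathcal{C}^\infty$ on $I$, hence continuous at $\lambda_\ast$. Therefore $\lim_{\lambda\uparrow\lambda_\ast}t^+_\lambda(u)=t^+_{\lambda_\ast}(u)=s_{\lambda_\ast}(u)$ and $\lim_{\lambda\uparrow\lambda_\ast}t^-_\lambda(u)=t^-_{\lambda_\ast}(u)=t_{\lambda_\ast}(u)$ by the very definitions of $t_{\lambda_\ast}$ and $s_{\lambda_\ast}$ on $\hat{\mathcal{M}}_{\lambda_\ast}$.

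\textbf{Case B: $u\notin\hat{\mathcal{M}}_{\lambda_\ast}\cup\hat{\mathcal{M}}^+_{\lambda_\ast}$.} In this situation, $u\in\mathcal{G}^+$ forces $\lambda(u)\geq\lambda_\ast$ by the definition \eqref{eq3.19} of $\lambda_\ast$, while $u\notin\hat{\mathcal{M}}_{\lambda_\ast}$ gives $\lambda(u)\leq\lambda_\ast$. Hence $\lambda(u)=\lambda_\ast$. For every $\lambda<\lambda_\ast=\lambda(u)$, Proposition \ref{prop3.1}(II)(a) provides $t^+_\lambda(u)<t(u)<t^-_\lambda(u)$; by the monotonicity part of Lemma \ref{lem3.7}, $t^+_\lambda(u)$ increases and $t^-_\lambda(u)$ decreases as $\lambda\uparrow\lambda_\ast$, so the two monotone limits
\[
T^+:=\lim_{\lambda\uparrow\lambda_\ast}t^+_\lambda(u)\leq t(u)\leq \lim_{\lambda\uparrow\lambda_\ast}t^-_\lambda(u)=:T^-
\]
exist. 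Passing to the limit in $g_u\bigl(t^\pm_\lambda(u)\bigr)=\lambda\int_{\mathbb{R}^N}\alpha(x)|u|^{1-\delta}\,\mathrm{d}x$ yields
\[
g_u(T^+)=g_u(T^-)=\lambda_\ast\!\int_{\mathbb{R}^N}\!\alpha(x)|u|^{1-\delta}\,\mathrm{d}x=\lambda(u)\!\int_{\mathbb{R}^N}\!\alpha(x)|u|^{1-\delta}\,\mathrm{d}x=g_u\bigl(t(u)\bigr).
\]
Since $t(u)$ is the \emph{unique} maximizer of $g_u$, this forces $T^+=T^-=t(u)=t^0_{\lambda_\ast}(u)$, which matches $s_{\lambda_\ast}(u)$ and $t_{\lambda_\ast}(u)$ in this case.

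\textbf{Part (ii).} Once (i) is established, (ii) is a direct consequence of the joint continuity of $(t,\lambda)\mapsto \mathcal{F}_{\lambda,u}(t)=\Psi_\lambda(tu)$: for any net $\lambda\uparrow\lambda_\ast$,
\[
\mathcal{I}^{\pm}_{\lambda}(u)=\mathcal{F}_{\lambda,u}\bigl(t^{\pm}_\lambda(u)\bigr)\longrightarrow\mathcal{F}_{\lambda_\ast,u}\bigl(\lim_{\lambda\uparrow\lambda_\ast}t^{\pm}_\lambda(u)\bigr),
\]
which by part (i) equals $\Psi_{\lambda_\ast}(s_{\lambda_\ast}(u)u)$ or $\Psi_{\lambda_\ast}(t_{\lambda_\ast}(u)u)$ respectively.

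The only delicate point is Case B, where Lemma \ref{lem3.7} (proved on an open interval on which the fibre critical points are smoothly defined) cannot be applied at $\lambda_\ast$ itself, so one must instead argue by the sandwich $T^+\leq t(u)\leq T^-$ together with the strict unimodality of $g_u$; this is where I expect the main (modest) obstacle to lie.
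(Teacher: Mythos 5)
Your proof is correct. The paper's own proof of this corollary is a single line ("it follows immediately from Lemma \ref{lem3.7}"), so in Case A you are doing exactly what the authors intend: on $I=(0,\lambda(u))\ni\lambda_\ast$ the maps $\lambda\mapsto t^\pm_\lambda(u)$ are smooth and $t^\pm_{\lambda_\ast}(u)$ coincide with $s_{\lambda_\ast}(u)$ and $t_{\lambda_\ast}(u)$ by definition. The genuine content you add is Case B, where $\lambda(u)=\lambda_\ast$ and Lemma \ref{lem3.7} only gives smoothness on the open interval $(0,\lambda_\ast)$, so continuity up to the endpoint is not literally "immediate": your sandwich $t^+_\lambda(u)<t(u)<t^-_\lambda(u)$ from Proposition \ref{prop3.1}(II)(b), combined with passing to the limit in $g_u(t^\pm_\lambda(u))=\lambda\int_{\mathbb{R}^N}\alpha|u|^{1-\delta}\,\mathrm{d}x$ and the strict unimodality of $g_u$ at its unique maximizer $t(u)$, correctly forces both one-sided limits to collapse to $t(u)=t^0_{\lambda_\ast}(u)$, which is exactly the value of $t_{\lambda_\ast}$ and $s_{\lambda_\ast}$ off $\hat{\mathcal{M}}_{\lambda_\ast}$. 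Part (ii) then follows from the joint continuity of $(t,\lambda)\mapsto\Psi_\lambda(tu)$ as you say. In short, your write-up supplies the boundary-case argument that the paper leaves implicit, and it is sound.
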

\begin{proof}
    It follows immediately from Lemma \ref{lem3.7}.
\end{proof}
The results below, which may be directly obtained from \cite{silva2018local}, are important to show the existence of solutions when $\lambda\geq \lambda_\ast $.
\begin{proposition}\label{prop5.4}
    The following results hold:
    \begin{itemize}
        \item[(i)] $\overline{\hat{\mathcal{M}}_{\lambda_\ast}\cup\hat{\mathcal{M}}^+_{\lambda_\ast}}=\overline{\hat{\mathcal{M}}_{\lambda_\ast}}\cup \overline{\hat{\mathcal{M}}^+_{\lambda_\ast}}\cup \{tu:t>0,~u\in\mathcal{M}^0_{\lambda_\ast}\}\cup\{0\}.$
        \item[(ii)] $t_{\lambda_\ast}$ is continuous and the map $P^-:S\cap\overline{\hat{\mathcal{M}}_{\lambda_\ast}}\to \mathcal{M}^-_{\lambda_\ast}\cup\mathcal{M}^0_{\lambda_\ast}$ defined by $P^-(u)=t_{\lambda_\ast}(u)u$ is a homeomorphism.
        \item[(iii)]  $s_{\lambda_\ast}$ is continuous and the map $P^+:S\to \mathcal{M}^+_{\lambda_\ast}\cup\mathcal{M}^0_{\lambda_\ast}$ defined by $P^+(u)=s_{\lambda_\ast}(u)u$ is a homeomorphism.
        \item[(iv)] $\mathcal{M}^0_{\lambda_\ast}$ has empty interior .
    \end{itemize}
\end{proposition}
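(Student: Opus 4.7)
The plan is to follow the strategy of Silva--da Silva in \cite{silva2018local}, adapting their arguments to the fractional Kirchhoff functional $\Psi_{\lambda_\ast}$ using the continuity and $0$-homogeneity of the map $u \mapsto \lambda(u)$ (Lemma \ref{lem3.5}), the characterization of $\mathcal{M}^0_{\lambda_\ast}$ furnished by Corollary \ref{cor3.6}(c), and the fiber-map machinery of Proposition \ref{prop3.1}.

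For part (i), the inclusion $\overline{A \cup B} \subseteq \overline{A} \cup \overline{B}$ is automatic, so I would first identify each closure separately. The set $\hat{\mathcal{M}}^+_{\lambda_\ast}$ is cut out of $\mathcal{K}$ by the closed condition $\int \beta u^\gamma \le 0$, so its closure merely adds $\{0\}$ and the stratum $\{u \ge 0 : \int \beta u^\gamma = 0\}$. The set $\hat{\mathcal{M}}_{\lambda_\ast}$ is open in $\mathcal{K}$ (both defining inequalities are open, using continuity of $\lambda$ on $\mathcal{G}^+$), so its boundary is reached either through $\lambda(u_n) \downarrow \lambda_\ast$ or $\int \beta u_n^\gamma \downarrow 0^+$. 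In the first case, Corollary \ref{cor3.6}(c) together with $0$-homogeneity of $\lambda$ forces the limit to lie in $\{tu : t > 0,\ u \in \mathcal{M}^0_{\lambda_\ast}\}$; in the second, the limit falls into $\overline{\hat{\mathcal{M}}^+_{\lambda_\ast}}$. For the reverse inclusion I would, given any $w \in \mathcal{M}^0_{\lambda_\ast}$, construct a sequence in $\hat{\mathcal{M}}_{\lambda_\ast}$ converging to $tw$ by perturbing $w$ in a direction along which $\lambda$ increases, invoking the infimum definition of $\lambda_\ast$ and the continuity of $\lambda$.

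For parts (ii) and (iii), bijectivity of $P^\pm$ follows from the uniqueness of $t^\pm_{\lambda_\ast}(u)$ and $t^0_{\lambda_\ast}(u)$ granted by Proposition \ref{prop3.1}, together with the $0$-homogeneity that makes restriction to the unit sphere $S$ well defined. Continuity of $t_{\lambda_\ast}$ and $s_{\lambda_\ast}$ away from $\mathcal{M}^0_{\lambda_\ast}$ is a consequence of an implicit function theorem argument essentially identical to the one used in Lemma \ref{lem4.4}(ii), applied to the equation $g_u(t) = \lambda_\ast \int \alpha |u|^{1-\delta}$; at limit points where both branches coalesce onto $t^0_{\lambda_\ast}(u)$, continuity follows from the squeeze $t^+_{\lambda}(u_n) \le t^0_{\lambda_\ast}(u_n) \le t^-_{\lambda}(u_n)$ combined with Corollary \ref{cor5.3}. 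Continuity of the inverses $(P^\pm)^{-1}(v) = v/\|v\|$ is immediate since Lemma \ref{lem3.2} and Corollary \ref{cor5.2} ensure that $\|v\|$ is bounded away from $0$ on both target sets.

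For part (iv), emptiness of the interior follows from the fact that $\mathcal{M}^0_{\lambda_\ast} \subset \mathcal{M}_{\lambda_\ast}$ already lives on the codimension-one Nehari constraint $\mathcal{F}'_{\lambda_\ast,u}(1) = 0$, and any neighborhood of $u \in \mathcal{M}^0_{\lambda_\ast}$ in $W^{s,p}_V(\mathbb{R}^N)$ contains points violating this identity; concretely, one perturbs $u$ by a test function $v$ for which the directional derivative of $u \mapsto \mathcal{F}'_{\lambda_\ast,u}(1)$ is nonzero, which is generic. I expect the main obstacle to be in part (ii): establishing continuity of $t_{\lambda_\ast}$ precisely at the crossover points where the two fiber-map branches $t^+_\lambda$ and $t^-_\lambda$ coalesce, since the implicit function theorem degenerates there and one must instead use the monotonicity information from Lemma \ref{lem3.7} together with Corollary \ref{cor5.3} to force the two-sided limit.
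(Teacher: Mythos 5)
Your overall strategy coincides with the paper's, which offers no argument at all for this proposition beyond deferring to \cite{silva2018local}; parts (ii), (iii) and the forward inclusion of (i) in your sketch are workable as outlined. However, there is a genuine gap in the only nontrivial direction of (i), namely $\{tu:t>0,\ u\in\mathcal{M}^0_{\lambda_\ast}\}\subseteq\overline{\hat{\mathcal{M}}_{\lambda_\ast}}$ (recall $\overline{A\cup B}=\overline{A}\cup\overline{B}$, so this inclusion is the entire content of (i)). You propose to ``perturb $w$ in a direction along which $\lambda$ increases,'' but every $w\in\mathcal{M}^0_{\lambda_\ast}$ is, after normalization, a \emph{global minimizer} of $\lambda$ on $\mathcal{G}^+$ (Corollary \ref{cor3.6}(c)), so the existence of nearby points with $\lambda$ \emph{strictly} larger than $\lambda_\ast$ is exactly the non-degeneracy that must be proved: a priori $\lambda$ could be locally constant equal to $\lambda_\ast$ near $w$, in which case $\hat{\mathcal{M}}_{\lambda_\ast}$ would miss an entire neighborhood of the ray through $w$ and the claimed equality would fail. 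The infimum definition of $\lambda_\ast$ and continuity of $\lambda$, which you invoke, give only $\lambda\geq\lambda_\ast$ and cannot rule this out. The gap is closable, but it needs an extra input: if $\lambda\equiv\lambda_\ast$ on a ball $B_r(w)\subset\mathcal{G}^+$, then $u'\mapsto t(u')u'$ sends $B_r(w)$ into $\mathcal{M}^0_{\lambda_\ast}$; taking $u'_n=w+\tfrac{r}{2}e_n$ with $\|e_m-e_n\|\geq c>0$, the compactness of $\mathcal{M}^0_{\lambda_\ast}$ (Corollary \ref{cor5.2}) together with the boundedness of $t(u'_n)$ away from $0$ and $\infty$ (via Lemma \ref{lemma2.3}) forces a strongly convergent subsequence of $\{u'_n\}$, a contradiction. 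Some such argument must appear explicitly.

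A secondary, more cosmetic issue is in (iv): the phrase ``the directional derivative of $u\mapsto\mathcal{F}'_{\lambda_\ast,u}(1)$ is nonzero, which is generic'' is not reliable here, since the singular term makes $u\mapsto\int_{\mathbb{R}^N}\alpha|u|^{1-\delta}\,\mathrm{d}x$ non-differentiable (its one-sided derivative may be $+\infty$). It is cleaner either to scale along the ray --- for $u\in\mathcal{M}^0_{\lambda_\ast}$ and $t\neq 1$, Proposition \ref{prop3.1}(II)(b) gives $\mathcal{F}'_{\lambda_\ast,u}(t)<0$, so $tu\notin\mathcal{M}_{\lambda_\ast}\supseteq\mathcal{M}^0_{\lambda_\ast}$ while $tu\to u$ --- or simply to note that $\mathcal{M}^0_{\lambda_\ast}$ is compact (Corollary \ref{cor5.2}) and a compact subset of the infinite-dimensional space $W^{s,p}_V(\mathbb{R}^N)$ has empty interior. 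Finally, for the inverse of $P^+$ in (iii) you do not need (and do not obviously have, from Lemma \ref{lem3.2}(i)) a uniform lower bound on $\|u\|$ over $\mathcal{M}^+_{\lambda_\ast}$; continuity of $v\mapsto v/\|v\|$ away from the origin suffices.
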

Define $$\hat{\Psi}^-_{\lambda_\ast}=\inf\{\Psi_{\lambda_\ast}(t_{\lambda_\ast}(u)u):~u\in \mathcal{M}^-_{\lambda_\ast}\cup\mathcal{M}^0_{\lambda_\ast}\}$$

and $$\hat{\Psi}^+_{\lambda_\ast}=\inf\{\Psi_{\lambda_\ast}(s_{\lambda_\ast}(u)u):~u\in \mathcal{M}^+_{\lambda_\ast}\cup\mathcal{M}^0_{\lambda_\ast}\}.$$

Then from Corollary \ref{cor5.3} and Proposition \ref{prop5.4}, we notice that $\hat{\Psi}^\pm_{\lambda_\ast}=\Upsilon^\pm_{\lambda_\ast}$ (see \cite{silva2018local}).
\begin{proposition}\label{prop5.5}
   The maps $(0,\lambda_\ast]\ni \lambda\mapsto \Upsilon^\pm_{\lambda} $  are decreasing and  left continuous for $\lambda\in(0,\lambda_\ast)$ . Moreover,$$\lim_{\lambda\uparrow{\lambda_\ast}}\Upsilon^\pm_{\lambda}=\Upsilon^\pm_{\lambda_\ast}. $$
\end{proposition}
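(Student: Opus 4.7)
The plan is to deduce all three assertions — strict monotonicity, left-continuity on $(0,\lambda_\ast)$, and the limit at $\lambda_\ast$ — from the differentiability/continuity of the maps $\lambda\mapsto \mathcal{I}^{\pm}_\lambda(u)$ (Lemma~\ref{lem3.7}), their one-sided limits at $\lambda_\ast$ (Corollary~\ref{cor5.3}), the parametrisation of $\mathcal{M}^{\pm}_{\lambda_\ast}\cup\mathcal{M}^0_{\lambda_\ast}$ via $P^{\pm}$ from Proposition~\ref{prop5.4}, and the identity $\hat{\Psi}^{\pm}_{\lambda_\ast}=\Upsilon^{\pm}_{\lambda_\ast}$ recorded just before the statement.

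\textbf{Step 1: strict decrease on $(0,\lambda_\ast]$.} Fix $0<\lambda_1<\lambda_2\leq\lambda_\ast$. By Lemma~\ref{lem4.1} (resp.~\ref{lem4.2}) there exists a minimizer $u_1\in\mathcal{M}^{\pm}_{\lambda_1}$ of $\Psi_{\lambda_1}$. Either $\int_{\mathbb{R}^N}\beta|u_1|^\gamma\,dx\leq 0$ (only in the $+$-case, so $u_1\in\hat{\mathcal{M}}^+_\lambda$ for every $\lambda>0$), or $u_1\in\mathcal{G}^+$, in which case $\lambda_\ast\leq\lambda(u_1)$ by the very definition of $\lambda_\ast$ in \eqref{eq3.19}. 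If $\lambda_2<\lambda(u_1)$, Proposition~\ref{prop3.1} guarantees that $t^{\pm}_{\lambda_2}(u_1)$ is well defined and, by Lemma~\ref{lem3.7}(ii), $\mathcal{I}^{\pm}_{\lambda_2}(u_1)<\mathcal{I}^{\pm}_{\lambda_1}(u_1)=\Psi_{\lambda_1}(u_1)=\Upsilon^{\pm}_{\lambda_1}$; since $t^{\pm}_{\lambda_2}(u_1)u_1\in\mathcal{M}^{\pm}_{\lambda_2}$, this yields $\Upsilon^{\pm}_{\lambda_2}\leq\mathcal{I}^{\pm}_{\lambda_2}(u_1)<\Upsilon^{\pm}_{\lambda_1}$. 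The boundary case $\lambda_2=\lambda(u_1)=\lambda_\ast$ is handled by Corollary~\ref{cor5.3}: strict decrease on $(0,\lambda_\ast)$ together with $\mathcal{I}^{\pm}_\lambda(u_1)\to \Psi_{\lambda_\ast}\big(t^0_{\lambda_\ast}(u_1)u_1\big)$ gives $\Upsilon^{\pm}_{\lambda_\ast}\leq \Psi_{\lambda_\ast}(t^0_{\lambda_\ast}(u_1)u_1)<\mathcal{I}^{\pm}_{\lambda_1}(u_1)=\Upsilon^{\pm}_{\lambda_1}$, using $t^0_{\lambda_\ast}(u_1)u_1\in\mathcal{M}^0_{\lambda_\ast}\subset\mathcal{M}^{\pm}_{\lambda_\ast}\cup\mathcal{M}^0_{\lambda_\ast}$ and $\hat{\Psi}^{\pm}_{\lambda_\ast}=\Upsilon^{\pm}_{\lambda_\ast}$.

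\textbf{Step 2: left-continuity on $(0,\lambda_\ast)$.} Fix $\lambda_0\in(0,\lambda_\ast)$. Step~1 ensures that $L:=\lim_{\lambda\uparrow\lambda_0}\Upsilon^{\pm}_\lambda$ exists and $L\geq\Upsilon^{\pm}_{\lambda_0}$. Pick a minimizer $u_0\in\mathcal{M}^{\pm}_{\lambda_0}$. For $\lambda<\lambda_0$ we have $\lambda<\lambda_0<\lambda(u_0)$ in the non-degenerate case and $\lambda<\infty$ in the $\hat{\mathcal{M}}^+$-case, so $t^{\pm}_\lambda(u_0)$ is well defined and Lemma~\ref{lem3.7} gives $\mathcal{I}^{\pm}_\lambda(u_0)\to \mathcal{I}^{\pm}_{\lambda_0}(u_0)=\Psi_{\lambda_0}(u_0)=\Upsilon^{\pm}_{\lambda_0}$. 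As $\Upsilon^{\pm}_\lambda\leq\mathcal{I}^{\pm}_\lambda(u_0)$, we conclude $L\leq\Upsilon^{\pm}_{\lambda_0}$, hence equality.

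\textbf{Step 3: $\lim_{\lambda\uparrow\lambda_\ast}\Upsilon^{\pm}_\lambda=\Upsilon^{\pm}_{\lambda_\ast}$.} Monotonicity again gives $\ell:=\lim_{\lambda\uparrow\lambda_\ast}\Upsilon^{\pm}_\lambda\geq\Upsilon^{\pm}_{\lambda_\ast}$. For the reverse inequality, fix $\epsilon>0$. Since $\Upsilon^{\pm}_{\lambda_\ast}=\hat{\Psi}^{\pm}_{\lambda_\ast}$, Proposition~\ref{prop5.4} lets us pick $u\in S$ with $\Psi_{\lambda_\ast}(s_{\lambda_\ast}(u)u)<\Upsilon^{+}_{\lambda_\ast}+\epsilon$ (resp.\ $\Psi_{\lambda_\ast}(t_{\lambda_\ast}(u)u)<\Upsilon^{-}_{\lambda_\ast}+\epsilon$ with $u\in S\cap\overline{\hat{\mathcal{M}}_{\lambda_\ast}}$). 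Corollary~\ref{cor5.3} then yields $\mathcal{I}^{\pm}_\lambda(u)\to\Psi_{\lambda_\ast}(s_{\lambda_\ast}(u)u)$ (resp.\ $\Psi_{\lambda_\ast}(t_{\lambda_\ast}(u)u)$) as $\lambda\uparrow\lambda_\ast$. Hence $\Upsilon^{\pm}_\lambda\leq\mathcal{I}^{\pm}_\lambda(u)<\Upsilon^{\pm}_{\lambda_\ast}+2\epsilon$ for $\lambda$ close enough to $\lambda_\ast$; letting $\epsilon\downarrow 0$ gives $\ell\leq\Upsilon^{\pm}_{\lambda_\ast}$, whence equality.

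The only delicate point is \emph{Step~1} at the degenerate boundary $\lambda_2=\lambda_\ast=\lambda(u_1)$, where $t^\pm_\lambda(u_1)$ collapse to a single critical value $t^0_{\lambda_\ast}(u_1)$; without Corollary~\ref{cor5.3} and the explicit description of $\mathcal{M}^0_{\lambda_\ast}$ from Corollary~\ref{cor3.6}(c) and Proposition~\ref{prop5.4}(i), one cannot promote the strict inequality across $\lambda_\ast$. Everything else is a routine sandwich between the comparison $\Upsilon^{\pm}_\lambda\leq\mathcal{I}^{\pm}_\lambda(u)$ and the $\lambda$-continuity provided by Lemma~\ref{lem3.7}.
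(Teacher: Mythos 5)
Your proposal is correct and follows essentially the same route as the paper: strict decrease via the minimizers of Lemmas \ref{lem4.1}--\ref{lem4.2} combined with the strict monotonicity of $\lambda\mapsto\mathcal{I}^{\pm}_{\lambda}(u)$ from Lemma \ref{lem3.7}, left-continuity by sandwiching $\Upsilon^{\pm}_{\lambda}\leq\mathcal{I}^{\pm}_{\lambda}(u_0)$ against monotonicity, and the limit at $\lambda_\ast$ via approximate minimizers and Corollary \ref{cor5.3} (your direct $\epsilon$-argument is just the contrapositive of the paper's $\rho,\rho'$ contradiction). Your explicit handling of the boundary case $\lambda_2=\lambda(u_1)=\lambda_\ast$ through $\hat{\Psi}^{\pm}_{\lambda_\ast}=\Upsilon^{\pm}_{\lambda_\ast}$ is in fact slightly more careful than the paper's own wording of that step.
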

\begin{proof}
 Let $0<\lambda<\hat{\lambda}<\lambda_\ast$. By Lemma \ref{lem4.2} and Lemma \ref{lem3.7}, we have  $\mathcal{I}^-_\lambda(w_\lambda)=\Upsilon^-_{\lambda}$ and the map $\lambda\mapsto  \mathcal{I}^-_{\lambda}(u)$ is strictly decreasing. Using these informations, we have 
 $$ \Upsilon^-_{\hat{\lambda}}\leq \mathcal{I}^-_{\hat{\lambda}}(w_\lambda)< \mathcal{I}^-_\lambda(w_\lambda)=\Upsilon^-_{\lambda}.$$
 It follows that $\Upsilon^-_{\lambda}$ is strictly decreasing for $0<\lambda<\lambda_\ast$. If $0<\lambda<\lambda_\ast$, then by Corollary \ref{cor5.3} and Lemma  \ref{lem3.7}, we have
 $$\Upsilon^-_{\lambda_\ast}\leq\Psi_{\lambda_\ast}(t_{\lambda_\ast}(u)u)=\lim_{\lambda\uparrow {\lambda_\ast}}\mathcal{I}^-_{\lambda}(u)=\mathcal{I}^-_{\lambda_\ast}(u)< \mathcal{I}^-_{\lambda}(u).$$ This yields $\Upsilon^-_{\lambda_\ast}\leq \Upsilon^-_{\lambda}$ and hence $\Upsilon^-_{\lambda}$ is decreasing for $0<\lambda\leq\lambda_\ast$.
 To prove the left continuity of $\Upsilon^-_{\lambda}$, choose a sequence $\{\lambda_n\}_{n\in N}$ such that $\lambda_n\uparrow \lambda\in (0,\lambda_\ast)$ as $n\to\infty$. Since the map $(0,\lambda_\ast)\ni\lambda\mapsto  \Upsilon^-_{\lambda}$ is strictly decreasing and $\lambda_n<\lambda$ for $n$ large enough . It follows that $\Upsilon^-_{\lambda}<\Upsilon^-_{\lambda_n}$ for $n$ large enough and hence $\Upsilon^-_{\lambda}\leq\displaystyle\lim_{n\to\infty}\Upsilon^-_{\lambda_n}$. This shows that $$\Upsilon^-_{\lambda}\leq\lim_{n\to\infty}\Upsilon^-_{\lambda_n}\leq\lim_{n\to\infty}\mathcal{I}^-_{\lambda_n}(w_\lambda)=\lim_{n\to\infty}\Psi_{\lambda_n}(t^-_{\lambda_n}(w_\lambda)w_\lambda)=\Psi_{\lambda}(t^-_{\lambda}(w_\lambda)w_\lambda) =\mathcal{I}^-_\lambda(w_\lambda)=\Upsilon^-_{\lambda}.$$
 Hence $$\lim_{n\to\infty}\Upsilon^-_{\lambda_n}=\Upsilon^-_{\lambda},~\text{i.e.},~ \lim_{\lambda_n\uparrow \lambda}\Upsilon^-_{\lambda_n}=\Upsilon^-_{\lambda},~\text{for all}~\lambda\in (0,\lambda_\ast).$$
 Therefore, the map $\Upsilon^-_{\lambda}$ is left continuous for $\lambda\in(0,\lambda_\ast)$. Next, our aim is to prove $\Upsilon^-_{\lambda}$ is left continuous at $\lambda=\lambda_\ast$, that is, $\displaystyle\lim_{\lambda\uparrow{\lambda_\ast}}\Upsilon^-_{\lambda}=\Upsilon^-_{\lambda_\ast}.$ For this, take a $\{\lambda_n\}_{n\in N}$ such that $\lambda_n\uparrow \lambda_\ast$ as $n\to\infty$, therefore $\lambda_n<\lambda_\ast$ for $n$ large enough. The decreaseness of the map $(0,\lambda_\ast]\ni \lambda\mapsto \Upsilon^-_{\lambda} $ infer that $\Upsilon^-_{\lambda_\ast}\leq\Upsilon^-_{\lambda_n}$ for $n$ large enough and hence $\Upsilon^-_{\lambda_\ast}\leq\displaystyle\lim_{n\to\infty}\Upsilon^-_{\lambda_n}=\Upsilon$ (say). We claim that $\Upsilon=\Upsilon^-_{\lambda_\ast}$. Indeed, if not, then let there exist a $\rho>0$ such that $\Upsilon-\Upsilon^-_{\lambda_\ast}\geq \rho $. Choosing $\rho'>0$ with $2\rho'<\rho$ and  $w_{\rho'}\in \mathcal{M}^-_{\lambda_\ast}$ such that $\mathcal{I}^-_{\lambda_\ast}(w_{\rho'})\leq \Upsilon^-_{\lambda_\ast}+\rho'$. Now from the continuity of the map $\lambda\mapsto \mathcal{I}^-_{\lambda}(u)$, we obtain  $\mathcal{I}^-_{\lambda_n}(w_{\rho'})\to\mathcal{I}^-_{\lambda_\ast}(w_{\rho'}) $ as $\lambda_n\uparrow \lambda_\ast$. Thus for $n$ large enough, we have
 $$ 0\leq \mathcal{I}^-_{\lambda_n}(w_{\rho'})-\mathcal{I}^-_{\lambda_\ast}(w_{\rho'})\leq \rho'.$$
 It follows that $$\Upsilon^-_{\lambda_n} \leq \mathcal{I}^-_{\lambda_n}(w_{\rho'})\leq \mathcal{I}^-_{\lambda_\ast}(w_{\rho'})+\rho'\leq\Upsilon^-_{\lambda_\ast}+2\rho'\leq \Upsilon-\rho+2\rho'<\Upsilon.$$
Passing limit $n\to\infty$ in the above inequality, we have $\Upsilon\leq \Upsilon-\rho+2\rho'<\Upsilon$, which is a contradiction and hence the proof is completed.\\
Similarly, we can prove that all the above results hold for the map $\Upsilon^+_{\lambda}$. 
\end{proof}
\begin{theorem}\label{thm5.6}
 The problem $(\mathcal{E}_{\lambda_\ast})$ has at least two solutions  $w_{\lambda_\ast}\in\mathcal{M}^-_{\lambda_\ast}$ and $u_{\lambda_\ast}\in\mathcal{M}^+_{\lambda_\ast}$. Moreover, $$ \mathcal{I}^-_{\lambda_\ast}(w_{\lambda_\ast})=\Upsilon^-_{\lambda_\ast}~\text{and}~\mathcal{I}^+_{\lambda_\ast}(u_{\lambda_\ast})=\Upsilon^+_{\lambda_\ast}. $$
\end{theorem}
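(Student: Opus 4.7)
The plan is to construct $u_{\lambda_\ast}$ and $w_{\lambda_\ast}$ as limits along an approximating sequence $\lambda_n \uparrow \lambda_\ast$. By Lemmas \ref{lem4.1} and \ref{lem4.2} and Theorem \ref{thm4.6}, for each $n$ there exist weak solutions $u_n \in \mathcal{M}^+_{\lambda_n}$ and $w_n \in \mathcal{M}^-_{\lambda_n}$ of $(\mathcal{E}_{\lambda_n})$ with $\Psi_{\lambda_n}(u_n) = \Upsilon^+_{\lambda_n}$ and $\Psi_{\lambda_n}(w_n) = \Upsilon^-_{\lambda_n}$. First I would use Proposition \ref{prop5.5} to get $\Upsilon^\pm_{\lambda_n} \to \Upsilon^\pm_{\lambda_\ast}$, combined with the coercivity estimate of Theorem \ref{thm3.1}(ii) (uniformly in $\lambda$ on $(0,\lambda_\ast]$), to conclude that both $\{u_n\}$ and $\{w_n\}$ are bounded in $W^{s,p}_V(\mathbb{R}^N)$. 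Extracting subsequences, $u_n \rightharpoonup u_{\lambda_\ast}$ and $w_n \rightharpoonup w_{\lambda_\ast}$ weakly, and by Lemma \ref{lemma2.3} strongly in $L^\tau(\mathbb{R}^N)$ and $L^\gamma(\mathbb{R}^N)$, so the weight integrals involving $\alpha$ and $\beta$ pass to the limit as in \eqref{eq4.2}.

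Next I would show that the weak limits are nontrivial. For $w_{\lambda_\ast}$ this is immediate from the uniform lower bound $\|w_n\| > \hat{c}$ of Lemma \ref{lem3.2}(ii) together with \eqref{eq3.14}, which forces $\int \beta(x)|w_{\lambda_\ast}|^\gamma \, \mathrm{d}x > 0$. For $u_{\lambda_\ast}$, since $\Upsilon^+_{\lambda_\ast} < 0$ (inheriting strict negativity from Lemma \ref{lem3.2}(i) via the limit), weak lower semicontinuity of $\Psi_{\lambda_\ast}$ together with continuity of the $\alpha$- and $\beta$-integrals precludes $u_{\lambda_\ast} \equiv 0$. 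The next step is to pass to the limit in the weak formulation
\[
\langle L(u_n),v\rangle - \lambda_n\!\int_{\mathbb{R}^N}\alpha(x)u_n^{-\delta}v\,\mathrm{d}x - \int_{\mathbb{R}^N}\beta(x)u_n^{\gamma-1}v\,\mathrm{d}x = 0,
\]
testing with $v=u_n - u_{\lambda_\ast}$ (after standard truncation to absorb the singular term) to derive $\limsup_n \langle L(u_n)-L(u_{\lambda_\ast}),u_n-u_{\lambda_\ast}\rangle \le 0$; Lemma \ref{lemma2.4}(ii) then gives strong convergence $u_n \to u_{\lambda_\ast}$, and similarly $w_n \to w_{\lambda_\ast}$, in $W^{s,p}_V(\mathbb{R}^N)$. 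Hence $u_{\lambda_\ast}$ and $w_{\lambda_\ast}$ are weak solutions of $(\mathcal{E}_{\lambda_\ast})$, both belonging to $\mathcal{M}_{\lambda_\ast}$.

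It remains to identify the type of each limit. Strong convergence together with the identification $\mathcal{F}''_{\lambda_n,u_n}(1) > 0$ yields $\mathcal{F}''_{\lambda_\ast,u_{\lambda_\ast}}(1) \ge 0$; Lemma \ref{lem5.1} asserts that $(\mathcal{E}_{\lambda_\ast})$ has \emph{no} solution in $\mathcal{M}^0_{\lambda_\ast}$, so the possibility $\mathcal{F}''_{\lambda_\ast,u_{\lambda_\ast}}(1)=0$ is ruled out and therefore $u_{\lambda_\ast}\in\mathcal{M}^+_{\lambda_\ast}$. The same argument places $w_{\lambda_\ast}\in\mathcal{M}^-_{\lambda_\ast}$. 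Finally, strong convergence combined with Proposition \ref{prop5.5} identifies the energy values: $\Psi_{\lambda_\ast}(u_{\lambda_\ast}) = \lim_n \Psi_{\lambda_n}(u_n) = \lim_n \Upsilon^+_{\lambda_n} = \Upsilon^+_{\lambda_\ast}$, and similarly $\Psi_{\lambda_\ast}(w_{\lambda_\ast}) = \Upsilon^-_{\lambda_\ast}$.

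The main obstacle I anticipate is the last step ruling out $u_{\lambda_\ast},w_{\lambda_\ast}\in\mathcal{M}^0_{\lambda_\ast}$: one must exploit the structural nonexistence result Lemma \ref{lem5.1}, whose proof relied on hypothesis (H4) and the sign-changing behaviour of $\beta$, in a way that is robust under the limit procedure. A subtler difficulty is the passage to the limit in the singular term $\int \alpha(x)u_n^{-\delta}v\,\mathrm{d}x$, which requires Fatou/Lebesgue-type arguments analogous to those employed in Lemma \ref{lem4.5} but carried out uniformly in $\lambda_n$; it is here that the compactness of $\mathcal{M}^0_{\lambda_\ast}$ from Corollary \ref{cor5.2} and the weight condition (H2) are essential to prevent mass loss.
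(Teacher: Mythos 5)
Your proposal is correct and follows essentially the same route as the paper's proof: approximate $\lambda_\ast$ from below by $\lambda_n\uparrow\lambda_\ast$, take the minimizers/solutions from Section \ref{sec4}, obtain boundedness from coercivity and Proposition \ref{prop5.5}, recover strong convergence via Fatou's lemma on the singular term together with the $(S_+)$ property of Lemma \ref{lemma2.4}, and then use Lemma \ref{lem5.1} to exclude $\mathcal{M}^0_{\lambda_\ast}$ and Proposition \ref{prop5.5} to identify the energy levels. The only cosmetic difference is that the paper derives the $\limsup\langle L(u_n)-L(w_{\lambda_\ast}),u_n-w_{\lambda_\ast}\rangle\le 0$ estimate by testing the equation with $u_n$ and $w_{\lambda_\ast}$ separately rather than with their difference, but this is the same computation.
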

\begin{proof}
  To prove  $w_{\lambda_\ast}\in\mathcal{M}^-_{\lambda_\ast}$ is a solution for $(\mathcal{E}_{\lambda_\ast})$, take $\lambda_n\uparrow \lambda_\ast$ as $n\to\infty$. Let $\{u_n\}_{n\in \mathbb{N}}$ be a sequence in $\mathcal{M}^-_{\lambda_n}$ such that $\Upsilon^-_{\lambda_n}=\mathcal{I}^-_{\lambda_n}(u_n)$ and also $u_n$ be a solution to $(\mathcal{E}_{\lambda_n})$ for each $n\in \mathbb{N}$. Now we claim that the sequence  $\{u_n\}_{n\in \mathbb{N}}$ is bounded in $W^{s,p}_{V}(\mathbb{R}^N)$. Indeed, by similar to the proof of Lemma \ref{lem3.2}-$(ii)$ (see \eqref{eq3.16}), we can deduce that $\{u_n\}_{n\in \mathbb{N}}$ is bounded from below. Now we only have to prove $\{u_n\}_{n\in \mathbb{N}}$ is bounded from above. If not, let $\|u_n\|\to\infty$ as $n\to\infty$, then using the fact $u_n\in\mathcal{M}^-_{\lambda_n}$ and Proposition \ref{prop5.5}, we have
  $$\Upsilon^-_{\lambda_\ast}=\lim_{n\to\infty} \mathcal{I}^-_{\lambda_n}(u_n)=\lim_{n\to\infty} \Psi_{\lambda_n}(u_n)\geq a\bigg(\frac{1}{p(m+1)}-\frac{1}{\gamma}\bigg)\|u_n\|^{p(m+1)}-c{\lambda_n}\|\alpha\|_{L^\xi(\mathbb{R}^N)}\bigg(\frac{1}{1-\delta}-\frac{1}{\gamma}\bigg)\|u_n\|^{1-\delta} $$
  $$\hspace{3cm} \to\infty~\text{as}~n \to\infty,~\text{since}~0<1-\delta<1<p(m+1)<\gamma,$$
  which is a contradiction. It follows that $\{u_n\}_{n\in \mathbb{N}}$ must be bounded and consequently, up to a subsequence $u_n\rightharpoonup w_{\lambda_\ast}$ weakly in $W^{s,p}_{V}(\mathbb{R}^N)$ as $n\to\infty$. By Lemma \ref{lemma2.3}, we have $u_n\to w_{\lambda_\ast}$ in $L^\tau(\mathbb{R}^N)$ and $L^\gamma(\mathbb{R}^N)$ respectively as $n\to\infty$, $u_n\to w_{\lambda_\ast}$ a.e. $x,y\in\mathbb{R}^N$ and $w_{\lambda_\ast}\geq 0$. Note that \eqref{eq4.2} still holds with $u_\lambda$ replaced by $w_{\lambda_\ast}$. 
    The boundedness of $\{u_n\}_{n\in \mathbb{N}}$ in $W^{s,p}_{V}(\mathbb{R}^N)$ infer that $\bigg\{\frac{|u_n(x)-u_n(y)|^{p-2}(u_n(x)-u_n(y))}{|x-y|^{(N+sp)/p^{\prime}}}\bigg\}_{n\in \mathbb{N}}  \text{and } \big\{V^{{\frac{1}{p^{\prime}}}}|u_n|^{p-2}u_n\big\}_{n\in \mathbb{N}} $ are bounded in $L^{p^{\prime}}(\mathbb{R}^{2N})$ and $L^{p^{\prime}}(\mathbb{R}^{N})$ respectively, where $p^{\prime}=\frac{p}{p-1}$ is the conjugate exponent of $p$. Moreover, we have
    $$ \frac{|u_n(x)-u_n(y)|^{p-2}(u_n(x)-u_n(y))}{|x-y|^{(N+sp)/p^{\prime}}}\to \frac{|w_{\lambda_\ast}(x)-w_{\lambda_\ast}(y)|^{p-2}(w_{\lambda_\ast}(x)-w_{\lambda_\ast}(y))}{|x-y|^{(N+sp)/p^{\prime}}}~\text{a.e. in }~\mathbb{R}^{2N}$$
    and $$V^{\frac{1}{p^{\prime}}}|u_n|^{p-2}u_n\to V^{\frac{1}{p^{\prime}}}|w_{\lambda_\ast}|^{p-2}w_{\lambda_\ast}~\text{a.e. in }~\mathbb{R}^{N}~\text{as}~n\to\infty .$$
    It follows that $$ \frac{|u_n(x)-u_n(y)|^{p-2}(u_n(x)-u_n(y))}{|x-y|^{(N+sp)/p^{\prime}}}\rightharpoonup \frac{|w_{\lambda_\ast}(x)-w_{\lambda_\ast}(y)|^{p-2}(w_{\lambda_\ast}(x)-w_{\lambda_\ast}(y))}{|x-y|^{(N+sp)/p^{\prime}}}~\text{weakly in }~L^{p^{\prime}}(\mathbb{R}^{2N})$$
    and $$V^{\frac{1}{p^{\prime}}}|u_n|^{p-2}u_n\rightharpoonup V^{\frac{1}{p^{\prime}}}|w_{\lambda_\ast}|^{p-2}w_{\lambda_\ast}~\text{weakly in }~L^{p^{\prime}}(\mathbb{R}^{N})~\text{as}~n\to\infty .$$
    Due to  weak convergence in Lebesgue space, for any $v\in W^{s,p}_{V}(\mathbb{R}^N) $, we obtain
    \begin{equation}\label{eq5.10}
\lim_{n\to\infty}\langle{B(u_n),v}\rangle=\langle{B(w_{\lambda_\ast}),v}\rangle .       
 \end{equation}
Since $u_n\to w_{\lambda_\ast}$ in $L^\gamma(\mathbb{R}^N)$  as $n\to\infty$, therefore by applying Lebesgue dominated convergence theorem, we have $|u_n|^{\gamma-2}u_n\to|w_{\lambda_\ast}|^{\gamma-2}w_{\lambda_\ast} $ in $L^{\frac{\gamma}{\gamma-1}}(\mathbb{R}^{N})$ as $n\to\infty$. Consequently, by Holder's inequality, we have 
$$\bigg|\int_{\mathbb{R}^{N}}\beta(x)\big(|u_n|^{\gamma-2}u_n-|w_{\lambda_\ast}|^{\gamma-2}w_{\lambda_\ast}\big)v~\mathrm{d}x\bigg|\leq \|\beta(x)\|_{L^\infty(\mathbb{R}^{N})}\bigg\||u_n|^{\gamma-2}u_n-|w_{\lambda_\ast}|^{\gamma-2}w_{\lambda_\ast}\bigg\|_{L^{\frac{\gamma}{\gamma-1}}(\mathbb{R}^{N})}\|v\|_{L^\gamma(\mathbb{R}^{N})}$$ $$
\to 0~\text{as}~n\to\infty .$$
It follows that 
\begin{equation}\label{eq5.11}
 \lim_{n\to\infty}\int_{\mathbb{R}^{N}}\beta(x)|u_n|^{\gamma-2}u_n v~\mathrm{d}x=\int_{\mathbb{R}^{N}}\beta(x)|w_{\lambda_\ast}|^{\gamma-2}w_{\lambda_\ast}v~\mathrm{d}x.   
\end{equation}
Using the fact that $u_n$ is a solution of the problem $(\mathcal{E}_{\lambda_n})$ for each $n\in \mathbb{N}$, we have 
\begin{equation}\label{eq5.12}   
\langle{L(u_n),v}\rangle=\lambda_n \int_{\mathbb{R}^{N}}\alpha(x)u^{-\delta}_{n}v~\mathrm{d}x+\int_{\mathbb{R}^{N}}\beta(x)u_n^{\gamma-1} v~\mathrm{d}x.    
\end{equation}
Next, applying limit inferior on both sides of \eqref{eq5.12} as $n\to\infty$ with $v\in\mathcal{K}$ and using the fact that \eqref{eq4.2}, \eqref{eq5.10} and \eqref{eq5.11} hold, we get
\begin{equation}\label{eq5.13}   
\infty >\liminf_{n\to\infty}~ \langle{L(u_n),v}\rangle-\int_{\mathbb{R}^{N}}\beta(x)w_{\lambda_\ast}^{\gamma-1} v~\mathrm{d}x\geq\lambda_\ast\liminf_{n\to\infty} \int_{\mathbb{R}^{N}}\alpha(x)u^{-\delta}_{n}v~\mathrm{d}x.    
\end{equation}
Furthermore, by Fatou's lemma, we have
\begin{equation}\label{eq5.14}
  \liminf_{n\to\infty} \int_{\mathbb{R}^{N}}\alpha(x)u^{-\delta}_{n}v~\mathrm{d}x\geq  \int_{\mathbb{R}^{N}}H(x)~\mathrm{d}x,
\end{equation}
where $H(x)$ is defined as in \eqref{eq5.999}, with $u$ replaced by $w_{\lambda_\ast}$.
From \eqref{eq5.13} and \eqref{eq5.14}, we obtain $0\leq \displaystyle\int_{\mathbb{R}^{N}}H(x)~\mathrm{d}x<\infty$ and hence $H(x)=\alpha(x)w^{-\delta}_{\lambda_\ast}v$, i.e., $w_{\lambda_\ast}>0$ a.e. in $ \mathbb{R}^{N}$ and $\alpha(x)w^{-\delta}_{\lambda_\ast}v\in L^1(\mathbb{R}^{N})$.
Consequently, from \eqref{eq4.2} and \eqref{eq5.11}-- \eqref{eq5.14}, we have $$\limsup_{n\to\infty} \langle{L(u_n)-L(~w_{\lambda_\ast}),u_n-~w_{\lambda_\ast}}\rangle=\limsup_{n\to\infty} \langle{L(u_n),u_n-~w_{\lambda_\ast}}\rangle\leq\limsup_{n\to\infty} \langle{L(u_n),u_n}\rangle-\liminf_{n\to\infty} \langle{L(u_n),w_{\lambda_\ast}}\rangle $$
$$\hspace{-5cm}=\limsup_{n\to\infty}M(\|u_n\|^p)\langle{B(u_n),u_n}\rangle-\liminf_{n\to\infty}M(\|u_n\|^p)\langle{B(u_n),w_{\lambda_\ast}}\rangle $$
$$\hspace{1cm}=\limsup_{n\to\infty}\bigg[\lambda_n \int_{\mathbb{R}^{N}}\alpha(x)u^{1-\delta}_{n}~\mathrm{d}x+\int_{\mathbb{R}^{N}}\beta(x)u_n^{\gamma} ~\mathrm{d}x\bigg]-\liminf_{n\to\infty}\bigg[\lambda_n \int_{\mathbb{R}^{N}}\alpha(x)u^{-\delta}_{n}w_{\lambda_\ast}~\mathrm{d}x+\int_{\mathbb{R}^{N}}\beta(x)u_n^{\gamma-1} w_{\lambda_\ast}~\mathrm{d}x\bigg] $$
$$\hspace{-1.2cm}\leq \bigg(\lambda_\ast \int_{\mathbb{R}^{N}}\alpha(x)w^{1-\delta}_{\lambda_\ast}~\mathrm{d}x+\int_{\mathbb{R}^{N}}\beta(x)w_{\lambda_\ast}^{\gamma} ~\mathrm{d}x\bigg)-\lambda_\ast \int_{\mathbb{R}^{N}}\alpha(x)w^{1-\delta}_{\lambda_\ast}~\mathrm{d}x-\int_{\mathbb{R}^{N}}\beta(x)w_{\lambda_\ast}^{\gamma} ~\mathrm{d}x=0. $$
Thus, by Lemma \ref{lemma2.4}, we ge $u_n\to w_{\lambda_\ast}$ in $W^{s,p}_{V}(\mathbb{R}^N)$ as $n\to\infty$ and hence $ M\big(\|u_n\|^p\big)\to M\big(\|w_{\lambda_\ast}\|^p\big)$ in $\mathbb{R}$ as $n\to\infty$. Now applying a limit inferior to \eqref{eq5.12} and using Fatou's lemma, we obtain 
\begin{equation}\label{eq5.15}
 \langle{L(w_{\lambda_\ast}),v}\rangle-\int_{\mathbb{R}^{N}}\beta(x)w_{\lambda_\ast}^{\gamma-1} v~\mathrm{d}x\geq\lambda_\ast \int_{\mathbb{R}^{N}}\alpha(x)w^{-\delta}_{\lambda_\ast}v~\mathrm{d}x,~\forall~v\in\mathcal{K}.   
\end{equation}
In addition to this, we also have
$$\mathcal{F}'_{\lambda_\ast,w_{\lambda_\ast}}(1)=0~\text{and}~\mathcal{F}''_{\lambda_\ast,w_{\lambda_\ast}}(1)\leq 0~\text{and hence} \int_{\mathbb{R}^{N}}\beta(x)|w_{\lambda_\ast}|^{\gamma}~\mathrm{d}x >0.$$ It follows that $w_{\lambda_\ast}\in \mathcal{M}^-_{\lambda_\ast}\cup\mathcal{M}^0_{\lambda_\ast}$ and therefore, we obtain
 $$ M\big(\|w_{\lambda_\ast}\|^p\big)\|w_{\lambda_\ast}\|^p-\lambda_\ast \int_{\mathbb{R}^{N}} \alpha(x)w_{\lambda_\ast}^{1-\delta}~\mathrm{d}x-\int_{\mathbb{R}^{N}}\beta(x)w_{\lambda_\ast}^{\gamma}~\mathrm{d}x =0.$$   
Let $v\in W^{s,p}_{V}(\mathbb{R}^N)$ and define $\phi_\epsilon= w_{\lambda_\ast}+\epsilon v$, then for each $\epsilon>0$ be given $\phi^+_\epsilon\in\mathcal{K}$. Now replacing $\phi_\epsilon$ with $v$ in \eqref{eq5.15} and adapting arguments similar to those of Theorem \ref{thm4.6}, we can show that $w_{\lambda_\ast}$ is a solution of $(\mathcal{E}_{\lambda_\ast})$. By Lemma \ref{lem5.1}, we get $w_{\lambda_\ast}\notin \mathcal{M}^0_{\lambda_\ast} $ and hence $w_{\lambda_\ast}\in \mathcal{M}^-_{\lambda_\ast} $. Moreover, we deduce from strong convergence and Proposition \ref{prop5.5} that $$ \mathcal{I}^-_{\lambda_\ast}(w_{\lambda_\ast})=\Psi_{\lambda_\ast}(w_{\lambda_\ast})=\lim_{n\to\infty} \Psi_{\lambda_n}(u_n)=\lim_{n\to\infty} \mathcal{I}^-_{\lambda_n}(u_n)=\Upsilon^-_{\lambda_\ast}.$$
Similarly, we can prove that $u_{\lambda_\ast}\in\mathcal{M}^+_{\lambda_\ast}$ is a solution for $(\mathcal{E}_{\lambda_\ast})$ and $\mathcal{I}^+_{\lambda_\ast}(u_{\lambda_\ast})=\Upsilon^+_{\lambda_\ast}$. Hence, the theorem is well established.
\end{proof}
For $0<\lambda\leq\lambda_\ast$, define the sets $$\mathcal{S}^+_{\lambda}=\big\{ u\in \mathcal{M}^+_{\lambda}:~\mathcal{I}^+_{\lambda}(u)=\Upsilon^+_{\lambda}\big\}~\text{and}~ \mathcal{S}^-_{\lambda}=\big\{ w\in \mathcal{M}^-_{\lambda}:~\mathcal{I}^-_{\lambda}(w)=\Upsilon^-_{\lambda}\big\}.$$
The following Corollary follows directly from Lemma \ref{lem4.1}, Lemma \ref{lem4.2} and Theorem \ref{thm5.6}. 
\begin{corollary}\label{cor5.7}
    The following holds for $0<\lambda\leq\lambda_\ast$: 
    \begin{itemize}
        \item [(i)] $\mathcal{S}^+_{\lambda}$ and $\mathcal{S}^-_{\lambda}$ are non-empty, compact and hence  there exist $c_\lambda,C_\lambda>0$  such that $c_\lambda\leq \|u\|,\|w\|\leq C_\lambda $ for all $u\in \mathcal{S}^+_{\lambda}$ and $w\in \mathcal{S}^-_{\lambda}$. 
        \item[(ii)] if $u\in \mathcal{S}^+_{\lambda}\cup \mathcal{S}^-_{\lambda}$, then $u$ is a solution for \eqref{main problem}.
    \end{itemize}
\end{corollary}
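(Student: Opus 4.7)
The plan is to reduce each assertion to results already established in the excerpt, with the only real work being the compactness of $\mathcal{S}^\pm_\lambda$. For part (ii), I would simply invoke Theorem \ref{thm4.6} in the range $0<\lambda<\lambda_\ast$ and Theorem \ref{thm5.6} at $\lambda=\lambda_\ast$, each of which says explicitly that a minimizer of $\Psi_\lambda$ over $\mathcal{M}^+_\lambda$ (resp.\ over $\mathcal{M}^-_\lambda$) is a weak solution of \eqref{main problem}. The non-emptiness in part (i) is equally immediate: Lemma \ref{lem4.1} and Lemma \ref{lem4.2} handle $0<\lambda<\lambda_\ast$, while Theorem \ref{thm5.6} supplies $u_{\lambda_\ast}\in\mathcal{S}^+_{\lambda_\ast}$ and $w_{\lambda_\ast}\in\mathcal{S}^-_{\lambda_\ast}$.

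For the uniform bounds, I would first obtain the upper bound $C_\lambda$ directly from Lemma \ref{lem3.2}: $\sup\{\|u\|:u\in\mathcal{M}^+_\lambda\}<\infty$, and, since every $w\in\mathcal{S}^-_\lambda$ satisfies $\Psi_\lambda(w)=\Upsilon^-_\lambda$, the bound $\sup\{\|w\|:w\in\mathcal{M}^-_\lambda,\Psi_\lambda(w)\leq \Upsilon^-_\lambda\}<\infty$ from Lemma \ref{lem3.2}(ii) gives what is needed. For the lower bound on $\mathcal{S}^-_\lambda$, I would use $\inf\{\|w\|:w\in\mathcal{M}^-_\lambda\}>0$, again from Lemma \ref{lem3.2}(ii). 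The only nontrivial lower bound is the one on $\mathcal{S}^+_\lambda$; here I would derive it as a consequence of the compactness established below, using that $\mathcal{S}^+_\lambda\subset\mathcal{K}$ does not contain $0$.

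The main obstacle is the compactness of $\mathcal{S}^\pm_\lambda$. Take a sequence $\{u_n\}\subset \mathcal{S}^+_\lambda$. By Lemma \ref{lem3.2} it is bounded in $W^{s,p}_V(\mathbb{R}^N)$, so up to a subsequence $u_n\rightharpoonup u$ weakly, and Lemma \ref{lemma2.3} yields $u_n\to u$ in $L^\tau(\mathbb{R}^N)\cap L^\gamma(\mathbb{R}^N)$; the Lebesgue-type convergences \eqref{eq3.2} and \eqref{eq3.4} then pass to the limit in the integrals involving $\alpha$ and $\beta$. Following the argument used in Lemma \ref{lem4.1}, if the convergence were only weak one would obtain $\liminf_{n\to\infty}\mathcal{F}'_{\lambda,u_n}(t^+_\lambda(u))>\mathcal{F}'_{\lambda,u}(t^+_\lambda(u))=0$, forcing $t^+_\lambda(u)>1$ and hence $\Psi_\lambda(t^+_\lambda(u)u)<\Psi_\lambda(u)\leq\Upsilon^+_\lambda$, contradicting $t^+_\lambda(u)u\in\mathcal{M}^+_\lambda$; therefore $u_n\to u$ strongly, and by Corollary \ref{cor3.6} (together with Lemma \ref{lem5.1} in the limiting case $\lambda=\lambda_\ast$, which forbids a minimizer from lying in $\mathcal{M}^0_{\lambda_\ast}$) we get $u\in\mathcal{M}^+_\lambda$ with $\Psi_\lambda(u)=\Upsilon^+_\lambda$, so $u\in\mathcal{S}^+_\lambda$. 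The same scheme, using the argument of Lemma \ref{lem4.2} with the one-sided inequalities reversed and invoking Lemma \ref{lem5.1} at $\lambda=\lambda_\ast$, gives the compactness of $\mathcal{S}^-_\lambda$.

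Finally, once compactness is established, the continuous functional $u\mapsto\|u\|$ attains its extrema on $\mathcal{S}^\pm_\lambda$; since $\mathcal{S}^+_\lambda\cap\{0\}=\emptyset$, the infimum of $\|\cdot\|$ on $\mathcal{S}^+_\lambda$ is a strictly positive number $c_\lambda$, and combined with the bounds in Lemma \ref{lem3.2} this yields the two-sided inequality $c_\lambda\leq\|u\|,\|w\|\leq C_\lambda$ on $\mathcal{S}^+_\lambda\cup\mathcal{S}^-_\lambda$, completing part (i). The hardest piece, as stated, is carrying the strong convergence argument through the degenerate Kirchhoff term so that both $\mathcal{F}'_{\lambda,u_n}(1)$ and $\mathcal{F}''_{\lambda,u_n}(1)$ behave correctly in the limit; the contradiction arguments from Lemma \ref{lem4.1} and Lemma \ref{lem4.2} already encode the right comparison, and Lemma \ref{lemma2.4}(ii) (the $(S_+)$ property of $L$) provides the structural tool needed to promote weak to strong convergence whenever one needs the limit of $\langle L(u_n),u_n-u\rangle$ to vanish.
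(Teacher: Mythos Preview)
Your proposal is essentially in line with the paper's own approach: the paper's entire proof is the single sentence ``The following Corollary follows directly from Lemma \ref{lem4.1}, Lemma \ref{lem4.2} and Theorem \ref{thm5.6},'' so in fact you supply far more detail than the authors do, particularly the explicit compactness argument for $\mathcal{S}^\pm_\lambda$ (which the paper leaves entirely implicit).

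One small point to tighten: for part (ii) at $\lambda=\lambda_\ast$ you write that Theorem \ref{thm5.6} ``says explicitly that a minimizer of $\Psi_\lambda$ over $\mathcal{M}^+_\lambda$ (resp.\ over $\mathcal{M}^-_\lambda$) is a weak solution.'' Strictly speaking, Theorem \ref{thm5.6} only produces \emph{one} particular minimizer that is a solution (obtained as a limit along $\lambda_n\uparrow\lambda_\ast$); it does not assert that an arbitrary element of $\mathcal{S}^\pm_{\lambda_\ast}$ solves $(\mathcal{E}_{\lambda_\ast})$. The fix is straightforward: since any $u\in\mathcal{S}^\pm_{\lambda_\ast}$ lies in $\mathcal{M}^\pm_{\lambda_\ast}$ (not in $\mathcal{M}^0_{\lambda_\ast}$), Corollary \ref{cor4.3} and hence the proofs of Lemma \ref{lem4.4}, Lemma \ref{lem4.5} and Theorem \ref{thm4.6} apply verbatim at $\lambda=\lambda_\ast$. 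This also cleans up the step in your compactness argument where you invoke Lemma \ref{lem5.1} to exclude the limit from $\mathcal{M}^0_{\lambda_\ast}$: once you know each $u_n$ is a solution, the passage to the limit (as in the proof of Theorem \ref{thm5.6}) shows the strong limit $u$ is a solution, and then Lemma \ref{lem5.1} rules out $u\in\mathcal{M}^0_{\lambda_\ast}$.
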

\section{Existence of solutions for \texorpdfstring{$\lambda>\lambda_\ast$}{Lg}}\label{sec6}
In this section, we study the existence of solutions for the problem \eqref{main problem} when $\lambda$ crosses the extremal parameter $\lambda_\ast$. The aim is to explore the minimization problem on appropriate subsets of $\mathcal{M}^+_{\lambda_\ast}$ and $\mathcal{M}^-_{\lambda_\ast}$ that preserve an acceptable distance from the set $\mathcal{M}^0_{\lambda_\ast}$ and minimizers achieved on these sets may be projected on $\hat{\mathcal{M}}_{\lambda}$ and $\hat{\mathcal{M}}_{\lambda}\cup \hat{\mathcal{M}}^+_{\lambda}$ for $\lambda\in (\lambda_\ast,\lambda_\ast+\epsilon)$, where $\epsilon>0$ small enough.\\
Let $\lambda>0$, then for all $w\in\mathcal{M}^{\mp}_\lambda\cup\mathcal{M}^0_\lambda$, we define
$$\mathcal{J}^\mp_\lambda(w)=a(p(m+1)-1)\|w\|^{p(m+1)}+\lambda\delta \int_{\mathbb{R}^{N}}\alpha(x)|w|^{1-\delta}~\mathrm{d}x-(\gamma-1)\int_{\mathbb{R}^{N}}\beta (x)|w|^\gamma~\mathrm{d}x.$$
\begin{lemma}\label{lem6.1}
 Suppose that, $0<\hat{C}^+_1<\hat{C}^-_2$ and $\lambda_n\downarrow \lambda_\ast$ as $n\to\infty$. Also, assume $u_n\in \mathcal{M}^{\mp}_{\lambda_\ast}$ such that $\hat{C}^+_1\leq\|u_n\|\leq \hat{C}^-_2$ for each $n\in\mathbb{N}$ and $\mathcal{J}^{\mp}_{\lambda_n}(t^{\mp}_{\lambda_n}(u_n)u_n)\to 0$ as $n\to\infty$, then $\mathrm{dist} (u_n,\mathcal{M}^0_{\lambda_\ast})\to 0$ as $n\to\infty$.
 %     \item [(ii)] if $u_n\in \mathcal{M}^+_{\lambda_\ast}$ such that $\hat{C}^+_1\leq\|u_n\|\leq \hat{C}^-_2$ for each $n\in\mathbb{N}$ and $\mathcal{J}^+_{\lambda_n}(t^+_{\lambda_n}(u_n)u_n)\to 0$ as $n\to\infty$, then $\mathrm{dist}(u_n,\mathcal{M}^0_{\lambda_\ast})\to 0$ as $n\to\infty$.
 % \end{itemize}
\end{lemma}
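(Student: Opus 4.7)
The plan is to argue by contradiction: suppose there exist $\epsilon_0>0$ and a subsequence, still denoted $\{u_n\}$, with $\mathrm{dist}(u_n,\mathcal M^0_{\lambda_*})\ge\epsilon_0$ for every $n$. Since $\{u_n\}$ is bounded, I extract $u_n\rightharpoonup u_0$ weakly in $W^{s,p}_V(\mathbb R^N)$; Lemma~\ref{lemma2.3} together with (H2)--(H3) then gives $\int_{\mathbb R^N}\alpha u_n^{1-\delta}\mathrm{d}x\to\int_{\mathbb R^N}\alpha u_0^{1-\delta}\mathrm{d}x$ and $\int_{\mathbb R^N}\beta u_n^\gamma\mathrm{d}x\to\int_{\mathbb R^N}\beta u_0^\gamma\mathrm{d}x$. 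Using $u_n\in\mathcal M^{\mp}_{\lambda_*}$ (so $a\|u_n\|^{p(m+1)}=\lambda_*\int_{\mathbb R^N}\alpha u_n^{1-\delta}\mathrm{d}x+\int_{\mathbb R^N}\beta u_n^\gamma\mathrm{d}x$) together with $\|u_n\|\ge\hat C_1^+$, passing to the limit forces $\lambda_*\int_{\mathbb R^N}\alpha u_0^{1-\delta}\mathrm{d}x+\int_{\mathbb R^N}\beta u_0^\gamma\mathrm{d}x\ge a(\hat C_1^+)^{p(m+1)}>0$, hence $u_0\not\equiv 0$ and, since $\alpha>0$ a.e., $\int_{\mathbb R^N}\alpha u_0^{1-\delta}\mathrm{d}x>0$.

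Next I would set $v_n:=t_n u_n\in\mathcal M^{\mp}_{\lambda_n}$. Observing that $\mathcal J^{\mp}_\lambda(w)=\mathcal{F}''_{\lambda,w}(1)$, I combine $\mathcal F'_{\lambda_n,v_n}(1)=0$ with the hypothesis $\mathcal J^{\mp}_{\lambda_n}(v_n)\to 0$ and eliminate $a\|v_n\|^{p(m+1)}$ to obtain
\begin{equation*}
\lambda_n(p(m+1)+\delta-1)\int_{\mathbb R^N}\alpha v_n^{1-\delta}\mathrm{d}x-(\gamma-p(m+1))\int_{\mathbb R^N}\beta v_n^\gamma\mathrm{d}x=o(1).
\end{equation*}
Plugging this back into the Nehari identity for $v_n$ and applying H\"older as in \eqref{eq3.7} yields a uniform bound on $\|v_n\|$; multiplying $\mathcal F'_{\lambda_n,u_n}(t_n)=0$ through by $t_n^\delta$ and using $\int_{\mathbb R^N}\alpha u_n^{1-\delta}\mathrm{d}x\ge c>0$ rules out $t_n\to 0$. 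Up to a subsequence $t_n\to t_*\in(0,\infty)$ and $v_n\rightharpoonup v_0:=t_*u_0\neq 0$. Taking limits produces
\begin{equation*}
\lambda_*(p(m+1)+\delta-1)\int_{\mathbb R^N}\alpha v_0^{1-\delta}\mathrm{d}x=(\gamma-p(m+1))\int_{\mathbb R^N}\beta v_0^\gamma\mathrm{d}x,
\end{equation*}
\begin{equation*}
a(\ell')^{m+1}=\lambda_*\,\tfrac{\gamma+\delta-1}{\gamma-p(m+1)}\int_{\mathbb R^N}\alpha v_0^{1-\delta}\mathrm{d}x,\qquad\ell':=\lim_n\|v_n\|^p,
\end{equation*}
so in particular $\int_{\mathbb R^N}\beta v_0^\gamma\mathrm{d}x>0$ and $v_0\in\mathcal G^+$.

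The hard part will be upgrading $v_n\rightharpoonup v_0$ to strong convergence. The key computation is that substituting both displayed identities into the formula \eqref{eq3.18} for $\lambda(v_0)$ causes all dependence on $\int_{\mathbb R^N}\alpha v_0^{1-\delta}\mathrm{d}x$ and $\int_{\mathbb R^N}\beta v_0^\gamma\mathrm{d}x$ to cancel, leaving
\begin{equation*}
\lambda(v_0)=\lambda_*\Bigl(\tfrac{\|v_0\|^{p(m+1)}}{(\ell')^{m+1}}\Bigr)^{\frac{\gamma+\delta-1}{\gamma-p(m+1)}}.
\end{equation*}
Weak lower semicontinuity gives $\|v_0\|^p\le\ell'$; strict inequality would yield $\lambda(v_0)<\lambda_*$, contradicting the variational characterisation \eqref{eq3.19} of $\lambda_*$ as the infimum of $\lambda(\cdot)$ over $\mathcal G^+$ (Lemma~\ref{lem3.5}). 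Hence $\|v_n\|\to\|v_0\|$, and by uniform convexity of $W^{s,p}_V(\mathbb R^N)$, $v_n\to v_0$ strongly; consequently $u_n=v_n/t_n\to v_0/t_*=u_0$ strongly as well. The two identities together with $\|v_0\|^p=\ell'$ are precisely the equations defining $\mathcal M^0_{\lambda_*}$, so $v_0\in\mathcal M^0_{\lambda_*}$. Passing to the limit in $u_n\in\mathcal M^{\mp}_{\lambda_*}$ gives $\mathcal F'_{\lambda_*,u_0}(1)=0$; by the $0$-homogeneity of $\lambda(\cdot)$ one has $\lambda(u_0)=\lambda(v_0)=\lambda_*$, and Proposition~\ref{prop3.1}(II)(b) shows $\mathcal F_{\lambda_*,u_0}$ has a unique critical point, which must coincide with both $1$ and $t_*$. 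Therefore $u_0=v_0\in\mathcal M^0_{\lambda_*}$ and $\mathrm{dist}(u_n,\mathcal M^0_{\lambda_*})\le\|u_n-u_0\|\to 0$, contradicting our standing assumption.
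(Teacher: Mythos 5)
Your argument is correct, and it reaches the conclusion by a route that is genuinely leaner than the paper's, even though the underlying mechanism is the same: the hypotheses force a minimizing sequence for the extremal functional $\lambda(\cdot)$, and the characterization $\lambda_\ast=\inf_{\mathcal G^+}\lambda$ from Lemma \ref{lem3.5} is what upgrades weak to strong convergence. The paper's proof first introduces \emph{both} fibration critical points $t^{+}_{\lambda_n}(u_n)<t^{-}_{\lambda_n}(u_n)$, shows through \eqref{eq6.2} that the ratio $t^+_n/t^-_n\to 1$ so the two critical points merge, and only then reads off $\lambda(u_n)\to\lambda_\ast$ and appeals to the compactness argument of Corollary \ref{cor5.2}. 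You bypass the merging step entirely: eliminating $a\|v_n\|^{p(m+1)}$ between the Nehari identity for $v_n=t_nu_n$ and $\mathcal J^{\mp}_{\lambda_n}(v_n)=\mathcal F''_{\lambda_n,v_n}(1)\to0$ yields the two limit identities directly, and your observation that substituting them into \eqref{eq3.18} collapses $\lambda(v_0)$ to $\lambda_\ast\bigl(\|v_0\|^{p(m+1)}/(\ell')^{m+1}\bigr)^{\frac{\gamma+\delta-1}{\gamma-p(m+1)}}$ makes the role of the norm defect explicit and shows at a glance why $v_0\in\mathcal M^0_{\lambda_\ast}$ once $\|v_0\|^p=\ell'$. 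One caveat: the cancellation requires the value of $C(a,\gamma,\delta,m,p)$ actually obtained by solving $\mathcal F'_{\lambda,u}(t)=\mathcal F''_{\lambda,u}(t)=0$, i.e.\ with $\gamma+\delta-1$ where the display \eqref{eq3.18} prints $\gamma+p-1$; the printed constant appears to be a typo and your identity uses the correct one. Your closing step identifying $u_0$ with $v_0$ via the uniqueness of the critical point in Proposition \ref{prop3.1}(II)(b) is needed in your formulation (since you scaled to $v_n$) and is handled correctly; all the auxiliary estimates (nondegeneracy of $u_0$, boundedness of $t_n$ above and away from zero, and the Radon--Riesz step) check out.
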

\begin{proof}
     We prove for $u_n\in \mathcal{M}^-_{\lambda_\ast}$. Since $u_n\in \mathcal{M}^-_{\lambda_\ast}$ by Lemma \ref{lem3.2}-$(ii)$ (see \eqref{eq3.14}) that there exists $c_1>0$ such that $\displaystyle\int_{\mathbb{R}^{N}}\beta (x)|u_n|^\gamma~\mathrm{d}x>c_1$. Further, as $t^-_{\lambda_n}(u_n)u_n\in\mathcal{M}^-_{\lambda_n}$, hence by Proposition \ref{prop3.1}, there exists $t^+_{\lambda_n}(u_n)>0$ satisfying $t^+_{\lambda_n}(u_n)<t^-_{\lambda_n}(u_n)$ such that $t^+_{\lambda_n}(u_n)u_n\in\mathcal{M}^+_{\lambda_n}$. Setting $t^+_n=t^+_{\lambda_n}(u_n)$ and $t^-_n=t^-_{\lambda_n}(u_n)$, we have
        \begin{equation}\label{eq6.1}
          \mathcal{F}'_{\lambda_n,u_n}(t^-_n)= \mathcal{F}'_{\lambda_n,u_n}(t^+_n)=0~\text{and}~ \mathcal{J}^-_{\lambda_n}(t^-_n u_n)=o(1)~\text{as}~n\to\infty. 
        \end{equation}
        In solving these equations, we obtain
        \begin{equation}\label{eq6.2}
        \begin{split}
          a t^-_n\|u_n\|^{p(m+1)}\Bigg[\frac{\big(p(m+1)+\delta-1\big)\big(\frac{t^+_n}{t^-_n}\big)^{\gamma+\delta-1}-(\gamma+\delta-1)\big(\frac{t^+_n}{t^-_n}\big)^{p(m+1)+\delta-1}+\big(\gamma-p(m+1)\big)}{\big(\frac{t^+_n}{t^-_n}\big)^{\gamma+\delta-1}-1}\Bigg]=o(1) 
        \end{split}  
        \end{equation} as $n\to\infty$.
        Using $\hat{C}^+_1\leq\|u_n\|\leq \hat{C}^-_2$ for each $n\in\mathbb{N}$ and Lemma \ref{lem3.2}, we deduce that $t^-_n$ and $t^+_n$ are bounded. Hence, up to subsequences $ t^-_n\to\sigma$ and $ t^+_n\to\eta$ as $n\to\infty$. Now from \eqref{eq6.2}, we obtain
        $$\big(p(m+1)+\delta-1\big)\bigg(\frac{\eta}{\sigma}\bigg)^{\gamma+\delta-1}-(\gamma+\delta-1)\bigg(\frac{\eta}{\sigma}\bigg)^{p(m+1)+\delta-1}+\big(\gamma-p(m+1)\big)=0,$$
        which has a unique root $\eta=\sigma$ and hence $ t^-_n\to\eta$ and $ t^+_n\to\eta$ as $n\to\infty$. Also, since $t^+_n u_n\in\mathcal{M}^+_{\lambda_n}$, there exists $c_2>0$ such that $\displaystyle\int_{\mathbb{R}^{N}}\alpha(x)|u_n|^{1-\delta}~\mathrm{d}x>c_2$. From \eqref{eq6.1}, it follows that         
         $$\mathcal{F}'_{\lambda_\ast,u_n}(\eta)=o(1)~\text{and}~\mathcal{J}^-_{\lambda_\ast}(\eta u_n)=o(1)~\text{as}~n\to\infty.$$          
This yields $$\frac{a\big(\gamma-p(m+1)\big)\|\eta u_n\|^{p
(m+1)}}{(\delta+\gamma-1)\displaystyle\int_{\mathbb{R}^{N}}\alpha(x)|\eta u_n|^{1-\delta}~\mathrm{d}x}=\lambda_\ast+o(1) ~\text{as}~n\to\infty$$
and $$  \frac{a\big(p(m+1)+\delta-1\big)\|\eta u_n\|^{p
(m+1)}}{(\delta+\gamma-1)\displaystyle\int_{\mathbb{R}^{N}}\beta(x)|\eta u_n|^{\gamma}~\mathrm{d}x}=1+ o(1)~\text{as}~n\to\infty.$$
Hence it follows from \eqref{eq3.18} and Lemma \ref{lem3.5} that
$$\lambda(u_n)=\lambda(\eta u_n)=(\lambda_\ast+o(1))(1+ o(1))^{\frac{p(m+1)+\delta-1}{\gamma-p(m+1)}}\to \lambda_\ast~\text{as}~n\to\infty. $$
This shows that $\{u_n\}_{n\in\mathbb{N}}$ is a bounded minimizing sequence for $\lambda_\ast$. Therefore, up to a subsequence $u_n\rightharpoonup u$ weakly in $W^{s,p}_{V}(\mathbb{R}^N)$ as $n\to\infty$. Repeating a similar procedure done in Corollary \ref{cor5.2}, we get $u_n\to u$ in $W^{s,p}_{V}(\mathbb{R}^N)$ as $n\to\infty$. Now, from the continuity of $\lambda(u)$, we infer that $\lambda(u)=\lambda_\ast$ and also that we have $\displaystyle\int_{\mathbb{R}^{N}}\beta (x)|u|^\gamma~\mathrm{d}x>0$. It follows from strong convergence that $\mathcal{F}'_{\lambda_\ast,u}(1)=\displaystyle\lim_{n\to\infty} \mathcal{F}'_{\lambda_n,u_n}(1)=0,~\text{that is},~u\in\mathcal{M}_{\lambda_\ast}$ and hence by Corollary \ref{cor3.6}-$(c)$, we deduce that $u\in\mathcal{M}^0_{\lambda_\ast}$ . This implies that 
$$\lim_{n\to\infty} \mathrm{dist}(u_n,\mathcal{M}^0_{\lambda_\ast})=\mathrm{dist}(u,\mathcal{M}^0_{\lambda_\ast})= 0.$$
By arguing similarly as above, we can prove for the case $u_n\in \mathcal{M}^-_{\lambda_\ast}$. This completes the proof. 
\end{proof}
Let $\hat{C}^+_1,\hat{C}^-_2,d>0$ and define the sets 
$$\mathcal{M}^-_{\lambda_\ast,\mathrm{d}}=\big\{w\in \mathcal{M}^-_{\lambda_\ast}:~\mathrm{dist}(w,\mathcal{M}^0_{\lambda_\ast})>d,~\|w\|\leq \hat{C}^-_2\big\} $$
and 
$$\mathcal{M}^+_{\lambda_\ast,\mathrm{d}}=\big\{u\in \mathcal{M}^+_{\lambda_\ast}:~\mathrm{dist}(u,\mathcal{M}^0_{\lambda_\ast})>d,~\hat{C}^+_1\leq\|u\|\big\}.$$
Lemma \ref{lem6.1} can immediately obtain the following Corollary.
\begin{corollary}\label{cor6.2}
 Let $\hat{C}^+_1, \hat{C}^-_2,d>0$ be given as above, then there exists $\epsilon>0$ such that
 \begin{itemize}
     \item [(i)] there exists $\eta<0$ such that $\mathcal{J}^-_{\lambda}(t^-_{\lambda}(w)w)< \eta$ for all $\lambda\in(\lambda_\ast,\lambda_\ast+\epsilon)$ and $w\in\mathcal{M}^-_{\lambda_\ast,\mathrm{d}}$. Consequently, $t^-_{\lambda}(w)w\in\mathcal{M}^-_\lambda$ and $w\in\hat{\mathcal{M}}_\lambda$, for all $\lambda\in(\lambda_\ast,\lambda_\ast+\epsilon)$.
     \item [(ii)] there exists $\eta>0$ such that $\mathcal{J}^+_{\lambda}(t^+_{\lambda}(u)u)> \eta$ for all $\lambda\in(\lambda_\ast,\lambda_\ast+\epsilon)$ and $u\in\mathcal{M}^+_{\lambda_\ast,\mathrm{d}}$. Consequently, $t^+_{\lambda}(u)u\in\mathcal{M}^+_\lambda$ and $u\in\hat{\mathcal{M}}_\lambda\cup\hat{\mathcal{M}}^+_\lambda $, for all $\lambda\in(\lambda_\ast,\lambda_\ast+\epsilon)$.
 \end{itemize}
\end{corollary}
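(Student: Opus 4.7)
The plan is a proof by contradiction reducing to Lemma~\ref{lem6.1}. I focus on part~(i); part~(ii) is entirely analogous. My starting point is the scaling identity
\[
\mathcal{J}^-_\lambda(t^-_\lambda(w)w)\;=\;\mathcal{F}''_{\lambda,\,t^-_\lambda(w)w}(1),
\]
which I would verify directly by comparing the explicit formulas of $\mathcal{J}^-_\lambda$ and $\mathcal{F}''_{\lambda,w}$ and using the elementary relation $\mathcal{F}''_{\lambda,sw}(1)=s^2\mathcal{F}''_{\lambda,w}(s)$. Under this identification, the sought inequality $\mathcal{J}^-_\lambda(t^-_\lambda(w)w)<\eta<0$ is precisely the statement that $t^-_\lambda(w)w$ lies strictly inside $\mathcal{M}^-_\lambda$ with a uniform quantitative margin.

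Before invoking Lemma~\ref{lem6.1}, I must verify that $t^-_\lambda(w)$ is actually defined for $w\in\mathcal{M}^-_{\lambda_\ast,d}$ and $\lambda$ slightly above $\lambda_\ast$. By Proposition~\ref{prop3.1}~(II), this amounts to establishing the uniform gap $\lambda_0:=\inf\{\lambda(w):w\in\mathcal{M}^-_{\lambda_\ast,d}\}>\lambda_\ast$. The plan is again by contradiction: if the infimum equalled $\lambda_\ast$, I would pick $\{w_n\}\subset\mathcal{M}^-_{\lambda_\ast,d}$ with $\lambda(w_n)\to\lambda_\ast$. Boundedness $\|w_n\|\le\hat{C}^-_2$ gives $w_n\rightharpoonup w$ weakly up to a subsequence, and Lemma~\ref{lemma2.3} makes both $\int\alpha|w_n|^{1-\delta}$ and $\int\beta|w_n|^\gamma$ converge. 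Weak lower semicontinuity of the norm in the explicit formula \eqref{eq3.18} for $\lambda(\cdot)$, together with the definition of $\lambda_\ast$, then forces $\lambda(w)=\lambda_\ast$ and $\|w_n\|\to\|w\|$; uniform convexity of $W^{s,p}_{V}(\mathbb{R}^N)$ upgrades the convergence to strong, placing $w\in\mathcal{M}^0_{\lambda_\ast}$ by Corollary~\ref{cor3.6}~(c) and contradicting $\mathrm{dist}(w_n,\mathcal{M}^0_{\lambda_\ast})>d$.

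With the gap in hand, I would suppose (i) fails and extract $\lambda_n\downarrow\lambda_\ast$ together with $w_n\in\mathcal{M}^-_{\lambda_\ast,d}$ satisfying $\mathcal{J}^-_{\lambda_n}(t^-_{\lambda_n}(w_n)w_n)\to 0^-$; the upper bound $0$ is automatic because $t^-_{\lambda_n}(w_n)w_n\in\mathcal{M}^-_{\lambda_n}$ once the gap step is available. Lemma~\ref{lem3.2}~(ii) supplies a uniform lower bound $\|w_n\|\ge\hat{c}>0$, while $\|w_n\|\le\hat{C}^-_2$ is built into $\mathcal{M}^-_{\lambda_\ast,d}$. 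All hypotheses of Lemma~\ref{lem6.1} are therefore met (taking the $\hat{C}^+_1$ of that lemma to be any positive constant not exceeding $\hat{c}$), and it concludes $\mathrm{dist}(w_n,\mathcal{M}^0_{\lambda_\ast})\to 0$, the desired contradiction. The ``consequently'' clause is now immediate: $t^-_\lambda(w)w\in\mathcal{M}^-_\lambda$ by the scaling identity, and $w\in\hat{\mathcal{M}}_\lambda$ combines $w\in\mathcal{G}^+$ (automatic on $\mathcal{M}^-_{\lambda_\ast}$, see Corollary~\ref{cor3.3}) with $\lambda<\lambda(w)$ (the gap). Part~(ii) runs in parallel, with one bifurcation: elements of $\mathcal{M}^+_{\lambda_\ast,d}$ with $\int\beta|u|^\gamma\le 0$ are handled by case~(I) of Proposition~\ref{prop3.1}, which defines $t^+_\lambda(u)$ for every $\lambda>0$ and yields $u\in\hat{\mathcal{M}}^+_\lambda$ with no restriction on $\lambda$, while the complementary piece in $\mathcal{G}^+$ uses the gap argument to land in $\hat{\mathcal{M}}_\lambda$. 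The principal obstacle I anticipate is precisely the uniform gap step, since Lemma~\ref{lem6.1} presupposes that the projection $t^{\mp}_{\lambda_n}$ even exists along the contradiction sequence; the weak-to-strong upgrade via uniform convexity of $W^{s,p}_{V}(\mathbb{R}^N)$ is the decisive technical ingredient.
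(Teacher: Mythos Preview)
Your proposal is correct and follows essentially the same route as the paper, which simply states that the corollary is an immediate consequence of Lemma~\ref{lem6.1}. You have in fact been more careful than the paper: the preliminary gap step $\inf\{\lambda(w):w\in\mathcal{M}^-_{\lambda_\ast,d}\}>\lambda_\ast$ (ensuring that $t^-_\lambda(w)$ is even defined before one can speak of $\mathcal{J}^-_\lambda(t^-_\lambda(w)w)$) is implicit in the paper's one-line proof but is genuinely needed, and your compactness argument for it via uniform convexity is sound.
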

\begin{lemma}\label{lem6.3}
 There holds $\mathrm{dist}\big(\mathcal{S}^\pm_{\lambda_\ast},\mathcal{M}^0_{\lambda_\ast}\big)>0.$ 
\end{lemma}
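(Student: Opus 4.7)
The plan is to show that $\mathcal{S}^\pm_{\lambda_\ast}$ and $\mathcal{M}^0_{\lambda_\ast}$ are two disjoint compact subsets of $W^{s,p}_V(\mathbb{R}^N)$, from which the positivity of the distance follows by the standard metric-space fact that two disjoint nonempty compact sets are separated by a positive distance. The essential work has, in effect, already been done elsewhere in the paper, so this proof should be short.

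First I would record that both sets are compact in the strong topology of $W^{s,p}_V(\mathbb{R}^N)$. Indeed, $\mathcal{M}^0_{\lambda_\ast}$ is compact by Corollary \ref{cor5.2}, while the compactness and non-emptiness of $\mathcal{S}^\pm_{\lambda_\ast}$ is exactly the content of Corollary \ref{cor5.7}$(i)$. Hence the infimum $\mathrm{dist}(\mathcal{S}^\pm_{\lambda_\ast},\mathcal{M}^0_{\lambda_\ast})=\inf\{\|u-v\|:u\in\mathcal{S}^\pm_{\lambda_\ast},\,v\in\mathcal{M}^0_{\lambda_\ast}\}$ is attained by some pair $(u_0,v_0)$, and the question reduces to showing that $\mathcal{S}^\pm_{\lambda_\ast}\cap\mathcal{M}^0_{\lambda_\ast}=\emptyset$.

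For the disjointness, I would argue as follows. By Corollary \ref{cor5.7}$(ii)$, every element of $\mathcal{S}^+_{\lambda_\ast}\cup\mathcal{S}^-_{\lambda_\ast}$ is a weak solution of $(\mathcal{E}_{\lambda_\ast})$. On the other hand, Lemma \ref{lem5.1} asserts that $(\mathcal{E}_{\lambda_\ast})$ admits no weak solution lying in $\mathcal{M}^0_{\lambda_\ast}$. Consequently, no $u\in \mathcal{S}^\pm_{\lambda_\ast}$ can belong to $\mathcal{M}^0_{\lambda_\ast}$, which yields $\mathcal{S}^\pm_{\lambda_\ast}\cap\mathcal{M}^0_{\lambda_\ast}=\emptyset$. (Alternatively, disjointness also follows from the trivial set-theoretic observation $\mathcal{M}^\pm_{\lambda_\ast}\cap\mathcal{M}^0_{\lambda_\ast}=\emptyset$ coming from the sign of $\mathcal{F}''_{\lambda_\ast,u}(1)$, together with $\mathcal{S}^\pm_{\lambda_\ast}\subseteq\mathcal{M}^\pm_{\lambda_\ast}$.)

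Combining these two ingredients, the distance must be attained at $(u_0,v_0)$ with $u_0\neq v_0$, so $\|u_0-v_0\|>0$, and therefore $\mathrm{dist}\bigl(\mathcal{S}^\pm_{\lambda_\ast},\mathcal{M}^0_{\lambda_\ast}\bigr)>0$, as required. There is no genuine obstacle here beyond citing the prior compactness results correctly; the only subtle point is to make sure the compactness claimed in Corollary \ref{cor5.7}$(i)$ is interpreted in the strong norm of $W^{s,p}_V(\mathbb{R}^N)$, since the distance in the statement is measured with respect to this norm.
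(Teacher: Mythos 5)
Your argument is correct as a formal deduction from the paper's stated results, but it routes through a different set of citations than the paper's own proof, and the difference matters. The paper argues by contradiction: it takes $w_n\in\mathcal{S}^-_{\lambda_\ast}$ and $\psi_n\in\mathcal{M}^0_{\lambda_\ast}$ with $\|w_n-\psi_n\|\to 0$, uses only the compactness of $\mathcal{M}^0_{\lambda_\ast}$ (Corollary \ref{cor5.2}) to obtain $\psi_n\to\psi\in\mathcal{M}^0_{\lambda_\ast}$ and hence $w_n\to\psi$, and then passes to the limit in the weak formulation (as in Theorem \ref{thm5.6}) to conclude that $\psi$ is a solution of $(\mathcal{E}_{\lambda_\ast})$ lying in $\mathcal{M}^0_{\lambda_\ast}$, contradicting Lemma \ref{lem5.1}; in particular it never needs $\mathcal{S}^\pm_{\lambda_\ast}$ to be compact or even closed. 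You instead invoke the compactness of $\mathcal{S}^\pm_{\lambda_\ast}$ from Corollary \ref{cor5.7}$(i)$ and reduce the claim to the metric-space fact that two disjoint nonempty compact sets are a positive distance apart. That is logically valid and shorter, and your disjointness observation ($\mathcal{S}^\pm_{\lambda_\ast}\subset\mathcal{M}^\pm_{\lambda_\ast}$, which is disjoint from $\mathcal{M}^0_{\lambda_\ast}$ by the sign of $\mathcal{F}''_{\lambda_\ast,u}(1)$) is immediate. The caveat is that the closedness of $\mathcal{S}^\pm_{\lambda_\ast}$ --- that a strong limit of minimizers remains in $\mathcal{M}^\pm_{\lambda_\ast}$ rather than degenerating into $\mathcal{M}^0_{\lambda_\ast}$ --- is precisely the delicate point, and justifying it requires the same two ingredients the paper deploys in its proof of Lemma \ref{lem6.3} (strong limits of solutions are solutions; no solution lies in $\mathcal{M}^0_{\lambda_\ast}$ by Lemma \ref{lem5.1}). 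Since Corollary \ref{cor5.7} is only asserted to ``follow directly'' from the earlier existence results, your proof relocates the analytic content of the lemma into that corollary rather than eliminating it; it is not circular as the paper is organized, but it is less self-contained than the paper's argument.
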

\begin{proof}
First, we show that $\mathrm{dist}\big(\mathcal{S}^-_{\lambda_\ast},\mathcal{M}^0_{\lambda_\ast}\big)>0$. Indeed, if not, then $\mathrm{dist}\big(\mathcal{S}^-_{\lambda_\ast},\mathcal{M}^0_{\lambda_\ast}\big)=0$. So, there exist two sequences $\{w_n\}_{n\in\mathbb{N}}\subset \mathcal{S}^-_{\lambda_\ast} $ and $\{\psi_n\}_{n\in\mathbb{N}}\subset\mathcal{M}^0_{\lambda_\ast}$ such that $\|w_n-\psi_n\|\to 0$ as $n\to\infty$. As $w_n\in\mathcal{S}^-_{\lambda_\ast}$, therefore by Corollary \ref{cor5.7}, we conclude that $w_n$ is a solution of $(\mathcal{E}_{\lambda_\ast})$. Therefore, we have    
$$\langle{L(w_n),v}\rangle-\lambda_\ast \int_{\mathbb{R}^{N}}\alpha(x)w^{-\delta}_n v~\mathrm{d}x-\int_{\mathbb{R}^{N}}\beta(x)w_n^{\gamma-1} v~\mathrm{d}x=0,~\text{for all}~v\in W^{s,p}_{V}(\mathbb{R}^N).$$
From Corollary \ref{cor5.2}, we deduce that there exists $\psi\in\mathcal{M}^0_{\lambda_\ast}$ such that up to a subsequence $\psi_n\to\psi$ in $ W^{s,p}_{V}(\mathbb{R}^N)$ as $n\to\infty$ and hence $w_n\to\psi$ in $ W^{s,p}_{V}(\mathbb{R}^N)$ as $n\to\infty$. Now, following the proof of Theorem \ref{thm5.6}, we obtain the following result.
\begin{equation}\label{eq6.5}   
\langle{L(\psi),v}\rangle-\lambda_\ast \int_{\mathbb{R}^{N}}\alpha(x)\psi^{-\delta} v~\mathrm{d}x-\int_{\mathbb{R}^{N}}\beta(x)\psi^{\gamma-1} v~\mathrm{d}x\geq 0,~\text{for all}~v\in\mathcal{K}.
\end{equation}
Let $v\in W^{s,p}_{V}(\mathbb{R}^N)$ and define $\phi_\epsilon= \psi+\epsilon v$, then for each $\epsilon>0$, $\phi^+_\epsilon\in\mathcal{K}$. Now replacing $\phi_\epsilon$ with $v$ in \eqref{eq6.5} and following Theorem \ref{thm4.6}, we can easily show that $\phi\in\mathcal{M}^0_{\lambda_\ast}$ is a solution to $(\mathcal{E}_{\lambda_\ast})$, which is a contradiction by Lemma \ref{lem5.1}. This shows that $\mathrm{dist}\big(\mathcal{S}^-_{\lambda_\ast},\mathcal{M}^0_{\lambda_\ast}\big)>0$.\\ Similarly, we can prove $\mathrm{dist}\big(\mathcal{S}^+_{\lambda_\ast},\mathcal{M}^0_{\lambda_\ast}\big)>0$. This completes the proof.
\end{proof}

Define $\mathrm{d}_{\lambda_\ast,\pm}=\mathrm{dist}\big(\mathcal{S}^\pm_{\lambda_\ast},\mathcal{M}^0_{\lambda_\ast}\big)$. Let there exist constants $\hat{C}^+_{1,\lambda_\ast},\hat{C}^-_{2,\lambda_\ast}>0$ such that $\|w\|\leq\hat{C}^-_{2,\lambda_\ast},~\forall~w\in \mathcal{S}^-_{\lambda_\ast}$ and $\hat{C}^+_{1,\lambda_\ast}\leq \|u\|,~\forall~u\in \mathcal{S}^+_{\lambda_\ast}$. Further, let $\mathrm{d}_{\pm}\in(0,\mathrm{d}_{\lambda_\ast,\pm})$, $\hat{C}^+_{1}<\hat{C}^+_{1,\lambda_\ast}$, $\hat{C}^-_{2,\lambda_\ast}<\hat{C}^-_{2}$ and $\lambda\in(\lambda_\ast,\lambda_\ast+\epsilon)$, where $\epsilon>0$ as in Corollary \ref{cor6.2}. Now consider the following constrained minimization problems:
\begin{equation}\label{eq6.6}
\Upsilon^-_{\lambda,d_{-}}=\inf\big\{\mathcal{I}^-_{\lambda}(w):w\in\mathcal{M}^-_{\lambda_\ast,d_{-}}\big\}~\text{and}~\Upsilon^+_{\lambda,d_{+}}=\inf\big\{\mathcal{I}^+_{\lambda}(u):u\in\mathcal{M}^+_{\lambda_\ast,d_{+}}\big\}.
\end{equation}
\begin{remark}
   It is also true that $\mathcal{S}^\pm_{\lambda_\ast}\subset \mathcal{M}^\pm_{\lambda_\ast,d_{\pm}}.$
\end{remark}
\begin{proposition}\label{prop6.5}
    The maps $\lambda\ni(\lambda_\ast,\lambda_\ast+\epsilon)\mapsto \Upsilon^\pm_{\lambda,d_{\pm}} $ are decreasing and there holds $$\lim_{\lambda\downarrow\lambda_\ast} \Upsilon^\pm_{\lambda,d_{\pm}}=\Upsilon^\pm_{\lambda_\ast}.$$
    
\end{proposition}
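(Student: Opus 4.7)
The plan is to follow the template of Proposition \ref{prop5.5}: first establish monotonicity by pointwise comparison on the common admissible set, then sandwich the boundary limit between an upper bound coming from an explicit test function and a lower bound coming from a uniform Lipschitz-in-$\lambda$ estimate for $\mathcal{I}^-_\lambda$. I describe the $-$-case; the $+$-case is entirely analogous, with Corollary \ref{cor6.2}(ii) and Lemma \ref{lem3.2}(i) replacing their counterparts.

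For monotonicity, fix $\lambda_\ast<\lambda_1<\lambda_2<\lambda_\ast+\epsilon$ and any $w\in \mathcal{M}^-_{\lambda_\ast,d_-}$. Corollary \ref{cor6.2}(i) gives $w\in \hat{\mathcal{M}}_{\lambda_i}$ and $t^-_{\lambda_i}(w)w\in \mathcal{M}^-_{\lambda_i}$, so $\mathcal{I}^-_{\lambda_i}(w)$ is defined. By Lemma \ref{lem3.7}(ii) on $(\lambda_\ast,\lambda_\ast+\epsilon)$, the map $\mu\mapsto \mathcal{I}^-_\mu(w)$ is strictly decreasing; passing to infima yields $\Upsilon^-_{\lambda_2,d_-}\leq \Upsilon^-_{\lambda_1,d_-}$. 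For the upper bound on the limit, fix $w^*\in \mathcal{S}^-_{\lambda_\ast}$, which lies in $\mathcal{M}^-_{\lambda_\ast,d_-}$ by the Remark preceding the statement together with the choice $d_-<d_{\lambda_\ast,-}$. Then $\Upsilon^-_{\lambda,d_-}\leq \mathcal{I}^-_\lambda(w^*)$, and since $\lambda\mapsto t^-_\lambda(w^*)$ is smooth on a neighbourhood of $\lambda_\ast$ inside $(0,\lambda(w^*))$ (Lemma \ref{lem3.7}),
\[
\mathcal{I}^-_\lambda(w^*)\longrightarrow \mathcal{I}^-_{\lambda_\ast}(w^*)=\Psi_{\lambda_\ast}(w^*)=\Upsilon^-_{\lambda_\ast}\qquad\text{as }\lambda\downarrow \lambda_\ast,
\]
whence $\limsup_{\lambda\downarrow \lambda_\ast}\Upsilon^-_{\lambda,d_-}\leq \Upsilon^-_{\lambda_\ast}$.

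The main obstacle is the reverse inequality $\liminf_{\lambda\downarrow \lambda_\ast}\Upsilon^-_{\lambda,d_-}\geq \Upsilon^-_{\lambda_\ast}$, which does \emph{not} follow from monotonicity. My plan is to produce a uniform-in-$w$ Lipschitz bound in $\lambda$. Differentiating $\mathcal{I}^-_\mu(w)=\Psi_\mu(t^-_\mu(w)w)$ and using the critical-point identity $\mathcal{F}'_{\mu,w}(t^-_\mu(w))=0$ to kill the $\partial_t$ contribution, one finds
\[
\frac{d}{d\mu}\mathcal{I}^-_\mu(w)=-\frac{(t^-_\mu(w))^{1-\delta}}{1-\delta}\int_{\mathbb{R}^N}\alpha(x)|w|^{1-\delta}\,\mathrm{d}x.
\]
The hard step is bounding the right-hand side uniformly over $w\in \mathcal{M}^-_{\lambda_\ast,d_-}$ and $\mu\in[\lambda_\ast,\lambda_\ast+\epsilon]$. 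The integral factor is controlled by H\"older, $(H2)$, and the upper bound $\|w\|\leq \hat{C}^-_2$. For $t^-_\mu(w)$, I combine three ingredients: (a) $t^-_\mu(w)w\in \mathcal{M}^-_\mu$ from Corollary \ref{cor6.2}(i); (b) the monotonicity above, giving $\Psi_\mu(t^-_\mu(w)w)=\mathcal{I}^-_\mu(w)\leq \Psi_{\lambda_\ast}(w)$, which is uniformly bounded on the bounded set $\{\|w\|\leq \hat{C}^-_2\}$; and (c) the $\mu$-uniform coercivity estimate \eqref{eq9.9}, which converts (b) into a uniform bound on $\|t^-_\mu(w)w\|$. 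The lower bound $\|w\|\geq \hat{c}>0$ from Lemma \ref{lem3.2}(ii) then yields $t^-_\mu(w)\leq C_1$ uniformly.

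Integrating from $\lambda_\ast$ to $\lambda$ produces a constant $C>0$ such that $\mathcal{I}^-_\lambda(w)\geq \Psi_{\lambda_\ast}(w)-C(\lambda-\lambda_\ast)$ for every $w\in \mathcal{M}^-_{\lambda_\ast,d_-}$. Since $w\in \mathcal{M}^-_{\lambda_\ast}$ forces $\Psi_{\lambda_\ast}(w)\geq \Upsilon^-_{\lambda_\ast}$, taking the infimum over $w$ gives $\Upsilon^-_{\lambda,d_-}\geq \Upsilon^-_{\lambda_\ast}-C(\lambda-\lambda_\ast)$, and letting $\lambda\downarrow \lambda_\ast$ closes the lower bound and hence the proof.
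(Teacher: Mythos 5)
Your proposal is correct, and the monotonicity and the upper bound $\limsup_{\lambda\downarrow\lambda_\ast}\Upsilon^-_{\lambda,d_-}\leq\Upsilon^-_{\lambda_\ast}$ are handled exactly as in the paper (pointwise use of Lemma \ref{lem3.7} on $\mathcal{M}^-_{\lambda_\ast,d_-}$, plus testing with an element of $\mathcal{S}^-_{\lambda_\ast}$). Where you genuinely diverge is the hard direction $\liminf_{\lambda\downarrow\lambda_\ast}\Upsilon^-_{\lambda,d_-}\geq\Upsilon^-_{\lambda_\ast}$. The paper argues by contradiction and compactness: it extracts a near-minimizing sequence $\{w_n\}\subset\mathcal{M}^-_{\lambda_\ast,d_-}$ for $\Upsilon^-_{\lambda_n,d_-}$, shows the weak limit is nontrivial, upgrades to strong convergence via the fibre-map comparison $\liminf_n\mathcal{F}'_{\lambda_n,w_n}(t_{\lambda_\ast}(w))>0$ (whose failure would contradict $\Upsilon^-_{\lambda_\ast}\leq\Psi_{\lambda_\ast}(t_{\lambda_\ast}(w)w)$, itself obtained by approximating from the left via Corollary \ref{cor5.3} and Proposition \ref{prop5.5}), and then concludes from the continuity estimate $|\mathcal{I}^-_{\lambda_\ast}(w_n)-\mathcal{I}^-_{\lambda_n}(w_n)|\to 0$. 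You instead prove a quantitative, $w$-uniform Lipschitz bound $|\frac{d}{d\mu}\mathcal{I}^-_\mu(w)|\leq C$ via the envelope identity $\frac{d}{d\mu}\mathcal{I}^-_\mu(w)=-\frac{(t^-_\mu(w))^{1-\delta}}{1-\delta}\int\alpha|w|^{1-\delta}\,\mathrm{d}x$, with the $t^-_\mu(w)$ factor controlled by the coercivity bound \eqref{eq9.9} together with $\|w\|\geq\hat c$ and $\|w\|\leq\hat C_2^-$; the only points worth writing out in full are that $\lambda(w)\geq\lambda_\ast+\epsilon$ for $w\in\mathcal{M}^-_{\lambda_\ast,d_-}$ (Corollary \ref{cor6.2}, needed so that $\mu\mapsto t^-_\mu(w)$ is smooth on the whole closed interval $[\lambda_\ast,\lambda]$) and that for the $+$ case the upper bound on $t^+_\mu(u)$ comes from Lemma \ref{lem3.2}-$(i)$ rather than from coercivity. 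Your route buys an explicit rate $\Upsilon^-_{\lambda_\ast}-C(\lambda-\lambda_\ast)\leq\Upsilon^-_{\lambda,d_-}\leq\Upsilon^-_{\lambda_\ast}$ and avoids the weak-versus-strong convergence dichotomy entirely; the paper's route avoids the uniform estimates on $t^\mp_\mu(\cdot)$ but leans on the full compactness machinery of Sections \ref{sec5}--\ref{sec6}. Both are valid proofs of the stated result.
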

\begin{proof}
 Let $w\in \mathcal{M}^-_{\lambda_\ast,d_{-}}$, then by Lemma \ref{lem3.7}, we obtain $\Upsilon^-_{\lambda,d_{-}}\leq \mathcal{I}^-_{\lambda}(w)<\mathcal{I}^-_{\lambda'}(w)$
 for $\lambda_\ast<\lambda'<\lambda<\lambda_\ast+\epsilon$. It follows that $\Upsilon^-_{\lambda,d_{-}}\leq\Upsilon^-_{\lambda',d_{-}}$ and hence the map $\lambda\mapsto  \Upsilon^-_{\lambda,d_{-}} $ is decreasing for all $\lambda\in(\lambda_\ast,\lambda_\ast+\epsilon)$. Further, if $w\in \mathcal{S}^-_{\lambda_\ast} $, then it is easy to see that $\Upsilon^-_{\lambda,d_{-}}<\Upsilon^-_{\lambda_\ast}$ for all $\lambda\in(\lambda_\ast,\lambda_\ast+\epsilon)$. Now we claim that $\displaystyle\lim_{\lambda\downarrow\lambda_\ast} \Upsilon^-_{\lambda,d_{-}}=\Upsilon^-_{\lambda_\ast}.$ Indeed, if not, then let $\lambda_n\downarrow\lambda_\ast$ as $n\to\infty$ such that $\displaystyle\lim_{\lambda_n\downarrow\lambda_\ast} \Upsilon^-_{\lambda_n,d_{-}}=\Upsilon^-~(\text{say})<\Upsilon^-_{\lambda_\ast}$ as $n\to\infty$ . By using \eqref{eq6.6}, we get a sequence $\{w_n\}_{n\in\mathbb{N}}\subset\mathcal{M}^-_{\lambda_\ast,d_{-}} $ such that 
 \begin{equation}\label{eq6.7}
     |\mathcal{I}^-_{\lambda_n}(w_n)-\Upsilon^-_{\lambda_n,d_{-}}|\to 0~\text{as}~n\to\infty.
 \end{equation}
 Next, from the definition of $\mathcal{M}^{-}_{\lambda_\ast,d_{-}}$ and Corollary \ref{cor6.2}, we get $\{w_n\}_{n\in\mathbb{N}}$ is bounded and $t^-_{\lambda_n}(w_n)w_n\in\mathcal{M}^-_{\lambda_n}$ for $n$ large enough. Hence up to a subsequence $w_n\rightharpoonup w$ weakly in $W^{s,p}_{V}(\mathbb{R}^N)$ as $n\to\infty$. Lemma \ref{lemma2.3} says that $w_n\to w$ in $L^\tau(\mathbb{R}^N)$ and $L^\gamma(\mathbb{R}^N)$ respectively as $n\to\infty$, $w_n\to w$ a.e. in $\mathbb{R}^N$ and $w\geq 0$. Following the proof of Lemma \ref{lem4.2}, we get $w\neq 0$ ( since $ w_n\in \mathcal{M}^-_{\lambda_\ast}$ ) and $\displaystyle\int_{\mathbb{R}^N}\beta(x)|w|^\gamma~\mathrm{d}x>0$. Further, we claim that $w_n\to w$ in $W^{s,p}_{V}(\mathbb{R}^N)$ as $n\to\infty$. Indeed, if not, then we have
   $$\liminf_{n\to\infty}\mathcal{F}'_{\lambda_n,w_n}(t_{\lambda_\ast}(w))> \mathcal{F}'_{\lambda_\ast,w}(t_{\lambda_\ast}(w))=0.$$  
 It follows that $\mathcal{F}'_{\lambda_n,w_n}(t_{\lambda_\ast}(w)>0$ for $n$ large enough. Since $t^-_{\lambda_n}(w_n)w_n\in\mathcal{M}^-_{\lambda_n}$ for $n$ large enough, therefore by Proposition \ref{prop3.1} the map $\mathcal{F}_{\lambda_n,w_n}$ is strictly increasing in $(t^+_{\lambda_n}(w_n),t^-_{\lambda_n}(w_n))$ and hence $t^+_{\lambda_n}(w_n)<t_{\lambda_\ast}(w)<t^-_{\lambda_n}(w_n)$ for $n$ large enough. Therefore, we have 
 $$ \Psi_{\lambda_\ast}(t_{\lambda_\ast}(w)w)<\liminf_{n\to\infty}\Psi_{\lambda_n}(t_{\lambda_\ast}(w)w_n)=\liminf_{n\to\infty}\mathcal{F}_{\lambda_n,w_n}(t_{\lambda_\ast}(w))< \liminf_{n\to\infty}\Psi_{\lambda_n,}(t^-_{\lambda_n}(w_n)w_n)$$ $$=\liminf_{n\to\infty}\mathcal{I}^-_{\lambda_n}(w_n)=\liminf_{n\to\infty}\Upsilon^-_{\lambda_n,d_{-}}=\Upsilon^-<\Upsilon^-_{\lambda_\ast},$$
 which is a contradiction because if we take $\lambda'_n\uparrow\lambda_\ast$ as $n\to\infty$, then by Corollary \ref{cor5.3} and Proposition \ref{prop5.5}, we get
 $$\Upsilon^-_{\lambda_\ast}=\lim_{\lambda'_n\uparrow{\lambda_\ast}}\Upsilon^-_{\lambda'_n}\leq \lim_{\lambda'_n\uparrow{\lambda_\ast}}\Psi_{\lambda'_n}(t^-_{\lambda'_n}(w)w)=\Psi_{\lambda_\ast}(t_{\lambda_\ast}(w)w). $$
 This shows that $w_n\to w$ in $W^{s,p}_{V}(\mathbb{R}^N)$ as $n\to\infty$. Next, by continuity of the function $\lambda\ni(\lambda_\ast,\lambda_\ast+\epsilon)\mapsto t^-_\lambda(w)$ (see Lemma \ref{lem3.7}-$(i)$), we get 
 \begin{equation}\label{eq6.9}
     |\mathcal{I}^-_{\lambda_\ast}(w_n)-\mathcal{I}^-_{\lambda_n}(w_n)|=|\Psi_{\lambda_\ast}(t^-_{\lambda_\ast}(w_n)w_n)-\Psi_{\lambda_n}(t^-_{\lambda_n}(w_n)w_n)|\to 0~\text{as}~n\to\infty.
 \end{equation}
 From \eqref{eq6.7} and \eqref{eq6.9}, we obtain
 $$|\Upsilon^-_{\lambda_\ast}-\Upsilon^-_{\lambda_n,d_{-}}|\leq |\mathcal{I}^-_{\lambda_\ast}(w_n)-\Upsilon^-_{\lambda_n,d_{-}}|\leq |\mathcal{I}^-_{\lambda_\ast}(w_n)-\mathcal{I}^-_{\lambda_n}(w_n)|+|\mathcal{I}^-_{\lambda_n}(w_n)-\Upsilon^-_{\lambda_n,d_{-}}|\to 0~\text{as}~n\to\infty.$$
 It follows that $\Upsilon^-_{\lambda_n,d_{-}}\to \Upsilon^-_{\lambda_\ast}~\text{as}~n\to\infty$, which is again a contradiction. Thus we must have $\displaystyle\lim_{\lambda\downarrow\lambda_\ast} \Upsilon^-_{\lambda,d_{-}}=\Upsilon^-_{\lambda_\ast}$.\\
 Similarly, we can prove that the map $\lambda\mapsto  \Upsilon^+_{\lambda,d_{+}} $ is decreasing for all $\lambda\in(\lambda_\ast,\lambda_\ast+\epsilon)$ and $\displaystyle\lim_{\lambda\downarrow\lambda_\ast} \Upsilon^+_{\lambda,d_{+}}=\Upsilon^+_{\lambda_\ast}$. Hence the proof is completed.
\end{proof}
\begin{lemma}\label{lem6.6}
Let us choose $\mathrm{d}_{-}\in(0,\mathrm{d}_{\lambda_\ast,-})$ and $\hat{C}^-_{2,\lambda_\ast}<\hat{C}^-_{2}$, then there exists $\varepsilon^->0$ such that   for all $\lambda\in(\lambda_\ast,\lambda_\ast+\varepsilon^-)$,  $\Upsilon^-_{\lambda,d_{-}}$ has a minimizer  $\overline{w}_\lambda\in\mathcal{M}^-_{\lambda_\ast,d_{-}}$.    
\end{lemma}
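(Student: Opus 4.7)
The plan is to implement the direct method: extract a convergent subsequence from a minimizing sequence, promote weak to strong convergence via a fiber-map contradiction, and then verify that the strong limit sits inside the open constraint set $\mathcal{M}^-_{\lambda_\ast, d_-}$.

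First, I would pick a minimizing sequence $\{w_n\} \subset \mathcal{M}^-_{\lambda_\ast, d_-}$ with $\mathcal{I}^-_\lambda(w_n) \to \Upsilon^-_{\lambda, d_-}$. The built-in bound $\|w_n\| \leq \hat{C}^-_2$ gives, up to a subsequence, $w_n \rightharpoonup \overline{w}_\lambda$ weakly in $W^{s,p}_V(\mathbb{R}^N)$, together with strong convergence in $L^\tau(\mathbb{R}^N) \cap L^\gamma(\mathbb{R}^N)$ via Lemma \ref{lemma2.3}. Since each $w_n \in \mathcal{M}^-_{\lambda_\ast}$, inequality \eqref{eq3.14} yields a uniform positive lower bound on $\int_{\mathbb{R}^N} \beta(x)|w_n|^\gamma\, \mathrm{d}x$, which passes to the limit by $L^\gamma$-convergence; hence $\overline{w}_\lambda \neq 0$ with $\int_{\mathbb{R}^N}\beta(x)|\overline{w}_\lambda|^\gamma\,\mathrm{d}x>0$, so $\overline{w}_\lambda \in \hat{\mathcal{M}}_\lambda$ for $\lambda$ close to $\lambda_\ast$ (Corollary \ref{cor6.2}(i)) and $t^-_\lambda(\overline{w}_\lambda)$ is well-defined.

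Next, I would upgrade weak convergence to strong convergence by the fiber-map argument from Lemma \ref{lem4.2}. Assume the contrary; then $\|\overline{w}_\lambda\|^p < \liminf \|w_n\|^p$. Since $t^-_\lambda(w_n)w_n \in \mathcal{M}^-_\lambda$ realises the global maximum of $\mathcal{F}_{\lambda,w_n}$ (Proposition \ref{prop3.1}(II)(a)) and $t^-_\lambda$ depends continuously on its arguments (Lemma \ref{lem3.7}), one obtains
\[
\Psi_\lambda(t^-_\lambda(\overline{w}_\lambda) \overline{w}_\lambda) < \liminf_{n\to\infty} \Psi_\lambda(t^-_\lambda(\overline{w}_\lambda) w_n) \leq \liminf_{n\to\infty} \Psi_\lambda(t^-_\lambda(w_n) w_n) = \Upsilon^-_{\lambda, d_-},
\]
contradicting minimality once admissibility of the limit is verified. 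Strong convergence then forces $\mathcal{F}'_{\lambda_\ast, \overline{w}_\lambda}(1) = 0$ and $\mathcal{F}''_{\lambda_\ast, \overline{w}_\lambda}(1) \leq 0$, placing $\overline{w}_\lambda \in \mathcal{M}^-_{\lambda_\ast} \cup \mathcal{M}^0_{\lambda_\ast}$, while the bounds $\|\overline{w}_\lambda\|\leq \hat{C}^-_2$ and $\mathrm{dist}(\overline{w}_\lambda,\mathcal{M}^0_{\lambda_\ast}) \geq d_-$ follow from strong convergence.

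The main obstacle, and the delicate step, is upgrading the inequality $\mathrm{dist}(\overline{w}_\lambda, \mathcal{M}^0_{\lambda_\ast}) \geq d_-$ to the \emph{strict} inequality required by the definition of $\mathcal{M}^-_{\lambda_\ast, d_-}$. My plan is to argue by contradiction, assuming equality. Exploiting $d_- < d_{\lambda_\ast, -} = \mathrm{dist}(\mathcal{S}^-_{\lambda_\ast}, \mathcal{M}^0_{\lambda_\ast})$ from Lemma \ref{lem6.3} and the strict gap $\Upsilon^-_{\lambda, d_-} < \Upsilon^-_{\lambda_\ast}$ from Proposition \ref{prop6.5}, together with Corollary \ref{cor6.2}(i), which keeps $\mathcal{J}^-_\lambda(t^-_\lambda(w)w)$ below some $\eta<0$ uniformly on $\mathcal{M}^-_{\lambda_\ast, d_-}$, I would construct a competitor by perturbing $\overline{w}_\lambda$ toward an element of $\mathcal{S}^-_{\lambda_\ast}$, which lies strictly farther than $d_-$ from $\mathcal{M}^0_{\lambda_\ast}$. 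Combined with the strict monotonicity of $\mathcal{I}^-_\lambda$ along the fiber and the continuity provided by Lemma \ref{lem3.7}, this perturbation produces an admissible point of strictly smaller energy, contradicting minimality and forcing $\overline{w}_\lambda \in \mathcal{M}^-_{\lambda_\ast, d_-}$ as desired.
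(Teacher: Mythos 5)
Your first half is sound and matches the paper: extract a bounded minimizing sequence, pass to a weak limit, rule out $\overline{w}_\lambda=0$ via \eqref{eq3.14}, and upgrade to strong convergence by the usual fibre-map comparison. One caveat already here: in your displayed chain the contradiction is drawn against $\Upsilon^-_{\lambda,d_-}$, which presupposes that $\overline{w}_\lambda$ is admissible for that infimum --- exactly what is unknown at that stage. The paper instead compares against the unconstrained level $\Upsilon^-_{\lambda_\ast}$, using Corollary \ref{cor5.3} and Proposition \ref{prop5.5} to get $\Upsilon^-_{\lambda_\ast}\leq \Psi_{\lambda_\ast}(t_{\lambda_\ast}(w)w)$, which breaks the circularity.

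The genuine gap is in your final step. First, the proposed contradiction is impossible to realize: you cannot produce a point of $\mathcal{M}^-_{\lambda_\ast,d_-}$ with $\mathcal{I}^-_\lambda$-energy strictly below $\Upsilon^-_{\lambda,d_-}$, since that is the infimum over that very set; and there is no reason that perturbing toward $\mathcal{S}^-_{\lambda_\ast}$ (the minimizers of $\mathcal{I}^-_{\lambda_\ast}$, not of $\mathcal{I}^-_{\lambda}$) lowers $\mathcal{I}^-_{\lambda}$. Second, and more fundamentally, any argument carried out at a single fixed $\lambda$ cannot work, because the conclusion is only claimed for $\lambda$ sufficiently close to $\lambda_\ast$; the smallness of $\varepsilon^-$ must enter the proof. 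The paper's mechanism is a contradiction over a sequence $\lambda_m\downarrow\lambda_\ast$ for which the limits $\overline{w}(\lambda_m)$ allegedly escape $\mathcal{M}^-_{\lambda_\ast,d_-}$. Proposition \ref{prop6.5} gives $\Upsilon^-_{\lambda_m,d_-}\to\Upsilon^-_{\lambda_\ast}$, so the doubly indexed sequence $\overline{w}_n(\lambda_m)$ is asymptotically minimizing for $\Upsilon^-_{\lambda_\ast}$; its strong limit $\overline{w}$ therefore lies in $\mathcal{S}^-_{\lambda_\ast}$, and the standing choices $d_-<d_{\lambda_\ast,-}=\mathrm{dist}(\mathcal{S}^-_{\lambda_\ast},\mathcal{M}^0_{\lambda_\ast})$ (Lemma \ref{lem6.3}) and $\hat{C}^-_{2,\lambda_\ast}<\hat{C}^-_2$ place $\overline{w}$ strictly inside $\mathcal{M}^-_{\lambda_\ast,d_-}$, forcing $\overline{w}(\lambda_m)\in\mathcal{M}^-_{\lambda_\ast,d_-}$ for large $m$ --- the desired contradiction. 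In your sketch these two hypotheses are cited but never actually used to close the argument; without the $\lambda_m\downarrow\lambda_\ast$ structure they have no role to play, and the boundary case $\mathrm{dist}(\overline{w}_\lambda,\mathcal{M}^0_{\lambda_\ast})=d_-$ is not excluded.
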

\begin{proof}
   Let $\varepsilon^->0$ such that for all $\lambda\in(\lambda_\ast,\lambda_\ast+\varepsilon^-)$, there exists a minimizing sequence $\{\overline{w}_n(\lambda)\}_{n\in\mathbb{N}}\subset\mathcal{M}^-_{\lambda_\ast,d_{-}}$ for $\Upsilon^-_{\lambda,d_{-}}$, i.e., we have 
   \begin{equation}\label{eq6.10}
       \lim_{n\to\infty} \mathcal{I}^-_\lambda(\overline{w}_n(\lambda))=\Upsilon^-_{\lambda,d_{-}}.
   \end{equation}
   It follows from the definition of $\mathcal{M}^-_{\lambda_\ast,d_{-}}$ that $\{\overline{w}_n(\lambda)\}_{n\in\mathbb{N}}$ is bounded and hence up to a subsequence $\overline{w}_n(\lambda)\rightharpoonup\overline{w}(\lambda)$ weakly in $W^{s,p}_{V}(\mathbb{R}^N)$ as $n\to\infty$. Since $\overline{w}_n(\lambda)\in\mathcal{M}^-_{\lambda_\ast}$, therefore as done in Lemma \ref{lem4.2}, we can obtain $\overline{w}(\lambda)\neq 0$. Next, we claim that $\overline{w}(\lambda)\in\mathcal{M}^-_{\lambda_\ast,d_{-}} $ for all $\lambda\in(\lambda_\ast,\lambda_\ast+\varepsilon^-)$. Indeed, if not, then let us choose a sequence $\overline{w}(\lambda_m)\notin\mathcal{M}^-_{\lambda_\ast,d_{-}}$ for $\lambda_m\downarrow\lambda_\ast$ as $m\to\infty$, that is, $\lambda_m\in(\lambda_\ast,\lambda_\ast+\varepsilon^-)$ for $m$ large enough. Therefore, from \eqref{eq6.10}, we have
   \begin{equation}\label{eq6.11}
       |\Upsilon^-_{\lambda_m,d_{-}}-\mathcal{I}^-_{\lambda_m}(\overline{w}_n(\lambda_m))|\to 0 ~\text{as}~n,m\to\infty.
   \end{equation}
   Now from Proposition \ref{prop6.5} and \eqref{eq6.11}, we obtain 
   \begin{equation}\label{eq6.12}
       |\Upsilon^-_{\lambda_\ast}-\mathcal{I}^-_{\lambda_m}(\overline{w}_n(\lambda_m))|\to 0 ~\text{as}~n,m\to\infty.
   \end{equation}
   Define $$w_n(\lambda)=t^-_{\lambda}(\overline{w}_n(\lambda))\overline{w}_n(\lambda)~\text{and}~w_{n,m}=w_n(\lambda_m)=t^-_{\lambda_m}(\overline{w}_n(\lambda_m))\overline{w}_n(\lambda_m).$$
   By Corollary \ref{cor6.2}, we have that $w_n(\lambda)\in\mathcal{M}^-_\lambda$ for all $\lambda\in(\lambda_\ast,\lambda_\ast+\epsilon)$ and hence $w_{n,m}\in \mathcal{M}^-_{\lambda_m}$ for $m$ large enough. The boundedness of $\overline{w}_n(\lambda_m)$, Lemma \ref{lem3.2}-$(ii)$ and Lemma \ref{lem3.7}-$(i)$ infer that there exists $a>0$ such that $a<t^-_{\lambda_m}(\overline{w}_n(\lambda_m))<t^-_{\lambda_\ast}(\overline{w}_n(\lambda_m))=1$ (since $\overline{w}_n(\lambda_m)\in\mathcal{M}^-_{\lambda_\ast}$ for $m$ large enough and for all $n\in \mathbb{N}$) for $n,m$ large enough. It follows that $\{w_{n,m}\}_{(n,m)\in\mathbb{N}^2}$ is bounded and hence up to a subsequence $w_{n,m}\rightharpoonup w\neq 0$ weakly in $W^{s,p}_{V}(\mathbb{R}^N)$ as $n,m\to\infty$. Our aim is to prove $w_{n,m}\to w\neq 0$ in $W^{s,p}_{V}(\mathbb{R}^N)$ as $n,m\to\infty$. Indeed, if not, then we have  
   $$\liminf_{n,m\to\infty}\mathcal{F}'_{\lambda_m,w_{n,m}}(t_{\lambda_\ast}(w))> \mathcal{F}'_{\lambda_\ast,w}(t_{\lambda_\ast}(w))=0. $$
 It follows that $\mathcal{F}'_{\lambda_m,w_{n,m}}(t_{\lambda_\ast}(w))>0$ for $n,m$ large enough. Since $w_{n,m}\in\mathcal{M}^-_{\lambda_m}$ for $m$ large enough, therefore by Proposition \ref{prop3.1}, the map $\mathcal{F}_{\lambda_m,w_{n,m}}$ is strictly increasing in $(t^+_{\lambda_m}(w_{n,m}), t^-_{\lambda_m}(w_{n,m})=1)$ and hence $t^+_{\lambda_m}(w_{n,m})<t_{\lambda_\ast}(w)<t^-_{\lambda_m}(w_{n,m})=1$ for $n,m$ large enough. Now from \eqref{eq6.12}, we have 
 $$ \Psi_{\lambda_\ast}(t_{\lambda_\ast}(w)w)<\liminf_{n,m\to\infty}\Psi_{\lambda_m}(t_{\lambda_\ast}(w)w_{n,m})=\liminf_{n,m\to\infty}\mathcal{F}_{\lambda_m,w_{n,m}}(t_{\lambda_\ast}(w))$$ $$\hspace{2.5cm}< \liminf_{n,m\to\infty}\Psi_{\lambda_m}(t^-_{\lambda_m}(w_{n,m})w_{n,m})=\liminf_{n,m\to\infty}\mathcal{I}^-_{\lambda_m}(w_{n,m})=\Upsilon^-_{\lambda_\ast},$$
 which is an absurd because if we take $\lambda'_n\uparrow\lambda_\ast$ as $n\to\infty$, then by Corollary \ref{cor5.3} and Proposition \ref{prop5.5}, we get
 $$\Upsilon^-_{\lambda_\ast}=\lim_{\lambda'_n\uparrow{\lambda_\ast}}\Upsilon^-_{\lambda'_n}\leq \lim_{\lambda'_n\uparrow{\lambda_\ast}}\Psi_{\lambda'_n}(t^-_{\lambda'_n}(w)w)=\Psi_{\lambda_\ast}(t_{\lambda_\ast}(w)w). $$
 Therefore, we must have that $w_{n,m}\to w$ in $W^{s,p}_{V}(\mathbb{R}^N)$ and also up to a subsequence  $t^-_{\lambda_m}(\overline{w}_n(\lambda_m))\to t\geq a$ as $n,m\to\infty$ (since $t^-_{\lambda_m}(\overline{w}_n(\lambda_m))$ is bounded). This infers that $\overline{w}_n(\lambda_m)\to\overline{w}\neq 0 $ in $W^{s,p}_{V}(\mathbb{R}^N)$ as $n,m\to\infty$. Hence for $m$ large enough, we have
 $$0\leq \|\overline{w}(\lambda_m)-\overline{w}\|\leq\liminf_{n\to\infty}\|\overline{w}_n(\lambda_m)-\overline{w}\|. $$
 Passing limit $m\to\infty$ in the above inequality, we get 
 $\overline{w}(\lambda_m)\to \overline{w} $ in $W^{s,p}_{V}(\mathbb{R}^N)$ as $m\to\infty$. Furthermore, from \eqref{eq6.12} and strong convergence, we get $\Upsilon^-_{\lambda_\ast}=\mathcal{I}^-_{\lambda_\ast}(\overline{w})$ and $\overline{w}\in \mathcal{M}^-_{\lambda_\ast}\cup\mathcal{M}^0_{\lambda_\ast}$. But from the definition of $\mathcal{M}^-_{\lambda_\ast,d_{-}}$, we have $\mathrm{dist}(\overline{w}_n(\lambda_m),\mathcal{M}^0_{\lambda_\ast})>d_{-}>0$  for $m$ large enough. It follows that $$\lim_{n,m\to\infty} \mathrm{dist}(\overline{w}_n(\lambda_m),\mathcal{M}^0_{\lambda_\ast})=\mathrm{dist}(\overline{w},\mathcal{M}^0_{\lambda_\ast})\geq d_{-}>0.$$
 This yields $\overline{w}\notin\mathcal{M}^0_{\lambda_\ast}$ and hence $\overline{w}\in\mathcal{M}^-_{\lambda_\ast}$. Thus we have $\overline{w}\in\mathcal{S}^-_{\lambda_\ast}$ and $\overline{w}(\lambda_m)\in\mathcal{M}^-_{\lambda_\ast,d_{-}}$ for $m$ large enough, which is again a contradiction. Therefore, there exists $\varepsilon^->0$ such that $\overline{w}(\lambda)\in\mathcal{M}^-_{\lambda_\ast,d_{-}} $ for all $\lambda\in(\lambda_\ast,\lambda_\ast+\varepsilon^-)$. Arguing as before we conclude that $\overline{w}_n(\lambda)\to\overline{w}(\lambda)$ in $W^{s,p}_{V}(\mathbb{R}^N)$ as $n\to\infty$ and $\mathcal{I}^-_\lambda(\overline{w}(\lambda))=\Upsilon^-_{\lambda,d_{-}}$. By choosing $\overline{w}_\lambda=\overline{w}(\lambda)$, the proof is completed.
\end{proof}
\begin{lemma}\label{lem6.7}
Let us choose $\mathrm{d}_{+}\in(0,\mathrm{d}_{\lambda_\ast,+})$ and $\hat{C}^+_{1}<\hat{C}^+_{1,\lambda_\ast}$, then there exists $\varepsilon^+>0$ such that   for all $\lambda\in(\lambda_\ast,\lambda_\ast+\varepsilon^+)$, $\Upsilon^+_{\lambda,d_{+}}$ has a minimizer  $\overline{u}_\lambda\in\mathcal{M}^+_{\lambda_\ast,d_{+}}$.    
\end{lemma}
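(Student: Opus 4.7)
The plan is to mirror the argument of Lemma \ref{lem6.6}, adapting it to the $\mathcal{M}^+$ setting where the role of local maxima along fibres is replaced by local minima, and where the relevant bounds flip direction. Fix $d_{+}\in(0,d_{\lambda_\ast,+})$ and $\hat{C}^+_{1}<\hat{C}^+_{1,\lambda_\ast}$. For each $\lambda\in(\lambda_\ast,\lambda_\ast+\epsilon)$, take a minimizing sequence $\{\overline{u}_n(\lambda)\}_{n\in\mathbb{N}}\subset\mathcal{M}^+_{\lambda_\ast,d_{+}}$ with $\mathcal{I}^+_\lambda(\overline{u}_n(\lambda))\to\Upsilon^+_{\lambda,d_{+}}$. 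Since $\mathcal{M}^+_{\lambda_\ast,d_{+}}\subset\mathcal{M}^+_{\lambda_\ast}$, Lemma \ref{lem3.2}-(i) guarantees that $\{\overline{u}_n(\lambda)\}$ is bounded in $W^{s,p}_V(\mathbb{R}^N)$, so up to a subsequence $\overline{u}_n(\lambda)\rightharpoonup \overline{u}(\lambda)$ weakly, and by Lemma \ref{lemma2.3} strongly in $L^\tau$ and $L^\gamma$. The lower-bound constraint $\|\overline{u}_n(\lambda)\|\geq\hat{C}^+_1$ coupled with the arguments leading to \eqref{eq4.2} rules out $\overline{u}(\lambda)=0$ in essentially the same way as in Lemma \ref{lem4.1}, so $\overline{u}(\lambda)\in\mathcal{K}$.

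The heart of the proof is to show that $\overline{u}(\lambda)\in\mathcal{M}^+_{\lambda_\ast,d_{+}}$ for all $\lambda$ sufficiently close to $\lambda_\ast$. Assume by contradiction that there is a sequence $\lambda_m\downarrow\lambda_\ast$ with $\overline{u}(\lambda_m)\notin\mathcal{M}^+_{\lambda_\ast,d_{+}}$. By Proposition \ref{prop6.5} we have $|\Upsilon^+_{\lambda_\ast}-\mathcal{I}^+_{\lambda_m}(\overline{u}_n(\lambda_m))|\to 0$ as $n,m\to\infty$. Project onto the Nehari set for $\lambda_m$ by setting $u_{n,m}=t^+_{\lambda_m}(\overline{u}_n(\lambda_m))\overline{u}_n(\lambda_m)\in\mathcal{M}^+_{\lambda_m}$ (well defined for large $m$ by Corollary \ref{cor6.2}-(ii)). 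The strict monotonicity of $\lambda\mapsto t^+_\lambda(u)$ from Lemma \ref{lem3.7}-(i), combined with $\overline{u}_n(\lambda_m)\in\mathcal{M}^+_{\lambda_\ast}$ (which gives $t^+_{\lambda_\ast}(\overline{u}_n(\lambda_m))=1$), forces $1<t^+_{\lambda_m}(\overline{u}_n(\lambda_m))$, and boundedness of the $\overline{u}_n(\lambda_m)$ keeps this scaling factor uniformly bounded above. Hence $\{u_{n,m}\}$ is bounded, and up to a subsequence $u_{n,m}\rightharpoonup u$ weakly in $W^{s,p}_V(\mathbb{R}^N)$.

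Next I would prove $u_{n,m}\to u$ strongly. Suppose not; then $\|u\|<\liminf\|u_{n,m}\|$ and $M(\|u\|^p)<\liminf M(\|u_{n,m}\|^p)$, so by the analog of \eqref{eq4.7},
\begin{equation*}
\liminf_{n,m\to\infty}\mathcal{F}'_{\lambda_m,u_{n,m}}(s_{\lambda_\ast}(u))>\mathcal{F}'_{\lambda_\ast,u}(s_{\lambda_\ast}(u))=0.
\end{equation*}
Since $u_{n,m}\in\mathcal{M}^+_{\lambda_m}$, the point $1$ is a local minimum of $\mathcal{F}_{\lambda_m,u_{n,m}}$, and it strictly decreases on $(0,1)$, forcing $s_{\lambda_\ast}(u)<1$. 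Combined with Proposition \ref{prop5.5} and the left-continuity result used in Lemma \ref{lem6.6}, this produces
\begin{equation*}
\Psi_{\lambda_\ast}(s_{\lambda_\ast}(u)u)<\liminf_{n,m\to\infty}\Psi_{\lambda_m}(u_{n,m})=\Upsilon^+_{\lambda_\ast},
\end{equation*}
contradicting the definition of $\Upsilon^+_{\lambda_\ast}$. Therefore $u_{n,m}\to u$ strongly, and since $t^+_{\lambda_m}(\overline{u}_n(\lambda_m))$ is bounded away from $0$ and $\infty$, up to subsequences it converges to some $t>0$, yielding $\overline{u}_n(\lambda_m)\to\overline{u}:=u/t$ strongly in $W^{s,p}_V(\mathbb{R}^N)$. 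Passing to the limit in $m$ in $\|\overline{u}(\lambda_m)-\overline{u}\|\leq\liminf_n\|\overline{u}_n(\lambda_m)-\overline{u}\|$ gives $\overline{u}(\lambda_m)\to\overline{u}$ strongly, whence $\mathcal{I}^+_{\lambda_\ast}(\overline{u})=\Upsilon^+_{\lambda_\ast}$ and, by continuity of $\mathrm{dist}(\cdot,\mathcal{M}^0_{\lambda_\ast})$ together with the definition of $\mathcal{M}^+_{\lambda_\ast,d_{+}}$, $\mathrm{dist}(\overline{u},\mathcal{M}^0_{\lambda_\ast})\geq d_{+}>0$. Hence $\overline{u}\in\mathcal{S}^+_{\lambda_\ast}\subset\mathcal{M}^+_{\lambda_\ast,d_{+}}$ and therefore $\overline{u}(\lambda_m)\in\mathcal{M}^+_{\lambda_\ast,d_{+}}$ for $m$ large, a contradiction.

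The conclusion follows by repeating the strong-convergence argument with fixed $\lambda\in(\lambda_\ast,\lambda_\ast+\varepsilon^+)$: the same fibre-map comparison rules out $\overline{u}_n(\lambda)\not\to\overline{u}(\lambda)$, and then continuity of $\mathcal{I}^+_\lambda$ along the strong limit yields $\mathcal{I}^+_\lambda(\overline{u}(\lambda))=\Upsilon^+_{\lambda,d_{+}}$. Setting $\overline{u}_\lambda:=\overline{u}(\lambda)$ completes the proof. The main obstacle is the projection step: one must simultaneously control the two parameters $(n,m)$, ensure that the scaling $t^+_{\lambda_m}(\overline{u}_n(\lambda_m))$ stays in a compact interval of $(0,\infty)$, and exploit the strict convexity-type information encoded in $u_{n,m}$ being a fibre minimum (rather than a maximum as in the $\mathcal{M}^-$ case) to derive the contradiction with $\Upsilon^+_{\lambda_\ast}$.
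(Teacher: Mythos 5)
Your instinct is right that the paper itself disposes of this lemma in one line (``similar to Lemma \ref{lem6.6}''), so writing out the actual adaptation is worthwhile, and most of your mirror image is correct: boundedness of the minimizing sequence must indeed come from Lemma \ref{lem3.2}-$(i)$ rather than from the defining constraint (which for $\mathcal{M}^+_{\lambda_\ast,d_+}$ is only a lower bound $\hat{C}^+_1\leq\|u\|$), non-vanishing of the weak limit comes from $\Upsilon^+_{\lambda,d_+}<0$ as in Lemma \ref{lem4.1}, the two-parameter projection $u_{n,m}=t^+_{\lambda_m}(\overline{u}_n(\lambda_m))\overline{u}_n(\lambda_m)$ with $t^+_{\lambda_m}>t^+_{\lambda_\ast}=1$ by the increasing monotonicity in Lemma \ref{lem3.7}-$(i)$ is the right projection, and the endgame identifying $\overline{u}\in\mathcal{S}^+_{\lambda_\ast}\subset\mathcal{M}^+_{\lambda_\ast,d_+}$ matches the paper's scheme.

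There is, however, a genuine gap at the one point where the $\mathcal{M}^+$ and $\mathcal{M}^-$ cases are \emph{not} symmetric, namely the contradiction derived from the failure of strong convergence. You conclude ``$s_{\lambda_\ast}(u)<1$'' from $\mathcal{F}'_{\lambda_m,u_{n,m}}(s_{\lambda_\ast}(u))>0$ together with the fact that $\mathcal{F}_{\lambda_m,u_{n,m}}$ decreases on $(0,1)$; this is backwards. If the fibre map decreases on $(0,1)$ its derivative is negative there, so a point with positive derivative must satisfy $s_{\lambda_\ast}(u)>1$ (this is precisely the conclusion $t^+_\lambda(u_\lambda)>1$ reached in Lemma \ref{lem4.1}). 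As a consequence, the displayed inequality $\Psi_{\lambda_\ast}(s_{\lambda_\ast}(u)u)<\liminf_{n,m}\Psi_{\lambda_m}(u_{n,m})$ is asserted without a working justification: the literal mirror of the chain in Lemma \ref{lem6.6} would require $\mathcal{F}_{\lambda_m,u_{n,m}}(s_{\lambda_\ast}(u))\leq\mathcal{F}_{\lambda_m,u_{n,m}}(1)$, which fails here because the fibre of $u_{n,m}$ is \emph{increasing} on $(1,s_{\lambda_\ast}(u))$. The repair is to argue as in Lemma \ref{lem4.1} rather than as in Lemma \ref{lem6.6}: with $s_{\lambda_\ast}(u)>1$, compare along the fibre of the \emph{limit} $u$, which is strictly decreasing on $(0,s_{\lambda_\ast}(u))$, to obtain $\Psi_{\lambda_\ast}(s_{\lambda_\ast}(u)u)<\Psi_{\lambda_\ast}(u)\leq\liminf_{n,m}\Psi_{\lambda_m}(u_{n,m})=\Upsilon^+_{\lambda_\ast}$, and then contradict $\hat{\Psi}^+_{\lambda_\ast}=\Upsilon^+_{\lambda_\ast}$ since $s_{\lambda_\ast}(u)u\in\mathcal{M}^+_{\lambda_\ast}\cup\mathcal{M}^0_{\lambda_\ast}$. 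With that substitution the rest of your argument goes through.
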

\begin{proof}
    The proof is similar to Lemma \ref{lem6.6}.
\end{proof}
\begin{theorem}\label{thm6.8}
   The problem \eqref{main problem} has at least two solutions $w_\lambda\in\mathcal{M}^-_\lambda$ and $u_\lambda\in\mathcal{M}^+_\lambda$ when $\lambda\in(\lambda_\ast,\lambda_\ast+\epsilon)$, where $\epsilon>0$ is small enough.
\end{theorem}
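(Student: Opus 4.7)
Set $\varepsilon=\min\{\varepsilon^+,\varepsilon^-\}$ where $\varepsilon^\pm$ come from Lemmas \ref{lem6.6}--\ref{lem6.7}, and let $\lambda\in(\lambda_\ast,\lambda_\ast+\varepsilon)$. My plan is to construct the desired solutions as the fibre-projections of the constrained minimizers produced by Lemmas \ref{lem6.6}--\ref{lem6.7}. Concretely, let $\overline{w}_\lambda\in\mathcal{M}^-_{\lambda_\ast,d_-}$ and $\overline{u}_\lambda\in\mathcal{M}^+_{\lambda_\ast,d_+}$ achieve $\Upsilon^-_{\lambda,d_-}$ and $\Upsilon^+_{\lambda,d_+}$ respectively, and set
\[
w_\lambda:=t^-_\lambda(\overline{w}_\lambda)\overline{w}_\lambda,\qquad u_\lambda:=t^+_\lambda(\overline{u}_\lambda)\overline{u}_\lambda.
\]
By Corollary \ref{cor6.2}, we already have $w_\lambda\in\mathcal{M}^-_\lambda$ and $u_\lambda\in\mathcal{M}^+_\lambda$, so the only thing left to check is that they are actually weak solutions of \eqref{main problem}. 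I will carry out the argument for $w_\lambda$ in detail; the case of $u_\lambda$ is symmetric and follows the recipe of Lemma \ref{lem4.5}$(i)$ and Theorem \ref{thm4.6} instead of Lemma \ref{lem4.5}$(ii)$.

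The central observation is that, because $\overline{w}_\lambda$ is strictly separated from $\mathcal{M}^0_{\lambda_\ast}$ by distance $>d_-$ and has norm $\leq\hat{C}^-_2$, for any $v\in\mathcal{K}$ and all sufficiently small $\eta>0$ the perturbed function $\tilde{w}_\eta:=t^-_{\lambda_\ast}(\overline{w}_\lambda+\eta v)(\overline{w}_\lambda+\eta v)$ remains in the admissible set $\mathcal{M}^-_{\lambda_\ast,d_-}$ by continuity of the fibre maps (Lemma \ref{lem3.7}) and continuity of the distance function. Combining this with the $0$-homogeneity of $t^-_\lambda$ (so that $t^-_\lambda(\tilde{w}_\eta)\tilde{w}_\eta=t^-_\lambda(\overline{w}_\lambda+\eta v)(\overline{w}_\lambda+\eta v)$) and the minimality of $\overline{w}_\lambda$ yields the key inequality
\[
\Psi_\lambda(w_\lambda)=\mathcal{I}^-_\lambda(\overline{w}_\lambda)\leq\mathcal{I}^-_\lambda(\tilde{w}_\eta)=\Psi_\lambda\bigl(t^-_\lambda(\overline{w}_\lambda+\eta v)(\overline{w}_\lambda+\eta v)\bigr).
\]
Rescaling via $\eta=\epsilon/t^-_\lambda(\overline{w}_\lambda)$ and exploiting $0$-homogeneity one more time, this translates into
\[
\Psi_\lambda(w_\lambda)\leq \Psi_\lambda\bigl(t^-_\lambda(w_\lambda+\epsilon v)(w_\lambda+\epsilon v)\bigr)\quad\text{for all }\epsilon\in(0,\epsilon_0),
\]
which is precisely the local minimizer property on $\mathcal{M}^-_\lambda$ that powered the proof of Lemma \ref{lem4.5}$(ii)$.

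From this point, I would argue exactly as in Lemma \ref{lem4.5}$(ii)$ and Theorem \ref{thm4.6}. Because $w_\lambda\in\mathcal{M}^-_\lambda$, the condition $\mathcal{F}''_{\lambda,w_\lambda}(1)<0$ allows the implicit function theorem to give $t^-_\lambda(w_\lambda+\epsilon v)\to 1$ as $\epsilon\to 0^+$, which is the analogue of Lemma \ref{lem4.4}$(ii)$. Then, combining the inequality above with the fact that $1$ is the global maximum of $\mathcal{F}_{\lambda,w_\lambda}$ gives
\[
\Psi_\lambda\bigl(t^-_\lambda(w_\lambda+\epsilon v)(w_\lambda+\epsilon v)\bigr)\geq \Psi_\lambda\bigl(t^-_\lambda(w_\lambda+\epsilon v)w_\lambda\bigr),
\]
so a Fatou-type argument on the singular term $\alpha(x)w_\lambda^{-\delta}$ (as in \eqref{eq4.9}--\eqref{eq4.10}) yields $w_\lambda>0$ a.e., $\alpha(x)w_\lambda^{-\delta}v\in L^1(\mathbb{R}^N)$, and the variational inequality
\[
\langle L(w_\lambda),v\rangle-\lambda\int_{\mathbb{R}^N}\alpha(x)w_\lambda^{-\delta}v\,\mathrm{d}x-\int_{\mathbb{R}^N}\beta(x)w_\lambda^{\gamma-1}v\,\mathrm{d}x\geq 0\quad\text{for all }v\in\mathcal{K}.
\]
Finally, to promote this inequality to an equality for every $v\in W^{s,p}_V(\mathbb{R}^N)$, I would repeat the positive-part test-function trick of Theorem \ref{thm4.6}: write $\phi_\epsilon=w_\lambda+\epsilon v$, test with $\phi_\epsilon^+$, use $w_\lambda\in\mathcal{M}^-_\lambda$ to cancel the zeroth-order term, estimate the fractional and singular contributions on the vanishing set $\mathcal{D}_\epsilon=\{\phi_\epsilon\leq 0\}$ via \eqref{eq4.17}--\eqref{eq4.19}, and let $\epsilon\to 0^+$.

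The main obstacle I anticipate is Step~2 — namely, verifying rigorously that the constrained minimizer property over $\mathcal{M}^-_{\lambda_\ast,d_-}$ (which sits inside $\mathcal{M}^-_{\lambda_\ast}$, \emph{not} $\mathcal{M}^-_\lambda$) can be transferred through the fibre projection $t^-_\lambda$ to yield a genuine local-minimum-type inequality for $\Psi_\lambda$ around $w_\lambda\in\mathcal{M}^-_\lambda$. This is where the strict distance $d_-<\mathrm{d}_{\lambda_\ast,-}$, the uniform bound $\hat{C}^-_2>\hat{C}^-_{2,\lambda_\ast}$, and the $0$-homogeneity of $\lambda(\cdot)$ and the fibre maps must conspire in a precise way. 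Once this transfer is justified, the remainder is a near-verbatim adaptation of the Section~\ref{sec4} machinery.
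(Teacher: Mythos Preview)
Your strategy is essentially the paper's: project the constrained minimizers $\overline{w}_\lambda,\overline{u}_\lambda$ to $\mathcal{M}^\mp_\lambda$, perturb inside the constraint set, and feed the resulting inequality into the Fatou/test-function machinery of Lemma~\ref{lem4.5} and Theorem~\ref{thm4.6}. Your rescaling $\eta=\epsilon/t^-_\lambda(\overline{w}_\lambda)$ is a nice touch: it collapses the paper's two-step comparison (its inequalities \eqref{eq6.17}--\eqref{eq6.18}) directly into the local-minimum inequality $\Psi_\lambda(w_\lambda)\le\Psi_\lambda\bigl(t^-_\lambda(w_\lambda+\epsilon v)(w_\lambda+\epsilon v)\bigr)$, after which Lemma~\ref{lem4.5}$(ii)$ applies verbatim.

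There is, however, one genuine gap that you flag but do not close. To perturb $\overline{w}_\lambda$ and remain in $\mathcal{M}^-_{\lambda_\ast,d_-}$ you need $\overline{w}_\lambda$ to be an \emph{interior} point of that set, and the binding constraint is the norm bound: a priori Lemma~\ref{lem6.6} only gives $\|\overline{w}_\lambda\|\le\hat C^-_2$, and if equality held your perturbation $\tilde w_\eta$ could escape. The paper spends real effort here: it shows that for any sequence $\lambda_n\downarrow\lambda_\ast$ the minimizers $\overline{w}_{\lambda_n}$ converge \emph{strongly} to some $\overline{w}\in\mathcal{S}^-_{\lambda_\ast}$ (via the by-now-familiar ``if not strong, then $\Psi_{\lambda_\ast}(t_{\lambda_\ast}(\overline{w})\overline{w})<\Upsilon^-_{\lambda_\ast}$'' contradiction, combined with Proposition~\ref{prop6.5}), so that $\lim_{\lambda\downarrow\lambda_\ast}\|\overline{w}_\lambda\|\le C_{\lambda_\ast}<\hat C^-_2$ by Corollary~\ref{cor5.7}. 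This forces $\|\overline{w}_\lambda\|<\hat C^-_2$ strictly, possibly after shrinking $\varepsilon^-$. The buffers $d_-<\mathrm{d}_{\lambda_\ast,-}$ and $\hat C^-_2>\hat C^-_{2,\lambda_\ast}$ you mention are exactly what make this work, but the argument is not automatic---you need the strong convergence to $\mathcal{S}^-_{\lambda_\ast}$, and that is the missing ingredient in your proposal.
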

\begin{proof}
 Due to Lemma \ref{lem6.6}, we have  $\overline{w}_\lambda\in\mathcal{M}^-_{\lambda_\ast,d_{-}}$  is a minimizer for $\Upsilon^-_{\lambda,d_{-}}$ when $\lambda\in(\lambda_\ast,\lambda_\ast+\varepsilon^-)$. Consequently, by Corollary \ref{cor6.2} we obtain $t^-_\lambda(\overline{w}_\lambda)\overline{w}_\lambda\in \mathcal{M}^-_{\lambda}$ for $\lambda\in(\lambda_\ast,\lambda_\ast+\epsilon)$. Denote $w_\lambda=t^-_\lambda(\overline{w}_\lambda)\overline{w}_\lambda$, then our aim is to show that $w_\lambda\in \mathcal{M}^-_{\lambda}$ is a solution for \eqref{main problem}. For this, we first have to prove $\overline{w}_\lambda$ is an interior point of $\mathcal{M}^-_{\lambda_\ast,d_{-}}$, i.e., to prove $$\|\overline{w}_\lambda\|<\hat{C}^-_2 ~\text{ for all}~ \lambda\in(\lambda_\ast,\lambda_\ast+\varepsilon^-),$$ where the constant $\hat{C}^-_2 >0$ satisfies $0<C_{\lambda_\ast}<\hat{C}^-_2 $, $C_{\lambda_\ast}$ as in Corollary \ref{cor5.7}. Next, consider a sequence $\{\lambda_n\}_{n\in\mathbb{N}}$ such that $\lambda_n\downarrow\lambda_\ast$ as $n\to\infty$. It follows that $\overline{w}_{\lambda_n}\in\mathcal{M}^-_{\lambda_\ast,d_{-}}$  for $n$ large enough and $\{\overline{w}_{\lambda_n}\}_{n\in\mathbb{N}}$ is bounded. Therefore, without loss of generality, we can assume $\overline{w}_{\lambda_n}\rightharpoonup \overline{w}$ weakly in $W^{s,p}_{V}(\mathbb{R}^N)$ as $n\to\infty$. It follows from Lemma \ref{lemma2.3} that $\overline{w}_{\lambda_n}\to \overline{w}$ in $L^\tau(\mathbb{R}^N)$ and $L^\gamma(\mathbb{R}^N)$ respectively as $n\to\infty$, $\overline{w}_{\lambda_n}\to \overline{w}$ a.e. in $\mathbb{R}^N$ and $\overline{w}\geq 0$. Arguing a similar strategy to that in Lemma \ref{lem4.2}, we obtain $\overline{w} \neq 0$ and $\displaystyle\int_{\mathbb{R}^N}\beta(x)|\overline{w}|^{\gamma}~\mathrm{d}x>0$. We claim that $\overline{w}_{\lambda_n}\to \overline{w}$ in $W^{s,p}_{V}(\mathbb{R}^N)$ as $n\to\infty$. Indeed, if not, then we have  $$\liminf_{n\to\infty}\mathcal{F}'_{\lambda_n,\overline{w}_{\lambda_n}}(t_{\lambda_\ast}(\overline{w}))> \mathcal{F}'_{\lambda_\ast,\overline{w}}(t_{\lambda_\ast}(\overline{w}))=0.$$ 
 It follows that $\mathcal{F}'_{\lambda_n,\overline{w}_{\lambda_n}}(t_{\lambda_\ast}(\overline{w}))>0$ for $n$ large enough. By Corollary \ref{cor6.2}, we have $t^-_{\lambda_n}(\overline{w}_{\lambda_n})\overline{w}_{\lambda_n}\in\mathcal{M}^-_{\lambda_n}$ for $n$ large enough, therefore by Proposition \ref{prop3.1}, the map $\mathcal{F}_{\lambda_n,\overline{w}_{\lambda_n}}$ is strictly increasing in $(t^+_{\lambda_n}(\overline{w}_{\lambda_n}),t^-_{\lambda_n}(\overline{w}_{\lambda_n}))$ and therefore $t^+_{\lambda_n}(\overline{w}_{\lambda_n})<t_{\lambda_\ast}(\overline{w})<t^-_{\lambda_n}(\overline{w}_{\lambda_n})$ for $n$ large enough. Therefore, by Proposition \ref{prop6.5}, we have 
 $$ \Psi_{\lambda_\ast}(t_{\lambda_\ast}(\overline{w})\overline{w})<\liminf_{n\to\infty}\Psi_{\lambda_n}(t_{\lambda_\ast}(\overline{w})\overline{w}_{\lambda_n})=\liminf_{n\to\infty}\mathcal{F}_{\lambda_n,\overline{w}_{\lambda_n}}(t_{\lambda_\ast}(w))$$ $$ < \liminf_{n\to\infty}\Psi_{\lambda_n}(t^-_{\lambda_n}(\overline{w}_{\lambda_n})\overline{w}_{\lambda_n})=\liminf_{n\to\infty}\mathcal{I}^-_{\lambda_n}(\overline{w}_{\lambda_n})=\liminf_{n\to\infty}\Upsilon^-_{\lambda_n,d_{-}}=\Upsilon^-_{\lambda_\ast},$$
 which is absurd because if we take $\lambda'_n\uparrow\lambda_\ast$ as $n\to\infty$, then by Corollary \ref{cor5.3} and Proposition \ref{prop5.5}, we get
 $$\Upsilon^-_{\lambda_\ast}=\lim_{\lambda'_n\uparrow{\lambda_\ast}}\Upsilon^-_{\lambda'_n}\leq \lim_{\lambda'_n\uparrow{\lambda_\ast}}\Psi_{\lambda'_n}(t^-_{\lambda'_n}(\overline{w})\overline{w})=\Psi_{\lambda_\ast}(t_{\lambda_\ast}(\overline{w})\overline{w}). $$
 Thus we conclude that  $\overline{w}_{\lambda_n}\to \overline{w}$ in $W^{s,p}_{V}(\mathbb{R}^N)$ as $n\to\infty$ and hence  $\mathcal{F}'_{\lambda_\ast,\overline{w}}(1)=0$ and $\mathcal{F}''_{\lambda_\ast,\overline{w}}(1)\leq 0$ (since $\overline{w}_{\lambda_n}\in\mathcal{M}^-_{\lambda_\ast}$ for $n$ large enough). It follows that $\overline{w}\in \mathcal{M}^-_{\lambda_\ast}\cup\mathcal{M}^0_{\lambda_\ast}$. But we have $$\lim_{n\to\infty} \mathrm{dist}(\overline{w}_n(\lambda_n),\mathcal{M}^0_{\lambda_\ast})=\mathrm{dist}(\overline{w},\mathcal{M}^0_{\lambda_\ast})\geq d_{-}>0 $$ and hence $\overline{w}\notin \mathcal{M}^0_{\lambda_\ast}$. This shows that we must have $\overline{w} \in \mathcal{M}^-_{\lambda_\ast}$. Next, it is easy to see that $t^-_{\lambda_n}(\overline{w}_{\lambda_n})$ is bounded and therefore, up to a subsequence we can assume $t^-_{\lambda_n}(\overline{w}_{\lambda_n})\to t$ as $n\to\infty$. Using the strong convergence and the fact that $t^-_{\lambda_n}(\overline{w}_{\lambda_n})\overline{w}_{\lambda_n}\in\mathcal{M}^-_{\lambda_n}$ for $n$ large enough, we obtain  $\mathcal{F}'_{\lambda_\ast,\overline{w}}(t)=0$ and $\mathcal{F}''_{\lambda_\ast,\overline{w}}(t)\leq 0$. Hence, from $\overline{w} \in \mathcal{M}^-_{\lambda_\ast}$ it follows that $t=1$. Now from Proposition \ref{prop6.5}, we get
 $$\mathcal{I}^-_{\lambda_\ast}(\overline{w})=\Psi_{\lambda_\ast}(\overline{w})=\lim_{n\to\infty} \Psi_{\lambda_n}(t^-_{\lambda_n}(\overline{w}_{\lambda_n})\overline{w}_{\lambda_n})=\lim_{n\to\infty}\mathcal{I}^-_{\lambda_n}(\overline{w}_{\lambda_n})=\lim_{n\to\infty}\Upsilon^-_{\lambda_n,d_{-}}=\Upsilon^-_{\lambda_\ast}.$$
 This shows that $\overline{w}\in\mathcal{S}^-_{\lambda_\ast}$ and hence from strong convergence, we obtain
 $$\|\overline{w}\|=\lim_{n\to\infty} \|\overline{w}_{\lambda_n}\|\leq C_{\lambda_\ast}<\hat{C}^-_2  ~(\text{see Corollary \ref{cor5.7}}). $$
 From the above inequality, we infer that $\lim_{\lambda\downarrow\lambda_\ast}\|\overline{w}_{\lambda}\|<\hat{C}^-_2 $, that is, $\|\overline{w}_\lambda\|<\hat{C}^-_2 ~\text{ for all}~ \lambda\in(\lambda_\ast,\lambda_\ast+\varepsilon^-)$.
 
 To prove $w_\lambda\in \mathcal{M}^-_{\lambda}$ is a solution for \eqref{main problem}, let us choose $v\in\mathcal{K}$. It follows from $\overline{w}_\lambda\in\mathcal{M}^-_{\lambda_\ast,d_{-}}$ that $\overline{w}_\lambda\in\mathcal{M}^-_{\lambda_\ast}$ for all $\lambda\in(\lambda_\ast,\lambda_\ast+\varepsilon^-)$. Now let us consider that there exists a $\boldsymbol{\sigma}_0>0$ small enough with $0<\boldsymbol{\sigma}<\boldsymbol{\sigma}_0$ satisfying $t^-_{\lambda_\ast}(\overline{w}_\lambda+\boldsymbol{\sigma}v)(\overline{w}_\lambda+\boldsymbol{\sigma}v)\in \mathcal{M}^-_{\lambda_\ast}$. Therefore, by applying the implicit function theorem as in Lemma \eqref{lem4.4}-$(ii)$, we can easily deduce that $t^-_{\lambda_\ast}(\overline{w}_\lambda+\boldsymbol{\sigma}v)\to 1$ as $\boldsymbol{\sigma}\to 0^+$. In addition, we have $\|\overline{w}_\lambda\|<\hat{C}^-_2$ and $\mathrm{dist}(\overline{w}_\lambda,\mathcal{M}^0_{\lambda_\ast})>d_{-}$. This yields
 $$\|t^-_{\lambda_\ast}(\overline{w}_\lambda+\boldsymbol{\sigma}v)(\overline{w}_\lambda+\boldsymbol{\sigma}v)\|< \hat{C}^-_2 $$
 and $$\mathrm{dist}(t^-_{\lambda_\ast}(\overline{w}_\lambda+\boldsymbol{\sigma}v)(\overline{w}_\lambda+\boldsymbol{\sigma}v),\mathcal{M}^0_{\lambda_\ast})>d_{-}~\text{for}~\boldsymbol{\sigma}>0~\text{small enough}. $$
  Hence 
  \begin{equation}\label{eq6.15}
   t^-_{\lambda_\ast}(\overline{w}_\lambda+\boldsymbol{\sigma}v)(\overline{w}_\lambda+\boldsymbol{\sigma}v)\in\mathcal{M}^-_{\lambda_\ast,d_{-}}~\text{for}~\boldsymbol{\sigma}>0~\text{small enough}.   
  \end{equation}
 Consequently, by using Corollary \ref{cor6.2}, we have
 \begin{equation}\label{eq6.16}
  t^-_\lambda\big(t^-_{\lambda_\ast}(\overline{w}_\lambda+\boldsymbol{\sigma}v)(\overline{w}_\lambda+\boldsymbol{\sigma}v)\big)\big(t^-_{\lambda_\ast}(\overline{w}_\lambda+\boldsymbol{\sigma}v)(\overline{w}_\lambda+\boldsymbol{\sigma}v)\big)\in\mathcal{M}^-_\lambda    
 \end{equation}
 for $\boldsymbol{\sigma}>0$ small enough and $\lambda>\lambda_\ast$ lies in an appropriate range. Now, from \eqref{eq6.15} and Lemma \ref{lem6.6}, we get
 \begin{equation}\label{eq6.17}
     \begin{split}
         \Psi_\lambda\bigg(t^-_\lambda\big(t^-_{\lambda_\ast}(\overline{w}_\lambda+\boldsymbol{\sigma}v)(\overline{w}_\lambda+\boldsymbol{\sigma}v)\big)\big(t^-_{\lambda_\ast}(\overline{w}_\lambda+\boldsymbol{\sigma}v)(\overline{w}_\lambda+\boldsymbol{\sigma}v)\big)\bigg)\\& \hspace{-10cm}=\mathcal{I}^-_\lambda\big(t^-_{\lambda_\ast}(\overline{w}_\lambda+\boldsymbol{\sigma}v)(\overline{w}_\lambda+\boldsymbol{\sigma}v)\big)\geq \Upsilon^-_{\lambda,d_{-}}=\mathcal{I}^-_{\lambda}(\overline{w}_\lambda)=\Psi_\lambda(t^-_\lambda(\overline{w}_\lambda)\overline{w}_\lambda).
     \end{split}
 \end{equation}
 But for $\boldsymbol{\sigma}>0$ very small enough, we have 
 \begin{equation}\label{eq6.18}
   \Psi_\lambda\bigg(t^-_\lambda\big(t^-_{\lambda_\ast}(\overline{w}_\lambda+\boldsymbol{\sigma}v)(\overline{w}_\lambda+\boldsymbol{\sigma}v)\big)\big(t^-_{\lambda_\ast}(\overline{w}_\lambda+\boldsymbol{\sigma}v)\overline{w}_\lambda\big)\bigg)\simeq \Psi_\lambda(t^-_\lambda(\overline{w}_\lambda)\overline{w}_\lambda).  
 \end{equation}
 From \eqref{eq6.16} and using the fact that $t^-_\lambda(\overline{w}_\lambda)\overline{w}_\lambda\in \mathcal{M}^-_{\lambda}$, we can apply the implicit function theorem to the same function $\mathcal{H}$ as in Lemma \ref{lem4.4}-$(ii)$ at the point 
$$\Bigg(t^-_\lambda(\overline{w}_\lambda),M(\|\overline{w}_\lambda\|^p)\|\overline{w}_\lambda\|^p,\int_{\mathbb{R}^{N}}\alpha(x)|\overline{w}_\lambda|^{1-\delta}~\mathrm{d}x,\int_{\mathbb{R}^{N}}\beta (x)|\overline{w}_\lambda|^\gamma~\mathrm{d}x\Bigg) $$
and easily verify 
$$t^-_\lambda\big(t^-_{\lambda_\ast}(\overline{w}_\lambda+\boldsymbol{\sigma}v)(\overline{w}_\lambda+\boldsymbol{\sigma}v)\big)\to t^-_\lambda(\overline{w}_\lambda)~\text{as}~\boldsymbol{\sigma}\to 0^+.$$
Denote $$\boldsymbol{\vartheta}=t^-_{\lambda_\ast}(\overline{w}_\lambda+\boldsymbol{\sigma}v)(\overline{w}_\lambda+\boldsymbol{\sigma}v).$$
Now from \eqref{eq6.17} and \eqref{eq6.18}, we obtain 
$$\frac {\Psi_\lambda\big(t^-_\lambda(\boldsymbol{\vartheta}) t^-_{\lambda_\ast}(\overline{w}_\lambda+\boldsymbol{\sigma}v)(\overline{w}_\lambda+\boldsymbol{\sigma}v)\big)-\Psi_\lambda\big(t^-_\lambda(\boldsymbol{\vartheta}) t^-_{\lambda_\ast}(\overline{w}_\lambda+\boldsymbol{\sigma}v)\overline{w}_\lambda\big)}{\boldsymbol{\sigma}}\geq 0.$$
On simplifying the above inequality, we get
$$\hspace{-3cm}\big(t^-_\lambda(\boldsymbol{\vartheta}) t^-_{\lambda_\ast}(\overline{w}_\lambda+\boldsymbol{\sigma}v)\big)^{p(m+1)}\Bigg[\frac{\widehat{M}(\|\overline{w}_\lambda+\boldsymbol{\sigma}v\|^p)-\widehat{M}(\|\overline{w}_\lambda\|^p)}{p\boldsymbol{\sigma}}\Bigg]$$ $$\hspace{2cm}\geq \big(t^-_\lambda(\boldsymbol{\vartheta}) t^-_{\lambda_\ast}(\overline{w}_\lambda+\boldsymbol{\sigma}v)\big)^\gamma \int_{\mathbb{R}^N}\beta(x)\bigg(\frac{|\overline{w}_\lambda+\boldsymbol{\sigma}v|^\gamma-|\overline{w}_\lambda|^\gamma}{\gamma \boldsymbol{\sigma}}\bigg)~\mathrm{d}x$$ $$\hspace{4cm}+\lambda \big(t^-_\lambda(\boldsymbol{\vartheta}) t^-_{\lambda_\ast}(\overline{w}_\lambda+\boldsymbol{\sigma}v)\big)^{1-\delta}\int_{\mathbb{R}^N}\alpha(x)\bigg(\frac{|\overline{w}_\lambda+\boldsymbol{\sigma}v|^{1-\delta}-|\overline{w}_\lambda|^{1-\delta}}{(1-\delta) \boldsymbol{\sigma}}\bigg)~\mathrm{d}x.$$
Applying Fatou's Lemma to the above inequality and using the facts that $t^-_\lambda(\boldsymbol{\vartheta})\to t^-_\lambda(\overline{w}_\lambda)$, $t^-_{\lambda_\ast}(\overline{w}_\lambda+\boldsymbol{\sigma}v)\to 1$ as $\boldsymbol{\sigma}\to 0^+$ and $w_\lambda=t^-_\lambda(\overline{w}_\lambda)\overline{w}_\lambda$, we get
\begin{equation}\label{eq6.19}
 \langle{L(w_{\lambda}),v}\rangle-\lambda \int_{\mathbb{R}^{N}}\alpha(x)w_\lambda^{-\delta} v~\mathrm{d}x-\int_{\mathbb{R}^{N}}\beta(x)w_\lambda^{\gamma-1} v~\mathrm{d}x\geq 0,~\forall~v\in\mathcal{K}.
\end{equation}
Choose $v\in W^{s,p}_{V}(\mathbb{R}^N)$ and define $\phi_\epsilon= w_\lambda+\epsilon v$, then for each $\epsilon>0$ be given $\phi^+_\epsilon\in\mathcal{K}$. Now replacing $\phi_\epsilon$ with $v$ in \eqref{eq6.19} and following Theorem \ref{thm4.6}, we can easily show that $w_\lambda\in\mathcal{M}^-_{\lambda}$ is a solution to \eqref{main problem}. Similarly, we can prove that $u_\lambda\in\mathcal{M}^+_{\lambda}$ is a solution to \eqref{main problem} for  $\lambda\in(\lambda_\ast,\lambda_\ast+\varepsilon^+)$. Let $\epsilon=\min~\{\varepsilon^-,\varepsilon^+\}$, then for  $\lambda\in(\lambda_\ast,\lambda_\ast+\epsilon)$, the problem  \eqref{main problem} has at least two solutions $w_\lambda\in\mathcal{M}^-_\lambda$ and $u_\lambda\in\mathcal{M}^+_\lambda$. This completes the proof.
\end{proof}
\noindent We complete the proof of Theorem \ref{thm2.6}. 
\begin{proof}[Proof of Theorem \ref{thm2.6}]
It follows directly from Theorem \ref{thm4.6},  Theorem \ref{thm5.6} and Theorem \ref{thm6.8}.
\end{proof}
\section{Conclusion and further remarks}
We believe that the result of our work is still true for the following degenerate Kirchhoff singular problems as well as for singular problems with certain relevant assumptions. For examples:
\begin{itemize}
    \item [(I)]    
    $$\begin{cases}
 M\big( \|u\|^p\big)\bigg[-\Delta_p u+V(x)u^{p-1}\bigg]=\lambda f(x)u^{-\delta}+g(x)u^{\gamma-1}~~\text{in}~~\mathbb{R}^N,\\
~~~~~~u>0~~\text{in}~~\mathbb{R}^N,~ \displaystyle\int_{\mathbb{R}^N} V(x)u^p~\mathrm{d}x<\infty,~ u\in W^{1,p}(\mathbb{R}^N),
  \end{cases}$$
  where $\|\cdot\|^p=\|\cdot\|^p_{W^{1,p}_V(\mathbb{R}^N)}$, $N\geq2$, $\Delta_p u=\text{div}(|\nabla u|^{p-2}\nabla u)$ is the $p$-Laplacian operator, $\lambda>0$ is a real parameter, $0<\delta<1<p<\gamma<p^\ast-1$, where $p^\ast=\frac{Np}{N-p}$ is the critical Sobolev exponent, $f>0$ a.e. in $\mathbb{R}^N$, $g\in L^\infty(\mathbb{R}^N)$ is sign changing in $\mathbb{R}^N$ with $g^+\neq 0$, $M:\mathbb{R}^+\to\mathbb{R}^+$ is a degenerate Kirchhoff function, $V:\mathbb{R}^N\to \mathbb{R}^+ $ is a continuous potential function. 
  
   \item [(II)] 
    $$\begin{cases}
 M\big( \|u\|^p\big)\bigg[\mathcal{L}_1 (u)+V(x)u^{p-1}\bigg]=\lambda f(x)u^{-\delta}+\frac{\alpha}{\alpha+\beta} h(x)u^{\alpha-1}v^\beta~~\text{in}~~\mathbb{R}^N,\\ M\big( \|u\|^p\big)\bigg[\mathcal{L}_1 (v)+V(x)v^{p-1}\bigg]=\mu g(x)v^{-\delta}+\frac{\beta}{\alpha+\beta} h(x)u^{\alpha}v^{\beta-1}~~\text{in}~~\mathbb{R}^N,\\
~~~~~~u,v>0~~\text{in}~~\mathbb{R}^N,~ \displaystyle\int_{\mathbb{R}^N} V(x)(u^p+v^p)~\mathrm{d}x<\infty,~ u,v\in \mathbf{X},
  \end{cases}$$  
  where $\mathbf{X}=W^{1,p}(\mathbb{R}^N)~(\text{or}~W^{s,p}(\mathbb{R}^N))$ and we also define $\mathbf{Y}=W^{1,p}_V(\mathbb{R}^N)~(\text{or}~W^{s,p}_V(\mathbb{R}^N))$, with $\|\cdot\|^p=\|\cdot\|^p_{\mathbf{Y}}$, $N\geq2$, the operator $\mathcal{L}_1$ is defined by $\mathcal{L}_1(u)=-\Delta_p u=-\text{div}(|\nabla u|^{p-2}\nabla u)~(\text{or}~\big(-\Delta_p\big)^s)$, called the $p$-Laplacian  (or fractional $p$-Laplacian) operator , $\alpha,\beta>1$, $0<\delta<1<p<\alpha+\beta<p^\ast$, where $p^\ast=\frac{Np}{N-p}$ \big(or $\frac{Np}{N-sp}$\big), called the critical Sobolev exponent, the real parameters $\lambda,\mu>0$, $f,g>0$ a.e. in $\mathbb{R}^N$, $h\in L^\infty(\mathbb{R}^N)$ is sign changing in $\mathbb{R}^N$ with $h^+\neq 0$, $M:\mathbb{R}^+\to\mathbb{R}^+$ is a degenerate Kirchhoff function, $V:\mathbb{R}^N\to \mathbb{R}^+ $ is a continuous potential function.
  \item [(III)] 
    
    $$\begin{cases}
 M\big( \|u\|^p\big)\bigg[\mathcal{L}_p (u)+V(x)u^{p-1}\bigg]+ M\big( \|u\|^q\big)\bigg[\mathcal{L}_q (u)+V(x)u^{q-1}\bigg]\\ \hspace{6.5cm} =\lambda f(x)u^{-\delta}+g(x)u^{\gamma-1}~~\text{in}~~\mathbb{R}^N,\\
~u>0~~\text{in}~~\mathbb{R}^N,~ \displaystyle\int_{\mathbb{R}^N} V(x)u^p~\mathrm{d}x<\infty,~\displaystyle \int_{\mathbb{R}^N} V(x)u^q~\mathrm{d}x<\infty, u\in \mathbb{X},
  \end{cases}$$
  
 where $\mathbf{X}=W^{1,p}(\mathbb{R}^N)\cap W^{1,q}(\mathbb{R}^N)~(\text{or}~~W^{s,p}(\mathbb{R}^N)\cap W^{s,q}(\mathbb{R}^N))$ and we define $\mathbf{Y}=W^{1,m}_V(\mathbb{R}^N)$  $(~\text{or}~W^{s,m}_V(\mathbb{R}^N)$, with $\|\cdot\|^m=\|\cdot\|^m_{\mathbf{Y}}$ for $m\in\{p,q\}$, $N\geq2$, $\mathcal{L}_m u=-\Delta_m u$ (or $\big(-\Delta_m\big)^s$ ) for $m=\{p,q\}$, is the $m$-Laplacian (or fractional $m$-Laplacian) operator, $\lambda>0$ is a real parameter, $1<\delta<q<p<\gamma<q^\ast-1$,  where $q^\ast=\frac{Nq}{N-q}$ \big(or $\frac{Np}{N-sp}$\big) is the critical Sobolev exponent, $f>0$ a.e. in $\mathbb{R}^N$, $g\in L^\infty(\mathbb{R}^N)$ is sign changing in $\mathbb{R}^N$ with $g^+\neq 0$, $M:\mathbb{R}^+\to\mathbb{R}^+$ is a degenerate Kirchhoff function, $V:\mathbb{R}^N\to \mathbb{R}^+ $ is a continuous  potential function. 
  \item [(IV)] 
    \begin{equation}\label{eq7.6}
    \begin{cases}
\mathfrak{L }^{p,q}_{a,V}(u) =\lambda f(x)u^{-\delta}+g(x)u^{\gamma-1}~~\text{in}~~\mathbb{R}^N,\\
~u>0~~\text{in}~~\mathbb{R}^N,~ \displaystyle\int_{\mathbb{R}^N} V(x)
(u^p+a(x)u^q)~\mathrm{d}x<\infty, u\in W^{1,\mathcal{H}}(\mathbb{R}^N),
  \end{cases}\tag{$\mathcal{P}_\lambda$}
  \end{equation}
  where $N\geq2$, $V:\mathbb{R}^N\to \mathbb{R}^+ $ is a positive continuous  function and $\lambda>0$ is a parameter. The double phase type operator that appears in the problem \eqref{eq7.6} is defined as follows:
  $$\mathfrak{L }^{p,q}_{a,V}(u)=-\text{div}(|\nabla u|^{p-2}\nabla u+a(x)|\nabla u|^{q-2}\nabla u)+V(x)(|u|^{p-2}u+a(x)|u|^{q-2}u). $$
  Also, we have $0\leq a(\cdot)\in L^1(\mathbb{R}^N)$, $0<\delta<1<p<q<N,~\frac{p}{q}<1+\frac{1}{N},~q<p^\ast$, where $p^\ast=\frac{Np}{N-p}$ is the critical Sobolev exponent,  $f>0$ a.e. in $\mathbb{R}^N$, $g\in L^\infty(\mathbb{R}^N)$ is sign changing in $\mathbb{R}^N$ with $g^+\neq 0$.
\end{itemize}
\begin{remark}
    The above problems are still actual whenever $M\equiv 1$. Moreover, one can study such singular problems in the bounded domain. We will leave the details proof of the above issues to interested readers.
\end{remark}
\section*{Acknowledgements} DKM wants to sincerely thank DST INSPIRE Fellowship DST/INSPIRE/03/2019/000265 sponsored by Govt. of India. TM acknowledges the support of the Start-up Research Grant from DST-SERB, sanction no. SRG/2022/000524. AS was supported by the DST-INSPIRE Grant DST/INSPIRE/04/2018/002208 sponsored by Govt. of India.

\bibliography{ref}
\bibliographystyle{abbrv}
\Addresses
\end{document}